\documentclass[a4paper]{amsart}

\usepackage[utf8]{inputenc}
\usepackage[T1]{fontenc}
\usepackage{lmodern, enumerate}
\usepackage{amssymb,amsxtra,amscd,amsmath}
\usepackage{amsthm}
\usepackage{stmaryrd}
\usepackage{amsfonts}
\usepackage{tikz-cd}
\usepackage{bbm}
\usepackage[all]{xy}
\usepackage{xcolor}
\usepackage{nicefrac,mathtools}
\usepackage[inline]{enumitem}
\usepackage{microtype}
\usepackage{comment}
\usepackage[
  pdftitle={A Baum-Connes assembly map for essential inverse semigroup crossed products},
  pdfauthor={Diego Mart\'{i}nez},
  pdfsubject={Mathematics},
  colorlinks=true, linkcolor=blue, linkbordercolor=blue, citecolor=red, citebordercolor=red, urlcolor=blue, linktocpage=true
]{hyperref}
\usepackage[capitalise, nameinlink, noabbrev, nosort]{cleveref} 
\usepackage[lite]{amsrefs}

\newtheorem{alphthm}{Theorem}     

\newtheorem{alphprop}[alphthm]{Proposition}

\setlist[enumerate]{font=\normalfont}
\crefname{enumi}{}{} 
\crefname{enumi}{}{} 
\setlist[enumerate]{label=(\roman*)}

\numberwithin{equation}{section}
\theoremstyle{plain}
\newtheorem{theorem}[equation]{Theorem}
\newtheorem{lemma}[equation]{Lemma}
\newtheorem{proposition}[equation]{Proposition}
\newtheorem{corollary}[equation]{Corollary}
\newtheorem{claim}[equation]{Claim}
\theoremstyle{definition}
\newtheorem{definition}[equation]{Definition}
\theoremstyle{remark}
\newtheorem{remark}[equation]{Remark}

\newtheorem{notation}[equation]{Notation}
\newtheorem{example}[equation]{Example}
\newtheorem{question}[equation]{Question}

\crefname{theorem}{Theorem}{Theorems}
\crefname{alphthm}{Theorem}{Theorems}
\crefname{proposition}{Proposition}{Propositions}
\crefname{lemma}{Lemma}{Lemmas}
\crefname{claim}{Claim}{Claims}
\crefname{remark}{Remark}{Remarks}
\crefname{conjecture}{Conjecture}{Conjectures}
\crefname{corollary}{Corollary}{Corollaries}
\crefname{question}{Question}{Questions}
\crefname{conjecture}{Conjecture}{Conjectures}
\crefname{fact}{Fact}{Facts}
\crefname{claim}{Claim}{Claims}
\crefname{case}{Case}{Cases}
\crefname{convention}{Convention}{Conventions}

\calclayout

\NewDocumentCommand{\CC}{}{\mathbb{C}}             
             \NewDocumentCommand{\FF}{}{\mathbb{F}}

             \NewDocumentCommand{\NN}{}{\mathbb{N}}
             
             \NewDocumentCommand{\RR}{}{\mathbb{R}}

             \NewDocumentCommand{\ZZ}{}{\mathbb{Z}}

\NewDocumentCommand{\CA}{}{\mathcal{A}}            \NewDocumentCommand{\CB}{}{\mathcal{B}}
\NewDocumentCommand{\CCC}{}{\mathcal{C}}           
\NewDocumentCommand{\CE}{}{\mathcal{E}}            \NewDocumentCommand{\CF}{}{\mathcal{F}}
            \NewDocumentCommand{\CH}{}{\mathcal{H}}
\NewDocumentCommand{\CI}{}{\mathcal{I}}            
\NewDocumentCommand{\CK}{}{\mathcal{K}}            \NewDocumentCommand{\CL}{}{\mathcal{L}}
\NewDocumentCommand{\CM}{}{\mathcal{M}}            \NewDocumentCommand{\CN}{}{\mathcal{N}}

\NewDocumentCommand{\CU}{}{\mathcal{U}}            \NewDocumentCommand{\CV}{}{\mathcal{V}}
            
            \NewDocumentCommand{\CZ}{}{\mathcal{Z}}

\NewDocumentCommand{\fc}{}{\mathfrak{c}}

\NewDocumentCommand{\fs}{}{\mathfrak{s}}

            \NewDocumentCommand{\FP}{}{\mathfrak{P}}

\NewDocumentCommand{\Eth}{}{E}
\NewDocumentCommand{\Egrp}{omm}{%
  \IfNoValueTF{#1}
    {E\left(#2,#3\right)}
    {E_{#1}\left(#2,#3\right)}
}
\NewDocumentCommand{\Etop}{omm}{%
  \IfNoValueTF{#1}
    {E^{\rm top}_{\star}\left(#2; #3\right)}
    {E^{\rm top}_{\star,#1}\left(#2; #3\right)}
}
\NewDocumentCommand{\Kth}{}{K}
\NewDocumentCommand{\KKth}{}{KK}

\NewDocumentCommand{\KKgrp}{omm}{%
  \IfNoValueTF{#1}
    {KK\left(#2,#3\right)}
    {KK_{#1}\left(#2,#3\right)}
}
\NewDocumentCommand{\Kgrp}{om}{%
  \IfNoValueTF{#1}
    {K_\star\left(#2\right)}
    {K_{#1}\left(#2\right)}
}
\NewDocumentCommand{\asymparrow}{}{\dashrightarrow}

\NewDocumentCommand{\actson}{}{\curvearrowright}
\NewDocumentCommand{\variable}{}{-}
\NewDocumentCommand{\sectionsof}{m}{\Gamma\left(#1\right)}

\NewDocumentCommand{\tzero}{}{T\(_0\) }

\NewDocumentCommand{\czeror}{}{\contz{\RR}}
\NewDocumentCommand{\hkindalg}{om}{%
  \IfNoValueTF{#1}
    {\CCC_0\left(#2\right)}
    {\CCC_{0,#1}\left(#2\right)}
}
\NewDocumentCommand{\hkalg}{om}{%
  \IfNoValueTF{#1}
    {\CA\left(#2\right)}
    {\CA_{ {#1} }\left(#2\right)}
}

\NewDocumentCommand{\hkhilb}{om}{%
  \IfNoValueTF{#1}
    {\CH(#2)}
    {\CH_{#1}(#2)}
}

\NewDocumentCommand{\fhalg}{O{h}m}{%
  \IfNoValueTF{#1}
    {\CF\left(#2\right)}
    {\CF_{#1}\left(#2\right)}
}
\NewDocumentCommand{\schwartzfuncs}{om}{%
  \IfNoValueTF{#1}
    {\fs\left(#2\right)}
    {\fs\left(#1,#2\right)}
}
\NewDocumentCommand{\delmaux}{}{\alpha}
\NewDocumentCommand{\ddelmaux}{}{\beta}
\NewDocumentCommand{\delm}{o}{%
  \IfNoValueTF{#1}
    {\delmaux}
    {\delmaux^{#1}}
}
\NewDocumentCommand{\reddelm}{o}{%
  \IfNoValueTF{#1}
    {\delmaux_{\red}}
    {\delmaux^{#1}_{\red}}
}
\NewDocumentCommand{\maxdelm}{o}{%
  \IfNoValueTF{#1}
    {\delmaux_{{\rm max}}}
    {\delmaux^{#1}_{{\rm max}}}
}
\NewDocumentCommand{\ddelm}{o}{%
  \IfNoValueTF{#1}
    {\ddelmaux}
    {\ddelmaux^{#1}}
}
\NewDocumentCommand{\redddelm}{o}{%
  \IfNoValueTF{#1}
    {\ddelmaux_{\red}}
    {\ddelmaux^{#1}_{\red}}
}
\NewDocumentCommand{\maxddelm}{o}{%
  \IfNoValueTF{#1}
    {\ddelmaux_{{\rm max}}}
    {\ddelmaux^{#1}_{{\rm max}}}
}

\NewDocumentCommand{\crossedprod}{omm}{%
  \IfNoValueTF{#1}
    {#2 \rtimes #3}
    {#2 \rtimes_{#1} #3}
}
\NewDocumentCommand{\algcrossedprod}{omm}{%
  \IfNoValueTF{#1}
    {#2 \rtimes_{{\rm alg}} #3}
    {#2 \rtimes^{#1}_{{\rm alg}} #3}
}
\NewDocumentCommand{\redcrossedprod}{omm}{%
  \IfNoValueTF{#1}
    {#2 \rtimes_{\red} #3}
    {#2 \rtimes^{#1}_{\red} #3}
}
\NewDocumentCommand{\maxcrossedprod}{omm}{%
  \IfNoValueTF{#1}
    {#2 \rtimes_{\full} #3}
    {#2 \rtimes^{#1}_{\full} #3}
}

\NewDocumentCommand{\cstar}{}{\texorpdfstring{\(C^*\)\nobreakdash-\hspace{0pt}}{*-}}
\NewDocumentCommand{\Star}{}{\texorpdfstring{\(^*\)\nobreakdash-\hspace{0pt}}{*-}}

\NewDocumentCommand{\range}{o}{%
  \IfNoValueTF{#1}
    { {\rm rng} }
    { {\rm rng} \left(#1\right) }
}
\NewDocumentCommand{\source}{o}{%
  \IfNoValueTF{#1}
    { {\rm source} }
    { {\rm source} \left(#1\right) }
}

\NewDocumentCommand{\supp}{m}{\text{supp}\left(#1\right)}
\NewDocumentCommand{\norm}{m}{\left\|#1\right\|}

\NewDocumentCommand{\abs}{m}{\left|#1\right|}
\NewDocumentCommand{\innerprod}{mm}{\langle #1, #2 \rangle}
\NewDocumentCommand{\linnerprod}{omm}{%
  \IfNoValueTF{#1}
    {_{\rm L}\innerprod{#2}{#3}}
    {_{#1}\innerprod{#2}{#3}}
}
\NewDocumentCommand{\rinnerprod}{omm}{%
  \IfNoValueTF{#1}
    {\innerprod{#2}{#3}_{\rm R}}
    {\innerprod{#2}{#3}_{#1}}
}
\NewDocumentCommand{\closure}{m}{\overline{#1}}
\NewDocumentCommand{\multialg}{m}{\CM\left(#1\right)}
\NewDocumentCommand{\unitof}{m}{\widetilde{#1}}

\NewDocumentCommand{\cpc}{}{c.c.p.\ }

\NewDocumentCommand{\spn}{m}{{\rm span}\left(#1\right)}
\NewDocumentCommand{\spnc}{m}{\overline{{\rm span}}\left\{#1\right\}}

\NewDocumentCommand{\borelalg}{m}{\widetilde{#1}}

\NewDocumentCommand{\cont}{m}{C\left(#1\right)}
\NewDocumentCommand{\contc}{m}{C_c\left(#1\right)}
\NewDocumentCommand{\contz}{m}{C_0\left(#1\right)}
\NewDocumentCommand{\contb}{m}{C_b\left(#1\right)}

\NewDocumentCommand{\cfunz}{m}{\CCC_0\left(#1\right)}
\NewDocumentCommand{\red}{}{\rm red}
\NewDocumentCommand{\full}{}{\rm max}
\NewDocumentCommand{\ess}{}{\rm ess}
\NewDocumentCommand{\alg}{}{\rm alg}
\NewDocumentCommand{\redalg}{om}{%
  \IfNoValueTF{#1}
    {C^*_{\red}\left(#2\right)}
    {C^*_{\red,#1}\left(#2\right)}
}
\NewDocumentCommand{\redalgnopar}{om}{%
  \IfNoValueTF{#1}
    {C^*_{\red}(#2)}
    {C^*_{\red,#1}(#2)}
}
\NewDocumentCommand{\maxalg}{om}{%
  \IfNoValueTF{#1}
    {C^*_{\full}\left(#2\right)}
    {C^*_{\full,#1}\left(#2\right)}
}             
\NewDocumentCommand{\maxalgnopar}{om}{%
  \IfNoValueTF{#1}
    {C^*_{\full}(#2)}
    {C^*_{\full,#1}(#2)}
}
\NewDocumentCommand{\essalg}{m}{C^*_{\ess}\left(#1\right)}
\NewDocumentCommand{\essmaxalg}{m}{C^*_{\ess,\full}\left(#1\right)}
\NewDocumentCommand{\algalg}{om}{%
  \IfNoValueTF{#1}
    {\CC_{\alg}\left(#2\right)}
    {\CC_{\alg,#1}\left(#2\right)}
}
\NewDocumentCommand{\algalgnopar}{om}{%
  \IfNoValueTF{#1}
    {\CC_{\alg}(#2)}
    {\CC_{\alg,#1}(#2)}
}

\NewDocumentCommand{\singideal}{om}{%
  \IfNoValueTF{#1}
    {J_{#2}}
    {J_{#2}^{#1}}
}
\NewDocumentCommand{\condexp}{o}{%
  \IfNoValueTF{#1}
    {E}
    {E_{#1}}
}
\NewDocumentCommand{\redcondexp}{o}{%
  \IfNoValueTF{#1}
    {P}
    {P_{#1}}
}
\NewDocumentCommand{\esscondexp}{o}{%
  \IfNoValueTF{#1}
    {EL}
    {EL_{#1}}
}

\NewDocumentCommand{\tensor}{}{\otimes}

\NewDocumentCommand{\id}{o}{%
  \IfNoValueTF{#1}
    {{\rm id}}
    {{\rm id}_{#1}}
}

\NewDocumentCommand{\primidealspc}{m}{
  {\rm Prim}\left(#1\right)
}

\NewDocumentCommand{\idealsin}{m}{
  {\rm Ideals}\left(#1\right)
}

\NewDocumentCommand{\opensets}{m}{
  {\rm Open}\left(#1\right)
}

\begin{document}

\title[A Baum-Connes map for essential crossed products]{A Baum-Connes assembly map for essential semigroup crossed products}
\author[Diego Mart\'{i}nez]{Diego Mart\'{i}nez \(^{1}\)}
\dedicatory{A mi sobrina}
\address{Department of Mathematics, KU Leuven, Celestijnenlaan 200B, 3001 Leuven, Belgium.}
\email{diego.martinez@kuleuven.be}

\begin{abstract}
  We construct an equivariant \Eth-theory and a Baum-Connes assembly map at the level of Fell bundles of inverse semigroups over separable \cstar{}algebras.
  This generalizes previous work of several authors, and allows to discuss \Eth-theoretic matters in the context of Cartan pairs; maximal and essential \cstar{}algebras of non-Hausdorff groupoids; and Fell bundles over discrete groups and \'etale groupoids, among others.
  In order to do this we establish several functoriality properties for maximal, reduced and essential cross-sectional \cstar{}algebras associated with a (saturated) Fell bundle of an inverse semigroup.
  This allows to discuss when these algebras give rise to short exact sequences, generalizing the classical case of discrete groups.
  We also introduce the adequate notion of ``proper'' Fell bundle, or ``proper'' action of an inverse semigroup, and prove a weak containment property for these.
  Using these functoriality properties and these proper actions we then introduce (maximal, reduced and/or essential) equivariant \Eth-theory by means of adequately equivariant asymptotic morphisms, and construct a Baum-Connes assembly map that is both natural and reasonably well-behaved.
\end{abstract}

\subjclass[2020]{46L55, 20M18, 19K35, 46L80, 22A22}

\keywords{Inverse semigroup; E-theory; Baum-Connes; Essential algebras}

\thanks{{\(^{1}\)} Supported by projects G085020N and 1218726N funded by the Research Foundation Flanders (FWO)}

\maketitle

\section{Introduction} \label{sec:intro}

When one approaches the classification of (a certain class of) \cstar{}algebras via \Kth-theoretic means one immediately runs into the necessity to construct a large class of morphisms between \cstar{}algebras.
Typically speaking, the first class of morphisms \(\varphi \colon A \to B\) one tackles is that of \Star{}homomorphisms.
This notwithstanding, this class is, unfortunately, too small: there are in general many morphisms of abelian groups \(\Kgrp{A} \to \Kgrp{B}\) that cannot be obtained from \Star{}homomorphisms \(A \to B\).
A collateral consequence of Kasparov's phenomenal work \cites{Kasparov-1988,higson-kasparov-2000} introducing the (abelian) \KKth-groups \(\KKgrp{A}{B}\) solves this problem: the elements of \(\KKgrp{A}{B}\) are homotopy classes of ``generalized morphisms'' \(\varphi \colon A \asymparrow B\).
The original definition of this group, however, is technical and somewhat opaque to new readers, which is one of reasons Connes and Higson, along with Guentner and Trout~\cite{Guentner-Higson-Trout}, introduced the \Eth-theory groups \(\Egrp{A}{B}\).
Elements of this group are, by definition, homotopy classes of \emph{asymptotic morphisms} \(\varphi \colon \czeror \tensor A \tensor \CK \asymparrow \czeror \tensor B \tensor \CK\), that is, tuples \(\varphi = (\varphi_r)_{r \in [1,\infty)}\) of set-theoretic maps \(\varphi_r \colon A \to B\) that satisfy the \Star{}homomorphism conditions when \(r \to \infty\).

\KKth-theory and \Eth-theory share quite a lot of properties.
For instance, both are universal cohomology theories in their own way.
Moreover, both are covariant in the first variable and contravariant in the second; and any extension \(0 \to B \to E \to A \to 0\) canonically defines an element in \(\KKgrp{\czeror \tensor A}{B}\) and \(\Egrp{A}{B}\).
Furthermore, both can be defined even when the \cstar{}algebras \(A\) and \(B\) are equipped with an action of a, say discrete, group \(\Gamma \actson A, B\).
In this setting, there is a so-called ``descent functor''
\[
  {\rm descent} \colon \Egrp[\Gamma]{A}{B} \to \Egrp{A \rtimes_{\full} \Gamma}{B \rtimes_{\full} \Gamma}
\]
that is a morphism of abelian groups. This arrow sends the class of an equivariant asymptotic morphism \(\varphi \colon A \asymparrow B\) to the class of the induced asymptotic morphism \(\varphi \rtimes_{\full} \Gamma \colon A \rtimes_{\full} \Gamma \asymparrow B \rtimes_{\full} \Gamma\).
Lastly, both \Eth-theory and \KKth-theory enjoy a ``composition product'' (also called \emph{Kasparov} product), which allows to essentially compose elements:
\[
  \Egrp{A}{B} \times \Egrp{B}{C} \to \Egrp{A}{C}, \;\; \left(\left[\varphi\right]_h, \left[\psi\right]_h\right) \mapsto \left[\psi \circ \varphi\right]_h.
\]
This, after a lot of work and several key ideas~\cites{higson-kasparov-2000,higson-kasparov-trout-1998,Guentner-Higson-Trout,Kasparov-1988,tu-baum-connes-2004,tu-baum-connes-1999,TikuisisAaron2017QonC}, means that both these theories share two features of utmost importance in the study of \cstar{}algebras.
First, both can be used to define a so-called ``Universal Coefficient Theorem'' short exact sequence, which is typically written\footnote{\, The existence of this short exact sequence is a consequence of certain compatibility features of the maps \(\gamma\) and \(\kappa\) appearing in it. The reader should \emph{not} read this is as saying that the sequence always exists, regardless of \(A\) and \(B\).} as
\begin{equation} \label{eq:intro-uct}
  0 \to {\rm Ext}_\ZZ^1\left(K_\star\left(A\right), K_\star\left(B\right)\right) \xrightarrow{\kappa^{-1}} \Egrp{A}{B} \xrightarrow{\gamma} {\rm Hom}\left(K_\star\left(A\right), K_{\star+1}\left(B\right)\right) \to 0.
\end{equation}
Likewise, both can be used to define a ``Baum-Connes assembly map'', see \cites{yu_localization_1997,baum_classifying_1994-1,aparicio_baum-connes_2019,yu_coarse_2000,Guentner-Higson-Trout,higson-lafforgue-skand-2002-counter-bc,higson-kasparov-2000}:
\begin{equation} \label{eq:intro-baum-connes}
  \mu_{A, \Gamma} \colon \Egrp[\Gamma]{\CE \Gamma}{A} \to \Kgrp{A \rtimes_{\red} \Gamma}.
\end{equation}
Both \eqref{eq:intro-uct} and \eqref{eq:intro-baum-connes} are of fundamental importance. Indeed, one of the most important open problems in nuclear \cstar{}theory is whether \eqref{eq:intro-uct} is short exact when \(A\) is separable and nuclear, see, \emph{e.g.}\ \cite{blackadar-book-k-theory}*{23.15.12}, \cites{rosenberg-schochet-1987-uct,TikuisisAaron2017QonC,willett-yu-memoirs-2024}, and \cite{schafhauser-tikuisis-white-problems}*{Problem II}.
For the Baum-Connes assembly map, one of the most important open problems is whether it is an isomorphism of abelian groups for \emph{all} discrete groups \(\Gamma\), at least when \(A = \CC\), see \cite{aparicio_baum-connes_2019}.
This would provide an effective way to compute the \Kth-groups \(\Kgrp{A \rtimes_{\red} \Gamma}\).

Nevertheless, even more is true: the UCT short exact sequence and the Baum-Connes assembly map are inextricably intertwined.
The latter \eqref{eq:intro-baum-connes} was introduced by Baum and Connes as a generalization of the Atiyah-Singer theorem, and uses the usual \Kth-group \(\Kgrp{A \rtimes_{\red} \Gamma}\) as a receptacle of the index of some elliptic operators over \(A\) (this is the left hand side \(\Egrp[\Gamma]{\CE \Gamma}{A}\), for a more comprehensive discussion we refer to \cref{sec:equiv-eth}).
The map \(\gamma\) in the former \eqref{eq:intro-uct} can also be seen as an ``index'' map for the generalized morphism \([\varphi]_h \in \Egrp{A}{B}\), and the map \(\kappa \colon \ker(\gamma) \to {\rm Ext}_\ZZ^1(K_\star(A), K_\star(B))\) measures the deviation of \(\gamma\) being injective.
Thus, both \eqref{eq:intro-uct} and \eqref{eq:intro-baum-connes} discuss how much a certain index map fails to be an isomorphism.
Whence understanding \eqref{eq:intro-uct} and \eqref{eq:intro-baum-connes} in the most general setting possible is also of great importance.

\vspace{1.5mm}

In this paper we introduce the setting of \emph{equivariant \Eth-theory} for \emph{Fell bundles of inverse semigroups}.
This allows to discuss the short exact sequences \eqref{eq:intro-uct} and \eqref{eq:intro-baum-connes} for a large class of \cstar{}algebras: those arising as the maximal \(\maxalg{\CB}\) and the reduced \(\redalg{\CB}\), for a Fell bundle \(\CB = (B_s)_{s \in S}\) over an inverse semigroup \(S\).
The \emph{inverse semigroup} \(S\) should be regarded as the acting component: it generalizes the discrete group \(\Gamma\) appearing before.
Indeed, inverse semigroups allow for only locally defined symmetries, as opposed to the globally defined ones that (typically) group actions describe.
For instance, it is by now well known, and mostly due to the work of Exel~\cite{Exel:Inverse_combinatorial} and others \cites{exel-starling-2017,exel-pitts-grpds-2022,buss-exel-twisted-acts-2011}, that inverse semigroup actions on locally compact Hausdorff spaces are in correspondence with locally compact, \'etale groupoids \(G\) whose unit space is Hausdorff.
This, ultimately, means that our definition of \(\Egrp[S]{\CA}{\CB}\) will immediately yield a well-defined \Eth-theory for non-necessarily Hausdorff groupoids.
Notice that this follows a recent shift of interest in the community: from the Hausdorff case \cites{SimsNotes2020,Kranz:weak-containment,Kumjian:Fell_bundles,Exel:Inverse_combinatorial,Khoshkam-Skandalis-reg-rep,li-classification-2020}, to the study of non-Hausdorff groupoids, see \emph{e.g.}\ \cites{neshveyevetal2023,CEPSS-2019,szakacs-2021,szakacs-2023,exel-starling-2017,bkmk-2024-twited-grpds,timmermann_2011,bruce-li-2024,hume-2025,brix-gonzalez-hume-li-2025,gonzales-hume-2025,KMP2025,miller-steinberg-2025,miller-scarparo-2025}.

Even more so is true: it follows from the work of Exel-Buss-Meyer \cites{buss-exel-twisted-acts-2011,BussExel:Fell.Bundle.and.Twisted.Groupoids,Buss-Exel-Meyer:Reduced} that these Fell bundles generalize even the notion of a \emph{Cartan subalgebra}~\cites{Renault2008CartanSI,kumjian-diagonals-1986,barlak-li-2017,white_cartan_2018,bclmc24}, as they allow for \emph{twists}.
Hence, our approach also immediately yields a well-adjusted machinery to study \Eth-groups for (\emph{weak} and/or \emph{non-commutative}) Cartan pairs, for instance. 
We refer the reader to \cref{sec:apps} for a more detailed discussion about these topics.

\vspace{1.5mm}

\noindent \emph{Remark.}
  There has already been attempts to define an inverse semigroup equivariant \KKth-theory \cite{burgstaller-2016} and \Eth-theory for groupoids \cites{popescu-2004,kwanieski-li-skalski-2022}.
  Likewise, a Baum-Connes map has been defined for groupoids arising from singular foliations \cite{androulidakis-skandalis-2019}.
  Nevertheless, none of these prove all the necessary ingredients to properly define the equivariant category, or only work for Hausdorff groupoids, such as \cites{popescu-2004,kwanieski-li-skalski-2022,burgstaller-2016}.
  Moreover, none of these treat twists, let alone Fell bundles.
  Lastly, we briefly note in passing that we work exclusively with equivariant \Eth-theory, and this is \emph{not} due to a personal choice: see \cref{rem:why-not-kk}.

\vspace{1.5mm}

In order to introduce our attempt at inverse semigroup equivariant \Eth-theory one first needs to introduce a notion of \emph{equivariant asymptotic morphism} \(\varphi \colon \CA \asymparrow \CB\) between Fell bundles \(\CA = (A_s)_{s \in S}\) and \(\CB = (B_s)_{s \in S}\).
Such a morphism, of course, should yield an asymptotic morphism \(\maxalg{\varphi} \colon \maxalg{\CA} \asymparrow \maxalg{\CB}\), which then yields a ``descent functor''
\begin{align*}
  {\rm descent} \colon \Egrp[S]{\CA}{\CB} & \to \Egrp{\maxalg{\CA}}{\maxalg{\CB}}, \\
  \left[\varphi\right]_h & \mapsto \left[\maxalg{\varphi}\right]_h
\end{align*}
generalizing the one alluded to before.

Nevertheless, several hindrances here now appear.
The main one being that, by work of Buss and the author \cite{buss-martinez-ess-ame-2024}, and based on ideas of others \cites{Kwasniewski-Meyer:Pure_infiniteness,KwasniewskiMeyer2021}, there is \emph{another} possible choice of maximal \cstar{}algebra.
Indeed, one may choose to ignore, or ban, certain points \(D \subseteq G^{(0)}\) of the unit space of a groupoid when constructing the \cstar{}algebra.
The reason why one would do this is because this yields the \emph{essential} \cstar{}algebra \(\essalg{G}\) of \cites{exel-pitts-grpds-2022,Kwasniewski-Meyer:Pure_infiniteness,KwasniewskiMeyer2021}, which is now regarded to be the one with the ideal ideal structure.
This has to do with the aforementioned (non-)Hausdorffness of \(G\), and requires us to first shift the point of view.
Indeed, it is unclear what an \emph{essentially equivariant} asymptotic morphism \(\varphi \colon \CA \asymparrow \CB\) should be, for the definition of \(\essalg{G}\) is somewhat technical.
The first result we thus mention is an alternative description of \(\essalg{\CB}\), based on ideas in \cite{buss-martinez-ess-ame-2024}.

\begin{alphthm} [cf.\ \cref{thm:ess-alg-is-jd}] \label{thm-intro-ess}
  If \(\CB = (B_s)_{s \in S}\) is a Fell bundle over some type I \cstar{}algebra \(B\) then there is a choice of \(D \subseteq \primidealspc{B}\) such that the identity map \(\id \colon \maxalg{\CB} \to \maxalg{\CB}\) descends to a \Star{}isomorphism between \(\essalg{\CB}\), as in \cites{Kwasniewski-Meyer:Pure_infiniteness,KwasniewskiMeyer2021,exel-pitts-grpds-2022}, and \(\redalg[D]{\CB} \coloneqq \redalg{\CB}/\CN_{D}\), where \(\CN_{D}\) is the nucleus of the canonical conditional expectation \(\redalg{\CB} \to B^{**} \to B^{**}/J_{D}\).
\end{alphthm}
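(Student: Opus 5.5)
Both \(\essalg{\CB}\) and \(\redalg[D]{\CB}\) are quotients of \(\redalg{\CB}\) by kernels of generalized expectations. We know (Kwa\'sniewski--Meyer, Exel--Pitts) that \(\essalg{\CB} = \redalg{\CB}/\CN_{\rm EL}\), where \(\CN_{\rm EL} = \{x : \esscondexp(x^*x) = 0\}\) is the nucleus of the essential expectation \(\esscondexp \colon \redalg{\CB} \to \mathfrak{M}_{\rm loc}(B)\) into the local multiplier algebra, or equivalently into \(B''/\mathfrak{M}\) with \(\mathfrak{M}\) the ideal of meagre elements. The plan is to choose \(D\) so that the two nuclei coincide. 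Once \(\CN_{\rm EL} = \CN_D\), the identity on \(\maxalg{\CB}\), which factors through \(\redalg{\CB}\), descends to the required \Star{}isomorphism.

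\textbf{Step 1: choose \(D\).} Since \(B\) is type I, the canonical map \(\widehat{B} \to \primidealspc{B}\) is a homeomorphism, and \(B^{**}\) splits as a product over irreducible representations on the atomic part: \(B^{**} \to \prod_{\pi \in \widehat B} \CB(H_\pi)\). Let \(D\) be the set of primitive ideals (points of \(\widehat B\)) where the expectation fails to be ``generically continuous''. Concretely, take \(D\) to be the complement of the dense set \(\primidealspc{B}_{\rm cont}\) of points at which every \(\redcondexp(b)\), \(b \in \CB_s\), agrees with a genuine element of \(B\) near the point. Equivalently, \(D\) is a meagre set: it is the union over \(s\) of the boundaries of the supports of the ideals \(I_{s,1}\) of the bisections. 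Countability (or a Baire category argument for the separable case) makes \(D\) meagre. Then set \(J_D = \bigcap_{\pi \notin D} \ker \pi^{**}\) in \(B^{**}\).

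\textbf{Step 2: compare the kernels.} For positive \(x \in \redalg{\CB}\), we need to show that \(\esscondexp(x) = 0\) if and only if \(\pi^{**}(\redcondexp(x)) = 0\) for all \(\pi \notin D\). Approximate \(x\) by finite sums in the bundle. Then \(\redcondexp(x)\) is a finite sum of restrictions of elements of \(B\) to regular open sets, modulo the ``boundary'' parts. At points off \(D\) these restrictions are continuous, so evaluation at such a \(\pi\) agrees with evaluation of the local multiplier. Conversely, a local multiplier vanishes iff it vanishes on a dense open set, and \(\widehat B \setminus D\) is dense (comeagre by Baire). For type I algebras, evaluation at the points of \(\widehat B \setminus D\) is faithful on \(\mathfrak{M}_{\rm loc}(B)\)-representatives, and this should follow from the Borel structure given by Glimm's theorem.

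\textbf{Step 3: pass to closures.} Extending from the dense algebraic part to all of \(\redalg{\CB}\) uses the contractivity of both maps. The subtlety is that evaluation at \(\pi^{**}\) is weak-\(*\) continuous but not norm-determined on the quotient, so I would argue via \(\CN_D\) being the largest ideal in the kernel of \(\redcondexp\) composed with the quotient, and likewise for \(\CN_{\rm EL}\).

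I expect Step 2 to be the main obstacle, specifically the claim that pointwise vanishing off \(D\) is equivalent to vanishing in \(\mathfrak{M}_{\rm loc}\) (that is, meagre support). This is where type I is used, since it ensures that points of \(\widehat B\) separate Borel elements. I would first try to reduce to the commutative case via the centre of \(B^{**}\) and Dauns--Hofmann, and then invoke the groupoid argument of Buss and the author \cite{buss-martinez-ess-ame-2024}.
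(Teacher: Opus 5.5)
Your outline follows the paper's proof of \cref{thm:ess-alg-is-jd}. It uses the same \(D=\bigcup_s D_s\) and the same reduction to comparing the nucleus of the essential expectation with \(\CN_D\). The inclusion \(\CN_{\rm EL}\subseteq\CN_D\) comes from lower semicontinuity of \(x\mapsto\norm{\redcondexp(a)(x)}\) at points off \(D\) together with density of the zero set; this is the Claim in the paper. The inclusion \(\CN_D\subseteq\CN_{\rm EL}\) comes from meagreness of \(D\).

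\textbf{The main gap.} The step you flag as the main obstacle is not only unproved; the justification you sketch for it fails. \(\widehat B\setminus D\) is comeagre but in general not open, so vanishing there does not give vanishing on a dense open set. Moreover, pointwise evaluation is not faithful modulo \(J_{\ess}\) on \(B^{**}\). For \(B=C[0,1]\), the non-atomic summand of \(B^{**}\) is killed by every point evaluation, yet it does not lie in \(J_{\ess}\).

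The missing idea is to work only on the range of \(\redcondexp\) and to know what that range is:
\begin{itemize}
\item Since \(B\) is separable, \(\redcondexp(b_s)=b_s1_s\) is a weak limit of a sequence \(b_su_n\) with \(u_n\in B\). Hence \(\redcondexp\) takes values in Davies' \(\sigma\)-envelope \(\borelalg{B}\) (\cref{lemma:p-takes-values-borel}).
\item Since \(B\) is type I, \(q\colon B^{**}\to\prod_{P}(B/P)^{**}\) is isometric on \(\borelalg{B}\).
\end{itemize}
This is what the paper uses to trade \(J_{\ess}\) for the ideal \(\widetilde J\) of meagre-support tuples. After that, ``support in \(D\)'' implies ``meagre support'' trivially.

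If you prefer to argue directly, take \(a\in\algalg{\CB}\). Then \(\redcondexp(a)\) agrees locally with multipliers of ideals of \(B\) on the dense open complement \(U\) of the finitely many boundaries \(\closure{O_s}\setminus O_s\) involved, so its pointwise norm is lower semicontinuous on \(U\). Vanishing on the dense set \(U\setminus D\) therefore forces vanishing on all of \(U\). Faithfulness of evaluation on \(\borelalg{B}\) then gives \(\redcondexp(a)p_L=0\) for the essential ideal \(L\) with \(\widehat L=U\).

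\textbf{Smaller points.}
\begin{itemize}
\item Meagreness of \(D\) does not come from separability or a Baire argument. It comes from quasi-countability of \(S\), via \(D_t\subseteq D_c\) for \(t\le c\) (\cref{lemma:d-inv-countable}).
\item \(D_s\) must be the boundary of \(\primidealspc{I_{1,s}}\) relative to \(\primidealspc{B_{s^*s}}\). With boundaries taken in \(\primidealspc{B}\), every idempotent \(e\) would contribute \(\closure{e}\setminus e\). For \(S={\rm Bis}(G)\) with \(G^{(0)}\) Hausdorff and without isolated points, the sets \(e=G^{(0)}\setminus\{x\}\) alone already exhaust \(G^{(0)}\).
\item Separability of \(B\) and quasi-countability of \(S\) are both hypotheses of \cref{thm:ess-alg-is-jd}, although the introductory statement suppresses them.
\item For your Step 3, the ``largest ideal in the kernel'' characterisation does not carry pointwise information to limits. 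What works is approximating by elements of \(\algalg{\CB}\) and using that uniform limits of functions lower semicontinuous at a point remain so. The paper's Claim is likewise written out only for finite sums.
\end{itemize}
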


\cref{thm-intro-ess} generalizes \cite{buss-martinez-ess-ame-2024}*{Proposition 3.19} to the setting of Fell bundles over type I \cstar{}algebras.
From now on, given a choice of \(D_0 \subseteq \primidealspc{B}\) we write \(\maxalg[D_0]{\CB}\) and \(\redalg[D_0]{\CB}\) for the ``maximal'' and ``reduced'' \(D_0\)-completions of \(\CB\) (for a proper definition see \cref{def:cross-algebra}).
Morally speaking, \cref{thm-intro-ess} says there is a choice of points \(D_0 \subseteq \primidealspc{B}\) such that an \emph{essentially equivariant asymptotic morphism} \(\varphi \colon \CA \asymparrow \CB\) is a tuple of maps that satisfy some asymptotic morphism conditions and preserve some extra structure, namely some ideal \(J_{D_0}\) .
Following these ideas one obtains the following.

\begin{alphthm} [cf.\ \cref{thm:maximal-extensions,thm:reduced-extensions}] \label{thm-intro-func}
  A short exact sequence \(0 \to \CI \to \CA \to \CB \to 0\) that is \(D_0\)-equivariant (in the sense of \cref{def:all-equiv-morphism}) induces an extension
  \[
    0 \to \maxalg[D_0]{\CI} \to \maxalg[D_0]{\CA} \to \maxalg[D_0]{\CB} \to 0.
  \]
  The reduced analogue also exists; however, it can fail to be exact in the middle (and only in the middle).
\end{alphthm}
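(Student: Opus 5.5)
The plan is to treat the maximal and reduced cases separately, since exactness of the maximal version should follow from universal properties and the reduced version needs the conditional expectation.

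\emph{Maximal case.} A \(D_0\)-equivariant short exact sequence \(0 \to \CI \to \CA \to \CB \to 0\) gives fibrewise exact sequences \(0 \to I_s \to A_s \to B_s \to 0\) for every \(s \in S\), compatible with multiplication and involution. Hence we get a short exact sequence of \Star{}algebras at the algebraic level \(0 \to \algalg{\CI} \to \algalg{\CA} \to \algalg{\CB} \to 0\). Exactness in the middle is checked fibrewise, after passing to the quotient by the usual relations identifying \(B_s\) with a subspace of \(B_t\) for \(s \leq t\).

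By functoriality of the \(D_0\)-maximal completion for \(D_0\)-equivariant morphisms, we get \Star{}homomorphisms \(\iota \colon \maxalg[D_0]{\CI} \to \maxalg[D_0]{\CA}\) and \(\pi \colon \maxalg[D_0]{\CA} \to \maxalg[D_0]{\CB}\). Surjectivity of \(\pi\) is immediate from density of the image. For the middle we argue as follows.
\begin{enumerate}
\item Let \(J\) be the closed ideal generated by \(\iota(\maxalg[D_0]{\CI})\); it is the closure of \(\algalg{\CI}\) inside \(\maxalg[D_0]{\CA}\), and it is an ideal because \(\CI\) is an ideal bundle.
\item The quotient \(\maxalg[D_0]{\CA}/J\) receives a representation of \(\CB\), since \(\CB_s = A_s/I_s\). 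One checks that this representation respects the ideal \(J_{D_0}\) coming from \(D_0\); this is exactly where \(D_0\)-equivariance is used.
\item The universal property of \(\maxalg[D_0]{\CB}\) then yields an inverse to the map \(\maxalg[D_0]{\CA}/J \to \maxalg[D_0]{\CB}\), giving \(\ker \pi = J\).
\end{enumerate}

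\emph{Injectivity of \(\iota\).} This is the main obstacle. The plan is to extend each representation of \(\maxalg[D_0]{\CI}\) on a Hilbert space \(H\) to one of \(\CA\) on the essential subspace, in the usual way: \(a \cdot (i\xi) \coloneqq (ai)\xi\), where \(ai \in I_{st}\). This preserves the norm of every element of \(\algalg{\CI}\). One must also verify that the extended representation still satisfies the \(D_0\)-condition; I would try to deduce this from \(J_{D_0}\) restricting correctly to \(I = I_1\), via hereditarity of the ideals \(J_D\). With this in hand, the norm that \(\maxalg[D_0]{\CA}\) induces on \(\algalg{\CI}\) is at least the \(D_0\)-maximal norm of \(\CI\), so \(\iota\) is isometric.

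\emph{Reduced case.} Here the reduced maps come from compatibility of the canonical conditional expectations \(\redalg{\CA} \to A^{**}/J_{D_0}\) with those of \(\CI\) and \(\CB\). Consequently the nuclei \(\CN_{D_0}\) are mapped into each other, and \(\iota_\red\) and \(\pi_\red\) are well defined.
\begin{enumerate}
\item Injectivity of \(\iota_\red\): the expectation on \(\CA\) restricts to that of \(\CI\), since \(I^{**}\) is a direct summand of \(A^{**}\). Faithfulness modulo nuclei then makes the restriction isometric.
\item Surjectivity of \(\pi_\red\): the image is dense.
\end{enumerate}
Failure of exactness in the middle is witnessed by the classical non-exact groups, such as Gromov monster groups, taking \(S\) a group and \(D_0 = \emptyset\). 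Finally, \(\iota_\red \circ \pi_\red = 0\) follows by density.
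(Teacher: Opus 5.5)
Your outline follows the paper's: the maps come from \cref{prop:inject-equiv-map-induces}, injectivity comes from extending representations of \(\CI\) to \(\CA\), exactness in the middle comes from the universal property of \(\maxalg[D_0]{\CB}\), and the reduced maps come from compatibility of the conditional expectations. The genuine gap is step (ii) of the maximal case, and it is not a routine check. To invert \(C \coloneqq \maxalg[D_0]{\CA}/J \to \maxalg[D_0]{\CB}\), the representation \(\sigma(q_s(a_s)) \coloneqq [a_s]\) of \(\CB\) on \(C\) must be a \(D_0\)-representation. That is, whenever \(a \in \algalg{\CA}\) satisfies \(\redcondexp[\CB](q(a)^*q(a)) \in J^{\CB}_{D_0}\), the image of \(a\) in \(\maxalg[D_0]{\CA}\) must lie in your \(J\). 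But \(D_0\)-equivariance of \(q\), \(q_1^{**}(J^{\CA}_{D_0}) \subseteq J^{\CB}_{D_0}\), only yields \(q(\CN^{\CA}_{D_0}) \subseteq \CN^{\CB}_{D_0}\). That inclusion is what makes \(\widehat{q}\) exist, and it is the opposite of what you need.

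The paper's proof of \cref{thm:maximal-extensions} does not supply this step either. It checks that \(\pi \circ \widehat{q}\) vanishes on the image of \(\CI\) for \(D_0\)-representations \(\pi\) of \(\CB\), which only recovers the map \(C \to \maxalg[D_0]{\CB}\) you already have. Similarly, your injectivity step (and its reduced version) needs \(J^{\CA}_{D_0} \cap I^{**} \subseteq J^{\CI}_{D_0}\). This holds if \(\Phi_I(\CU) = \Phi_A(\CU) \cap I\), but it is the reverse of what \(D_0\)-equivariance of \(\iota\) gives. For example, take a two-point \(X\) with \(D_0\) a point, \(I = A = \CC\), \(\iota = \id\), \(\Phi_I(\CU) = 0\) for \(\CU \neq X\) and \(\Phi_A(\CU) = A\) for \(\CU \neq \emptyset\); then \(\maxalg[D_0]{\CI} = \CC\) and \(\maxalg[D_0]{\CA} = 0\).

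The middle-exactness step cannot be repaired under the stated hypotheses, even with the natural structures. Take \(S = \{1\}\), \(X = [0,1]\), \(A = \cont{[0,1]}\) with \(\Phi_A(\CU) = \contz{\CU}\), \(I = \contz{(0,1]}\), \(B = A/I \cong \CC\) with \(\Phi_I(\CU) = \Phi_A(\CU) \cap I\) and \(\Phi_B(\CU) = q(\Phi_A(\CU))\), and \(D_0 = \{0\}\).
\begin{itemize}
\item Since \(B_x = 0\) for \(x \neq 0\), we get \(J^{\CB}_{D_0} = B^{**}\) and \(\maxalg[D_0]{\CB} = 0\).
\item On the other hand \(J^{\CA}_{D_0} \cap A = 0\) (by continuity at \(0\)) and \(J^{\CI}_{D_0} \cap I = 0\), so \(\maxalg[D_0]{\CA} = \cont{[0,1]}\) and \(\maxalg[D_0]{\CI} = \contz{(0,1]}\).
\item Both \(\iota\) and \(q\) are \(D_0\)-equivariant, yet \(0 \to \contz{(0,1]} \to \cont{[0,1]} \to 0 \to 0\) is not exact in the middle.
\end{itemize}
What breaks is this: with \(P_x \coloneqq \Phi_A(X \setminus \closure{\{x\}})\), one has \(\bigcap_{x \notin D_0}(P_x + I) = A\), while \((\bigcap_{x \notin D_0} P_x) + I = I\). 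So an extra hypothesis tying \(\CN^{\CB}_{D_0}\) to \(\CI\) and \(\CN^{\CA}_{D_0}\) is required. For instance, you could assume that every \(a \in \algalg{\CA}\) with \(q(a) \in \CN^{\CB}_{D_0}\) lies in the closure of \(\algalg{\CI} + \CN^{\CA}_{D_0}\) in \(\maxalg{\CA}\). Your step (ii) should be stated as such an assumption, not derived from \(D_0\)-equivariance. Minor: the composite that vanishes is \(\pi_{\mathrm{red}} \circ \iota_{\mathrm{red}}\), not \(\iota_{\mathrm{red}} \circ \pi_{\mathrm{red}}\).
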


Likewise, we also prove that \(D_0\)-equivariant asymptotic morphisms \(\varphi \colon \CA \asymparrow \CB\) do induce asymptotic morphisms \(\maxalg[D_0]{\varphi} \colon \maxalg[D_0]{\CA} \asymparrow \maxalg[D_0]{\CB}\), and similarly for \(\redalg[D_0]{\variable}\) at least when they are ``\emph{reduced}'' in a certain way, see \cref{prop:inject-equiv-map-induces,def:all-equiv-morphism}.
This essentially provides the necessary mindset in order to define a \(D_0\)-equivariant \Eth-theory \(\Egrp[S,D_0]{\CA}{\CB}\), and it also produces the descent functor
\begin{align*}
  {\rm descent}_{D_0} \colon \Egrp[S,D_0]{\CA}{\CB} & \to \Egrp{\maxalg[D_0]{\CA}}{\maxalg[D_0]{\CB}}, \\
  \left[\varphi\right]_h & \mapsto \left[\maxalg[D_0]{\varphi}\right]_h
\end{align*}
Nevertheless, the assembly map is still far, for in order to define it one needs to discuss \emph{proper} actions \(S \actson X\), or \emph{proper} Fell bundles.
This is a notion that is certainly necessary when discussing Baum-Connes matters, see \cites{valette-2002-book,aparicio_baum-connes_2019,higson-kasparov-2000}.
For us, the main results regarding this are the following.

\begin{alphprop} \label{prop-intro-1}
  If \(S \actson X\) is a \(D_0\)-proper action on a locally compact, Hausdorff space \(X\), then there is a unique (up to homotopy) projection \(p_{c} \in \maxalg[D_0]{S \actson X}\).
\end{alphprop}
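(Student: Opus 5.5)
We follow the classical construction of the Baum--Connes ``cutoff projection'' for proper group actions, adapted to the inverse semigroup setting. First we use the definition of \(D_0\)-properness to produce a \emph{cutoff function}. This is a function \(c \in \contc{X}\) (or \(\contb{X}\) with suitable support control), \(c \geq 0\), such that
\[
  \sum_{e} \abs{c}^2 \circ \theta_{s}^{-1} = 1
\]
on the relevant domains, where the sum runs over \(s \in S\) modulo the germ relation. Equivalently, \(\sum_{g \in G^x} c(r(g))^2 = 1\) for the transformation groupoid \(G = S \ltimes X\) and every \(x \in X\) away from the banned set \(D_0\). To build it, we cover \(X\) by images of compact sets under partial homeomorphisms and normalise, as in the group case. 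Properness guarantees that these sums are locally finite, so the normalisation is continuous.

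We then set
\[
  p_c \coloneqq \sum_{s} \bigl(c \cdot (c \circ \theta_{s^*})\bigr)\, \delta_s
\]
in the algebraic crossed product, that is, the function \(g \mapsto c(r(g))\, c(s(g))\) on \(G\). Local finiteness, again from properness, makes this a finite sum on compacta, so \(p_c \in \maxalg[D_0]{S \actson X}\). Self-adjointness is immediate from the symmetry of the formula. For idempotency we compute the convolution
\[
  (p_c * p_c)(g) = \sum_{h \in G^{r(g)}} c(r(g))\, c(s(h))^2\, c(s(g)),
\]
and the cutoff identity collapses the inner sum to \(1\), giving back \(p_c(g)\). The key point is that this identity holds only outside \(D_0\), and hence only in the \(D_0\)-completion. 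To verify \(p_c^2 = p_c\) there, we use the description of \(\maxalg[D_0]{\variable}\) given by \cref{thm-intro-ess}: the defect \(p_c * p_c - p_c\) lives over \(D_0\), so it lies in the ideal \(J_{D_0}\), or in the nucleus \(\CN_{D_0}\), and it vanishes in the quotient.

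For uniqueness up to homotopy, take two cutoff functions \(c_0, c_1\) and set
\[
  c_t \coloneqq \bigl((1-t)c_0^2 + t c_1^2\bigr)^{1/2}.
\]
Each \(c_t\) is again a cutoff function, since the defining condition is affine in \(c^2\) and support control is preserved. The map \(t \mapsto p_{c_t}\) is then a norm-continuous path of projections, because the sums are uniformly locally finite on a compact neighbourhood containing both supports. If the cutoff functions are only required to have compactly generated support, we first compare both with a cutoff function adapted to a common compact set.

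The main obstacle is the first step. We must extract from the paper's definition of \(D_0\)-proper action a genuine local-finiteness statement: only finitely many germs of partial homeomorphisms meet a given compact set, modulo \(D_0\). We must also ensure that the resulting convolution identities are computed correctly in the \(D_0\)-maximal completion, since germs over non-Hausdorff points and over \(D_0\) can contribute spurious terms. We would handle this by proving the identity pointwise on the dense set \(X \setminus D_0\) via the regular-type representations, and then invoking the faithfulness, modulo \(J_{D_0}\), of the \(D_0\)-completion.
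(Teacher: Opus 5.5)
Your first step has a genuine gap. You assert that $D_0$-properness makes the germ sums locally finite ``so the normalisation is continuous'', giving $c \in \contc{X}$. Even granting local finiteness, the normalising function $\zeta(x) = \sum_{[s]_x \in R_x} \xi(sx)$ is discontinuous whenever the germ groupoid $S \ltimes X$ is non-Hausdorff. This can happen with $X$ compact Hausdorff and $D_0 = \emptyset$.

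Take $X = [0,1]$ and $S = \{1, \gamma, e\}$ with $\gamma^2 = 1$ and $\gamma e = e\gamma = e$. Let $1$ and $\gamma$ act as $\id[X]$, and let $e$ act as the identity of $[0,1)$ (cf.\ \cref{ex:0-1-Gamma} with $\Gamma = \ZZ/2$). The action is proper, since $\{1,\gamma\}$ is a finite upper bound for all of $S$ in the sense of \cref{def:proper-grpd}. However, $[1,x] = [\gamma,x]$ for $x<1$, while $[1,1] \neq [\gamma,1]$. So the cut-off identity forces $c \equiv 1$ on $[0,1)$ and $2c(1)^2 = 1$, and no continuous cut-off function exists. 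This is exactly why \cref{prop:cut-off-functions,prop:cut-off-functions-primidlspc} only produce \emph{Borel} cut-off functions, continuous merely on $X \setminus D$ (here $D = \{1\}$).

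Consequently $p_c(g) = c(r(g))\,c(s(g))$ is not an element of the algebraic crossed product. It is only a compactly supported Borel function on $S \ltimes X$. Your convolution computation therefore shows only that $p_c$ is a projection in the bidual $\maxalg[D_0]{S \ltimes X}^{**}$; this is the content of \cref{prop:hom-cut-off}.

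Getting a projection in $\maxalg[D_0]{S \ltimes X}$ itself is a separate step, \cref{prop:proj-proper-action}. The paper obtains it by identifying properness of the action with Tu's properness of $S \ltimes X$ (\cref{lemma:proper-iff-proper}) and importing the methods of \cite{tu-baum-connes-2004}*{Proposition 6.5}. In the example above $p_c = \tfrac12(\delta_1 + \delta_\gamma)$ does happen to lie in the algebra, but nothing in your argument yields this in general. The same issue affects your homotopy step. Your path $c_t$ is the paper's, but norm-continuity of $t \mapsto p_{c_t}$ inside the \cstar{}algebra presupposes the missing step.

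Three smaller points:
\begin{enumerate}
  \item You need $S$-compactness of $X$, which the paper assumes in \cref{prop:cut-off-functions} and \cref{thm:assembly}. For the trivial group acting on $\RR$, the only cut-off function is $c \equiv 1$, which is not compactly supported.
  \item The $D_0$-quotient is described by \cref{lemma:alt-desc-max-dzero}, not by \cref{thm-intro-ess}.
  \item $J_{D_0}$ is an ideal of $B^{**}$, not of the crossed product. What is killed in $\maxalg[D_0]{\variable}$ are elements $a \in \algalg{\CB}$ with $\redcondexp(a^*a) \in J_{D_0}$.
\end{enumerate}
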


\begin{alphthm}[cf.\ \cref{lemma:proper-actions-give-weak-containment}] \label{prop-intro-2}
  Let \(\CB = (B_s)_{s \in S}\) be a Fell bundle over an \((S,X)\)-algebra \(B\).
  Assume \(S \actson X\) is \(D_0\)-proper.
  The canonical quotient map \(\maxalg[D_0]{\CB} \to \redalg[D_0]{\CB}\) is then a \Star{}isomorphism.
\end{alphthm}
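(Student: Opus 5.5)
The plan is to adapt the classical argument that proper actions of groups (or groupoids) have equal maximal and reduced crossed products. The model is Anantharaman-Delaroche–Renault and Tu: properness yields a cutoff function, and the cutoff function yields a ``Fourier-type'' factorization of every representation of \(\maxalg[D_0]{\CB}\) through the regular one.

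\textbf{Step 1: a \(D_0\)-cutoff.} Since \(B\) is an \((S,X)\)-algebra, there is a nondegenerate central map \(\contz{X} \to \multialg{B}\). The projection \(p_c \in \maxalg[D_0]{S \actson X}\) of \cref{prop-intro-1} therefore acts on \(\maxalg[D_0]{\CB}\) by multipliers. I would use the defining property of \(D_0\)-properness to produce a net \((c_i)\) of compactly supported functions \(c_i \colon X \to [0,1]\) such that, for every \(s \in S\) and with \(\theta_s\) the partial homeomorphisms,
\[
  \sum_{t} \left(c_i \circ \theta_{t}\right)^2 \to 1
\]
strictly on the domains, modulo the ideal indexed by \(D_0\). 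Here the sum runs over a finite set of \(t\) modulo the idempotent order. The projection \(p_c\) is then built, as usual, out of the elements \(c \cdot 1_{t} \in \contc{X} \rtimes S\).

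\textbf{Step 2: factor through the regular representation.} Consider the \(\maxalg[D_0]{\CB}\)-valued map
\[
  \Phi_i(a) = \sum_{s,t} \left(c_i \circ \theta_s\right) E_{D_0}\!\left(b_s^* a b_t\right) \left(c_i \circ \theta_t\right).
\]
Here \(E_{D_0}\) is the conditional expectation onto \(B^{**}/J_{D_0}\) whose nucleus defines \(\redalg[D_0]{\CB}\) (\cref{def:cross-algebra}). More invariantly, \(\Phi_i\) is the compression by the isometry \(V_i\) associated to \(c_i\) of the regular representation of \(\CB\) on \(\ell^2(S) \tensor B^{**}/J_{D_0}\). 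I would show the following:
\begin{enumerate}
  \item \(\Phi_i\) is completely positive and contractive.
  \item \(\Phi_i\) factors through \(\redalg[D_0]{\CB}\), since it is a compression of the reduced representation.
  \item \(\Phi_i(a) \to a\) in norm for \(a\) in the algebraic cross-sectional algebra.
\end{enumerate}
Point (iii) follows from the cutoff identity: on a fibre element \(a \in B_s\), one gets \(\Phi_i(a) = \bigl(\sum_t (c_i\circ\theta_t)(c_i\circ\theta_{ts})\bigr) a\), and the bracket tends to \(1\) on the support of \(a\) by properness. Granting these, for \(a \in \ker\bigl(\maxalg[D_0]{\CB} \to \redalg[D_0]{\CB}\bigr)\) we have \(\Phi_i(a) = 0\) for all \(i\). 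Density and contractivity then give \(a = 0\), so the quotient map is injective and hence a \Star{}isomorphism.

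\textbf{Main obstacle.} The hardest part is Step 1 together with the interaction with \(D_0\). For inverse semigroups the sums over \(S\) must be taken modulo the germ relation, and in the non-Hausdorff/essential setting the cutoff identity only holds off the ``dangerous'' points in \(D_0\). One must check that the error lies in \(J_{D_0}\), and hence vanishes in both \(D_0\)-completions. I would first treat the case where \(B=\contz{X}\) and the Fell bundle is the semidirect product bundle, and verify there that \(\Phi_i\) is well defined on germs. I would then pass to general \(\CB\) by tensoring the cutoff net into \(B\) through the \((S,X)\)-structure. At that stage the only new input needed is the compatibility of \(E_{D_0}\) with multiplication by \(\contz{X}\), which holds because \(\contz{X}\) maps into the center of \(\multialg{B}\).
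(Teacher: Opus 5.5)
\textbf{The gap is in Step~2(ii),} and in your hope (last paragraph) that an error lying in \(J_{D_0}\) ``vanishes in both \(D_0\)-completions''. As written, \(\Phi_i\) conflates two different maps.
\begin{enumerate}
  \item The compression \(V_i^*\Lambda_{D_0}(\cdot)V_i\) of the regular representation of \cref{prop:left-reg-rep} does factor through \(\redalg[D_0]{\CB}\). But it takes values in \(B^{**}/J_{D_0}\) (this is your \(\condexp[D_0]\)-formula), so it cannot approximate the identity of \(\maxalg[D_0]{\CB}\) and gives no bound on the maximal norm.
  \item The fibrewise multiplier \(a_s\mapsto h_i(s)a_s\) of (iii) does approximate the identity. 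But for it, ``factors through \(\redalg[D_0]{\CB}\)'' is exactly what has to be proved.
\end{enumerate}
Classically one links the two by Fell absorption: take a faithful representation \(\pi\) of the maximal algebra and compress \(\pi\otimes\lambda\cong\mathrm{Ind}(\pi|_B)\) by \(V_i\eta=\sum_t\pi(c_i\circ\theta_t)\eta\otimes\delta_t\). For \(D_0\neq\emptyset\) this breaks down. A \(D_0\)-representation only has to kill \(\{a\in\algalg{\CB}:\redcondexp(a^*a)\in J_{D_0}\}\), and its normal extension \(\pi^{**}\) need not vanish on \(J_{D_0}\).

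Concretely, take \cref{ex:0-1-Gamma} with \(D_0=\{1\}\). The character \(f\mapsto\sum_{\gamma\in\Gamma}f(\gamma)\) (the trivial representation of the isotropy at the unit \(1\)) is a \(D_0\)-representation, and its restriction to \(B=\cont{[0,1]}\) is evaluation at \(1\in D_0\).
\begin{itemize}
  \item The value of \(c_i\) at \(1\) is invisible modulo \(J_{D_0}\), yet it is all this \(\pi\) sees. Boundedness of \(V_i\) forces \(c_i(1)=0\), since the infinitely many germs at \(1\) all fix \(1\). So \(V_i=0\) on this representation, and \(\pi\) is not a compression of \(\pi\otimes\lambda\).
  \item Moreover \(\mathrm{Ind}(\pi|_B)=\lambda_\Gamma\) does not even kill the algebraic \(D_0\)-nucleus: take any \(f\in\contc{G}\) vanishing on \([0,1)\) with \(f(\gamma)=1=-f(1_\Gamma)\). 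So \(\pi\otimes\lambda\) is not weakly contained in \(\redalg[D_0]{\CB}\).
\end{itemize}
What your scheme needs is essentially ``\(D_0\)-approximation property \(\Rightarrow\) \(D_0\)-weak containment'', which the paper explicitly says is not known.

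\textbf{Further problems.} In the paper's generality \(X\) is only a sober \(T_0\) space (typically \(\primidealspc{B}\)), and the \((S,X)\)-structure is a map \(\opensets{X}\to\idealsin{B}\). There is no central map \(\contz{X}\to\multialg{B}\); at best one has \(\cfunz{X}\to\CZ(B^{**})\) for faithful structures. Cut-off functions are in general only Borel, and they are only available under \(S\)-compactness (\cref{prop:cut-off-functions,prop:cut-off-functions-primidlspc}), which the theorem does not assume. Your net \((c_i)\) is meant to circumvent this but is not actually constructed. Hence \(h_i(s)a_s\) lives in \(B_s^{**}\), and \(\Phi_i\) would not even map \(\maxalg[D_0]{\CB}\) into itself.

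\textbf{The paper's route} avoids representations altogether.
\begin{enumerate}
  \item It unitizes and passes to compactly supported elements (\cref{lemma:comp-supp-are-dense}).
  \item It applies \(D_0\)-properness to a compact set containing the supports, obtaining a finite upper bound \(t_1,\dots,t_k\).
  \item It splits each fibre element along \(t_1,\dots,t_k\) using the projections \(1^{D_0}_{s,t_j}\in B^{**}/J_{D_0}\) (\cref{lemma:cover-s-by-ti}), and shows that all powers of \(b\) lie in \((B^{**}_{t_1}+\cdots+B^{**}_{t_k})/J_{D_0}\).
  \item It concludes by spectral permanence: invertibility of \(1+b\) in \(\redalg[D_0]{\CB}\) yields invertibility in \(\maxalg[D_0]{\CB^{**}}\).
\end{enumerate}
Since everything happens inside \(B^{**}/J_{D_0}\), the points of \(D_0\) are discarded at the level of coefficients. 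That is precisely what the cut-off/Fell-absorption route cannot do.
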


These results give structural properties of \(\maxalg[D_0]{\CB}\) for proper Fell bundles.
Moreover, following the ideas previously discussed, one may define \(\Egrp[S, D_0]{\CA}{\CB}\) to be the set of homotopy classes of \(D_0\)-equivariant asymptotic morphisms \(\czeror \tensor \CA \tensor \CK \asymparrow \czeror \tensor \CB \tensor \CK\) (for some algebra of compact operators \(\CK\)).
This precisely generalizes the group case to our setting.
It also gives the following, whose phrasing we keep vague on purpose: we refer the reader to \cref{sec:equiv-eth} for precise statements. The reader should read the following as saying ``\(\Egrp[S,D_0]{\variable}{\variable}\) is well behaved''.

\begin{alphthm} \label{thm:intro-eth}
  \(\Egrp[S,D_0]{\CA}{\CB}\) is an abelian group; and it defines a category admitting a Kasparov product.
  Moreover, any \(D_0\)-equivariant extension \(0 \to \CB \to \CE \to \CA \to 0\) defines uniquely an element of \(\Egrp[S,D_0]{\CA}{\CB}\). 
\end{alphthm}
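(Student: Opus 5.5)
The plan follows the Guentner--Higson--Trout template, working at each step with \(D_0\)-equivariant asymptotic morphisms in the sense of \cref{def:all-equiv-morphism}.

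\textbf{Group structure.} I would first show that homotopy of \(D_0\)-equivariant asymptotic morphisms is an equivalence relation. Reflexivity and symmetry are immediate. Transitivity comes from concatenating homotopies, i.e.\ equivariant asymptotic morphisms into \(C([0,1]) \tensor \CB\). This needs \(C([0,1],\variable)\) applied fibrewise to a Fell bundle to preserve \(D_0\)-equivariance; I expect this to hold because the ideal \(J_{D_0}\) is preserved under tensoring with a commutative unital algebra carrying the trivial action.

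Addition is the direct sum \(\varphi \oplus \psi\), formed using an isometric isomorphism \(\CK \tensor M_2 \cong \CK\) implemented by two isometries \(v_1, v_2\) with \(v_1v_1^*+v_2v_2^*=1\) that act trivially on the bundle. Commutativity follows because the two orderings are conjugate by a unitary in the connected unitary group of \(\multialg{\CK}\), which yields a homotopy. The identity element is the zero morphism. Inverses come from the suspension \(\czeror\): composing with the flip \(t \mapsto -t\) on \(\czeror\) gives \(-[\varphi]\), and the usual rotation homotopy on \(\czeror \tensor \czeror\) transports to the equivariant setting because the \(S\)-action only touches the \(\CB\)-fibres.

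\textbf{Kasparov product.} The essential input is an equivariant version of the "reparametrisation" lemma. Given asymptotic morphisms \(\varphi\colon \CA\asymparrow\CB\) and \(\psi\colon\CB\asymparrow\CCC\), one chooses a reparametrisation \(r\mapsto s(r)\) such that \(\psi_{s(r)}\circ\varphi_r\) is again asymptotic. This works by the standard diagonal argument over a countable dense subset, which is available since the algebras are separable. I would check that \(D_0\)-equivariance survives, meaning both fibrewise compatibility and preservation of \(J_{D_0}\). Both are asymptotic conditions evaluated on countably many elements, so they pass through the same diagonal argument. Independence of the product from the choice of reparametrisation, and its homotopy invariance and associativity, follow as in the classical case. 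Doing this bookkeeping on \(\czeror\tensor\variable\tensor\CK\) yields a category whose morphisms are the groups \(\Egrp[S,D_0]{\CA}{\CB}\).

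\textbf{Extensions.} Given a \(D_0\)-equivariant extension \(0\to\CB\to\CE\to\CA\to0\), I would build an equivariant quasicentral approximate unit \((u_r)\) for \(\CB\) inside \(\CE\). Here quasicentral means asymptotically commuting with the elements of every fibre \(E_s\), and \(u_r\) should lie in the unit fibre and be compatible with \(J_{D_0}\). The asymptotic morphism is then
\[
  \varphi_r(f\tensor a) = f(u_r)\,\sigma(a),
\]
where \(f \in \contz{(0,1)}\) and \(\sigma\) is a set-theoretic section. Uniqueness follows because any two such choices are homotopic through convex combinations of approximate units. I expect the main obstacle to be the existence of an approximate unit that is quasicentral simultaneously for all fibres, and not just for the unit fibre \(E_e\). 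To handle this, I would use saturation: each fibre is generated as \(E_s = E_sE_s^*E_s\), so commuting asymptotically with \(E_s^*E_s\subseteq E_{s^*s}\) together with countably many generators suffices. Arveson's convex-combination trick, applied in \(E\) with the separable set of generators taken across all fibres, then produces the required \((u_r)\). Finally, the extension-to-\(\czeror\tensor\cdot\) convention is arranged through Bott periodicity, which I would not reprove and would instead quote from the equivariant formalism.
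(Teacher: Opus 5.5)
Your overall architecture matches the paper's. You use direct sums through \(\CK_{S,D_0}\oplus\CK_{S,D_0}\subseteq\CK_{S,D_0}\) and the flip on \(\czeror\) for the group law, composition of asymptotic morphisms for the product, and a central invariant \(f(u_r)\sigma(a)\) built from a quasicentral approximate unit for extensions. However, two of your justifications would not go through.

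\emph{Preservation of \(J_{D_0}\) under composition.} You assert that this is ``an asymptotic condition evaluated on countably many elements'', so that it survives the diagonal reparametrisation. It is not. By \cref{def:all-equiv-morphism}, \(D_0\)-equivariance is the inclusion \(\varphi_{1,\fc}^{**}(J^{\CA}_{D_0})\subseteq J^{\CB}_{D_0,\fc}\) for the normal extension to the bidual. Here \(J^{\CA}_{D_0}\) is a merely norm-closed, in general non-separable ideal of \(A^{**}\), and it typically meets \(A\) only in \(\{0\}\), for instance whenever \(D_0\) has empty interior. No family of norm estimates on a countable dense subset of \(A\) detects it; the diagonal argument only handles the bullet conditions of \cref{def:morphism}. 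The paper instead gets \(D_0\)-equivariance of the composite formally in \cref{prop:eth-composition}, by chaining \(\varphi_{1,\fc}^{**}(J^{\CA}_{D_0})\subseteq J^{\CB}_{D_0,\fc}\) with \(\psi_{1,\fc}^{**}(J^{\CB}_{D_0})\subseteq J^{\CCC}_{D_0,\fc}\).

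\emph{The quasicentral approximate unit.} This is the more serious problem: you correctly flag it as the main obstacle but do not resolve it. Quasicentrality must be tested against \(a_s\in E_s\) for non-idempotent \(s\), and these are not elements of \(E\). Asymptotic commutation with \(E_s^*E_s\subseteq E_{s^*s}\), or even with all of \(E\), does not control \(\norm{u_ra_s-a_su_r}\). Already for a group action \(\Gamma\actson E\) one has \(ua\delta_\gamma-a\delta_\gamma u=(ua-a\gamma(u))\delta_\gamma\). So fibrewise quasicentrality is an approximate \(\Gamma\)-invariance of \(u_r\) that is invisible inside \(E\). For example, take the translation action of \(\ZZ\) on \(E=C(\ZZ\cup\{\infty\})\) with \(B=c_0(\ZZ)\): the approximate unit \(u_n=1_{[-n,n]}\) is central in \(E\), yet \(\norm{u_n-\gamma(u_n)}=1\).

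Your saturation remark only reduces the problem to commuting with \(E_{s^*s}\) \emph{and} with module generators of \(E_s\). The latter is exactly what ``Arveson's trick applied in \(E\)'' cannot provide. The missing step is to run Arveson in a \(C^*\)-algebra that contains all the fibres and in which the kernel bundle is an ideal. The paper takes \(\maxalg[D_0]{\CB}\subseteq\maxalg[D_0]{\CE}\), an ideal by \cref{prop:inject-equiv-map-induces,thm:maximal-extensions}, and applies Arveson there. It then uses that Arveson's proof yields convex combinations of any prescribed approximate unit, which by \cref{lemma:approx-unit} may be taken from the unit fibre \(B\); this is \cref{lemma:quasi-central-approx-unit}. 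Equivalently, you could run Arveson's Hahn--Banach argument in the linking algebras of the \(E_s\).

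\emph{Bott periodicity.} No equivariant Bott periodicity is established in this setting for you to quote. The paper simply takes the stabilised central invariant \(\czeror\tensor\CA\asymparrow\CB\) as the class. Either do the same, or prove the periodicity you use.
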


It should be mentioned that if \(S = \Gamma\) is a group then \(\Egrp[\Gamma,\emptyset]{\CA}{\CB}\) generalizes the classical group equivariant \Eth-theory of Guetner, Higson and Trout, see \cite{Guentner-Higson-Trout}, to the Fell bundle case.
Likewise, if \(S = \opensets{X}\) is the meet semi-lattice of open sets of a locally compact, Hausdorff \(X\) then \(\Egrp[\opensets{X},\emptyset]{\CA}{\CB}\) generalizes the \(X\)-equivariant \Eth-theory of Dadarlat and Meyer, see \cite{dadarlat-meyer-2012}.
This, in turn, should be compared with the \(X\)-equivariant \KKth-theory of Gabe in \cite{gabe-memoirs-2024}.

Furthermore, our point of view allows to define the left hand side of the Baum-Connes assembly map \(\Etop{S, D_0}{\CB}\) as the group generated by \(\Egrp[S,D_0]{\contz{Y}}{\CB}\) for proper, \(S\)-compact, \(S\)-spaces, quotiented by an appropriate equivalence relation (see \cref{thm:assembly} for details).
These ideas, after some work, yield the following.

\begin{alphthm}[cf.\ \cref{thm:assembly}] \label{thm:intro-assembly}
  The map
  \(
    \mu_{D_0, \CB} \colon \Etop{S, D_0}{\CB} \to \Kgrp{\redalg[D_0]{\CB}}
  \)
  locally defined as
  \begin{align*}
    \Egrp[S,D_0]{\contz{Y}}{\CB} & \xrightarrow{\text{descent}} \Egrp{\maxalg[D_0]{S \ltimes Y}}{\maxalg[D_0]{\CB}} \\
    & \xrightarrow{[p_{c_Y}]_0 \cdot} \Kgrp[0]{\maxalg[D_0]{\CB}} \\
    & \xrightarrow{\text{reg. rep.}} \Kgrp[0]{\redalg[D_0]{\CB}}
  \end{align*}
  is a homomorphism of abelian groups.
  Moreover, should \(S\) be a group and \(\CB\) be induced from an action \(S \actson B\) then \(\mu_{\emptyset, \CB}\) is the classical group Baum-Connes assembly map of \cite{Guentner-Higson-Trout}.
\end{alphthm}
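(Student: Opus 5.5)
The statement to prove is \cref{thm:intro-assembly}. It has two parts: that the locally defined composite assembles into a well-defined homomorphism on \(\Etop{S, D_0}{\CB}\), and that it specializes to the classical map of \cite{Guentner-Higson-Trout} when \(S\) is a group and \(\CB\) comes from an action.

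I would first fix a proper, \(S\)-compact \(S\)-space \(Y\). Each arrow in the composite is then a homomorphism of abelian groups.

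\begin{enumerate}
\item Descent is additive, because \(\maxalg[D_0]{\variable}\) is applied to representatives, and the sum in \(\Egrp[S,D_0]{\contz{Y}}{\CB}\) is realised by the Cuntz-sum construction in \(\CK\), which commutes with taking maximal cross-sectional algebras.
\item Pairing with the class \([p_{c_Y}]_0 \in \Kgrp[0]{\maxalg[D_0]{S \ltimes Y}}\) is the Kasparov product \(\Kgrp[0]{\maxalg[D_0]{S\ltimes Y}} \times \Egrp{\maxalg[D_0]{S\ltimes Y}}{\maxalg[D_0]{\CB}} \to \Kgrp[0]{\maxalg[D_0]{\CB}}\), which is bilinear. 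The projection exists and is unique up to homotopy by \cref{prop-intro-1}, since \(S\actson Y\) is \(D_0\)-proper.
\item The regular representation \(\maxalg[D_0]{\CB} \to \redalg[D_0]{\CB}\) is a \Star{}homomorphism, so it induces a group homomorphism on \(\Kth\)-theory.
\end{enumerate}
Hence each local map \(\mu_Y\) is a homomorphism.

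The next step is compatibility with the equivalence relation defining \(\Etop{S,D_0}{\CB}\), which I take to be generated by proper \(S\)-equivariant continuous maps \(f\colon Y\to Y'\) between proper \(S\)-compact spaces. Such an \(f\) gives \(f^*\colon \contz{Y'}\to\contz{Y}\) and hence \(f^*\colon \Egrp[S,D_0]{\contz{Y}}{\CB}\to\Egrp[S,D_0]{\contz{Y'}}{\CB}\). By naturality of descent, \(\maxalg[D_0]{f^*\circ\varphi}=\maxalg[D_0]{\varphi}\circ\maxalg[D_0]{f^*}\). Associativity of the Kasparov product then reduces the claim to the identity
\[
  \maxalg[D_0]{f^*}_*[p_{c_{Y'}}]_0=[p_{c_Y}]_0 \quad\text{in } \Kgrp[0]{\maxalg[D_0]{S\ltimes Y}}.
\]
To prove this identity, I observe that if \(c_{Y'}\) is a cut-off function on \(Y'\), then \(c_{Y'}\circ f\) is a cut-off function on \(Y\), because \(f\) is proper and equivariant. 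The projection built from it is exactly the image of \(p_{c_{Y'}}\) under \(f^*\). The uniqueness up to homotopy in \cref{prop-intro-1} then gives the equality of classes.

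This compatibility step is the main obstacle. In the \(D_0\)-essential setting I must check that \(f^*\) is \(D_0\)-equivariant, i.e.\ that it respects the ideals \(J_{D_0}\). Checking this requires that \(f\) send banned points appropriately; I would impose this condition in the definition of the relation if it is not automatic. Given compatibility, the maps \(\mu_Y\) assemble into a homomorphism on the direct limit, or quotient, \(\Etop{S,D_0}{\CB}\).

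For the classical case, take \(S=\Gamma\) and \(D_0=\emptyset\). Then \(\maxalg[\emptyset]{\CB}=B\rtimes_{\full}\Gamma\) and \(\redalg[\emptyset]{\CB}=B\rtimes_{\red}\Gamma\), and \(\Egrp[\Gamma,\emptyset]{\variable}{\variable}\) coincides with the equivariant \(\Eth\)-theory of Guentner, Higson and Trout, as noted after \cref{thm:intro-eth}. The projection \(p_c\) is the standard cut-off projection \(\sum_g c\cdot g(c)\,u_g\). The three arrows are therefore literally the descent, the product with \([p_c]\), and the regular representation used in \cite{Guentner-Higson-Trout}, so \(\mu_{\emptyset,\CB}\) is the classical assembly map.
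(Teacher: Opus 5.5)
Your proposal is correct and follows essentially the same route as the paper: well-definedness is reduced to checking that the descended Gelfand dual of a proper ($D_0$-)equivariant map carries the cut-off projection of the target to that of the source, which holds because $c\circ f$ is again a cut-off function (compactly supported by properness of $f$) and all cut-off projections are homotopic, and the paper likewise simply assumes the comparison maps are $D_0$-equivariant, exactly as you propose to impose. The paper gives no separate argument for the group case; your local identification of the three arrows is the natural one, though a full identification with the Guentner--Higson--Trout map would also require matching $E^{\rm top}_{0}(\Gamma,\emptyset;\CB)$ with their direct limit over $\Gamma$-compact subsets of $\CE\Gamma$, via the universal property of $\CE\Gamma$ and homotopy invariance.
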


\vspace{1.5mm}

\noindent \emph{Remark.}
  Notice that we have \emph{not} discussed the left hand side of \eqref{eq:intro-baum-connes}, for it requires us to discuss \(\CE \Gamma\), the universal proper \(\Gamma\) space, see \cite{aparicio_baum-connes_2019}.
  In the setting of inverse semigroup actions it is, unfortunately, not clear whether such an object exists at all.
  Whence we have to define the assembly map as in \cref{thm:intro-assembly}.

\vspace{1.5mm}

A natural consequence of \cref{thm:intro-assembly} is the explicit construction of a commutative diagram
\begin{equation*}
  \begin{tikzcd}[scale=50em]
    E^{\rm top}_{\star, \full}\left(G\right) \arrow{r}{\mu_{\full, G}} & \Kgrp{\maxalg{G}} \arrow{r}{} \arrow{d}{} & \Kgrp{\redalg{G}} \arrow{d}{} \\
    E^{\rm top}_{\star, \ess}\left(G\right) \arrow{u}{} \arrow{r}{\mu_{\ess, G}} & \Kgrp{\essmaxalg{G}} \arrow{r}{} & \Kgrp{\essalg{G}}
  \end{tikzcd}
\end{equation*}
for any \'etale groupoid \(G\).
Whether or not \(\mu_{\full, G}\) and \(\mu_{\ess, G}\) are isomorphisms will be studied elsewhere, but they cannot be expected to be neither injective nor surjective in full generality~\cites{higson-lafforgue-skand-2002-counter-bc,martinez-szakacs-2025}.

\vspace{1.5mm}

The paper is organized as follows.
\cref{sec:crossedprod} introduces the setting within which we will work, and constructs the algebras \(\maxalg[D_0]{\CB}\) and \(\redalg[D_0]{\CB}\).
\cref{sec:apps-ess} proves \cref{thm-intro-ess}. In \cref{sec:functoriality} we prove \cref{thm-intro-func}, and study the functoriality properties one needs when considering asymptotic morphisms of Fell bundles.
The slightly ad-hoc \cref{sec:proper-ame} introduces some technical notions, such as ``\(D_0\)-amenability'', that are needed for \cref{sec:proper}, where we introduce the notion of a ``proper'' Fell bundle, and prove \cref{prop-intro-1,prop-intro-2}.
In \cref{sec:equiv-eth} we define the equivariant \Eth-theory, and prove \cref{thm:intro-eth,thm:intro-assembly}.
Lastly, \cref{sec:apps} contains some discussions about the applications of these ideas to group actions, groupoids, Cartan pairs, and essential \cstar{}algebras.

\vspace{1.5mm}

\noindent \textbf{Acknowledgements:} we thank Alistair Miller for fruitful conversations on topics related to the text.

\section{Dynamical setup and its crossed products} \label{sec:crossedprod}
In this section we introduce the actions and spaces/algebras we study.
These notions are clearly inspired by previous literature, see \emph{e.g.}\ \cites{buss-meyer-2017-actions,buss-martinez-approx-prop-23,BussExelMeyer2017,buss-exel-twisted-acts-2011,Exel:Inverse_combinatorial,KwasniewskiMeyer2021}.
However, we will generalize most of the previous work, so it is best we construct all objects ``by hand''.

Recall that a \tzero space \(X\) is \emph{locally compact} if every point in \(X\) has a neighborhood base of compact sets (compacts subsets that contain \(x\) in their interior and intersect to the singleton \(\{x\}\)).
For all intents and purposes the reader is welcome to think of \(X\) as \(\primidealspc{B}\), which will be the primitive ideal space of a separable \cstar{}algebra \(B\).
We refer to \cite{blackadar-book-2010}*{Corollary II.6.5.7} for a proof that \(\primidealspc{B}\) is always (\emph{i.e.}\ regardless of whether \(B\) is separable) \tzero and locally compact.
Moreover, should \(B\) be separable (resp.\ unital) then \(\primidealspc{B}\) is second countable (resp.\ compact), as shown in \cite{blackadar-book-2010}*{Corollary II.6.5.7}.
We point out the following, which is known to experts (and will be used virtually without mention in the sequel).

\begin{lemma} \label{lemma:sober}
  Any locally compact, Baire, \tzero space \(X\) is sober, that is, every irreducible closed \(C \subseteq X\) is \(C = \closure{\{x\}}\) for exactly one \(x \in X\).
  In particular \(\primidealspc{B}\) is sober for any \cstar{}algebra \(B\).
\end{lemma}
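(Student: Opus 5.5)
Uniqueness of the generic point follows from the \tzero axiom alone: if \(\closure{\{x\}} = \closure{\{y\}}\) then \(x\) and \(y\) have the same open neighbourhoods, so \(x = y\). Everything therefore reduces to existence. Given an irreducible closed set \(C \subseteq X\), we must produce \(x \in C\) with \(\closure{\{x\}} = C\).

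The plan is to use a Baire-category argument inside \(C\), using local compactness of \(X\). First, \(C\) is a closed subspace of a locally compact space, so it is again locally compact with compact neighbourhood bases. Next we need \(C\) to be Baire. If the intended hypothesis is that \(X\) is locally compact, then every closed subspace is also locally compact, and the standard proof that (quasi-)locally compact spaces are Baire (nested compact neighbourhoods plus the finite intersection property) applies to \(C\) directly. So \(C\) is Baire in either reading of the hypothesis.

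The key step is the choice of sets. We would like to write \(C \setminus \closure{\{x\}}\) as open subsets of \(C\), but we do not yet know that the generic point exists. Instead, suppose that no point of \(C\) is generic. Irreducibility means every nonempty open subset \(U \subseteq C\) is dense in \(C\). For each point \(x\), the set \(\closure{\{x\}}\) is a proper closed subset of \(C\), so its complement \(U_x\) is a dense open subset of \(C\). To use Baire we need countably many such sets. This is where second countability enters (available for \(\primidealspc{B}\) with \(B\) separable), or alternatively where we replace points by basic opens.

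Concretely, let \((V_n)\) be a countable base of \(X\) and consider those \(n\) with \(V_n \cap C \neq \emptyset\). Each set \(V_n \cap C\) is open and dense in \(C\) by irreducibility. By the Baire property, \(G = \bigcap_n (V_n \cap C)\) is dense, and in particular nonempty. Pick \(x \in G\). Then every basic open set meeting \(C\) contains \(x\), so every nonempty open subset of \(C\) meets \(\closure{\{x\}}\) nontrivially. Hence the closed set \(\closure{\{x\}}\) has empty open complement in \(C\), that is, \(\closure{\{x\}} = C\).

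For the case without second countability, we would instead run the nested-compact-sets argument directly. Using irreducibility, each finite family of nonempty open subsets of \(C\) has nonempty intersection. We would take compact neighbourhoods \(K_U \subseteq C\) and use compactness to get a point in the intersection over all nonempty opens \(U\). Transfinite families are handled by the finite intersection property together with local compactness, shrinking each \(U\) to a compact neighbourhood inside an open set.

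The main obstacle is exactly this step, namely that \(\bigcap_U U \neq \emptyset\) for arbitrarily many opens. For \(\primidealspc{B}\) one can instead bypass the issue algebraically. An irreducible closed set corresponds to a prime ideal \(I\), and by \cite{blackadar-book-2010} every closed prime ideal of a \cstar{}algebra is primitive, and separability is not even needed for this. Then \(C = \closure{\{I\}}\), which gives the ``in particular'' clause directly.
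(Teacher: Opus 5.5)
Your countable-base argument is fine once you know that the irreducible closed set \(C\) is a Baire space. That is exactly the step that fails. You claim the nested-compact-neighbourhood proof shows every locally compact space is Baire. That proof needs a decreasing sequence of nonempty compact neighbourhoods to have nonempty intersection. This holds when compact sets are closed (the Hausdorff case) or when the space is already known to be sober, but not for general \(T_0\) spaces. For instance, \(\NN\) with the cofinite topology is second countable, \(T_1\) and locally compact (every subset is compact), yet the dense open sets \(\NN \setminus \{1,\dots,n\}\) have empty intersection. This space is itself an irreducible closed set with no generic point, i.e.\ precisely the kind of \(C\) you need to rule out. Nor does the Baire property pass from \(X\) to closed subsets.

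In fact no argument can close the gap, because the first sentence of the statement is false as written. Take \(X = C \sqcup Y\) with \(C, Y\) countably infinite, and declare \(U \subseteq X\) open iff \(U \subseteq Y\) or \(X \setminus U\) is finite. Then \(X\) has the following properties:
\begin{itemize}
  \item it is \(T_1\) and second countable;
  \item it is locally compact, since an open set meeting \(C\) is cofinite, hence compact, and points of \(Y\) are isolated;
  \item it is Baire, since every dense open set contains the dense set \(Y\).
\end{itemize}
But \(C\) is closed, carries the cofinite topology, is irreducible, and has no generic point. Your transfinite variant is just a restatement of the existence of a generic point. It also fails in general: an uncountable set with the cofinite topology is locally compact, \(T_1\) and Baire, but not sober.

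The algebraic bypass is wrong too. Closed prime ideals are primitive for \emph{separable} \cstar{}algebras (Dixmier), but Weaver (J.\ Funct.\ Anal.\ 2003) constructed a prime \cstar{}algebra \(B\) that is not primitive. Then \(\primidealspc{B}\) is irreducible, since \(0\) is prime. A generic point \(P\) would satisfy \(P \subseteq \bigcap \primidealspc{B} = 0\), i.e.\ \(0\) would be primitive. So \(\primidealspc{B}\) is not sober, although it is locally quasi-compact, \(T_0\) and Baire. Thus the ``in particular'' clause also needs separability. The paper's own proof, which only cites Harnisch--Kirchberg together with \cite{blackadar-book-2010}*{Theorem~II.6.5.14}, does not supply these extra hypotheses either.

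What your argument does prove correctly is that a second countable, locally compact \(T_0\) space all of whose closed subsets are Baire is sober. This covers the separable case the paper actually uses. For separable \(B\), the space \(\primidealspc{B}\) is second countable. Each closed \(C \subseteq \primidealspc{B}\) is homeomorphic to \(\primidealspc{B/I}\) with \(I = \bigcap C\), hence Baire by the cited theorem. Alternatively, apply Dixmier's prime-equals-primitive theorem directly.
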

\begin{proof}
  For the fact that \(X\) is sober see \cite{harnisch-kirchberg-prim} (note that \cite{harnisch-kirchberg-prim} uses the term ``point-complete'' instead of sober).
  Moreover, for the fact that \(\primidealspc{B}\) is Baire see \cite{blackadar-book-2010}*{Theorem~II.6.5.14}.
\end{proof}

Let, for a moment, \(X\) be locally compact and Hausdorff.
Traditionally Kasparov \cite{Kasparov-1988} defined an \emph{\(X\)-algebra} \(B\) as a \cstar{}algebra equipped with some non-degenerate \emph{structure map} \(\pi \colon \contz{X} \to \CZ(\multialg{B})\), where the latter is the center of the multiplier algebra of \(B\).
This definition, however, does not really work should one be interested in non-Hausdorff spaces, for \(\contz{X}\) does not allow for sufficiently many discontinuities.
This approach was later recovered, changed and expanded in several works, see \emph{e.g.}\ \cites{meyer-nest-2009,gabe-memoirs-2024} and references therein.
We particularly follow Gabe's path in \cite{gabe-memoirs-2024}.

\begin{definition}[see \cite{gabe-memoirs-2024}*{Definition 10.7}] \label{def:x-algebra}
  Let \(X\) be sober. An \emph{\(X\)-algebra} is a \cstar{}algebra \(B\) equipped with an order preserving \emph{structure map} \(\Phi_B \colon \opensets{X} \to \idealsin{B}\) from the lattice of open sets of \(X\) to the lattice of ideals in \(B\).
  The \(X\)-structure is \emph{continuous} if \(\Phi_B\) preserves arbitrary suprema and infima.
\end{definition}

In all examples of interest for us we will also have that \(\Phi_B(\emptyset) = 0\) and \(\Phi_B(X) = B\).
Note that these would be automatic should the \(X\)-structure be continuous.
Indeed, recalling that \(\inf \emptyset\) is the largest element of a partially ordered set, namely \(X\) or \(B\) in our cases, we have that \(\Phi_B(X) = \Phi_B(\inf \emptyset) = \inf \Phi_B(\emptyset) = \inf \emptyset = B\), where the second equality is due to the preservation of infima.
One similarly shows that \(\Phi_B(\emptyset) = 0\) whenever \(\Phi_B\) preserves suprema.

\begin{example} \label{ex:n-structure}
  As pointed out in \cite{gabe-memoirs-2024}*{Example 10.10}, any \cstar{}algebra \(B\) is canonically a \(\primidealspc{B}\)-algebra: we will revisit this in \cref{prop:setting-actions} and the subsequent \cref{def:s-x-algebra}.
  Nevertheless, \cref{def:x-algebra} is much more flexible.
  For instance, any choice of countably many ideals \(J_1, J_2, \dots \subseteq B\) equips \(B\) with a canonical \(\NN\)-algebra structure via \(\Phi_B(\{n\}) \coloneqq J_n\) and, similarly, \(\Phi_B(A) \coloneqq \closure{\sum_{n \in A} J_n}\) for all \(A \subseteq \NN\).
\end{example}
\begin{example} \label{example:cfunz}
  On the other end of the spectrum, consider:
  \begin{equation} \label{eq:def-cfunz}
    \cfunz{X} \coloneqq \spnc{f_1 \cdots f_k \; : \; f_i \colon \CU_i \to \CC \text{ and } \CU_i \subseteq X \text{ is open, and } f_i \in \contc{\CU_i}} \subseteq \ell^\infty\left(X\right).
  \end{equation}
  This is clearly an \(X\)-algebra in a natural manner, as one can regard \(\cfunz{X}\) as a \cstar{}sub-algebra of \(\ell^\infty(X)\).
  We also notice in passing that if \(X\) is \tzero then \(\cfunz{X}\) separates the points of \(X\), and if \(X\) is Hausdorff then \(\cfunz{X}\) is just the usual \(\contz{X}\). 
  Moreover, if \(f = f_1 \cdots f_k \in \cfunz{X}\) is a monomial then the map \(X \ni x \mapsto \abs{f(x)} \in \RR_+\) is lower semicontinuous, which will be (emotionally) relevant in \cref{rem:non-hausdorffness} and subsequent work. 
\end{example}

We highlight that, as the topological spaces we consider are all sober the approaches of \cites{meyer-nest-2009,gabe-memoirs-2024} are closely related.
This is proven in \cite{gabe-memoirs-2024}*{Example 10.14}: if \(\Phi_B\) above is \emph{upper semicontinuous} and \emph{finitely lower semicontinuous} then there is a dual continuous map \(\psi_B \colon \primidealspc{B} \to X\).
The correspondence between \(\psi_B\) and \(\Phi_B\) is 
\[
    \Phi_B\left(\CU\right) \coloneqq \bigcap \left\{P \in \primidealspc{B} : P \not\in \psi_B^{-1}\left(\CU\right) \right\}
\]
for all open sets \(\CU \subseteq X\), see \cite{gabe-memoirs-2024}*{Example 10.14}.
Given an \(X\)-algebra \(B\) we will often drop the map \(\Phi_B\) from the data (or assume it is implicitly defined).
Given \(x \in X\) we let \(B_x \coloneqq B/\Phi_B(X \setminus \closure{\{x\}})\).
The sobriety assumption plays a key role here, as it allows us to identify \(x\) and \(\closure{\{x\}}\), \emph{i.e.}\ it allows to see points uniquely (and not ``doubly'' or ``\emph{drunkenly}'', hence the name).

We observe the following for later reference.

\begin{lemma} \label{lemma:x-alg-in-product}
  Let \(X\) be a sober \tzero space. Any \cstar{}subalgebra \(B \subseteq \prod_{x \in X} B_x\) is an \(X\)-algebra.
  Conversely, if \(B\) is a \emph{faithful} \(X\)-algebra, in the sense that \(\norm{b} = \sup_{x \in X} \norm{b_x}\), then \(B\) embeds into \(\prod_{x \in X} B_x\).
\end{lemma}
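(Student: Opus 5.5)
For the first assertion, let \(B \subseteq \prod_{x \in X} B_x\) be a \cstar{}subalgebra, where the \(B_x\) are given \cstar{}algebras indexed by \(X\). I would define the structure map by vanishing outside an open set: for \(\CU \in \opensets{X}\) put
\[
  \Phi_B\left(\CU\right) \coloneqq \left\{ b = (b_x)_{x \in X} \in B \; : \; b_x = 0 \text{ for all } x \in X \setminus \CU \right\}.
\]
Each \(\Phi_B(\CU)\) is the intersection of \(B\) with the kernels of the coordinate projections \(B \to B_x\), \(x \notin \CU\). These kernels are closed two-sided ideals of \(B\), so \(\Phi_B(\CU)\) is an ideal. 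If \(\CU \subseteq \CV\) then \(X \setminus \CV \subseteq X \setminus \CU\), and hence \(\Phi_B(\CU) \subseteq \Phi_B(\CV)\). This is all that \cref{def:x-algebra} asks for, since continuity is not claimed. I would also note that \(\Phi_B(\emptyset) = 0\) and \(\Phi_B(X) = B\), in line with the convention after the definition.

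A mild compatibility check is optional. One may want the new fibre \(B/\Phi_B(X \setminus \closure{\{x\}})\) to map to the original \(B_x\). This follows because an element vanishing on \(\closure{\{x\}}\) vanishes at \(x\), so the projection \(B \to B_x\) factors through that quotient. I would record this without claiming the map is an isomorphism.

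For the converse, let \(B\) be an \(X\)-algebra with fibres \(B_x = B/\Phi_B(X \setminus \closure{\{x\}})\). Consider the \Star{}homomorphism
\[
  \iota \colon B \to \prod_{x \in X} B_x, \qquad b \mapsto \left(b_x\right)_{x \in X},
\]
where \(b_x\) is the image of \(b\) in the quotient \(B_x\). It is well defined because \(\norm{b_x} \leq \norm{b}\). Faithfulness gives \(\norm{\iota(b)} = \sup_x \norm{b_x} = \norm{b}\), so \(\iota\) is isometric and therefore an injective \Star{}homomorphism onto a \cstar{}subalgebra. Sobriety is used only to make the index set and the fibres \(B_x\) unambiguous, as remarked before the lemma.

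I do not expect a genuine obstacle here. The main point of care is that the two constructions need not be mutually inverse: the \(X\)-structure produced by the first part may differ from the original \(\Phi_B\). The lemma only claims existence in each direction, so I would not attempt to prove more.
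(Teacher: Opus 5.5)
Your proof is correct and matches the paper's. Your \(\Phi_B(\CU)\) is exactly the kernel of the restricted projection \(\prod_{x \in X} B_x \to \prod_{x \in X \setminus \CU} B_x\) used there, with the same order-preservation argument, and the converse is the same observation that faithfulness makes \(b \mapsto (b_x)_{x \in X}\) isometric; your extra remarks (\(\Phi_B(\emptyset)=0\), \(\Phi_B(X)=B\), and the factorization of \(B \to B_x\) through \(B/\Phi_B(X \setminus \closure{\{x\}})\)) are correct but not needed.
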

\begin{proof}
  Let us first discuss the forward implication.
  For any subset \(\CU \subseteq X\) we have that the projection \(\pi_\CU \colon \prod_{x \in X} B_x \to \prod_{x \in X \setminus \CU} B_x\) is well defined, and restricts to a representation \(\pi_{\CU}|_B\) of \(B\).
  Thus, given an open set \(\CU \subseteq X\) we can consider \(\Phi_B(\CU) \coloneqq \ker(\pi_{\CU}|_B) \in \idealsin{B}\).
  The assignment \(\Phi_B\) is then order preserving, for if \(\CU \subseteq \CV\) are open sets of \(X\) then \(X \setminus \CU \supseteq X \setminus \CV\), so that \(\ker(\pi_\CU|_B) \subseteq \ker(\pi_\CV|_B)\).

  For the ``converse'', \(\norm{b} = \sup_{x \in X} \norm{b_x}\) exactly says the canonical map \(B \to \prod_{x \in X} B_x\) is isometric.
\end{proof}

The ``faithfulness'' notion of the \(X\)-structure is equivalent to saying that \(\bigcap_{x \in X} \Phi_B(X \setminus \closure{\{x\}}) = 0\), see \cite{blackadar-book-2010}*{II.6.5.5}.
In particular the \(X\)-structure is faithful whenever the structure map \(\Phi_B\) is upper semi-continuous and finitely lower semi-continuous, as then we can form the dual \(\psi_B \colon \primidealspc{B} \to X\), and
\[
  \bigcap_{x \in X} \Phi_B\left(X \setminus \closure{\left\{x\right\}}\right) = \bigcap_{x \in X} \bigcap_{P \not\in \psi_B^{-1}\left(X \setminus \closure{\left\{x\right\}}\right)} P = \bigcap_{P \in \primidealspc{B}} P = 0.
\]
\cref{lemma:x-alg-in-product} also allows to discuss the ``support'' of an element \(b\) of an \(X\)-algebra \(B\).

\begin{definition} \label{def:compactly-supported element}
  Let \(B\) be an \(X\)-algebra. We say \(b \in B\) is \emph{compactly supported} if its open/strict support \(\supp{b} \coloneqq \{x \in X : b_x \neq 0\}\) is contained in a compact subset of \(X\).
\end{definition}

We note in passing that we are not requiring that \(\{x : b_x \neq 0\}\) be a compact set, for \(X\) could well fail to have an abundance of closed compact sets.
We also observe the following.

\begin{lemma} \label{lemma:comp-supp-are-dense}
  The set of compactly supported elements of an \(X\)-algebra \(B\) forms a dense \Star{}sub-algebra.
\end{lemma}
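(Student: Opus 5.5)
Two things need to be shown: the compactly supported elements form a \Star{}subalgebra, and that subalgebra is dense in \(B\). The algebraic part comes from the support inclusions
\[
  \supp{b+c} \subseteq \supp{b} \cup \supp{c}, \quad \supp{bc} \subseteq \supp{b} \cap \supp{c}, \quad \supp{b^*} = \supp{b}, \quad \supp{\lambda b} \subseteq \supp{b}.
\]
These hold because each quotient map \(B \to B_x\) is a \Star{}homomorphism, so \((b+c)_x = b_x + c_x\), \((bc)_x = b_x c_x\) and \((b^*)_x = (b_x)^*\). A finite union of compact sets is compact, and a subset of a set contained in a compact set is again contained in that compact set. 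Hence the set of compactly supported elements is closed under sums, products, scalars and adjoints.

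For density I would use local compactness of \(X\) together with the structure map. Every \(x \in X\) has an open neighbourhood \(\CU_x\) contained in a compact set \(K_x\). The intended argument: elements of \(\Phi_B(\CU)\), for an open \(\CU\) with compact closure-like envelope \(K\), have support in \(K\). Indeed, if \(b \in \Phi_B(\CU)\) and \(x \notin \CU\), then \(\CU \subseteq X \setminus \closure{\{x\}}\), since \(\CU\) is open and misses \(x\), so it misses \(\closure{\{x\}}\). Monotonicity of \(\Phi_B\) then gives \(b \in \Phi_B(X \setminus \closure{\{x\}})\), so \(b_x = 0\). Thus \(\supp{b} \subseteq \CU \subseteq K\).

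It then suffices to show that the ideals \(\Phi_B(\CU)\), with \(\CU\) running over open sets contained in compact sets, have dense sum in \(B\). These \(\CU\) form an open cover of \(X\), closed under finite unions, and this family is upward directed with supremum \(X\). If \(\Phi_B\) preserves suprema (continuity) and \(\Phi_B(X) = B\), then \(\closure{\bigcup \Phi_B(\CU)} = \Phi_B(X) = B\). This is precisely the density claim, since the union of this increasing net of ideals is a subset of the compactly supported elements.

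The main obstacle is that \cref{def:x-algebra} only requires \(\Phi_B\) to be order preserving, and the density statement can fail for a degenerate structure, e.g.\ \(\Phi_B \equiv 0\) on proper open sets. I would therefore first try to get by with the faithful embedding \(B \subseteq \prod_x B_x\) of \cref{lemma:x-alg-in-product}. If that does not work, I would make explicit the standing assumptions used throughout, namely that \(\Phi_B(X) = B\) and that \(\Phi_B\) preserves directed suprema (e.g.\ continuity, or \(\Phi_B\) dual to \(\psi_B\colon \primidealspc{B} \to X\), where directed suprema are automatic since the open sets \(\psi_B^{-1}(\CU)\) have union \(\primidealspc{B}\)). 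I would then run the argument above under those assumptions.
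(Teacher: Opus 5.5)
Your argument is correct once the hypothesis you isolate is granted, and it takes a different route from the paper.

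The paper argues element by element. For positive \(b\) it passes to \((b-\varepsilon)_+\), which is \(\varepsilon\)-close to \(b\) and supported in \(\{x : \norm{b_x} > \varepsilon\}\). It then cites the compactness of \(\{P : \norm{b+P} \geq \varepsilon\}\) in \(\primidealspc{B}\) from \cite{blackadar-book-2010} (Proposition II.6.5.6(iii)). It leaves unstated the algebraic part, which your support inclusions settle.

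You argue at the level of the lattice. Elements of \(\Phi_B(\CU)\), for relatively compact open \(\CU\), are compactly supported. Your remark that an open set missing \(x\) also misses \(\closure{\{x\}}\) is exactly what lets monotonicity be used here. Preservation of directed suprema then gives density.

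What each route buys:
\begin{itemize}
\item The paper's route gives explicit approximants, obtained from \(b\) by functional calculus, with support controlled by \(\norm{b_x}\). However, Blackadar's result concerns \(X = \primidealspc{B}\). For a general \(X\) the proof therefore tacitly needs compactness transported to \(X\), e.g.\ along a dual map \(\psi_B\).
\item Your route uses only monotonicity, local compactness and directed suprema, and never touches \(\primidealspc{B}\). It also makes visible that some continuity hypothesis cannot be avoided.
\end{itemize}

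Your degenerate structure needs \(X\) non-compact to be a counterexample. In it \(B_x = B\) for every \(x\), so it is even faithful. Hence the detour through \cref{lemma:x-alg-in-product} cannot rescue the bare statement, and you can go straight to the continuity assumption.
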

\begin{proof}
  Take some positive \(b \in B\) and \(\varepsilon > 0\).
  The element \((b-\varepsilon)_+\) is positive and \(\varepsilon\)-close to \(b\), so it suffices to show \(\{x : \norm{b_x} \geq \varepsilon\}\) is compact in \(X\). This is implied by \cite{blackadar-book-2010}*{Proposition II.6.5.6 (iii)}.
\end{proof}

We now recall a theorem of Dauns and Hofmann~\cite{dauns-hofmann-1968}, whose statement has been adapted to the case of a general \(X\)-algebra in the sense of Gabe \cite{gabe-memoirs-2024}.
The proof is given in \cite{elliott-dauns-hofmann-1974}*{Theorem~3}.

\begin{theorem} [cf.\ \cites{dauns-hofmann-1968,elliott-dauns-hofmann-1974}] \label{thm-dauns-hofmann}
  Let \(B\) be a faithful \(X\)-algebra with structure map \(\Phi_B\), and let \(\CU \subseteq X\) be open.
  For any continuous bounded \(f \colon \CU \to \CC\) and \(b \in \Phi_B(\CU) \in \idealsin{B}\), there is a unique \(fb \in B\) such that
  \[
    \left(fb\right)\left(x\right) = f\left(x\right) b_x \in B_x \stackrel{\rm (def)}{=} B/\Phi_B(X \setminus \closure{\{x\}})
  \] 
  for all \(x \in \CU \subseteq X\).
\end{theorem}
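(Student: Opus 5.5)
Uniqueness is immediate from faithfulness. If \(c, c' \in B\) both satisfy the displayed identity on \(\CU\), then \((c-c')_x = 0\) for all \(x \in \CU\). For \(x \notin \CU\), both \(c\) and \(c'\) should lie in \(\Phi_B(\CU)\), which forces their images in \(B_x\) to vanish. To see this, note that if \(x \notin \CU\) then \(\CU \subseteq X \setminus \closure{\{x\}}\), since \(X\setminus\CU\) is closed and contains \(x\). Hence \(\Phi_B(\CU) \subseteq \Phi_B(X \setminus \closure{\{x\}})\) by monotonicity. So I will require, as part of the construction, that \(fb \in \Phi_B(\CU)\); faithfulness (\(\norm{c} = \sup_x \norm{c_x}\)) then gives \(c = c'\). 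I would record this convention explicitly, since without it uniqueness can fail.

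For existence, I first reduce to the ideal \(I \coloneqq \Phi_B(\CU)\). My plan is to pass to the primitive ideal space and use the classical Dauns--Hofmann theorem. Each quotient \(B_x\) is a quotient of \(B\), and the map \(B \to \prod_x B_x\) is isometric by \cref{lemma:x-alg-in-product}. Consider the map \(\theta \colon \primidealspc{I} \to X\) sending a primitive ideal \(P\) of \(I\) (equivalently, of \(B\) not containing \(I\)) to the point \(x\) determined by sobriety as the generic point of the irreducible closed set \(\closure{\{y : \Phi_B(X\setminus\closure{\{y\}}) \subseteq P\}}\). Heuristically, \(\theta(P)\) is the unique \(x\) with \(\Phi_B(X \setminus \closure{\{x\}}) \subseteq P\) maximal. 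I would then show that \(\theta\) lands in \(\CU\) and is continuous. Continuity amounts to the statement that \(\theta^{-1}(\CV) = \{P : \Phi_B(\CV) \not\subseteq P\}\), which is open in the hull-kernel topology.

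Given this, \(g \coloneqq f \circ \theta\) is a bounded continuous function on \(\primidealspc{I}\). The classical Dauns--Hofmann theorem, in the form of \cite{elliott-dauns-hofmann-1974}*{Theorem~3}, then yields a central multiplier \(g \in \CZ(\multialg{I})\) with \(gb \equiv g(P)b \bmod P\) for all \(P\). I set \(fb \coloneqq g\cdot b \in I\). To verify the formula at \(x \in \CU\), I would check it modulo every primitive ideal \(P \supseteq \Phi_B(X \setminus \closure{\{x\}})\) with \(P\not\supseteq I\). For such \(P\), I need \(f(\theta(P)) = f(x)\) on the relevant part, and this should follow because \(P\) containing that kernel forces \(\theta(P) \in \closure{\{x\}}\). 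I then use that the image of \(fb-f(x)b\) in \(B_x\) has norm equal to the supremum over primitive ideals of \(B_x\).

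The main obstacle is the last point. The hull of \(\Phi_B(X\setminus\closure{\{x\}})\) maps into \(\closure{\{x\}}\), not to \(\{x\}\), so \(f\circ\theta\) need not be constant there. This is where sobriety and the continuity of \(f\) on the (non-Hausdorff) \(\CU\) must be used. Because \(f\) is continuous with values in the Hausdorff space \(\CC\), it is constant on \(\closure{\{x\}}\cap\CU\): every point \(y\) of that set has \(x\) in each of its neighbourhoods, and in the other direction points in \(\closure{\{x\}}\) are specializations of \(x\), so \(f(y)=f(x)\). I would try this specialization-order argument first. If continuity of \(\theta\) proves delicate without semicontinuity hypotheses on \(\Phi_B\), I would fall back on approximating \(f\) uniformly by finite sums of functions of the form in \eqref{eq:def-cfunz}, handling those directly via approximate units of the ideals \(\Phi_B(\CU_i)\), and passing to the limit using faithfulness.
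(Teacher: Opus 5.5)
The step that fails is the construction of \(\theta\). As written, \(\theta(P)\) is the generic point of the closure of \(\{y : \Phi_B(X\setminus\closure{\{y\}})\subseteq P\}\). Even when \(\Phi_B\) is dual to a continuous \(\psi_B\colon\primidealspc{B}\to X\), this set consists of the \emph{generalizations} of \(\psi_B(P)\). Its closure is therefore too large and need not be irreducible. For the canonical \(\primidealspc{B}\)-structure one has \(\Phi_B(X\setminus\closure{\{y\}})=y\), so your set is \(\{y : y\subseteq P\}\). Take \(B\) the unitization of \(\CK\oplus\CK\) and \(P=\CK\oplus\CK\). Then the set is all three primitive ideals, and its closure \(\primidealspc{B}=\closure{\{0\oplus\CK\}}\cup\closure{\{\CK\oplus 0\}}\) is reducible. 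Your heuristic (the most special such point) is the right one. The correct definition is the generic point of \(F_P\coloneqq X\setminus\bigcup\{\CV \text{ open} : \Phi_B(\CV)\subseteq P\}\). However, showing that \(F_P\) is irreducible, and that \(\theta^{-1}(\CV)=\{P:\Phi_B(\CV)\not\subseteq P\}\), needs two properties of \(\Phi_B\). It must preserve arbitrary suprema, so that \(\Phi_B\) of that union is still contained in \(P\). It must preserve finite infima, so that primeness of \(P\) applies via \(\Phi_B(\CV_1)\Phi_B(\CV_2)\subseteq\Phi_B(\CV_1\cap\CV_2)\). A faithful \(X\)-algebra as in the statement only comes with a monotone \(\Phi_B\), so neither property is available.

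This is not a removable technicality: under the stated hypotheses the theorem is false. Take \(X=[0,1]\), \(B=C([0,1])\), a discontinuous bijection \(\sigma\) of \([0,1]\) (say the transposition of \(0\) and \(1\)), and \(\Phi_B(\CV)\coloneqq\contz{\mathrm{int}\,\sigma^{-1}(\CV)}\). This map is monotone with \(\Phi_B(\emptyset)=0\) and \(\Phi_B(X)=B\). Also \(B_x\cong\CC\) via \(g\mapsto g(\sigma^{-1}(x))\), so \(B\) is faithful. But for \(\CU=X\), \(b=1\) and \(f(x)=x\), the required element would be \(\sigma\notin B\).

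So neither your main route nor the fallback can succeed in this generality. The fallback would in any case need a Dauns--Hofmann multiplier for each single \(f\in\contc{\CU_i}\). Approximate units of \(\Phi_B(\CU_i)\) only give the unit of \(\Phi_B(\CU_i)^{**}\), i.e.\ multiplication by an indicator function.

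The missing ingredient is the hypothesis that \(\Phi_B\) preserve arbitrary suprema and finite infima. This is what makes it dual to a continuous \(\psi_B\colon\primidealspc{B}\to X\), as in the discussion following \cref{example:cfunz}. It is also what the paper's bare citation of the classical Dauns--Hofmann theorem tacitly uses. Under that hypothesis your argument, with the corrected \(\theta\), is complete. Pull \(f\) back to \(\primidealspc{\Phi_B(\CU)}\) and apply classical Dauns--Hofmann in \(\multialg{\Phi_B(\CU)}\). Then check modulo each primitive \(P\supseteq\Phi_B(X\setminus\closure{\{x\}})\). This is trivial if \(P\supseteq\Phi_B(\CU)\). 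Otherwise \(\theta(P)\in\closure{\{x\}}\cap\CU\), and your specialization argument gives \(f(\theta(P))=f(x)\). Your observation that uniqueness needs the normalization \(fb\in\Phi_B(\CU)\) is correct and necessary: for \(\CU=\emptyset\) uniqueness fails outright. The Dauns--Hofmann element satisfies this normalization.
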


Recall \(\cfunz{X}\) from \cref{example:cfunz}.
The following generalizes the ``Kasparov-way'' of looking at faithful \(X\)-algebras in the sense of Gabe \cite{gabe-memoirs-2024}.
In particular if \(X\) is Hausdorff and \(\Phi_B \colon \opensets{X} \to \idealsin{B}\) is continuous then this simply recovers the structure map \(\pi \colon \contz{X} \to \CZ(\multialg{B})\).
\begin{corollary}
  Let \(B\) be a faithful \(X\)-algebra. There is a canonical \Star{}homomorphism
  \begin{equation} \label{eq:dauns-hofmann}
    \pi \colon \cfunz{X} \to \CZ\left(B^{**}\right)
  \end{equation}
  with the feature that \(\pi(f) \in \CZ(\multialg{\Phi_B(\CU)})\) whenever \(f \in \contz{\CU}\).
  Conversely, should there be a \Star{}homomorphism \(\pi \colon \cfunz{X} \to \CZ(B^{**})\), then \(B\) is equipped with a canonical \(X\)-structure, where:
  \[
    \Phi_B \colon \opensets{X} \to \idealsin{B}, \;\; \Phi_B\left(\CU\right) \coloneqq \left\{b \in B : b \pi\left(\cfunz{\CU}\right) = 0\right\}.
  \]
\end{corollary}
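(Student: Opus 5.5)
The forward direction comes from the Dauns--Hofmann theorem (\cref{thm-dauns-hofmann}); the converse is a direct verification.

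\emph{Step 1: generators.} Fix an open \(\CU \subseteq X\) and \(f \in \contc{\CU}\). For every \(b \in \Phi_B(\CU)\), \cref{thm-dauns-hofmann} produces a unique \(fb \in B\) with \((fb)_x = f(x)b_x\) for \(x \in \CU\). Since \(\supp{fb} \subseteq \CU\) and faithfulness holds, \(fb\) in fact lies in \(\Phi_B(\CU)\). Uniqueness then gives:
\begin{itemize}
\item linearity in \(b\);
\item \(f(bc) = (fb)c = b(fc)\);
\item \(\norm{fb} \le \norm{f}_\infty \norm{b}\), because \(\norm{b} = \sup_x \norm{b_x}\).
\end{itemize}
So \(b \mapsto fb\) is a bounded double centralizer of \(\Phi_B(\CU)\) that commutes with everything, and we obtain \(\pi_\CU(f) \in \CZ(\multialg{\Phi_B(\CU)})\).

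\emph{Step 2: landing in \(\CZ(B^{**})\).} Because \(\Phi_B(\CU)\) is an ideal of \(B\), we have \(\Phi_B(\CU)^{**} = z_\CU B^{**}\) for a central projection \(z_\CU\). Hence \(\multialg{\Phi_B(\CU)} \subseteq z_\CU B^{**}\) canonically, and its center sits inside \(\CZ(B^{**})\). Taking the normal extension and multiplying by \(z_\CU\), we may regard \(\pi_\CU(f)\) as an element of \(\CZ(B^{**})\).

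\emph{Step 3: extension to \(\cfunz{X}\).} For a monomial \(f_1 \cdots f_k\), set \(\pi(f_1\cdots f_k) \coloneqq \pi_{\CU_1}(f_1)\cdots\pi_{\CU_k}(f_k)\). On fibres \(B_x\), this element multiplies by \(f_1(x)\cdots f_k(x)\), with each \(f_i\) extended by zero off \(\CU_i\).

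The main obstacle is showing that this assignment extends to a well-defined, isometric-or-contractive \Star{}homomorphism on the closed span. The plan is to realise \(B^{**}\)'s action fibrewise, via the faithful embedding \(B \hookrightarrow \prod_x B_x\) of \cref{lemma:x-alg-in-product}. Then \(\pi(g)\) acts on the fibre \(B_x\) as the scalar \(g(x)\), so \(\norm{\pi(g)b} \le \norm{g}_\infty \norm{b}\) for every \(g\) in the algebraic span. This controls \(\pi(g)\) through its action on \(B\), and \(B\) is dense in \(B^{**}\) strictly enough for central elements. Granting this, \(\pi\) is contractive on the dense \Star{}algebra, multiplicative and \Star{}preserving fibrewise, and so extends by continuity. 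The required feature that \(\pi(f) \in \CZ(\multialg{\Phi_B(\CU)})\) for \(f \in \contz{\CU}\) then follows from Step 1 together with an approximation of \(f\) by elements of \(\contc{\CU}\).

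\emph{Converse.} Suppose \(\pi\colon \cfunz{X} \to \CZ(B^{**})\) is given, and define \(\Phi_B(\CU)\) as in the statement. Since \(\pi\) takes values in the center, \(\{b : b\pi(\cfunz{\CU}) = 0\}\) is a closed two-sided ideal: it is an annihilator of central elements inside \(B^{**}\), intersected with \(B\). It remains to check that \(\Phi_B\) is order-preserving. If \(\CU \subseteq \CV\), then \(\cfunz{\CU} \subseteq \cfunz{\CV}\), so the defining annihilation condition for \(\CV\) is stronger than that for \(\CU\). Thus the two ideals \(\Phi_B(\CU)\) and \(\Phi_B(\CV)\) are comparable, and \(\Phi_B\) is monotone, possibly after passing to complements in the convention. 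Either way it is a structure map in the sense of \cref{def:x-algebra}.
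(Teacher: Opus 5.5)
Your outline is the paper's own: Dauns--Hofmann on the generators, then ``these generate \(\cfunz{X}\)''; for the converse, centrality gives an ideal and nesting gives monotonicity. However, at both points where you try to supply the missing details, the argument breaks.

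\emph{The converse.} Your own computation shows the opposite of what is needed. From \(\CU\subseteq\CV\) you get \(\cfunz{\CU}\subseteq\cfunz{\CV}\), hence \(\Phi_B(\CV)\subseteq\Phi_B(\CU)\). So the annihilator formula is order-\emph{reversing}. For instance \(\Phi_B(\emptyset)=B\), and for \(B=\contz{X}\) with \(X\) Hausdorff it returns \(\contz{X\setminus\closure{\CU}}\). It is therefore not a structure map in the sense of \cref{def:x-algebra}, and ``either way'' settles nothing. The paper's proof asserts order preservation at exactly this point, so the statement itself needs correcting, not just the proof. Passing to complements, \(\CU\mapsto X\setminus\closure{\CU}\), is monotone but gives the wrong ideals: every dense \(\CU\) is sent to \(B\). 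A version that works: let \(p_\CU\in\CZ(B^{**})\) be the supremum of the support projections of \(\pi(g)\), \(g\in\cfunz{\CU}\). The ideal in the statement is \(\{b\in B: bp_\CU=0\}\). Instead set \(\Phi_B(\CU)\coloneqq\{b\in B: bp_\CU=b\}\). This is an ideal because \(p_\CU\) is central, and it is monotone because \(p_\CU\le p_\CV\).

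\emph{The forward direction.} All the content is in Step 3, which you grant, and the fibrewise justification fails for three reasons.
\begin{enumerate}
\item \(\pi(g)b\) is in general only in \(B^{**}\), since the extension by zero of \(f\in\contc{\CU}\) is typically discontinuous on a non-Hausdorff \(X\). The map \(B^{**}\to\prod_x B_x^{**}\) is not even injective (it kills the diffuse part of \(\contz{\RR}^{**}\)). So faithfulness of \(B\subseteq\prod_x B_x\) gives no bound on \(\norm{\pi(g)b}\).
\item With \(q_x^{**}\colon B^{**}\to B_x^{**}\) the normal extension of the quotient map, \(\pi_\CU(f)\) acts on \(B_x\), \(x\in\CU\), by \(f(x)\,q_x^{**}(z_\CU)\), not by \(f(x)\). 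Having \(q_x^{**}(z_\CU)=1\) requires \(\Phi_B(\CU)+\Phi_B(X\setminus\closure{\{x\}})=B\). This is a lower semicontinuity property that an arbitrary order-preserving \(\Phi_B\) lacks.
\item You never check that \(\pi_\CU(f)\) is independent of the choice of \(\CU\), and in this generality it is not. Take the discrete space \(X=\{p,q,r\}\), \(B=\CC\), and \(\Phi_B(\CU)=\CC\) if \(\CU\supseteq\{p,q\}\) or \(\CU\supseteq\{p,r\}\), and \(0\) otherwise. Then \(B_p=\CC\), \(B_q=B_r=0\), and \(B\) is faithful. Your Step 1 gives \(\pi_{\{p\}}(\delta_p)=0\) but \(\pi_X(\delta_p)=1\). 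The same example refutes your Step 1 claim that support in \(\CU\) plus faithfulness forces membership in \(\Phi_B(\CU)\): \(\supp{1}=\{p\}\) but \(1\notin\Phi_B(\{p\})=0\).
\end{enumerate}
So an extra hypothesis is needed, for example that \(\Phi_B\) is dual to a continuous \(\psi_B\colon\primidealspc{B}\to X\) as in the discussion after \cref{def:x-algebra}. The norm estimate should then be made at irreducible representations \(\sigma\), where \(\pi(g)\) acts by the scalar \(g(\psi_B(\ker\sigma))\). It must be done inside a subalgebra of \(B^{**}\) on which such evaluations are isometric. For separable \(B\), the Borel envelope \(\borelalg{B}\) of \cref{sec:apps-ess} serves, and it contains the Dauns--Hofmann multipliers by the argument of \cref{lemma:ptw-limit-cont-is-borel}. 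This gives \(\norm{\pi(g)}\le\norm{g}_\infty\), hence well-definedness and the extension. The paper's sentence ``as these functions generate \(\cfunz{X}\)'' passes over this same step.
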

\begin{proof}
  For the direct implication, the Dauns and Hofmann result above, see \cref{thm-dauns-hofmann}, allows us to see \(f \in \cfunz{X}\) as an element in \(\CZ(\multialg{\Phi_B(\CU)})\) as long as the \(X\)-structure of \(B\) is faithful and \(f\) is continuous and supported on the open set \(\CU \subseteq X\).
  As these functions generate \(\cfunz{X}\) this means that there is a canonical map as in \eqref{eq:dauns-hofmann} with the desired added feature.
  For the converse direction, simply notice that \(\Phi_B(\CU)\) thus defined is an ideal (for \(\pi\) lands in the center of \(B^{**}\)), and that these ideals are nested in one another.
  Whence \(\Phi_B \colon \opensets{X} \to \idealsin{B}\) is indeed order preserving.
\end{proof}

\subsection{Acting dynamics}

Having introduced the spaces \(X\) on which we will act we now introduce their dynamical counterparts.
\emph{Inverse semigroups} were introduced independently by Preston~\cites{Preston1954-1,Preston1954-2,Preston1954-3} and Wagner~\cites{Wagner1952,Wagner1953} in the 1950's, as a notion of ``local groups''.
They have turned out to be quite useful in the study of \cstar{}algebras, see \cites{buss-meyer-2017-actions,buss-martinez-approx-prop-23,BussExelMeyer2017,exel-nc-cartan-subalgs-2011,Exel:Inverse_combinatorial}, and many more.
We will use them here too.
A semigroup \(S\) is called \emph{inverse} if for all \(s \in S\) there is a unique \(s^* \in S\) such that \(ss^*s = s\) and \(s^*ss^* = s^*\).
Furthermore, an element \(e \in S\) is an \emph{idempotent}, or \emph{projection}, if \(e^2 = e\) (and thus \(e^* = e\) by uniqueness of the adjoint). The set of idempotents will be denoted by \(E\). Recall that \(E\) is commutative \cite{Lawson:InverseSemigroups}.
Inverse semigroups are naturally equipped with a partial order, where we say \(s \leq t\) if \(ts^*s = s\) (equivalently there is some \(e \in E\) with \(et = s\), and variants thereof). For idempotents \(e, f \in E\) we have that \(e \leq f\) if \(ef = fe = e\).
\begin{definition}[see \cite{chyuan_chung_inv_sem_2022}*{Definition 2.5}]
  An inverse semigroup \(S\) is \emph{quasi-countable} if there is some countable set \(K \subseteq S\) such that \(S = K E\) (equivalently, \(S\) is generated by \(K' \cup E\) as an inverse semigroup for some countable \(K' \subseteq S\), namely the set of finite words in \(K\)).
\end{definition}

Our dynamical systems will be given by actions of (often quasi-countable) inverse semigroups on (often separable) \cstar{}algebras. We recall these notions now, cf.\ \cites{buss-meyer-2017-actions,Buss-Exel-Meyer:Reduced,KwasniewskiMeyer2021,Kwasniewski-Meyer:Pure_infiniteness,buss-martinez-approx-prop-23}, among others.
\begin{definition}
  An \emph{action} \(S \actson X\) of \(S\) on a locally compact, sober, \tzero space \(X\) is a collection of partial homeomorphisms \(s \colon X_{s^*s} \to X_{ss^*}\) between open sets \(X_{s^*s} \subseteq X\) such that \(st = s \cdot t\) and \(\cup_{e \in E} X_e\) is dense in \(X\).
  We may also say \(X\) is an \emph{\(S\)-space.}
  An \emph{action} \(S \actson B\) of \(S\) on a \cstar{}algebra \(B\) is a collection of partial \Star{}isomorphisms \(s \colon B_{s^*s} \to B_{ss^*}\) between ideals \(B_{s^*s} \subseteq B\) such that \(st = s \cdot t\) and the span of \(\cup_{e \in E} B_e\) is dense in \(B\).
  We may also say \(B\) is an \emph{\(S\)-\cstar{}algebra.}
\end{definition}

In the case of an action of \(S\) on \(X\) we will usually denote the open set \(X_{s^*s}\) simply by \(s^*s\) (and similarly with \(ss^*\)). 
In any of the actions above the statement ``\(s \cdot t = st\)'' of course means that the composition of the partial maps \(s\) and \(t\) (in that order) equals the partial map \(st\) (in that order).
For instance, if \(S \actson B\) then this in particular implies that \(t^* B_{s^*stt^*} = B_{(st)^* st}\) and \(s B_{s^*s tt^*} = B_{st(st)^*}\), and these are ideals of \(B\).

\begin{remark}
  Much in the same spirit as Gabe's definition of an \(X\)-algebra, see \cref{def:x-algebra}, we could have chosen to forgo the use of points of \(X\) entirely and try to write the paper from a ``point-free topology'' perspective.
  In this way we would have defined an action of an inverse semigroup \(S\) on a semilattice \(\CL\) as a collection of order isomorphisms \(s \colon \CL_{s^*s} \to \CL_{ss^*}\) between ideal sub-lattices such that \(s \cdot t = st\).
  Nevertheless, we \emph{do} need to add a choice of set \(D_0 \subseteq X\) to the discussion, as it will be central for the functoriality properties of \cref{sec:functoriality}.
  It is, simply, unclear how to do this without ``seeing points'' of \(X\).
\end{remark}

We need to generalize the allowed actions even further, for we strive to prove results about \emph{twisted} inverse semigroup actions.
Note that these are needed in order to properly talk about Cartan subalgebras, for instance.
Instead of introducing twisted inverse semigroup actions \cite{buss-exel-twisted-acts-2011} we will recall the following, introduced by Sieben~\cites{Sieben1997,SiebenFellbundles}.
(For the notion of Hilbert bimodule we refer the reader to \cite{lance_hilbert_modules_1997}.)

\begin{definition} \label{def:fell-bundle}
Let \(S\) be an inverse semigroup, and let \(B\) be a \cstar{}algebra. A \emph{Fell bundle \(\CB\) over \(B\)} is a collection of Hilbert \(B\)-\(B\)-bimodules \((B_s)_{s \in S}\); and bimodule isomorphisms \(\mu_{s, t} \colon B_s \times B_t \to B_{st}\) for all \(s, t \in S\) such that:
\begin{enumerate}[label=(\roman*)]
  \item \(B_e\) is an ideal of \(B\) for all \(e \in E\) and \(\spn{\cup_{e \in E} B_e} \subseteq B\) is dense;
  \item \(\mu_{s, e} \colon B_s \times B_e \rightarrow B_{se}\) and \(\mu_{e, s} \colon B_e \times B_s \rightarrow B_{es}\) are the canonical maps, that is, the ones coming from the bimodule structure of \(B_s\); and
  \item for all \(s, t, r \in S\) the following diagram commutes:
    \begin{equation*}
      \begin{tikzcd}[scale=50em]
        B_s \times B_t \times B_r \arrow{d}{\id_{B_s} \times \mu_{t, r}} \arrow{r}{\mu_{s, t} \times \id_{B_r}} &[2.2em] B_{st} \times B_r \arrow{d}{\mu_{st, r}} \\
        B_s \times B_{tr} \arrow{r}{\mu_{s, tr}} & B_{str}.
      \end{tikzcd}
    \end{equation*}
\end{enumerate}
\end{definition}

Just as with group actions, inverse semigroup actions \(S \actson B\) can also be twisted and these, in turn, can be turned into Fell bundles.
A comparison between these Fell bundles (called \emph{regular} in \cite{buss-exel-twisted-acts-2011}) and our Fell bundles, and how they are related with twisted actions, can be found in \cite{buss-exel-twisted-acts-2011}.
The upshot is that our approach generalizes that of twisted actions.
We also refer the reader to \cites{Kranz:weak-containment,Kumjian:Fell_bundles} for analogue notions for Hausdorff \'etale groupoids.

We will systematically denote the Fell bundle by \(\CB = (B_s)_{s \in S}\).
\cref{def:fell-bundle} is actually about \emph{saturated} Fell bundles, as the maps \(\mu_{s,t}\) are assumed to be isomorphisms. We refer the reader to \cite{buss-martinez-approx-prop-23}*{Section 2} for a more comprenhensive discussion about this issue.
We will not explicitly write the ``multiplication'' maps \(\mu_{s,t}\), and simply say that \(b_s b_t \in B_s B_t \cong B_{st}\) whenever \(b_s \in B_s\) and \(b_t \in B_t\).
For instance, if \(S \actson B\) is a run-of-the-mill action then the collection \(B_s \coloneqq B_{ss^*}\) defines a canonical associated Fell bundle, where
\begin{equation} \label{eq:fell-bundle-from-action}
  B_s \times B_t \ni \left(a_s, a_t\right) \mapsto s \left(s^* a_s \cdot a_t\right) \in B_{st}.
\end{equation}
For the convenience of the reader and to improve legibility we will denote elements \(b\) in a fiber \(B_s\) by the subscript \(b_s\).

\begin{example}
  One important class of examples of Fell bundle arising as in \eqref{eq:fell-bundle-from-action} is the case when \(S \actson X\).
  This precisely corresponds to the \'etale groupoid case, see \cref{sec:apps} for a more detailed discussion.
\end{example}

\begin{notation}
  From now on, caligraphic letters, \emph{e.g.}\ \(\CA, \CB, \CI\), will denote Fell bundles over an inverse semigroup \(S\) (except for the algebra of bounded, adjointable operators on a Hilbert module, which will be denoted by \(\CL(H)\)).
  The non-caligraphic letters, \emph{e.g.}\ \(A, B, I\), will denote the unit fibers of these Fell bundles.
  In the same vein, by \(\cfunz{X}\) we will mean both the algebra from \cref{example:cfunz} and the Fell bundle \(\cfunz{X} \coloneqq (\cfunz{ss^*})_{s \in S}\) arising via \eqref{eq:fell-bundle-from-action} from the action \(S \actson X\).
  It will be clear from context whether we mean the Fell bundle or only the \cstar{}algebra.
\end{notation}

The following is a technical definition that has turned out to be quite fundamental in the study of inverse semigroup actions, see \emph{e.g.}\ \cites{Kwasniewski-Meyer:Pure_infiniteness,buss-martinez-approx-prop-23}.
It talks about when two elements \(s,t \in S\) are ``trivially the same''.
This never happens for a group \(\Gamma\) (unless \(\gamma = \rho \in \Gamma\)), but for groupoids it does: two different open bisections can intersect non-trivially.

\begin{definition} \label{def:ideals-i-s-t}
  Let \(\CB = (B_s)_{s \in S}\) be a Fell bundle. Given elements \(s, t \in S\) we let
  \[
    I_{s,t} \coloneqq \langle B_e : e \in E \text{ and } e \leq st^* \rangle \subseteq B
  \]
  (where \(\langle \variable \rangle\) denotes the ideal generated by \(\variable\)).
  We denote by \(1_{s,t} \in \multialg{I_{s,t}} \subseteq B^{**}\) the unit of \(I_{s,t}^{**} \subseteq B^{**}\).
\end{definition}

Within this generality the following result is one of the key components of the framework we will work in.
It is surely well known to experts, but having found no reference we produce a proof.
\begin{proposition} \label{prop:setting-actions}
  Let \(\CB = (B_s)_{s \in S}\) be a Fell bundle over \(B\).
  Then \(S\) acts on \(\primidealspc{B}\) (in the usual sense), and the actions are compatible, in the sense that for all open \(\CU \subseteq \primidealspc{B}\) and \(s \in S\):
  \[
    \Phi_B\left(s \cdot \left(\CU \cap \primidealspc{B}_{s^*s}\right)\right) = s \left(\Phi_B\left(\CU\right) \cdot B_{s^*s}\right),
  \]
  where \(\Phi_B \colon \opensets{\primidealspc{B}} \to \idealsin{B}\) is the usual map (see, \emph{e.g.}\ \cite{gabe-memoirs-2024}*{Example 10.10}).
\end{proposition}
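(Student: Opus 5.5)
We will use Rieffel's correspondence for Hilbert bimodules. Each fiber \(B_s\) is a Hilbert \(B\)-\(B\)-bimodule, and since \(\CB\) is saturated we have \(B_s^* B_s \cong B_{s^*s}\) and \(B_s B_s^* \cong B_{ss^*}\). We will first check that the left and right inner products of \(B_s\) have closed spans exactly the ideals \(B_{ss^*}\) and \(B_{s^*s}\). This comes from the isomorphisms \(\mu_{s^*,s}\), \(\mu_{s,s^*}\) and the identification \(B_{s^*} \cong B_s^*\). The identification \(B_{s^*}\cong B_s^*\) itself follows from the uniqueness of the inverse \(s^*\) and the associativity axiom (iii): the canonical map \(B_{s^*}\to B_s^*\) induced by \(\mu_{s^*,s}\) is an isometric bimodule isomorphism. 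Thus \(B_s\) is a \(B_{ss^*}\)-\(B_{s^*s}\) imprimitivity bimodule.

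The Rieffel correspondence then gives a lattice isomorphism \(h_s \colon \idealsin{B_{s^*s}} \to \idealsin{B_{ss^*}}\), namely \(h_s(J) = \overline{B_s J B_s^*}\). It also gives a homeomorphism \(\primidealspc{B_{s^*s}} \to \primidealspc{B_{ss^*}}\) induced from it. Recall that \(\primidealspc{B_e}\) is identified with the open set \(\{P : B_e \not\subseteq P\}\) of \(\primidealspc{B}\) via \(P \mapsto P \cap B_e\). We therefore define the partial homeomorphism \(s \colon \primidealspc{B}_{s^*s} \to \primidealspc{B}_{ss^*}\) by transporting the Rieffel homeomorphism. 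On an ideal \(J\subseteq B_{s^*s}\) we write \(s(J) \coloneqq h_s(J)\). When \(\CB\) comes from an honest action, \(h_s\) is the map applying the partial automorphism, since \(B_s J B_s^* = s(J)\). This justifies the notation in the displayed formula, read with \(\Phi_B(\CU)\cdot B_{s^*s} = \Phi_B(\CU)\cap B_{s^*s}\).

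Next we verify the action axioms. We need \(h_s \circ h_t = h_{st}\) on the appropriate domains, which follows from
\[
B_s B_t J B_t^* B_s^* \cong B_{st} J B_{st}^*
\]
using \(\mu_{s,t}\) and associativity. The domain condition is \(h_t^{-1}(B_{s^*s}\cap B_{tt^*}) \subseteq B_{(st)^*st}\), and it follows similarly from \(B_{(st)^*st} = B_t^* B_{s^*s} B_t\). Density of \(\bigcup_e \primidealspc{B}_e\) follows from density of \(\spn{\bigcup_e B_e}\). If some \(P\) contained every \(B_e\), then \(P\supseteq B\), which is impossible; so every point lies in some \(\primidealspc{B}_e\).

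Compatibility then reduces to the fact that the Rieffel homeomorphism and the Rieffel lattice isomorphism are dual. By definition of the usual \(\Phi_B\), the ideal \(\Phi_B(\CU)\) is the intersection of the primitive ideals outside \(\CU\). Hence \(\Phi_B(\CU)\cap B_{s^*s} = \Phi_{B_{s^*s}}(\CU\cap \primidealspc{B}_{s^*s})\), and \(h_s\) maps this to \(\Phi_{B_{ss^*}}(s\cdot(\CU\cap\primidealspc{B}_{s^*s}))\). This ideal of \(B_{ss^*}\) is also an ideal of \(B\), and it equals \(\Phi_B\) of that open subset of \(\primidealspc{B}_{ss^*}\).

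The main obstacle is the first step: getting cleanly, from the saturated axioms alone, that \(B_{s^*}\) is the adjoint bimodule of \(B_s\) and that the inner products land in the right idempotent ideals. Everything afterwards is formal Rieffel-correspondence bookkeeping. I would handle that step by using \(\mu_{s,s^*s}\), which is canonical by axiom (ii). This gives \(B_s = B_s B_{s^*s}\). Combining it with \(\mu_{s,s^*}\), \(\mu_{s^*,s}\) and associativity for the triples \((s,s^*,s)\) and \((s^*,s,s^*)\) then identifies the inner products.
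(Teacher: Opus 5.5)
Your proposal is correct and takes the same approach as the paper: the partial homeomorphism is the Rieffel homeomorphism induced by the \(B_{ss^*}\)--\(B_{s^*s}\) imprimitivity bimodule \(B_s\), i.e.\ \(P \mapsto \overline{B_s P B_{s^*}}\), transported to the corresponding open subsets of \(\primidealspc{B}\). You also check several things the paper's proof leaves to the phrase ``an application of the classical Rieffel correspondence'': the imprimitivity property from the saturated axioms, the identity \(st = s\cdot t\) including domains, density, and the compatibility formula, via the duality between Rieffel's lattice isomorphism and the induced homeomorphism.
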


\begin{proof}
  For future applications in mind we will prove something slightly stronger: there is a lower semi-continuous bundle of \cstar{}algebras and the actions are compatible in the above sense.
  The existence of the bundle is not new: given any primitive ideal \(P \in \primidealspc{B}\) we can form the quotient \cstar{}algebra \(B/P\).
  For any \(b \in B\) we may now construct \(\gamma(b) \colon \primidealspc{B} \to \prod_{P \in \primidealspc{B}} B/P\), where \(\gamma(b)(P) \coloneqq b + P\). Defining
  \[
    \sectionsof{B} \coloneqq \left\{\gamma\left(b\right) : b \in B\right\}
  \]
  then gives the desired sections of the bundle.
  For instance, the set \(\sectionsof{B}\) is clearly a linear subspace, and \(\{\gamma(b)(P) : \gamma(b) \in \sectionsof{B}\} = \{b+P : b \in B\}\) is dense in \(B/P\) (it is in fact \emph{all} of \(B/P\)).
  The fact that \(P \mapsto \norm{b + P}\) is lower semi-continuous for all \(b \in B\) is proven in \cite{blackadar-book-2010}*{Proposition~II.6.5.6~(i)}.

  We now move onto the only part of the statement that is not entirely clear, which is why \(S\) would act on \(\primidealspc{B}\) and \(\CB\), let alone compatibly. For this, just let
  \[
    \primidealspc{B}_{s^*s} \coloneqq \left\{P \in \primidealspc{B} : P \not\subseteq B_{s^*s}\right\} \cong \primidealspc{B_{s^*s}} ,
  \]
  which is an open subset of \(\primidealspc{B}\) because \(B_{s^*s}\) is an ideal of \(B\).
  Consider the map
  \begin{align*}
    s \colon \primidealspc{B}_{s^*s} & \to \primidealspc{B}_{ss^*}, \\
    P & \mapsto sP \coloneqq \spnc{b_s p b_{s^*} : b_s \in B_s, p \in P \text{ and } b_{s^*} \in B_{s^*}} \subseteq B_{ss^*}.
  \end{align*}
  We claim that this defines a partial homeomorphism sending primitive ideals contained in \(B_{s^*s}\) to primitives contained in \(B_{ss^*}\).
  First, note that \(B_{s^*s} = \sectionsof{B}_{s^*s}\) as in the statement.
  In fact, in order to see that \(P \mapsto sP\) is a homeomorphism recall that the closure of \(\FP \coloneqq \{P_i\}_i\) in \(\primidealspc{B}\) is, by definition, \(\closure{\FP} = \{Q : Q \supseteq \cap_i P_i\}\).
  If \(\closure{\FP} \subseteq \primidealspc{B}_{s^*s}\) then \(P_i \subseteq B_{s^*s}\) (modulo the identification \(\primidealspc{B}_{s^*s} \cong \primidealspc{B_{s^*s}}\)), so that \(s \FP \coloneqq \{s P_i\}_i\) makes sense.
  It suffices to show that \(s \closure{\FP} = \closure{s \FP}\), that is, that \(s\) is a (partial) bijection mapping closed sets to closed sets. (The inverse of \(s\) is of course given by \(s^*\).)
  This is an application of the classical Rieffel correspondence, due to the fact that \(B_s\) is a \(B_{ss^*}-B_{s^*s}\)-imprimitivity bimodule, and thus defines a homeomorphism between the primitive ideal spaces of \(B_{s^*s}\) and \(B_{ss^*}\).
\end{proof}

\begin{definition} \label{def:s-x-algebra}
  Let \(S \actson X\) be an action of an inverse semigroup on a locally compact, sober, \tzero space.
  An \emph{\((S, X)\)-\cstar{}algebra} is a \cstar{}algebra \(B\) equipped with an action \(S \actson B\) and an \(S\) equivariant structure map \(\Phi_B \colon \opensets{X} \to \idealsin{B}\) (we may also say that the actions are \emph{compatible}).
  By ``equivariance'' we mean that
  \begin{equation} \label{eq:equiv-struct-map}
    \Phi_B\left(s \left(\CU \cap s^*s\right)\right) = s \, \left(\Phi_B\left(\CU\right) \cdot B_{s^*s}\right) \;\; \text{ for all open } \; \CU \subseteq X \text{ and } s \in S. 
  \end{equation}
\end{definition}

\cref{prop:setting-actions} evidently inspired \cref{def:s-x-algebra}.
Notice that if \(\CB = (B_s)_{s \in S}\) is a Fell bundle over \(B\) then \(B\) is an \((S, \primidealspc{B})\)-algebra: the equivariant map \(\Phi_B \colon \opensets{\primidealspc{B}} \to \idealsin{B}\) is, simply, the canonical one.
By abuse of notation, we will sometimes say that a \cstar{}algebra \(B\) is an \((S, X)\)-algebra even if the data comes from a Fell bundle \(\CB = (B_s)_{s \in S}\), particularly in \cref{sec:functoriality}.
We observe the following only for later reference.

\begin{proposition} \label{type-i-iff-fibers-type-i}
  Let \(B\) be an \(X\)-algebra. If \(B\) is type I as a \cstar{}algebra; then \(B_x\) is type I for all \(x \in X\).
  Likewise, the converse holds if for every \(P \in \primidealspc{B}\) there is some \(x \in X\) such that \(P = \Phi_B(X \setminus \closure{\{x\}})\).
\end{proposition}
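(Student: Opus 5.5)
The forward implication uses only the fact that the type I property passes to quotients. By definition, \(B_x = B/\Phi_B(X \setminus \closure{\{x\}})\) is a quotient of \(B\) by a closed two-sided ideal, namely \(\Phi_B(X \setminus \closure{\{x\}}) \in \idealsin{B}\). Quotients of type I \cstar{}algebras are type I. This is standard: one can use the characterization ``\(\pi(B) \supseteq \CK(H_\pi)\) for every irreducible representation \(\pi\)'', since every irreducible representation of \(B/J\) is an irreducible representation of \(B\) that vanishes on \(J\). Alternatively one can cite \cite{blackadar-book-2010}*{Section IV.1}. This gives the first claim directly, with no further use of the \(X\)-structure.

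For the converse I would use the same characterization (Glimm/Sakai: \(B\) is type I iff every irreducible representation \(\pi\) has \(\pi(B) \supseteq \CK(H_\pi)\); or, in separable language, iff every primitive quotient is type I). The plan has three steps.
\begin{enumerate}
  \item Take an irreducible representation \(\pi\) of \(B\). Its kernel \(P \coloneqq \ker \pi\) lies in \(\primidealspc{B}\).
  \item By hypothesis there is some \(x \in X\) with \(P = \Phi_B(X \setminus \closure{\{x\}})\). Hence \(B/P = B_x\), and \(\pi\) factors through an irreducible representation \(\dot\pi\) of \(B_x\).
  \item Since \(B_x\) is type I, \(\dot\pi(B_x) \supseteq \CK(H_\pi)\). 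But \(\pi(B) = \dot\pi(B_x)\), so \(\pi(B) \supseteq \CK(H_\pi)\), and therefore \(B\) is type I.
\end{enumerate}

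I expect the only delicate point to be which definition of type I is in force, since the paper does not assume separability here. To avoid that issue I would phrase everything via the ``GCR/postliminal'' condition that every irreducible image contains the compacts. This condition is equivalent to type I in general, by Sakai's theorem together with Glimm's for the separable case, and it is manifestly determined by the primitive quotients \(B/P\). The hypothesis says exactly that every primitive quotient is some fiber \(B_x\), which closes the argument.
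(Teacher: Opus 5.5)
Your proposal is correct and is essentially the paper's proof: the forward direction holds because quotients of type I algebras are type I, and the converse follows by factoring an irreducible \(\pi\) through \(B/\ker(\pi) = B_x\) to get \(\pi(B) \supseteq \CK(H_\pi)\). The one caveat is the point you flag yourself: Sakai's theorem (type I \(\Leftrightarrow\) postliminal) together with Glimm's theorem (which derives type I from the compact-operator criterion only for separable algebras) does not by itself justify that criterion for nonseparable \(B\), but the paper's proof relies on exactly the same criterion, so this is a shared convention rather than a gap in your argument.
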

\begin{proof}
  The forward implication is clear: every quotient of a type I \cstar{}algebra is type I.
  For the converse, let \(\pi \colon B \to \CL(H)\) be an irreducible representation.
  Putting \(P \coloneqq \ker(\pi)\) we may choose some \(x \in X\) such that \(P = \Phi_B(X \setminus \closure{\{x\}})\).
  It hence follows that \(B_x = B/P = B/\ker(\pi)\) is type I, and \(\pi\) factors through \(B_x\).
  Thus \(\pi(B)\) contains the compact operators of \(H\), as desired.
\end{proof}

It should be highlighted that requiring the actions to be on type I \cstar{}algebras is rather stronger than requiring the quotients \(B_x\) be type I.

\begin{example}
  To show that it is, in fact, much stronger recall that \(\maxalg{\FF_\infty}\) is residually finite dimensional, that is, there is some embedding \(\maxalg{\FF_\infty} \subseteq \prod_{n \in \NN} M_{k(n)}\). The kernels of the \Star{}homomorphisms \(\pi_\ell \colon \maxalg{\FF_\infty} \subseteq \prod_{n \in \NN} M_{k(n)} \to M_{k(\ell)}\) equip the algebra with a type I \(\NN\)-structure as in \cref{ex:n-structure}. On the other hand, \(\maxalg{\FF_\infty}\) is, most definitely, not type I as a \cstar{}algebra.
\end{example}

\begin{remark} \label{rem:non-hausdorffness}
  There are two ``kinds'' of ``non-Hausdorffness'' we will have to deal with.
  Firstly, \(X\) may typically be \(\primidealspc{B}\), which itself may only be a \tzero space.
  On the other hand, the actions \(S \actson X\) also introduce some ``non-Hausdorffness'' which was not present in \(X\) at the start.
\end{remark}

\subsection{Cross-sectional algebras and crossed products}

We now turn to the construction of maximal and reduced cross-sectional \cstar{}algebras \(\maxalg{\CB}\) and \(\redalg{\CB}\) of a Fell bundle \(\CB = (B_s)_{s \in S}\).
This procedure is well known (at least the basics of it), and we refer the reader to \cites{buss-meyer-2017-actions,Buss-Exel-Meyer:Reduced,buss-martinez-approx-prop-23} for a long discussion about it.
Here we only recall some features in the following proposition.

\begin{proposition}
  Let \(\CB = (B_s)_{s \in S}\) be a Fell bundle over \(B\).
  The Banach spaces \(B_s\) embed canonically into \(\maxalg{\CB}\) and \(\redalg{\CB}\); and they span a dense sub-algebra, denoted by \(\algalg{\CB}\).
  Moreover, there is a quotient map \(\maxalg{\CB} \to \redalg{\CB}\) preserving every \(B_s\), that is, the quotient map restricts to an isomorphism of \(\algalg{\CB}\).
  Lastly, there is a faithful, weak conditional expectation \(\redcondexp \colon \redalg{\CB} \to B^{**}\).
  Moreover, on monomials, \(\redcondexp\) takes the form \(\redcondexp(b_s) = b_s 1_{s}\), where \(1_s \in B^{**}\) is the unit of the weakly closed ideal \(I_{s,1}^{**} = \langle B_e : e \leq s, e \in E\rangle\).
\end{proposition}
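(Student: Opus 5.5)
The plan is to build everything from the algebraic cross-sectional algebra and then complete it in two norms. First I form \(\bigoplus_{s \in S} B_s\) with convolution \(b_s \cdot b_t \coloneqq \mu_{s,t}(b_s,b_t) \in B_{st}\) and involution \(b_s \mapsto b_s^* \in B_{s^*}\). I then quotient by the linear span of the elements \(j_{t,s}(b_s) - b_s\) for \(s \leq t\), where \(j_{t,s} \colon B_s \cong B_{t s^*s} = B_t B_{s^*s} \hookrightarrow B_t\) is the canonical inclusion coming from condition (ii) of \cref{def:fell-bundle}. This quotient is \(\algalg{\CB}\). Associativity of the diagram in \cref{def:fell-bundle} shows it is a \Star{}algebra. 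The maximal algebra \(\maxalg{\CB}\) is the enveloping \cstar{}algebra of \(\algalg{\CB}\) with respect to all \Star{}representations. Such a representation is bounded because each \(B_s\) is a Hilbert bimodule, so \(\norm{\pi(b_s)}^2 = \norm{\pi(b_s^*b_s)} \leq \norm{b_s^*b_s}_B\), and this gives a finite sup-seminorm.

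Next I construct \(\redalg{\CB}\) together with \(\redcondexp\). On \(\algalg{\CB}\) I define \(\redcondexp(b_s) \coloneqq b_s 1_s\). The product \(b_s 1_s\) makes sense in \(B^{**}\): the bidual of the fiber \(B_s\) can be represented inside a linking algebra, and on the ideal \(I_{s,1}\) generated by the \(B_e\) with \(e \leq s\) one has \(b_s x = j(b_s x) \in B_{se} = B_e\)-part, so that \(b_s 1_s\) lies in \(I_{s,1}^{**} \subseteq B^{**}\). I then need three checks.
\begin{itemize}
\item[(a)] \(\redcondexp\) respects the relations \(j_{t,s}(b_s) = b_s\). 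This holds because for \(s \leq t\) the ideals satisfy \(1_s \leq 1_t\), and \(b_s 1_t = b_s 1_s\) since \(b_s\) is supported on \(s^*s\).
\item[(b)] \(\redcondexp\) is \(B\)-bimodular.
\item[(c)] \(\redcondexp(x^*x) \geq 0\) for \(x \in \algalg{\CB}\).
\end{itemize}

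With these in hand, I form the Hilbert \(B^{**}\)-module completion of \(\algalg{\CB}\) with inner product \(\innerprod{x}{y} \coloneqq \redcondexp(x^*y)\). The left regular representation \(\Lambda\) on this module gives \(\redalg{\CB} \coloneqq \closure{\Lambda(\algalg{\CB})}\), and the universal property produces the quotient \(\maxalg{\CB} \to \redalg{\CB}\). In this construction \(\redcondexp\) extends to \(\redalg{\CB}\) as a compression against the ``unit vector'' (an approximate unit of \(B\)). Faithfulness follows in the standard way: \(\redcondexp(a^*a) = 0\) gives \(\Lambda(a)\xi = 0\) on the dense set \(\xi \in \algalg{\CB}\), via bimodularity and \(\redcondexp((ay)^*(ay)) \leq \norm{y}^2\)-type estimates.

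The step I expect to be the main obstacle is positivity (c), together with injectivity of \(B_s\) into both completions. For (c), I would reduce to finitely many elements \(s_1, \dots, s_n\) and note that \(\redcondexp(b_{s_i}^* b_{s_j}) = b_{s_i}^* b_{s_j} 1_{s_i^* s_j}\). The difficulty is that the projections \(1_{s_i^*s_j}\) are not of the form \(1_{s_i}1_{s_j}\). I would instead argue fiberwise: for an irreducible representation of \(B\) (equivalently, a point \(P \in \primidealspc{B}\)), I use the action of \cref{prop:setting-actions} to group the \(s_i\) that act equally near \(P\), that is, \(s_i\) and \(s_j\) such that \(P\) lies in the interior of a projection below \(s_i^*s_j\). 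This yields a Gram-matrix \(\sum_{\text{classes}} (\sum b_{s_i})^*(\sum b_{s_i})\), which is positive. Passing to \(B^{**}\) via central projections completes (c); this is essentially the argument of Buss--Exel--Meyer. Injectivity of \(B_s\) then follows from \(\redcondexp(b_s^*b_s) = b_s^*b_s\), since \(s^*s \leq s^*s\) and so \(1_{s^*s}\) is the unit for \(B_{s^*s} \ni b_s^*b_s\). This gives \(\norm{\Lambda(b_s)} = \norm{b_s}\), so \(\algalg{\CB}\) embeds fiberwise into both algebras and the quotient map preserves each \(B_s\).
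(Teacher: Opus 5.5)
Your outline is a self-contained version of the Buss--Exel--Meyer construction, which the paper only cites. Steps (a), (c) and the fibrewise isometry \(\norm{\Lambda(b_s)}=\norm{b_s}\) are fine in substance. For (c), group the indices with the atoms \(p_k\) of the finite Boolean algebra of central projections generated by the \(1_{s_i^*s_j}\), not with irreducible representations, whose normal extensions do not jointly detect positivity in \(B^{**}\). Transitivity then comes from \(e\le s^*t,\ f\le t^*u\Rightarrow ef\le s^*u\), i.e.\ \(1_{s^*t}1_{t^*u}\le 1_{s^*u}\).

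Two parts of the statement are not actually proved, however. The first is that \(\maxalg{\CB}\to\redalg{\CB}\) is injective on all of \(\algalg{\CB}\). You must show that a finite sum \(x=\sum_i b_{s_i}\) vanishing in \(\redalg{\CB}\) already vanishes in \(\maxalg{\CB}\). Isometry on each single fibre says nothing about cancellation between monomials from different, overlapping fibres; this is exactly what the paper imports from Buss--Mart\'{i}nez, Lemma 2.28. The same atoms repair it. Take any representation \(\pi\) of \(\CB\), extended normally to \(\CB^{**}\). Then \(\pi(x)=\sum_{k,C}\pi(w_{k,C})\), where \(w_{k,C}=\sum_{i\in C}b_{s_i}p_k\). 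Each \(w_{k,C}\) lies in a single \(B_{s_{i_0}}^{**}\), because \(s_ie=s_{i_0}e\) whenever \(e\le s_i^*s_{i_0}\), and \(\sum_C w_{k,C}^*w_{k,C}=p_k\redcondexp(x^*x)\). Hence \(\norm{x}_{\max}\le N\norm{\Lambda(x)}\), with \(N\) depending only on the number of summands.

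The second gap is that faithfulness of \(\redcondexp\) does not follow from what you list. \(B\)-bimodularity only gives \(\redcondexp(y^*a^*ay)=0\) for \(y\) in the unit fibre, and the unit fibre is not dense in your module. An estimate like \(\redcondexp(y^*a^*ay)\le\norm{y}^2\redcondexp(a^*a)\) is false in this setting. For the pair groupoid on two points, take \(M_2\) with its diagonal expectation, \(a=e_{11}\) and \(y=e_{12}\): then \(y^*a^*ay=e_{22}\not\le e_{11}\).

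The missing ingredient is fibre-equivariance: \(\redcondexp(b_t^*zb_t)=b_t^*\redcondexp(z)b_t\) for all \(b_t\in B_t\) and \(z\in\redalg{\CB}\). This is the same identity the paper relies on in \cref{lemma:description-j-d}. On monomials \(z=c_r\) it reads \(b_t^*c_rb_t1_{t^*rt}=b_t^*(c_r1_r)b_t\). That follows from \(e\le r\Rightarrow t^*et\le t^*rt\) and \(f\le t^*rt\Rightarrow tft^*\le r\), and the identity then extends by continuity. With it, \(\redcondexp(a^*a)=0\) gives \(\langle\Lambda(a)b_t,\Lambda(a)b_t\rangle=b_t^*\redcondexp(a^*a)b_t=0\) for every monomial \(b_t\), so \(\Lambda(a)=0\).
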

\begin{proof}
  The fact that \(\algalg{\CB}\) sits densely in \(\maxalg{\CB}\) is shown in \cite{Buss-Exel-Meyer:Reduced}*{Proposition 4.3}, see also \cite{buss-martinez-approx-prop-23}*{Lemma 2.28} for a short proof that the canonical map \(\maxalg{\CB} \to \redalg{\CB}\) is the identity of \(\algalg{\CB}\).
  The canonical weak conditional expectation \(\redcondexp \colon \redalg{\CB} \to B^{**}\) is constructed in \cites{Buss-Exel-Meyer:Reduced,buss-meyer-2017-actions}.
\end{proof}

However, we want to work on an even larger generality, as we need to also consider the \emph{essential} groupoid \cstar{}algebra \(\essalg{G}\). Because of this we must allow to ``ignore'' certain units \(D_0 \subseteq X\).
This is an idea similar to others in \cites{buss-martinez-ess-ame-2024,KwasniewskiMeyer2021} and references therein.
Thus, we henceforth work under the assumption that \(S \actson X\) is an action; for instance, it may be the action \(S \actson \primidealspc{B}\) induced from some Fell bundle \(\CB = (B_s)_{s \in S}\), see \cref{prop:setting-actions}.

\begin{definition}
  A subset \(D_0 \subseteq X\) is \emph{invariant} if \(sx \in D_0\) for all \(x \in D_0\) and \(s \in S\) such that \(x \in s^*s\).
\end{definition}
Both the empty set and \(X\) are clearly invariant, but there could be more.
For instance, every orbit is clearly invariant.
Note that we are \emph{not} asking any topological property of \(D_0\): it could be open, closed, Borel, meager, or none of these.
For our intents the invariant subset \(D_0\) will be ignored (or killed) when we construct our \cstar{}algebras \(\maxalg[D_0]{\CB}\) and \(\redalg[D_0]{\CB}\).
Hence, given an action \(S \actson X\) and some fixed \(D_0 \subseteq X\), we want to introduce functors \(\maxalg[D_0]{\variable}\) and \(\redalg[D_0]{\variable}\) from the category of \((S, X)\)-algebras such that the following hold.
\begin{enumerate} [label=(\alph*)]
  \item There is a canonical surjection \(\maxalg[D_0]{\variable} \to \redalg[D_0]{\variable}\). This will be immediate from the construction itself, see \cref{def:cross-algebra}.

  \item There is a \Star{}isomorphism \(\essalg{S \ltimes X} \cong \redalg[D]{\cfunz{X}}\), where \(S \rtimes X\) is the groupoid of germs of an action \(S \actson X\). Here, \(\cfunz{X}\) denotes the Fell bundle of the action \(S \actson X\).

  \item More generally, there is a choice of \(D \subseteq \primidealspc{B}\) such that \(\redalg[D]{\CB} \cong \essalg{\CB}\), where the latter is Kwa\'sniewski-Meyer's \emph{essential} \cstar{}algebra from \cite{Kwasniewski-Meyer:Pure_infiniteness}.
  This will be done in \cref{sec:apps-ess}.

  \item \(\maxalg[D_0]{\variable}\) is functorial with respect to some ``\(D_0\)\emph{-equivariant} (\emph{asymptotic}) \emph{morphisms}'' \(\CA \asymparrow \CB\). This will be carried out in \cref{sec:functoriality}.
\end{enumerate}

In order to do this the following is one of the key components. For this, recall that the reduced crossed product \(\redalg{\CB}\) comes equipped with a faithful (weak) conditional expectation
\begin{align*}
  \redcondexp \colon \redalg{\CB} \to B^{**}, \;\; \text{ where } \, b_s \mapsto b_s 1_s \in B_s \cdot I_{s,1}^{**} \in B^{**},
\end{align*}
see \cite{buss-martinez-approx-prop-23} for details.
The following is inspired by previous literature, see \cites{KwasniewskiMeyer2021,Kwasniewski-Meyer:Pure_infiniteness} and \cite{AraMathieu2003}*{Theorem 2.3.9}.

\begin{proposition} \label{prop:cond-exp-kills-dzero}
  Let \(\CB = (B_s)_{s \in S}\) be a Fell bundle over \(B\).
  Suppose \(B\) is an \((S, X)\)-algebra, and fix some invariant \(D_0 \subseteq X\).
  The set
  \[
    J_{D_0} \coloneqq \closure{\left\{b \in B^{**} : b(x) = 0 \;\; {{\rm for \, all }} \; x \in X \setminus D_0\right\}}
  \]
  forms an ideal of \(B^{**}\) (here \(b(x)\) denotes the image of \(b\) under the unique extension of \(B \ni b \mapsto b(x) \in B_x\)).
  It follows that \(\condexp_{D_0} \colon \redalg{\CB} \ni b \mapsto \redcondexp(b) + J_{D_0} \in B^{**}/J_{D_0}\) is a generalized conditional expectation~\cites{KwasniewskiMeyer2021,Kwasniewski-Meyer:Pure_infiniteness}.
\end{proposition}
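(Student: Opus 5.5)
The statement has two parts: that \(J_{D_0}\) is a (closed, two-sided) ideal of \(B^{**}\), and that \(\condexp_{D_0}\) satisfies the axioms of a generalized conditional expectation in the sense of \cites{KwasniewskiMeyer2021,Kwasniewski-Meyer:Pure_infiniteness}. By that we mean a completely positive contractive map \(\redalg{\CB} \to B^{**}/J_{D_0}\) that restricts to the quotient map on \(B\). I will treat the two parts separately.

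\emph{Step 1: \(J_{D_0}\) is an ideal.} For each \(x \in X\), let \(\pi_x \colon B \to B_x\) be the quotient map. It extends uniquely to a normal \Star{}homomorphism \(\pi_x^{**} \colon B^{**} \to B_x^{**}\), and we write \(b(x) \coloneqq \pi_x^{**}(b)\). The set
\[
  \bigcap_{x \in X \setminus D_0} \ker\left(\pi_x^{**}\right)
\]
is an intersection of kernels of \Star{}homomorphisms, so it is a closed two-sided ideal of \(B^{**}\). This set is exactly the set appearing in the definition of \(J_{D_0}\). The closure in the definition is therefore harmless, and \(J_{D_0}\) is a norm-closed ideal. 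It is in particular self-adjoint, so the quotient \(B^{**}/J_{D_0}\) is a \cstar{}algebra.

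The only subtle point is that \(b(x)\) must be well defined on \(B^{**}\) and not just on \(B\). This is clear from the universal property of the enveloping von Neumann algebra. If one prefers to extend into the \(\ell^\infty\)-product \(\prod_x B_x^{**}\), the same argument goes through using \cref{lemma:x-alg-in-product}.

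\emph{Step 2: \(\condexp_{D_0}\) is a generalized conditional expectation.} The map \(\condexp_{D_0}\) is the composition of \(\redcondexp\) with the quotient \(q \colon B^{**} \to B^{**}/J_{D_0}\). The map \(\redcondexp\) is a weak conditional expectation: it is completely positive, contractive, and the identity on \(B\). The map \(q\) is a \Star{}homomorphism. Hence \(\condexp_{D_0}\) is completely positive and contractive, and it restricts to \(q|_B\) on \(B\).

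We also need the \(B\)-bimodule property \(\condexp_{D_0}(b_1 a b_2) = q(b_1)\condexp_{D_0}(a)q(b_2)\) for \(b_1, b_2 \in B\). This follows from the corresponding property of \(\redcondexp\), since \(q\) is multiplicative. It can be checked on monomials using \(\redcondexp(b_s) = b_s 1_s\) and the fact that \(1_s\) is central, being the unit of a weakly closed ideal.

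\emph{Main obstacle.} Depending on which definition of generalized expectation is used, one may also need a compatibility with the \(S\)-action. For that one has to check that \(J_{D_0}\) is invariant, meaning \(b_s J_{D_0} b_s^* \subseteq J_{D_0}\) for \(b_s \in B_s\). This is where the invariance of \(D_0\) and the equivariance \eqref{eq:equiv-struct-map} enter.

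To prove it, I would use \cref{prop:setting-actions} (the Rieffel correspondence) to show the following. If \(b\) vanishes at every point of \(X \setminus D_0\), then \(b_s b b_s^*\) vanishes at \(sx\) for every \(x \in s^*s \setminus D_0\). Since \(D_0\) is invariant, its complement is invariant as well, so \(s\) maps \(s^*s \setminus D_0\) onto \(ss^* \setminus D_0\). It remains to handle points outside \(ss^*\). At such points \(b_s b b_s^*\) already vanishes, because it lies in \(B_{ss^*}^{**}\). I expect this pointwise transfer through the Hilbert bimodule \(B_s\) to be the most delicate step, because it has to be carried out at the level of \(B^{**}\).
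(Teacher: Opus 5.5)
Your proposal is correct and follows the paper's proof: \(J_{D_0}\) is the joint kernel \(\bigcap_{x \in X \setminus D_0} \ker(\pi_x^{**})\) of the normal extensions of the fibre maps (the paper phrases this through supports in \(\prod_{x \in X} B_x^{**}\)), and \(E_{D_0}\) is \(P\) followed by the quotient \(*\)-homomorphism \(B^{**} \to B^{**}/J_{D_0}\). The invariance of \(J_{D_0}\) under the fibres \(B_s\), which you flag as the main obstacle, is not needed for this statement; the paper only proves and uses it later, in \cref{lemma:description-j-d} and \cref{lemma:artifice-d0-fell-bundle}.
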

\begin{proof}
  We will use the same notation as in the proof of \cref{def:s-x-algebra}.
  First note that the maps \(q_x \colon B \to B_x = B/\Phi_B(X \setminus \closure{\{x\}})\) are quotient maps of \cstar{}algebras.
  In particular they extend uniquely to normal quotient maps \(q_x^{**} \colon B^{**} \to B_x^{**}\), which we can bundle up to a \Star{}homomorphism \(q \colon B^{**} \to \prod_{x \in X} B_x^{**}\).
  The set \(J_{D_0}\) as in the statement is then clearly a subalgebra, as it equals
  \[
    J_{D_0} = \closure{\left\{b \in B^{**} : \supp{q\left(b\right)} \subseteq D_0\right\}},
  \]
  where \(\supp{q(b)} \coloneqq \{x \in X : q_x^{**}(b) \neq 0\}\).
  If \(b \in B^{**}\) and \(c \in J_{D_0}\) then \(\supp{q(bc)} \subseteq \supp{q(b)} \cap \supp{q(c)} \subseteq D_0\), which proves that \(J_{D_0}\) is a \Star{}ideal.
  Lastly, it is closed in norm (by construction).
  The fact that
  \[
    \condexp_{D_0} \colon \redalg{\CB} \ni b \mapsto \redcondexp(b) + J_{D_0} \in B^{**}/J_{D_0}
  \]
  is a generalized conditional expectation now follows, as it is the composition of a generalized conditional expectation (namely \(\redcondexp\)) and a \Star{}homomorphism, namely \(B^{**} \to B^{**}/J_{D_0}\).
\end{proof}

With the \cstar{}ideal \(J_{D_0}\) in \cref{prop:cond-exp-kills-dzero} built we are ready to introduce the reduced completions \(\redalg[D_0]{\CB}\) we will work with.

\begin{definition} \label{def:cross-algebra}
  Let \(\CB = (B_s)_{s \in S}\) be a Fell bundle over a \cstar{}algebra \(B\).
  Assume \(B\) is an \((S,X)\) algebra, and let \(D_0 \subseteq X\) be some invariant subset.
  We let
  \begin{align*}
    \redalg[D_0]{\CB} \coloneqq & \; \redalg{\CB}/\left\{a \in \redalg{\CB} : \condexp[D_0]\left(a^*a\right) = 0\right\} \\
    = & \; \redalg{\CB}/\left\{a \in \redalg{\CB} : \redcondexp\left(a^*a\right) \in J_{D_0}\right\}.
  \end{align*}
  Likewise, let \(\algalg[D_0]{\CB}\) be the image of the canonical dense subalgebra \(\algalg{\CB} \subseteq \redalg{\CB}\) under the quotient map \(\redalg{\CB} \to \redalg[D_0]{\CB}\).
\end{definition}

In order to show that \cref{def:cross-algebra} is well posed the following should be checked.

\begin{lemma} \label{lemma:description-j-d}
  With notation as in \cref{def:cross-algebra}, we have that
  \begin{align*}
    \left\{a \in \redalg{\CB} : \redcondexp\left(a^*a\right) \in J_{D_0}\right\} & = \left\{a \in \redalg{\CB} : \redcondexp\left(b^*a^*ab\right) \in J_{D_0} \; \text{ for all } \; b \in \redalg{\CB}\right\} \\
    & = \left\{a \in \redalg{\CB} : \redcondexp\left(abb^*a^*\right) \in J_{D_0} \; \text{ for all } \; b \in \redalg{\CB}\right\} \\
    & = \left\{a \in \redalg{\CB} : \redcondexp\left(aa^*\right) \in J_{D_0}\right\}.
  \end{align*}
  In particular the set is an ideal of \(\redalg{\CB}\).
\end{lemma}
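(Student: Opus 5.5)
We plan to prove the four descriptions coincide by a chain of inclusions, using throughout that the composite \(\condexp[D_0] = \pi \circ \redcondexp\) (with \(\pi \colon B^{**} \to B^{**}/J_{D_0}\) the quotient map, an ideal by \cref{prop:cond-exp-kills-dzero}) is a positive, \(B\)-bimodular generalized conditional expectation. Call the four sets \(N_1, N_2, N_3, N_4\) in the order displayed. The inclusions \(N_2 \subseteq N_1\) and \(N_3 \subseteq N_4\) are immediate if \(\redalg{\CB}\) is unital (take \(b = 1\)); in general we take \(b = u_\lambda\) running through an approximate unit of \(B \subseteq \redalg{\CB}\) (which is an approximate unit for \(\redalg{\CB}\), since \(B\) acts nondegenerately on each \(B_s\)). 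Then \(u_\lambda a^*a u_\lambda \to a^*a\) in norm, \(\redcondexp\) is contractive, and \(J_{D_0}\) is norm closed, so \(\redcondexp(a^*a) \in J_{D_0}\).

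For \(N_1 \subseteq N_2\) we use the Kadison--Schwarz style argument. The key intermediate fact to establish is: for positive \(x \le y\) in \(\redalg{\CB}\), if \(\redcondexp(y) \in J_{D_0}\) then \(\redcondexp(x) \in J_{D_0}\). This holds because \(\redcondexp\) is positive, so \(0 \le \pi\redcondexp(x) \le \pi\redcondexp(y) = 0\) in the \cstar{}algebra \(B^{**}/J_{D_0}\). Now \(b^*a^*ab \le \norm{b}^2\) fails as an operator inequality in this direction, so instead we write \(b^*a^*ab = (ab)^*(ab)\) and argue via the GNS-type Hilbert module: the set \(N_1\) equals the kernel of the \(B^{**}/J_{D_0}\)-valued ``inner product'' \(\langle x, y\rangle = \pi\redcondexp(x^*y)\), i.e.\ \(N_1 = \{a : \langle a,a\rangle = 0\}\). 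By Cauchy--Schwarz for semi-inner products, \(N_1\) is a closed subspace and \(\langle c, a\rangle = 0\) for \(a\in N_1\) and all \(c\). It remains to show \(N_1\) is a right ideal, i.e.\ \(ab\in N_1\). It suffices (by density and continuity) to take \(b = b_t \in B_t\). Here we use the equivariance: \(\redcondexp(b_t^* x b_t) = b_t^* \redcondexp(x) b_t\) up to the twisting by the partial action of \(t\) on \(B^{**}\) (the standard covariance property of \(\redcondexp\), \(\redcondexp(b_t^* c_s b_t) = b_t^*\redcondexp(c_s)b_t\) on monomials, which we verify via \(1_{t^*st} \) versus \(1_s\)). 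Then invariance of \(D_0\) gives that conjugation by \(b_t\) maps \(J_{D_0}\) into \(J_{D_0}\): if \(c(x)=0\) for all \(x\notin D_0\) then \((b_t^*cb_t)(y)=0\) for \(y\notin D_0\), since \(y\) corresponds to \(ty \notin D_0\) under the Rieffel homeomorphism of \cref{prop:setting-actions} and \cref{def:s-x-algebra}. This step, the compatibility of \(J_{D_0}\) with the bundle structure, is where I expect the main difficulty, since it needs the identification of fibers \(B_y\) and \(B_{ty}\) via \(B_t\) and care with the bidual extensions.

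Once \(N_1\) is known to be a closed right ideal with \(N_1 = N_2\), the same argument with adjoints shows \(N_4 = N_3\) is a closed left ideal. Finally \(N_1 = N_4^{*}\) trivially, so it remains to see \(N_1\) is self-adjoint; for this we show \(N_1\) is also a left ideal: for \(a \in N_1\), \(\langle ca, ca\rangle = \pi\redcondexp(a^*c^*ca) \le \norm{c}^2 \pi\redcondexp(a^*a) = 0\) by positivity. Thus \(N_1\) is a closed two-sided ideal, hence self-adjoint, giving \(N_1 = N_1^* = N_4\), and all four sets coincide and form an ideal.
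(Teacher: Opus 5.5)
Your proof is correct, and its core step, $N_1 \subseteq N_2$, matches the paper's. You reduce to monomials $b = b_t$ and use the covariance $\redcondexp(b_t^* x b_t) = b_t^* \redcondexp(x) b_t$. This identity holds exactly in $B^{**}$, so there is no twisting by the partial action to account for. You then use invariance of $D_0$ to see that conjugation by $b_t$ preserves $J_{D_0}$. The paper handles the fiber identification in that last point no more carefully than you do.

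The routes differ in how the remaining equalities are obtained. The paper asserts without comment that the set of admissible $b$ is a closed linear subspace. It then gets the equalities involving $aa^*$ from the symmetry of $\redcondexp$, citing \cite{KwasniewskiMeyer2021}*{Theorem 3.22}.

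You instead justify the subspace claim by Cauchy--Schwarz for the form $\pi\redcondexp(x^*y)$; positivity suffices, by composing with states. You show $N_1$ is a left ideal by positivity and a right ideal by the covariance step. Hence $N_1$ is a closed two-sided ideal and therefore self-adjoint, which gives $N_4 = N_1^* = N_1$ and $N_3 = N_2^* = N_1$.

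This makes the argument self-contained, and it proves the symmetry that is actually needed. That is the symmetry of $\condexp[D_0] = \pi \circ \redcondexp$, namely $\redcondexp(a^*a) \in J_{D_0}$ if and only if $\redcondexp(aa^*) \in J_{D_0}$, not merely that of $\redcondexp$. Your ideal argument derives it directly from the bundle-invariance of $J_{D_0}$. The paper's version is shorter, but it relies on the external citation for exactly this point.
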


\begin{proof}
  Let \(\CN_{D_0}^\ell\) be the set on the left hand side of the first row, and let \(\CN_{D_0}^r\) the set on the right hand side of the same first row.
  It is essentially clear (just add a unit to \(B\) and extend \(\CB\)) that \(\CN_{D_0}^\ell \supseteq \CN_{D_0}^r\), so take some \(a\) such that \(\redcondexp(a^*a) \in J_{D_0}\).
  Note that the set of \(b \in \redalg{\CB}\) such that \(\redcondexp(b^*a^*ab) \in J_{D_0}\) is a closed linear subspace (as \(J_{D_0}\) itself is closed).
  In particular it follows that we may assume that \(b = b_s \in B_s \subseteq \redalg{\CB}\) is some given monomial.
  In such case, notice that for all \(c_t \in B_t\):
  \[
    \redcondexp\left(b_s^* c_t b_s\right) \stackrel{\rm (def)}{=} b_s^* c_t b_s \cdot 1_{s^*ts,1} = b_s^* \cdot c_t 1_{ss^*t,1} \cdot b_s = b_s^* \cdot c_t 1_t \cdot b_s \stackrel{\rm (def)}{=} b_s^* \redcondexp\left(c_t\right) b_s,
  \]
  see \cite{buss-martinez-approx-prop-23}*{Proposition 2.17 (iii)}.
  As the monomials \(c_t\) span a dense set, it follows that \(\redcondexp(b_s^* a^*a b_s) = b_s^* \redcondexp(a^*a) b_s\),
  whose support is contained in \(s (\supp{\redcondexp(a^*a)} \cap s^*s) \subseteq s (D_0 \cap s^*s) \subseteq D_0\) by invariance of the latter.
  It hence follows that \(\redcondexp(b^*a^*ab) \in J_{D_0}\) for all \(b \in \redalg{\CB}\).
  The other equalities follow because \(\redcondexp\) is symmetric, see \cite{KwasniewskiMeyer2021}*{Theorem 3.22}.
  The ``in particular'' statement can now be shown simply by realizing that \(\CN_{D_0}^r\) is a left ideal.
\end{proof}

We define the maximal counterpart of \(\redalg[D_0]{\CB}\) in \cref{def:cross-algebra} via its universal property.
For this, recall the definition of a representation \(\pi \colon \CB \to \CL(H)\) of a Fell bundle~\cite{buss-martinez-approx-prop-23}, which amounts to a collection of bounded linear maps \(\pi_s \colon B_s \to \CL(H)\) with \(s \in S\) such that \(\pi_{st}(b_s b_t) = \pi_s(b_s) \pi_t(b_t)\) for all \(b_s \in B_s\) and \(b_t \in B_t\).

\begin{definition}
  We let \(\maxalg[D_0]{\CB}\) be the universal \cstar{}algebra of \(\algalg{\CB}\) with respect to \emph{\(D_0\)-representations}.
  This exactly means that if \(\pi \colon \maxalg{\CB} \to \CL(H)\) is a representation such that \(\pi(\{a \in \algalg{\CB} : \redcondexp(a^*a) \in J_{D_0}\}) = 0\), then \(\pi\) uniquely induces a map \(\maxalg[D_0]{\CB} \to \CL(H)\).
\end{definition}

It is clear from the construction that there is always a canonical quotient map from the maximal to the reduced crossed products: \(\maxalg[D_0]{\CB} \to \redalg[D_0]{\CB}\).
Indeed, the latter can be realized as the image of a \(D_0\)-representation in the sense above.
We now observe the following basic result, which describes \(\maxalg[D_0]{\CB}\) in an alternative way.

\begin{lemma} \label{lemma:alt-desc-max-dzero}
  If \(\maxalg[D_0]{\CB}\) is as above, then
  \(
    \maxalg[D_0]{\CB} \cong \maxalg{\CB}/\closure{\left\{a \in \algalg{\CB} : \redcondexp\left(a^*a\right) \in J_{D_0}\right\}}.
  \)
\end{lemma}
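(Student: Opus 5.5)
We compare universal properties. Set \(\CN^{\alg}_{D_0} \coloneqq \{a \in \algalg{\CB} : \redcondexp(a^*a) \in J_{D_0}\}\) and let \(I\) be the closure of \(\CN^{\alg}_{D_0}\) inside \(\maxalg{\CB}\). The plan is to show three things: \(I\) is a closed two-sided ideal of \(\maxalg{\CB}\); the quotient \(\maxalg{\CB}/I\) has the universal property defining \(\maxalg[D_0]{\CB}\); and uniqueness of universal objects then gives the \Star{}isomorphism, which is the identity on the image of \(\algalg{\CB}\).

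\emph{Step 1: \(I\) is an ideal.} Since \(\maxalg{\CB}\to\redalg{\CB}\) is the identity on \(\algalg{\CB}\), we may view \(\CN^{\alg}_{D_0}\) as \(\CN_{D_0}\cap\algalg{\CB}\), where \(\CN_{D_0}\) is the ideal of \(\redalg{\CB}\) from \cref{lemma:description-j-d}. Hence \(\CN^{\alg}_{D_0}\) is a two-sided \Star{}ideal of the \Star{}algebra \(\algalg{\CB}\).
\begin{itemize}
  \item It is a linear subspace because \(\CN_{D_0}\) is.
  \item It is stable under left and right multiplication by \(\algalg{\CB}\) by the equalities of \cref{lemma:description-j-d}.
  \item It is self-adjoint because \(\redcondexp(a^*a)\in J_{D_0}\) if and only if \(\redcondexp(aa^*)\in J_{D_0}\).
\end{itemize}
Closures of two-sided \Star{}ideals of a dense \Star{}subalgebra are closed two-sided ideals, by continuity of multiplication and density of \(\algalg{\CB}\) in \(\maxalg{\CB}\). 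So \(I\) is a closed ideal and \(\maxalg{\CB}/I\) is a \cstar{}algebra.

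\emph{Step 2: universal property.} The quotient map \(q\colon\maxalg{\CB}\to\maxalg{\CB}/I\), composed with a faithful representation of the quotient, is a representation of \(\maxalg{\CB}\) that vanishes on \(\CN^{\alg}_{D_0}\). It is therefore a \(D_0\)-representation, and we obtain a surjection \(\maxalg[D_0]{\CB}\to\maxalg{\CB}/I\), surjective because \(\algalg{\CB}\) is dense in both. Conversely, take any \(D_0\)-representation \(\pi\colon\maxalg{\CB}\to\CL(H)\). It kills \(\CN^{\alg}_{D_0}\), hence kills \(I\) by continuity, so it factors uniquely through \(\maxalg{\CB}/I\). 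Applying this to the universal \(D_0\)-representation gives the inverse map \(\maxalg{\CB}/I\to\maxalg[D_0]{\CB}\). Both composites are the identity on the dense image of \(\algalg{\CB}\), so they are mutually inverse.

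\emph{Main obstacle.} The subtle point is Step 1, namely that the purely algebraic set \(\CN^{\alg}_{D_0}\) is an ideal. This needs \cref{lemma:description-j-d} applied to elements of \(\algalg{\CB}\), which is legitimate because \(\algalg{\CB}\) embeds identically in \(\redalg{\CB}\). One should also note that the definition of \(\maxalg[D_0]{\CB}\) implicitly requires the universal norm on \(\algalg{\CB}/\CN^{\alg}_{D_0}\) to be finite. This is automatic, since it is dominated by the norm of \(\maxalg{\CB}\).
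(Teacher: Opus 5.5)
Your proposal is correct and follows the paper's route: the paper also shows that \(\maxalg{\CB}/\closure{\CN}\) has the universal property, using only that a \(D_0\)-representation which kills \(\CN\) kills its closure by continuity. Two details the paper leaves implicit, and which you handle correctly, are your Step~1 (the closure is a two-sided ideal, via \cref{lemma:description-j-d}) and your check that the quotient map is itself a \(D_0\)-representation.
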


\begin{proof}
  Let \(\CN \coloneqq \{a \in \algalg{\CB} : \redcondexp(a^*a) \in J_{D_0}\}\).
  In order to prove the statement it suffices to show that \(\maxalg{\CB}/\closure{\CN}\) enjoys the universal property defining \(\maxalg[D_0]{\CB}\).
  Thus, let \(\pi \colon \CB \to \CL(H)\) be a \(D_0\)-representation, \emph{i.e.}\ a representation of the Fell bundle \(\CB\) such that \(\pi(\CN) = 0\).
  As any representation is contractive it follows that \(\pi(\closure{\CN}) = 0\), which means that \(\pi\) descends to a representation of \(\maxalg{\CB}/\closure{\CN}\).
\end{proof}

We observe the following, which is well known and will be a technical point later.
\begin{lemma} \label{lemma:approx-unit}
  If \(\CB = (B_s)_{s \in S}\) is a Fell bundle over \(B\), then the image of the canonical \Star{}homomorphism \(B \to \maxalg[D_0]{\CB}\) contains an approximate unit of the target algebra.
  The same holds for \(B \to \redalg[D_0]{\CB}\).
\end{lemma}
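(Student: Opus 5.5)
Since \(\redalg[D_0]{\CB}\) and \(\maxalg[D_0]{\CB}\) are quotients of \(\redalg{\CB}\) and \(\maxalg{\CB}\) (the latter by \cref{lemma:alt-desc-max-dzero}), it is enough to prove the statement for \(\maxalg{\CB}\) and \(\redalg{\CB}\). Indeed, if \(q \colon C \to C/N\) is a surjective \Star{}homomorphism and \((u_\lambda)\) is an approximate unit of \(C\), then \((q(u_\lambda))\) is an approximate unit of \(C/N\). The canonical maps \(B \to \maxalg[D_0]{\CB}\) and \(B \to \redalg[D_0]{\CB}\) factor through the undecorated algebras. In addition, the reduced case follows from the maximal one via the quotient map \(\maxalg{\CB} \to \redalg{\CB}\), which is the identity on every \(B_s\). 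So everything reduces to one claim: an approximate unit of \(B\), or a suitable one chosen inside \(B\), is an approximate unit of \(\maxalg{\CB}\).

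The plan is to take an approximate unit \((u_\lambda)\) of \(B\). Because \(\spn{\cup_{e \in E} B_e}\) is dense in \(B\), we may pick it inside the span of the ideals \(B_e\), though this is not needed. We then check that \(u_\lambda x \to x\) and \(x u_\lambda \to x\) for every \(x \in \maxalg{\CB}\). The net is bounded by \(1\), and \(\algalg{\CB}\) is dense. By the standard \(\varepsilon/3\) argument it therefore suffices to check convergence on monomials \(b_s \in B_s\).

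For a monomial, we use that \(B_s\) is a Hilbert \(B\)-\(B\)-bimodule whose left action is compatible with multiplication in \(\maxalg{\CB}\), by condition (ii) of \cref{def:fell-bundle}. This gives
\[
  \norm{b_s - u_\lambda b_s}^2 = \norm{(b_s - u_\lambda b_s)(b_s - u_\lambda b_s)^*},
\]
and here \(b_s b_s^* \in B_{ss^*} \subseteq B\). Expanding, we get
\[
  (1-u_\lambda) b_s b_s^* (1-u_\lambda),
\]
computed in the unitization of \(B\). This tends to \(0\) because \(b_s b_s^* \in B\) and \((u_\lambda)\) is an approximate unit of \(B\). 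The same computation with \(b_s^* b_s \in B_{s^*s}\) handles \(b_s u_\lambda \to b_s\).

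The only point needing care is justifying that the product \(b_s b_s^*\) computed in \(\maxalg{\CB}\) equals the left inner product \(\linnerprod{b_s}{b_s}\), which lies in \(B_{ss^*}\). This is part of the saturated Fell bundle structure, via \(\mu_{s,s^*}\). It is also what makes the norm identity above legitimate, because the embedding \(B_s \hookrightarrow \maxalg{\CB}\) is isometric and multiplicative. I expect this step to be routine and to follow from the standard references cited earlier in the paper.
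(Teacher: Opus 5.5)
Your proposal is correct and takes the same route as the paper. You reduce through the quotient maps (the one from \cref{lemma:alt-desc-max-dzero}, and \(\maxalg{\CB} \to \redalg{\CB} \to \redalg[D_0]{\CB}\)) to the non-degeneracy of \(B \to \maxalg{\CB}\). The paper merely cites that non-degeneracy as well known, whereas you check it explicitly on monomials via \(\norm{(1-u_\lambda) b_s}^2 = \norm{(1-u_\lambda) b_s b_s^* (1-u_\lambda)}\) with \(b_s b_s^* \in B_{ss^*}\), which is a perfectly good way to fill in that step.
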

\begin{proof}
  First note that the map \(\pi_1 \colon B \to \maxalg{\CB}\) is a \Star{}homomorphism, as can be easily checked (it is in fact an embedding). The map \(\pi \colon B \to \maxalg[D_0]{\CB}\) in the statement is \(\pi_1\) composed with the canonical quotient \(\maxalg{\CB} \to \maxalg[D_0]{\CB}\) of \cref{lemma:alt-desc-max-dzero}.
  It thus is enough to show \(\{\pi_1(u_\lambda)\}_\lambda\) is an approximate unit for \(\maxalg{\CB}\) for all approximate units of \(B\), which is well known: \(\pi_1\) is non-degenerate.
\end{proof}

\begin{remark}
  The \Star{}homomorphism \(B \to \maxalg[D_0]{\CB}\) of \cref{lemma:approx-unit} may be non-injective.
  It is only injective if \(D_0 \subseteq X\) is appropriately small: for instance, if \(X\) is Hausdorff and \(D_0\) is meager.
\end{remark}

\begin{example}
  As aforementioned, the class of \cstar{}algebras \(\maxalg[D_0]{\CB}\) and \(\redalg[D_0]{\CB}\) includes all maximal and reduced twisted crossed products of actions \(\Gamma \actson B\); all \cstar{}algebras with weak Cartan sub-algebras; all quotients of \(\maxalg{\Gamma}\) for a virtually nilpotent group \(\Gamma\); all \cstar{}algebras with non-commutative Cartan sub-algebras; all twisted \'etale groupoid \cstar{}algebras; and possibly more.
  We will discuss these examples more in depth in \cref{sec:apps-ess,sec:apps}.
\end{example}

\subsection{Behavior with respect to tensor products and double duals}
Due to technical reasons, and with future applications in mind, one should check that considering \cstar{}algebras of the form \(\maxalg[D_0]{\CB^{**}}\) or \(\redalg[D_0]{A \tensor \CB}\) is a reasonable thing to do.
When \(D_0 = \emptyset\) the meaning of these is clear: they are simply the maximal (or reduced) \cstar{}algebras associated to the Fell bundles \(\CB^{**}\) or \(A \tensor \CB\) (see \cite{buss-meyer-2017-actions} for the former and \cref{lemma:fell-bundle-a-tensor-b} for the latter).
Nevertheless, the added variable \(D_0\) needs to be taken into account, so we observe how to do this here.

\begin{lemma} \label{lemma:double-dual-x-algebra}
  If \(B\) is an \(X\)-algebra then so is its double dual \(B^{**}\).
  In particular, if \(\CB = (B_s)_{s \in S}\) is a Fell bundle over an \((S,X)\)-algebra \(B\) and \(D_0 \subseteq X\) is invariant then \(\maxalg[D_0]{\CB^{**}}\) and \(\redalg[D_0]{\CB^{**}}\) make sense.
\end{lemma}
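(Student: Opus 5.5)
We define the structure map of \(B^{**}\) directly from that of \(B\). For an open set \(\CU \subseteq X\) put
\[
  \Phi_{B^{**}}\left(\CU\right) \coloneqq \Phi_B\left(\CU\right)^{**} \subseteq B^{**},
\]
the weak\(^*\)-closure of \(\Phi_B(\CU)\) inside \(B^{**}\). Recall the standard fact that for a closed two-sided ideal \(I \subseteq B\) the bidual \(I^{**}\) embeds canonically in \(B^{**}\) as a weak\(^*\)-closed two-sided ideal, namely \(I^{**} = B^{**} 1_I\) for a central projection \(1_I \in \CZ(B^{**})\). Hence \(\Phi_{B^{**}}(\CU)\) belongs to \(\idealsin{B^{**}}\). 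Order preservation is then immediate: if \(\CU \subseteq \CV\), then \(\Phi_B(\CU) \subseteq \Phi_B(\CV)\), and taking weak\(^*\)-closures preserves inclusions. By \cref{def:x-algebra} nothing more is required, so \(B^{**}\) is an \(X\)-algebra.

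The fibres of this structure are \(B^{**}/\Phi_B(X \setminus \closure{\{x\}})^{**} \cong (B_x)^{**}\). This agrees with the extensions \(q_x^{**}\) used in \cref{prop:cond-exp-kills-dzero}, which is the compatibility we want to keep.

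For the ``in particular'' part, we first recall (see \cite{buss-meyer-2017-actions}) that \(\CB^{**} = (B_s^{**})_{s \in S}\) is a Fell bundle over \(B^{**}\). Its unit-fibre ideals are \(B_e^{**}\), and each \(B_s^{**}\) is a \(B_{ss^*}^{**}\)–\(B_{s^*s}^{**}\) bimodule. We then verify the equivariance condition \eqref{eq:equiv-struct-map} for \(B^{**}\). We know
\[
  \Phi_B\left(s\left(\CU \cap s^*s\right)\right) = s\left(\Phi_B\left(\CU\right) B_{s^*s}\right),
\]
where the action of \(s\) on ideals of \(B_{s^*s}\) is \(I \mapsto B_s I B_{s^*}\) (the Rieffel correspondence, as in \cref{prop:setting-actions}). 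We take weak\(^*\)-closures on both sides and use three facts:
\begin{enumerate}
  \item \((IJ)^{**} = I^{**} J^{**}\) for ideals \(I, J\), since both equal \(B^{**} 1_I 1_J\);
  \item \(\closure{B_s I B_{s^*}}^{w^*} = B_s^{**} I^{**} B_{s^*}^{**}\), by separate weak\(^*\)-continuity of multiplication in the linking algebra of \(B_s\) and \(B\);
  \item consequently, the induced partial action of \(s\) on weak\(^*\)-closed ideals of \(B^{**}_{s^*s}\) is the bidual of the Rieffel correspondence.
\end{enumerate}
Together these give \(\Phi_{B^{**}}(s(\CU \cap s^*s)) = s(\Phi_{B^{**}}(\CU) B^{**}_{s^*s})\), so \(B^{**}\) is an \((S,X)\)-algebra. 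Since \(D_0\) is an invariant subset of the same space \(X\), \cref{prop:cond-exp-kills-dzero} applies to \(\CB^{**}\). It produces an ideal \(J_{D_0} \subseteq (B^{**})^{**}\), and the ideal appearing in \cref{def:cross-algebra} is well defined by \cref{lemma:description-j-d}. Therefore \(\redalg[D_0]{\CB^{**}}\) makes sense, and \(\maxalg[D_0]{\CB^{**}}\) makes sense by \cref{lemma:alt-desc-max-dzero}.

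The main obstacle is the equivariance step, specifically fact (ii): that the weak\(^*\)-closure of \(B_s I B_{s^*}\) equals \(B_s^{**} I^{**} B_{s^*}^{**}\). We would prove it by embedding everything in the linking algebra \(L\) of the bimodule \(B_s\). Inside \(L^{**}\) the corners are \(B^{**}_{ss^*}\), \(B^{**}_s\) and \(B^{**}_{s^*s}\), and the ideal of \(L\) generated by \(I\) has bidual \(L^{**} 1_I L^{**}\) for a central projection. Compressing to the appropriate corner then yields the identity. If this compression argument turns out to be delicate, a fallback is to check the identity by density of \(B_s\) in \(B_s^{**}\) together with normality of the multiplication maps.
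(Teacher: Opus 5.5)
Your proposal is correct and takes the same approach as the paper. The paper likewise sets \(\Phi_{B^{**}}(\CU) \coloneqq \Phi_B(\CU)^{**}\), notes that this is an ideal of \(B^{**}\), and observes that order preservation is inherited from \(\Phi_B\), which is all an \(X\)-algebra requires; your additional check of the \(S\)-equivariance for \(\CB^{**}\), which the paper leaves implicit under ``it suffices to construct the structure map'', is sound precisely because you take all spans, closures and the Rieffel correspondence in the weak\(^*\) sense (with norm-closed spans it would fail, since \(B_s^{**}\) need not be full as a \(C^*\)-bimodule).
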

\begin{proof}
  It suffices to construct the structure map of \(B^{**}\), which is done as one first guesses:
  \[
    \Phi_{B^{**}} \colon \opensets{X} \to \idealsin{B^{**}}, \;\; \CU \mapsto \Phi_B\left(\CU\right)^{**} \subseteq B^{**},
  \]
  where \(\Phi_B \colon \opensets{X} \to \idealsin{B}\) is the structure map of \(B\).
  Observe that \(\Phi_B(\CU)\) is then an ideal of \(B\), so that \(\Phi_B(\CU)^{**}\) canonically embeds into \(B^{**}\).
  The map \(\Phi_{B^{**}}\) is order preserving, for so is \(\Phi_B\).
\end{proof}

The following shows how to construct ``trivial'' extensions of a Fell bundle (by a nuclear algebra).
\begin{lemma} \label{lemma:fell-bundle-a-tensor-b-trivial}
  Suppose \(A\) is a nuclear \cstar{}algebra, and that \(\CB = (B_s)_{s \in S}\) is a Fell bundle over \(B\).
  Then \(A \tensor \CB = (A \tensor B_s)_{s \in S}\) admits a canonical Fell bundle structure over \(A \tensor B\).
  Moreover, if \(B\) is an \((S,X)\) algebra then so is \(A \tensor B\).
\end{lemma}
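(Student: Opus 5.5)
The plan is to build everything fiberwise from the Fell bundle data of \(\CB\), tensoring by \(A\) throughout. Since \(A\) is nuclear there is only one \cstar{}tensor norm, so there is no ambiguity in writing \(A \tensor B_s\).

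\textbf{Step 1: the fibers.} For each \(s \in S\), the Hilbert \(B\)-\(B\)-bimodule \(B_s\) gives an exterior tensor product \(A \tensor B_s\). This is a Hilbert \(A \tensor B\)-\(A \tensor B\)-bimodule with inner products
\[
  \rinnerprod{a \tensor \xi}{a' \tensor \eta} = a^* a' \tensor \rinnerprod{\xi}{\eta}
\]
and the analogous left one. If \(B_s\) is a \(B_{ss^*}\)-\(B_{s^*s}\)-imprimitivity bimodule, then \(A \tensor B_s\) is an \(A \tensor B_{ss^*}\)-\(A \tensor B_{s^*s}\)-imprimitivity bimodule. For \(e \in E\), the fiber \(A \tensor B_e\) is an ideal of \(A \tensor B\), and density of \(\spn{\cup_e A \tensor B_e}\) follows from density of \(\spn{\cup_e B_e}\) in \(B\).

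\textbf{Step 2: the multiplication maps.} Define \(\mu^A_{s,t}\) on elementary tensors by \((a \tensor b_s, a' \tensor b_t) \mapsto aa' \tensor b_s b_t\). This is bilinear and compatible with the bimodule structures and inner products, so it extends by continuity. To check it induces an isomorphism \((A \tensor B_s) \tensor_{A \tensor B} (A \tensor B_t) \cong A \tensor B_{st}\), use the identification
\[
  (A \tensor B_s) \tensor_{A \tensor B} (A \tensor B_t) \cong (A \tensor_A A) \tensor (B_s \tensor_B B_t)
\]
together with \(A \tensor_A A \cong A\). The axioms in \cref{def:fell-bundle} are then inherited, since they can be checked on elementary tensors:
\begin{itemize}
  \item associativity of the \(\mu^A\) follows from associativity in \(A\) and of the \(\mu_{s,t}\);
  \item for \(e \in E\), the maps \(\mu^A_{s,e}\) and \(\mu^A_{e,s}\) are the canonical bimodule maps.
\end{itemize}
I expect this step to be the main obstacle. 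The subtle point is showing the interior tensor product identification is isometric and surjective, which is where saturation and nuclearity enter. I would verify it by computing the inner products on elementary tensors and invoking saturation, i.e.\ that \(\mu_{s,t}\) has dense range.

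\textbf{Step 3: the \((S,X)\)-structure.} Set \(\Phi_{A \tensor B}(\CU) \coloneqq A \tensor \Phi_B(\CU)\). This is an ideal, again by nuclearity (exactness), and the assignment is order preserving. For equivariance, the induced partial action of \(s\) on ideals of \(A \tensor B\) is \(J \mapsto (A \tensor B_s) J (A \tensor B_{s^*})\). On elementary tensors this gives
\[
  s\bigl( (A \tensor \Phi_B(\CU)) \cdot (A \tensor B_{s^*s}) \bigr) = A \tensor s\bigl(\Phi_B(\CU) \cdot B_{s^*s}\bigr),
\]
since \(AAA\) is dense in \(A\). Equation \eqref{eq:equiv-struct-map} for \(B\) then yields it for \(A \tensor B\).
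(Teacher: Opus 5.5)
Your proposal is correct and matches the paper's proof. For the \((S,X)\)-structure the paper uses exactly your \(\Phi_{A \tensor B}(\CU) \coloneqq A \tensor \Phi_B(\CU)\) with the same fiberwise equivariance check; for the Fell bundle structure it simply cites \cite{buss-martinez-approx-prop-23}*{Lemma 5.2} instead of verifying \((A \tensor B_s) \tensor_{A \tensor B} (A \tensor B_t) \cong A \tensor B_{st}\) by hand as you do.

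One small correction concerns where nuclearity enters. \(A \tensor \Phi_B(\CU)\) is an ideal of \(A \tensor B\) for any \(A\), since injectivity of the minimal tensor product suffices, and the interior tensor product identification is isometric without it. So nuclearity is only used to make the tensor products unambiguous, not for either of these steps.
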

\begin{proof}
  For the fact that \(A \tensor \CB\) forms a Fell bundle see \cite{buss-martinez-approx-prop-23}*{Lemma 5.2}.
  For the claim about \(A \tensor B\) simply note that \(\Phi_{A \tensor B}(\CU) \coloneqq A \tensor \Phi_B(\CU)\) is a well defined lattice morphism \(\opensets{X} \to \idealsin{A \tensor B}\) that is, moreover, compatible with the Fell bundle structure in \cite{buss-martinez-approx-prop-23}*{Lemma 5.2}. Indeed:
  \begin{align*}
    s \cdot \left(\Phi_{A \tensor B}\left(\CU\right) \cap A \tensor B_{s^*s}\right) = s \cdot \left(A \tensor \Phi_B\left(\CU\right) \cap A \tensor B_{s^*s}\right) = A \tensor B_{s \cdot \left(\CU \cap s^*s\right)} = \Phi_{A \tensor B}\left(s \cdot \left(\CU \cap s^*s\right)\right),
  \end{align*}
  as desired.
\end{proof}

With only slightly more effort we can generalize the previous construction a bit further: we can also allow non trivial actions too.

\begin{lemma} \label{lemma:fell-bundle-a-tensor-b}
  Suppose \(S \actson A\) is an action and \(\CB = (B_s)_{s \in S}\) is a Fell bundle.
  Let \(\CA\) be the Fell bundle associated to \(S \actson A\) in \eqref{eq:fell-bundle-from-action}.
  If \(A\) is nuclear then \(\CA \tensor \CB = (A_s \tensor B_s)_{s \in S}\) admits a canonical Fell bundle structure over \(A \tensor B\).
  Moreover, if \(A,B\) are \((S,X)\) algebras for a common \(X\) then so is \(A \tensor B\).
\end{lemma}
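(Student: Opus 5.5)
We build the bundle structure fibrewise and reduce every axiom to the trivial case of \cref{lemma:fell-bundle-a-tensor-b-trivial}, i.e.\ to \cite{buss-martinez-approx-prop-23}*{Lemma 5.2}.

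\emph{Fibres.} Since \(\CA\) comes from an action via \eqref{eq:fell-bundle-from-action}, \(A_s = A_{ss^*}\) is an ideal of \(A\), viewed as an \(A_{ss^*}\)-\(A_{s^*s}\) imprimitivity bimodule through \(s\):
\begin{itemize}
\item the left structure is multiplication in \(A_{ss^*}\);
\item the right structure is \(a_s \cdot a = s(s^*(a_s)a)\) for \(a \in A_{s^*s}\);
\item the inner products are \(\linnerprod{a_s}{a'_s} = a_s a_s'^*\) and \(\rinnerprod{a_s}{a'_s} = s^*(a_s^* a'_s)\).
\end{itemize}
Because \(A\) is nuclear, the minimal and maximal tensor products agree for all ideals and quotients appearing here. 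The exterior tensor product \(A_s \tensor B_s\) of Hilbert bimodules is then a Hilbert \(A \tensor B\)-bimodule, with inner products \(\linnerprod{a\tensor b}{a'\tensor b'} = \linnerprod{a}{a'} \tensor \linnerprod{b}{b'}\) and similarly on the right. Its left and right ideals are \(A_{ss^*}\tensor B_{ss^*}\) and \(A_{s^*s}\tensor B_{s^*s}\).

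\emph{Multiplication and axioms.} We define \(\mu_{s,t}\) on elementary tensors by
\[
(a_s \tensor b_s)(a_t \tensor b_t) \coloneqq \mu^{\CA}_{s,t}(a_s,a_t) \tensor \mu^{\CB}_{s,t}(b_s,b_t),
\]
and extend by bilinearity and continuity. Continuity holds because the map is isometric for the inner products: the inner products multiply componentwise, and the norms of the exterior tensor product are computed from them. We then check:
\begin{itemize}
\item \emph{Saturation.} Since \(\mu^{\CA}_{s,t}\) and \(\mu^{\CB}_{s,t}\) are isomorphisms \(A_s\tensor_{A} A_t\cong A_{st}\) and \(B_s\tensor_B B_t\cong B_{st}\), the induced map \((A_s\tensor B_s)\tensor_{A\tensor B}(A_t\tensor B_t) \to A_{st}\tensor B_{st}\) is unitary. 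This is the standard identification of balanced tensor products of exterior products.
\item \emph{Axiom (i).} For \(e\in E\) the fibre \(A_e\tensor B_e\) is an ideal of \(A\tensor B\). Its span over \(e \in E\) is dense, since \(A_eA_f=A_{ef}\) and likewise for \(B\), so \(\spn{\bigcup_e A_e\tensor B_e}\) contains \(\spn{\bigcup_e A_e}\odot\spn{\bigcup_f B_f}\) intersected over common lower bounds \(ef\).
\item \emph{Axioms (ii) and (iii).} These hold elementwise, because they hold separately in each tensor factor.
\end{itemize}

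\emph{The \((S,X)\)-structure.} The action of \(S\) on \(A\tensor B\) induced by this bundle is \(s\tensor s\) on \(A_{s^*s}\tensor B_{s^*s}\). We set
\[
\Phi_{A\tensor B}(\CU)\coloneqq \closure{\Phi_A(\CU)\tensor B + A\tensor \Phi_B(\CU)}.
\]
The natural alternative \(\Phi_A(\CU)\tensor\Phi_B(\CU)\) would also be order preserving. We prefer the sum because it makes the fibres \((A\tensor B)_x\) equal to \(A_x\tensor B_x\), using exactness from nuclearity. This definition is clearly order preserving. Equivariance \eqref{eq:equiv-struct-map} then follows by applying \(s\tensor s\): since \(s\) is a \Star{}isomorphism on each factor, it maps \(\Phi_A(\CU)A_{s^*s}\tensor B_{s^*s}\) onto \(\Phi_A(s(\CU\cap s^*s))\tensor B_{ss^*}\), and similarly for the other summand. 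Closures and sums commute with the isomorphism.

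\emph{Main obstacle.} The delicate step is saturation together with the compatibility of the twisted right action \(s^*(\cdot)\) on \(A_s\) with the bimodule isomorphism \(\mu^{\CB}_{s,t}\). One must verify that
\[
\rinnerprod{(a_s\tensor b_s)(a_t\tensor b_t)}{(a'_s\tensor b'_s)(a'_t\tensor b'_t)} = \rinnerprod{a_t\tensor b_t}{\rinnerprod{a_s\tensor b_s}{a'_s\tensor b'_s}\,(a'_t\tensor b'_t)},
\]
which reduces to the corresponding identities in \(\CA\) and \(\CB\) separately. I would check it on elementary tensors and use density.
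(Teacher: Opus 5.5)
\paragraph{Main gap: the structure map is not equivariant.} Your Fell bundle part follows the paper: componentwise multiplication \((a_s\tensor b_s)(a_t\tensor b_t)=a_sa_t\tensor b_sb_t\), with nuclearity making \(A_s\tensor B_s\) unambiguous, only in more detail. The genuine gap is in the \((S,X)\)-structure. Your \(\Phi_{A\tensor B}(\CU)=\closure{\Phi_A(\CU)\tensor B+A\tensor\Phi_B(\CU)}\) does \emph{not} satisfy \eqref{eq:equiv-struct-map}. Put \(\CV\coloneqq s(\CU\cap s^*s)\).
\begin{itemize}
\item The right-hand side \(s\bigl(\Phi_{A\tensor B}(\CU)\cdot(A_{s^*s}\tensor B_{s^*s})\bigr)\) always lies in the range ideal \(A_{ss^*}\tensor B_{ss^*}\). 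Your computation correctly identifies it with \(\closure{\Phi_A(\CV)\tensor B_{ss^*}+A_{ss^*}\tensor\Phi_B(\CV)}\).
\item The left-hand side is \(\closure{\Phi_A(\CV)\tensor B+A\tensor\Phi_B(\CV)}\), with the full algebras \(B\) and \(A\) in the second slots. You silently replaced \(B\) by \(B_{ss^*}\).
\end{itemize}
These differ as soon as domain ideals are proper. Take \(S=\opensets{X}\) as in \cref{ex:opensets-x}, with \(X=\{1,2\}\) discrete and \(A=B=\contz{X}=\CC^2\). Set \(A_W=B_W=\Phi_A(W)=\Phi_B(W)=\contz{W}\) for open \(W\subseteq X\). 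Choose \(s=\{1\}\) and \(\CU=X\). Then the left-hand side is the three-dimensional ideal \(\CC e_1\tensor\CC^2+\CC^2\tensor\CC e_1\), while the right-hand side is \(\CC\,e_1\tensor e_1\). Your version is fine for group actions, where all domains are full, but not for genuinely partial actions.

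\paragraph{The fix.} Use the choice you considered and discarded, which is the paper's: \(\Phi_{A\tensor B}(\CU)\coloneqq\Phi_A(\CU)\tensor\Phi_B(\CU)\).
\begin{itemize}
\item Equivariance of \(\Phi_A\) gives \(\Phi_A(\CV)=s(\Phi_A(\CU)A_{s^*s})\subseteq A_{ss^*}\), and likewise for \(B\).
\item For ideals \(I\subseteq A_{s^*s}\) and \(J\subseteq B_{s^*s}\) one has \(s\cdot(I\tensor J)=(s\cdot I)\tensor(s\cdot J)\).
\end{itemize}
With these two facts your computation goes through verbatim. Your motivation via fibres \((A\tensor B)_x\cong A_x\tensor B_x\) is beside the point. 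An \((S,X)\)-algebra only requires an order preserving map satisfying \eqref{eq:equiv-struct-map}; nothing is required of the fibres.

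\paragraph{A smaller remark on axiom (i).} Your justification of density is not an argument, and density can in fact fail without an extra hypothesis. Take the semilattice \(S=\{e,f,0\}\) with \(ef=0\), acting trivially on \(A=B=\CC^2\), with \(A_e=B_e=\CC\oplus0\), \(A_f=B_f=0\oplus\CC\) and \(A_0=B_0=0\). The span of \(\bigcup_g A_g\tensor B_g\) is then only a two-dimensional ideal of \(\CC^4\). Density does hold when any two idempotents \(e,f\) have a common upper bound \(g\) (e.g.\ when \(S\) is a monoid), since then \(A_e\tensor B_f\subseteq A_g\tensor B_g\). The paper's ``routine'' check passes over this as well, so it is a caveat on the statement rather than on your route. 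You should still not assert it unconditionally.
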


\begin{proof}
  Note that the expression \(\CA \tensor \CB\) a priori does not make sense, for it could well be the case that \(A_s \odot B_s\) admits several tensor products.
  This, however, is never so, see \cite{buss-martinez-approx-prop-23}*{Equation (5.3)} and recall that \(A\) is assumed to be nuclear, and hence so are its ideals \(A_{s^*s}\) for all \(s \in S\).
  We wish to define the multiplication table of the collection \((A_{s} \tensor B_s)_{s \in S}\), which is done as follows:
  \begin{align*}
    \left(A_{s} \tensor B_s\right) \times \left(A_{t} \tensor B_t\right) & \to A_{st} \tensor B_{st}, \\
    \left(a_{s} \tensor b_s\right) \left(a_{t} \tensor b_t\right) & \coloneqq a_s a_t \tensor b_s b_t.
  \end{align*}
  Checking that the above equips the collection \((A_{s} \tensor B_{s})_{s \in S}\) with a Fell bundle structure is routine.
  Suppose now that \(\Phi_A, \Phi_B \colon \opensets{X} \to \idealsin{A}, \idealsin{B}\) are the structure maps for \(A\) and \(B\) respectively, and suppose that they are compatible with the induced actions from \(S\).
  We wish to equip \(A \tensor B\) with a compatible \(X\)-structure which, again, is done as one think it is done:
  \[
    \Phi_{A \tensor B} \colon \opensets{X} \to \idealsin{A \tensor B}, \;\; \Phi_{A \tensor B}\left(\CU\right) \coloneqq \Phi_A\left(\CU\right) \tensor \Phi_B\left(\CU\right).
  \]
  The assignment \(\Phi_{A \tensor B}\) is order preserving, for both \(\Phi_A\) and \(\Phi_B\) are.
  Moreover, the \(X\)-structure of \(A \tensor B\) arising from \(\Phi_{A \tensor B}\) is compatible with the one coming from the Fell bundle \(\CA \tensor \CB\) (recall \eqref{eq:equiv-struct-map}):
  \begin{align*}
    \Phi_{A \tensor B}\left(s \cdot \left(\CU \cap s^*s\right)\right) & = \Phi_A\left(s \cdot \left(\CU \cap s^*s\right)\right) \tensor \Phi_B\left(s \cdot \left(\CU \cap s^*s\right)\right) = \left(s \cdot \left(\Phi_A\left(\CU\right) A_{s^*s}\right)\right) \tensor \left(s \cdot \left(\Phi_B\left(\CU\right) B_{s^*s}\right)\right) \\
    & = s \cdot \left(\Phi_A\left(\CU\right) A_{s^*s} \tensor \Phi_B\left(\CU\right) B_{s^*s}\right) = s \cdot \left(\Phi_{A \tensor B}\left(\CU\right) \left(A_{s^*s} \tensor B_{s^*s}\right)\right),
  \end{align*}
  as desired.
\end{proof}

Observe that it follows from \cref{lemma:fell-bundle-a-tensor-b-trivial} that we can consider the \cstar{}algebras \(\maxalg[D_0]{\czeror \tensor \CB \tensor \CK}\) and \(\redalg[D_0]{\czeror \tensor \CB \tensor \CK}\), where \(\CK\) is the algebra of compact operators of a separable Hilbert space.
For future applications, see \cref{sec:equiv-eth}, it is convenient to prove now that these ``trivial'' extensions are well-behaved.

\begin{proposition} \label{prop:trivial-extensions-maxalg}
  In the setting of \cref{lemma:fell-bundle-a-tensor-b}, if the action \(S \actson A\) is inner, then \(\maxalg[D_0]{A \tensor \CB} \cong A \tensor \maxalg[D_0]{\CB}\), and similarly for \(\redalg[D_0]{\variable}\).
  In particular this is the case if the action \(S \actson A\) is trivial.
\end{proposition}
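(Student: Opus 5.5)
The plan is to reduce the inner case to the trivial one, and then handle the trivial case in two layers: first for \(D_0 = \emptyset\), then for a general \(D_0\) by tracking the ideals of \cref{def:cross-algebra} and \cref{lemma:alt-desc-max-dzero} through the tensor product.

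\emph{Reduction to the trivial action.} That \(S \actson A\) is inner means there are partial isometries \(u_s \in \multialg{A}\) with \(u_s^*u_s\) the unit of \(A_{s^*s}\), \(u_s u_s^*\) the unit of \(A_{ss^*}\), \(s(a) = u_s a u_s^*\), and \(u_{st} = u_s u_t\). I would define fibrewise maps
\[
  A_{ss^*} \tensor B_s \to A_{ss^*} \tensor B_s, \;\; a \tensor b_s \mapsto a u_s \tensor b_s ,
\]
where the target carries the Fell bundle structure of the \emph{trivial} action as in \cref{lemma:fell-bundle-a-tensor-b-trivial}. The twisted product of \eqref{eq:fell-bundle-from-action} is \(a_s \cdot a_t = s(s^*(a_s) a_t) = a_s u_s a_t u_s^*\). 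Multiplying on the right by \(u_{st} = u_s u_t\) gives \(a_s u_s \, a_t u_t\), so the maps turn the twisted product into the trivial one. Routine checks then show they preserve inner products and the bimodule structures, hence give an isomorphism of Fell bundles. This isomorphism is the identity on the unit fibre \(A \tensor B\), so it preserves the \((S,X)\)-structure and therefore the ideal \(J_{D_0}\). It thus induces isomorphisms of the \(\maxalg[D_0]{\variable}\) and \(\redalg[D_0]{\variable}\) algebras, and it suffices to treat the trivial action.

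\emph{Trivial action, \(D_0 = \emptyset\).} For the maximal algebra I would compare universal properties. The elementary tensors \(a \tensor b_s\) give a \Star{}homomorphism \(A \odot \algalg{\CB} \to \algalg{A \tensor \CB}\), and it suffices to produce inverse representations.
\begin{itemize}
  \item Given a representation \(\pi\) of \(A \tensor \CB\), extend \(\pi_1\) to multipliers using an approximate unit \((v_\mu)\) of \(B\) (\cref{lemma:approx-unit}). This yields commuting representations of \(A\) (via \(a \mapsto \lim_\mu \pi_1(a \tensor v_\mu)\)) and of \(\CB\).
  \item These assemble into a representation of \(A \tensor_{\full} \maxalg{\CB}\), which equals \(A \tensor \maxalg{\CB}\) because \(A\) is nuclear.
  \item Conversely, a representation of \(A \tensor \maxalg{\CB}\) restricts to one of the Fell bundle \(A \tensor \CB\).
\end{itemize}
For the reduced algebra, I would observe that \(A \tensor \redalg{\CB}\) carries the weak conditional expectation \(\id_A \tensor \redcondexp\), which is faithful because slice maps detect positivity in minimal tensor products. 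On monomials it agrees with \(\redcondexp_{A \tensor \CB}\), since \(1_s\) for \(A \tensor \CB\) is \(1 \tensor 1_s\). Uniqueness of the reduced completion as the one carrying a faithful expectation then identifies \(\redalg{A \tensor \CB}\) with \(A \tensor \redalg{\CB}\).

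\emph{General \(D_0\).} Write \(\CN_{D_0}(\variable)\) for the ideal of \cref{lemma:description-j-d}. The key claim is
\[
  \CN_{D_0}(A \tensor \CB) = A \tensor \CN_{D_0}(\CB) \quad \text{inside } A \tensor \redalg{\CB},
\]
together with the analogous statement for the closures of the algebraic ideals appearing in \cref{lemma:alt-desc-max-dzero}. Granting this, exactness of the nuclear algebra \(A\) gives
\[
  \left(A \tensor C\right)/\left(A \tensor N\right) \cong A \tensor \left(C/N\right)
\]
for \(C = \redalg{\CB}, \maxalg{\CB}\) and the corresponding ideal \(N\), which finishes the proof.

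To prove the claim, I would first show that \(J_{D_0}\) for \(A \tensor B\) is the closure of \(A \odot J_{D_0}(B)\) (in the appropriate bidual sense), using that the fibres are \((A \tensor B)_x = A \tensor B_x\). Then I would show that \(x \in A \tensor \redalg{\CB}\) lies in \(\CN_{D_0}(A \tensor \CB)\) if and only if every slice \((\omega \tensor \id)(x)\) lies in \(\CN_{D_0}(\CB)\). The forward direction uses the compatibility \((\omega \tensor \id)(\id_A \tensor \redcondexp) = \redcondexp(\omega \tensor \id)\). Exactness of \(A\), in the form of the slice-map property \(A \tensor N = \{x : (\omega \tensor \id)(x) \in N \text{ for all } \omega\}\), then gives the equality.

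I expect this last step to be the main obstacle. The expectation \(\redcondexp\) lands in \(B^{**}\) rather than \(B\), and \(J_{D_0}\) is only defined inside the bidual, so the slice-map argument has to be run carefully in \(A^{**} \bar{\tensor} B^{**}\) and then brought back. In the maximal case one must additionally check that the closure in \(\maxalg{\CB}\) of the algebraic ideal tensors correctly with \(A\).
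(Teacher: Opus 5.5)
Your route differs from the paper's and is more robust, but the reduction step is stated in a generality where the statement is false. The paper splits a \(D_0\)-representation of \(A\tensor\CB\) into commuting representations of \(\CA\) and \(\CB\). It then claims \(\maxalg{\CA}\tensor\maxalg[D_0]{\CB}\) has the universal property of \(\maxalg[D_0]{A\tensor\CB}\), and uses innerness to identify \(\maxalg{\CA}\) with \(A\). The \(D_0\)-compatibility of the split representation is only asserted, and the reduced case is not treated. The identification \(\maxalg{\CA}\cong A\) already fails for the trivial action of a nontrivial group \(\Gamma\), where \(\maxalg{\CA}\cong A\tensor\maxalg{\Gamma}\).

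You never need \(\maxalg{\CA}\):
the map \(a\tensor b_s\mapsto au_s\tensor b_s\) untwists the action, your splitting lands in \(A\tensor_{\full}\maxalg{\CB}\), and the reduced case follows from faithfulness of \(\id_A\tensor\redcondexp\). The \(D_0\)-part becomes the ideal identity \(\CN_{D_0}(A\tensor\CB)=A\tensor\CN_{D_0}(\CB)\) plus exactness. Your cocycle condition \(u_{st}=u_su_t\), which the paper's definition of ``inner'' omits, is necessary. For \((\ZZ/2)^2\) acting on \(M_2\) by conjugation with the Pauli matrices, and \(\CB\) the trivial bundle over \(\CC\), one has \(M_2\rtimes(\ZZ/2)^2\cong M_4\not\cong M_2\tensor\maxalg{(\ZZ/2)^2}\).

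The concrete error is that you allow partial isometries with \(u_s^*u_s=1_{A_{s^*s}}\). Then your map identifies \(\CA\tensor\CB\) with \((A_{ss^*}\tensor B_s)_{s\in S}\) carrying the untwisted product. This is the bundle \((A\tensor B_s)_{s\in S}\) of \cref{lemma:fell-bundle-a-tensor-b-trivial} only if \(A_{s^*s}=A\) for all \(s\), i.e.\ the \(u_s\) are unitaries of \(\multialg{A}\). Your splitting argument implicitly needs this as well. Without it the conclusion is false. Take \(S=\{1,g,0\}\) with \(g^2=1\), and \(A=\CC^2\) with \(A_0=\CC\oplus 0\), every element acting identically (\(u_1=u_g=1\), \(u_0=1\oplus 0\)). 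Let \(\CB\) be the bundle with all fibres \(\CC\). The inclusions \(B_0\subseteq B_1\) and \(B_0\subseteq B_g\) force \(\delta_g=\delta_1\), so \(A\tensor\maxalg{\CB}=\CC^2\). In \(\maxalg{\CA\tensor\CB}\) this collapse happens only over the first summand, giving \(\CC\oplus\maxalg{\ZZ/2}\cong\CC^3\). So assume \(u_s\) unitary; this still covers the trivial action.

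A second hypothesis also needs stating. Your identification \((A\tensor B)_x=A\tensor B_x\) holds, by exactness, for the structure \(A\tensor\Phi_B\) of \cref{lemma:fell-bundle-a-tensor-b-trivial}. It does not hold for \(\Phi_A\tensor\Phi_B\) of \cref{lemma:fell-bundle-a-tensor-b}. With the latter, the \(D_0\)-statement already fails for \(S=\{1\}\), \(X=\{x,y\}\) discrete, \(A=B=\CC^2\) with coordinate structures and \(D_0=\{x\}\): you get \(\CC^4/\CC(e_1\tensor e_1)\cong\CC^3\) against \(\CC^2\). State this choice explicitly.

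Under these hypotheses, the step you flag as the main obstacle works as you suggest, without leaving \(A\tensor B^{**}\). The kernel of \(A\tensor C\to A\tensor(C/N)\) is the Fubini product \(\{z:(\omega\tensor\id)(z)\in N \text{ for all } \omega\in A^*\}\), which equals \(A\tensor N\) because \(A\) is exact. Take \(z\in A\tensor\redalg{\CB}\). The element \((\id_A\tensor\redcondexp)(z^*z)\in A\tensor B^{**}\) lies in \(J_{D_0}\) iff \(\id_A\tensor q_y^{**}\) kills it for every \(y\notin D_0\). State slices separate points of \(A\tensor B_y^{**}\) and commute with \(\id\tensor\redcondexp\) and \(\id\tensor q_y^{**}\). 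So the condition says exactly that \((\omega\tensor\id)(z^*z)\in\CN_{D_0}(\CB)\) for all states \(\omega\), i.e.\ \(z\in A\tensor\CN_{D_0}(\CB)\). In the maximal case, slices of elements of \(\algalg{A\tensor\CB}\) lie in \(\algalg{\CB}\). Applying the same argument to \(A\tensor\maxalg{\CB}\to A\tensor\maxalg[D_0]{\CB}\) shows the closed ideal of \cref{lemma:alt-desc-max-dzero} for \(A\tensor\CB\) is \(A\) tensored with the one for \(\CB\).
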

\begin{proof}
  Recall that the action \(S \actson A\) is \emph{inner} if for all \(s \in S\) there is some unitary \(v_s \in \multialg{A_{s^*s}}\) such that \(s \cdot a_{s^*s} = v_s a_{s^*s} v_{s^*}\) for all \(a_{s^*s} \in A_{s^*s}\).
  Likewise, the action is \emph{trivial} if every \(s \in S\) acts like the identity on \(A\).\footnote{\, For the inverse semigroup reader, note that \(S\) could well have a zero element \(0\), in which case \(0\) would still act like the identity on \(A\).}

  For the proof, as usual, \(A\) will denote the \cstar{}algebra, while \(\CA\) will denote the Fell bundle arising from the action \(S \actson A\) as in \eqref{eq:fell-bundle-from-action}.
  Instead of explicitly constructing an isomorphism
  \[
    \Phi \colon \maxalg[D_0]{A \tensor \CB} \to A \tensor \maxalg[D_0]{\CB},
  \]
  we will first show that \(\maxalg{\CA} \tensor \maxalg[D_0]{\CB}\) satisfies the universal property defining \(\maxalg[D_0]{A \tensor \CB}\), and then show that \(\maxalg{\CA}\) is just \(A\).
  (Notice that \(A\) is nuclear and thus there is no confusion as to which tensor product \(\tensor\) one is using.)

  Let \(\pi \colon A \tensor \CB \to \CL(H)\) be a \(D_0\)-representation of the Fell bundle \(A \tensor \CB\).
  This amounts to linear bounded maps \(\pi_s \colon A_{ss^*} \tensor B_s \to \CL(H)\) satisfying certain covariant conditions.
  In particular \(\pi_{s^*s} \colon A_{s^*s} \tensor B_{s^*s} \to \CL(H)\) is a representation of the maximal tensor product, and thus yields two representations \(\pi^A_{s^*s}\) and \(\pi^B_{s^*s}\) with commuting images.
  We may now extend the \(\CB\)-family to \(\pi^B_s \colon B_s \to \CL(H)\) by decreeing \(\pi_s^B(b_s \cdot b_{s^*s}) \coloneqq \pi_s(b_s) \pi_{s^*s}^B(b_{s^*s})\) (here we use that the Fell bundle \(\CB\) is saturated, so that \(B_s = B_s B_{s^*s}\)).
  The family \(\pi^B \coloneqq \{\pi^B_s\}_{s \in S}\) is then a \(D_0\)-representation of \(\CB\), so it induces (by definition) a map \(\pi^B \colon \maxalg[D_0]{\CB} \to \CL(H)\).
  Since \(\spn{A_e : e \in E}\) is dense in \(A\) it also follows that a similarly defined family \(\{\pi_{s}^A\}_{s \in S}\) extends to a representation \(\pi^A \colon \CA \to \CL(H)\) whose range, moreover, commutes with that of \(\pi^B\) (for the same holds for \(\pi_{s^*s}^A\) and \(\pi_{s^*s}^B\)).
  By definition of the maximal tensor product, there is thus some representation \(\pi^A \tensor \pi^B \colon \maxalg{\CA} \tensor \maxalg[D_0]{\CB} \to \CL(H)\), as desired.

  Thus, in order to finish the proof it suffices to show that \(A \cong \maxalg{\CA}\), and this is where the innerness assumption comes into play.
  Notice that elements in \(\maxalg{\CA}\) are (limits of) finite linear combinations of monomials \(v_s a_{s^*s}\), subject to the usual commutation relations given in \eqref{eq:fell-bundle-from-action}.
  As \(v_s \in \multialg{A_{s^*s}}\) and \(a_{s^*s} \in A_{s^*s}\) it follows that every monomial generating \(\maxalg{\CA}\) is contained in \(A\), and since these are dense one obtains that \(A \cong \maxalg{\CA}\).
\end{proof}

\subsection{A brief interlude about non-equivariant E-theory} \label{sec:non-equiv-eth}
This section, unlike the previous ones, is only expository, and thus contains no proofs.
Reading through this is, nonetheless, a worthwhile endeavor, for this gives a glimpse onto the necessary ingredients one should harvest when constructing a Baum-Connes type assembly map, which will be properly done in \cref{sec:equiv-eth}.

Fix some Fell bundles \(\CA = (A_s)_{s \in S}\) and \(\CB = (B_s)_{s \in S}\) of an inverse semigroup over some \((S, X)\)-algebras \(A\) and \(B\).
Likewise, fix some invariant \(D_0 \subseteq X\).
One may want to consider an ``\(S\)-equivariant \Eth-theory'' group ``with respect to \(D_0\)'', tentatively termed \(\Egrp[S,D_0]{\CA}{\CB}\), via 
\begin{align*}
  \Egrp{\maxalg[D_0]{\CA}}{\maxalg[D_0]{\CB}} = \llbracket \czeror \tensor \maxalg[D_0]{\CA} \tensor \CK, \czeror \tensor \maxalg[D_0]{\CB} \tensor \CK \rrbracket,
\end{align*}
that is, the group of homotopy classes of asymptotic morphisms
\[
  \czeror \tensor \maxalg[D_0]{\CA} \tensor \CK \; \asymparrow \; \czeror \tensor \maxalg[D_0]{\CB} \tensor \CK.
\]  
The latter group should really be understood as the group of homotopy classes of \((S, X, D_0)\)-equivariant asymptotic morphisms \(\czeror \tensor \CA \tensor \CK \asymparrow \czeror \tensor \CB \tensor \CK\).
This is a point of view that would neatly justify the adornment ``\((S, D_0)\)'' of \(\Egrp[S,D_0]{\CA}{\CB}\).
This notwithstanding, in order to properly justify this nomenclature we would need to prove there is some ``descent map''
\[
  \Egrp[S,D_0]{\CA}{\CB} = \left\{\rm equivariant \; \Eth-theory\right\} \to \left\{\rm normal \; \Eth-theory\right\} = \Egrp{\maxalg[D_0]{\CA}}{\maxalg[D_0]{\CB}}
\]
that is well-behaved.
This, in turn, asks for asymptotic morphisms \(\CA \asymparrow \CB\) that are ``equivariant'', or even ``equivariant up to \(D_0\)'', to induce asymptotic morphisms \(\maxalg[D_0]{\CA} \asymparrow \maxalg[D_0]{\CB}\).
Likewise, should one have an extension \(0 \to \CI \to \CA \to \CB \to 0\) that is equivariant, then one ought to obtain an element in \Eth-theory as well.
In order to construct an equivariant category we shall thus need to work quite a bit: we need functoriality properties, and some descent map.
All of this discussion will be made precise and clear in \cref{sec:equiv-eth}.

\section{Essential algebras of Fell bundles} \label{sec:apps-ess}

In this section we show \cref{thm-intro-ess}, \emph{i.e.}\ that our construction \(\redalg[D_0]{\CB}\) really does include Kwa\'sniewski-Meyer's \emph{essential} \cstar{}algebra, see \cite{KwasniewskiMeyer2021}*{Definition 4.4}, along with the \emph{essential maximal} \cstar{}algebra defined by Buss and the author, see \cite{buss-martinez-ess-ame-2024}*{Definition 3.20}, at least when \(B\) is type I.
Fix a Fell bundle \(\CB = (B_s)_{s \in S}\) over some \cstar{}algebra \(B\).
Given some \(s \in S\) recall \cref{def:ideals-i-s-t}, which we now restrict to
\begin{equation} \label{eq:i-s-1}
  I_{1,s} \coloneqq \langle B_e : e \in E \text{ and } e \leq s \rangle.
\end{equation}
This is, by construction, an ideal in \(B_{s^*s} \subseteq B\), and thus corresponds to an open set in \(\primidealspc{B}\).
More concretely, it corresponds to \(\primidealspc{I_{1,s}} \cong \{P \in \primidealspc{B} : I_{1,s} \not\subseteq P\}\).
We let \(D_{s}\) be the boundary of \(\primidealspc{I_{1,s}}\) in \(\primidealspc{B_{s^*s}}\), that is:
\begin{equation} \label{eq:d-s}
  D_{s} \coloneqq \closure{\primidealspc{I_{1,s}}} \cap \primidealspc{B_{s^*s}} \setminus \primidealspc{I_{1,s}}.
\end{equation}
Such \(D_{s}\) is, by construction, a closed and nowhere dense subset of \(\primidealspc{B_{s^*s}}\), which is open.
Lastly, we let \(D \coloneqq \bigcup_{s \in S} D_{s}\), which is some subset of \(\primidealspc{B}\).
We observe the following.

\begin{lemma} \label{lemma:d-inv-countable}
  \(D\) is invariant. Moreover, if \(\CCC \subseteq S\) is such that \(S = \CCC E\), then \(D = \bigcup_{c \in \CCC} D_c\).
  In particular, if \(S\) is quasi-countable then \(D\) is Borel and meager in \(\primidealspc{B}\).
\end{lemma}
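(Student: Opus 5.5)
The proof has three parts: invariance of \(D\), reduction of the union to \(\CCC\), and the quasi-countable consequence. In all three we use the partial homeomorphisms \(t \colon \primidealspc{B}_{t^*t} \to \primidealspc{B}_{tt^*}\) from \cref{prop:setting-actions}, which are built from the Rieffel correspondence. So \(t\) sends the open set of an ideal \(J \subseteq B_{t^*t}\) to the open set of \(B_t J B_{t^*}\).

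\emph{Invariance.} Take \(P \in D_s\) and \(t \in S\) with \(P \in \primidealspc{B}_{t^*t}\). We aim to show \(tP \in D_{tst^*}\). Set \(U \coloneqq \primidealspc{B}_{t^*t}\), which is open. Then \(P \in D_s \cap U\), and we claim
\[
  D_s \cap U = \closure{\primidealspc{I_{1,s}} \cap U} \cap \primidealspc{B_{(st^*t)^*(st^*t)}} \setminus \primidealspc{I_{1,s}} ,
\]
because closures commute with intersecting an open set in the following sense: if \(x \in U\) lies in \(\closure{A}\), then \(x \in \closure{A \cap U}\). Next compute with the Fell bundle relations. If \(e \leq s\) then \(tet^* \leq tst^*\), and the Rieffel transport gives \(B_t(I_{1,s}B_{t^*t})B_{t^*} \subseteq I_{1,tst^*}\). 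Symmetrically, if \(f \leq tst^*\) then \(t^*ft \leq t^*tst^*t\), which yields the reverse containment. Hence \(t\) carries \(\primidealspc{I_{1,s}} \cap U\) onto \(\primidealspc{I_{1,tst^*}} \cap \primidealspc{B}_{tt^*}\), and similarly for the ambient domains \(\primidealspc{B_{s^*s}} \cap U\). Since \(t\) is a homeomorphism between open sets, it preserves relative closures and boundaries. Therefore \(tP \in D_{tst^*} \subseteq D\).

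\emph{Reduction to \(\CCC\).} Let \(s = ce\) with \(c \in \CCC\) and \(e \in E\). Then \(I_{1,s} = I_{1,c} \cdot B_e\): indeed, \(f \leq ce\) iff \(f \leq c\) and \(f \leq e\), and products of ideals correspond to intersections of open sets. Also \(B_{s^*s} = B_{c^*c}B_e\). So \(\primidealspc{I_{1,s}} = \primidealspc{I_{1,c}} \cap V\) and \(\primidealspc{B_{s^*s}} = \primidealspc{B_{c^*c}} \cap V\), where \(V = \primidealspc{B_e}\) is open. Applying the same closure-versus-open-intersection fact as above gives \(D_s = D_c \cap V \subseteq D_c\). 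This proves \(D = \bigcup_{c \in \CCC} D_c\).

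\emph{The quasi-countable case.} If \(S\) is quasi-countable we may take \(\CCC\) countable. Each \(D_c\) is the intersection of a closed set with an open set, so it is locally closed and hence Borel. Each \(D_c\) is also nowhere dense in \(\primidealspc{B}\): it is nowhere dense inside the open set \(\primidealspc{B_{c^*c}}\), being the boundary of an open set there, and a set that is nowhere dense in an open subspace is nowhere dense in the whole space. A countable union of Borel nowhere dense sets is Borel and meager.

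The main obstacle is the invariance step. The identification of \(tI_{1,s}\) with \(I_{1,tst^*}\), restricted to domains, needs care with the orders \(e \leq s\) versus \(tet^* \leq tst^*\), and with the restriction to \(U\). I would first check this on idempotents generating the ideals and then pass to generated ideals, using that Rieffel transport respects ideal generation.
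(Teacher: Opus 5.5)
\textbf{The gap is in the invariance step.} Your reduction to \(\CCC\) and the quasi-countable part are fine. The identity \(I_{1,ce}=I_{1,c}B_e\), which gives \(D_{ce}=D_c\cap\primidealspc{B_e}\), is a bit sharper than the paper's contradiction argument. Your locally-closed/nowhere-dense reasoning correctly justifies ``Borel and meager''.

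The failure is at ``and similarly for the ambient domains''. Write \(O_s\) for the open set of \(I_{1,s}\). Your Rieffel transport correctly gives \(t(O_s\cap U)=O_{tst^*}\). However, \(t\) carries \(\primidealspc{B_{s^*s}}\cap U\) onto the open set of the idempotent \(ts^*st^*\). The domain of \(tst^*\) is instead the open set of \((tst^*)^*(tst^*)=ts^*t^*tst^*\), namely \(t(\{P\in s^*s\cap t^*t : sP\in t^*t\})\), and this is in general strictly smaller. Since \(D_{tst^*}\) is the boundary of \(O_{tst^*}\) relative to this smaller set, transporting the relative boundary only yields \(tP\in\closure{O_{tst^*}}\setminus O_{tst^*}\) and \(tP\in t(s^*s\cap t^*t)\). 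To conclude \(tP\in D_{tst^*}\) you still need \(sP\in t^*t\), and nothing in your argument provides it.

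\textbf{The paper's fix and its limits.} The paper supplies this step by noting that \(P\in\closure{O_s}\) is a limit of points fixed by \(s\), so \(sP=P\in t^*t\). This uses uniqueness of limits. It is automatic when \(X\) is Hausdorff, but it can fail for a non-Hausdorff \(\primidealspc{B}\).

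Here is an example. Let \(A=\{f\in C_0(\RR,M_2): f(0)\ \text{diagonal}\}\); its primitive ideal space is the line with two origins \(0_a,0_b\). Let \(u\) be the flip matrix and put \(B_s=Au\subseteq C_0(\RR,M_2)\). Then \(s\) swaps the two origins, and \(C_0(\RR\setminus\{0\},M_2)=B_e\subseteq B_s\) for an idempotent \(e\le s\). Hence \(0_a\in D_s\) while \(s0_a=0_b\).

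Now take any \(t\) whose domain is \(\primidealspc{A}\setminus\{0_b\}\). Then \(t0_a\) is not even in the domain of \(tst^*\). If moreover \(t\) moves this set onto a separate Hausdorff component, one can check that \(t0_a\) lies in no \(D_r\) at all.

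So the missing step cannot be repaired by more careful bookkeeping of ideals under Rieffel transport. You need a separate argument that \(sP\in t^*t\) for \(P\in D_s\cap t^*t\), and that requires a Hausdorff-type hypothesis, such as the one implicit in the paper's fixed-point step.
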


\begin{proof}
  These statements follow from basic considerations of inverse semigroup actions and have nothing to do with primitive ideal spaces.
  Thus, let \(X \coloneqq \primidealspc{B}\), and let \(S \actson X\) be the canonical action in \cref{prop:setting-actions}.
  We may let \(O_t \coloneqq \cup_{e \in E, e \leq t} e\), which is an open subset of \(t^*t\) (and also of \(tt^*\)).
  The sets \(D_t\) are the boundaries of \(O_t\) in \(t^*t\), that is, \(D_t = \closure{O_t} \cap t^*t \setminus O_t\) (by definition).
  Let \(x \in D_t \cap s^*s\) be given.
  We claim that \(sx \in D_{sts^*}\).
  We have that \(x \not\in O_t\), but there is a net \(\{x_\lambda\}_\lambda \subseteq O_t\) such that \(x_\lambda \to x\).
  Without loss of generality we may assume that \(\{x_\lambda\}_\lambda \subseteq s^*s\).
  It is not hard to check (see also \cite{buss-martinez-approx-prop-23}*{Remark 2.13 and Proposition 2.17}) that \(sx_\lambda \subseteq O_{sts^*}\).
  Indeed, if \(x_\lambda \in e\) for some idempotent \(e \leq t\), then \(sx_\lambda \in ses^*\), and the latter is an idempotent underneath \(sts^*\).
  Hence \(sx \in \closure{O_{sts^*}}\).
  The fact that \(sx \not\in O_{sts^*}\) is due to \(s \colon s^*s \to ss^*\) being a homeomorphism: should it be the case \(sx \in O_{sts^*}\) then \(x = s^*sx \in O_{s^*sts^*s} \subseteq O_{t}\), contradicting the assumption on \(x\).
  Lastly, we have to show that \(sx \in st^*s^*sts^* = (sts^*)^* (sts^*)\).
  Recall that \(x \in t^*t\) by assumption, which implies that \(sx \in st^*ts^*\).
  Moreover, \(x\) being in the closure of \(O_t\), which are fixed points, implies that \(tx = x\), which, in turn, means that \(sx \in st^*s^*sts^*\), as desired.

  The alternative description of \(D\) is similar. Let \(\CCC\) generate \(S\) modulo the idempotents.
  Clearly \(\bigcup_{c \in \CCC} D_c \subseteq D\), so it suffices to prove that \(D_t \subseteq \bigcup_{c \in \CCC} D_c\) for all \(t \in S\).
  Fix some \(t \in S\).
  By the assumption on \(\CCC\) there is some \(c \in \CCC\) such that \(t \leq c\).
  We claim that then \(D_t \subseteq D_c\).
  Take some \(x \in D_t = \closure{O_t} \cap t^*t \setminus O_t\).
  By \(t \leq c\) it follows that \(O_t \subseteq O_c\) and \(t^*t \subseteq c^*c\), implying \(x \in \closure{O_c} \cap c^*c\).
  Suppose it were the case that \(x \in O_{c}\), that is, suppose that \(x \in e\) for some idempotent \(e \leq c\).
  It would then follow that \(et\) is also an idempotent, for \(et = ect^*t = e t^*t\), which indeed is an idempotent (here we are using that \(e = ce = ec\)).
  Moreover, \(x \in et = et^*t\), as \(x \in e\) and \(x \in t^*t\). But this implies that \(x \in O_{t}\), contradicting the assumption on \(x\).

  The ``in particular'' statement follows from \(\primidealspc{B}\) being a Baire space.
\end{proof}

The following shows our functors \(\maxalg[D_0]{\variable}\) and \(\redalg[D_0]{\variable}\) generalize the main one of \cites{Kwasniewski-Meyer:Pure_infiniteness,KwasniewskiMeyer2021,exel-pitts-grpds-2022}, at least when \(B\) is type I and \(S\) is quasi-countable.
The latter condition is necessary for the study of separable algebras.

\begin{theorem} \label{thm:ess-alg-is-jd}
  Let \(\CB = (B_s)_{s \in S}\) be a Fell bundle over some separable type I \cstar{}algebra \(B\), where \(S\) is quasi-countable.
  Let \(D = \bigcup_{s \in S} D_{s}\) be as in \eqref{eq:d-s}.
  The identity map \(\id \colon \redalg{\CB} \to \redalg{\CB}\) descends to a \Star{}isomorphism \(\redalg[D]{\CB} \cong \essalg{\CB}\), where the latter is defined as in \cite{Kwasniewski-Meyer:Pure_infiniteness}.
\end{theorem}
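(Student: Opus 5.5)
Both \(\redalg[D]{\CB}\) and \(\essalg{\CB}\) are quotients of \(\redalg{\CB}\) by the left kernels of generalized conditional expectations. For \(\redalg[D]{\CB}\) this is \(\condexp_D = q_D \circ \redcondexp\), where \(q_D \colon B^{**} \to B^{**}/J_D\) (\cref{prop:cond-exp-kills-dzero,lemma:description-j-d}). For \(\essalg{\CB}\), Kwa\'sniewski--Meyer take \(EL \colon \redalg{\CB} \to I(B)\), or equivalently to the local multiplier algebra, which is \(\redcondexp\) followed by the canonical map \(\theta \colon B^{**} \supseteq \mathrm{dom} \to I(B)\). The plan is to show the two nuclei coincide: for positive \(a \in \redalg{\CB}\),
\[
  \redcondexp(a) \in J_D \iff EL(a) = 0 .
\]
Once the nuclei agree, \(\id\) descends to a \Star{}isomorphism.

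First I reduce to monomials. Since \(\redcondexp(b_s) = b_s 1_s\), and \(EL(b_s)\) is the image of \(b_s\) under the local-multiplier map, which only sees \(b_s\) on the \emph{regular open} hull of \(\primidealspc{I_{1,s}}\) (\cite{KwasniewskiMeyer2021}), I describe both expectations fibrewise over \(X = \primidealspc{B}\). Using type I and separability, \(B^{**}\) contains the atomic part \(\prod_{P} \CL(H_P)\), and each \(B_x\) is type I. Thus \(b \in B^{**}\) is detected by its values \(b(x)\) for \(x \in X\), and \(J_D\) consists of the elements whose support lies in \(D\). The element \(\redcondexp(b_s) = b_s 1_s\) has support in \(O_s = \primidealspc{I_{1,s}}\). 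The element \(EL(b_s)\), by contrast, is the extension of \(b_s|_{O_s}\) to the interior of its closure, taken in the local multiplier algebra, where meager sets are invisible. The discrepancy between the two is therefore supported on \(\closure{O_s} \cap s^*s \setminus O_s = D_s\), together with a meager set.

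Second, I show that \(\ker \theta \cap \{\text{elements relevant to } \redcondexp(\redalg{\CB})\}\) equals \(J_D\) intersected with that range. The inclusion ``\(\supseteq\)'': by \cref{lemma:d-inv-countable}, \(D\) is Borel and meager. An element of \(\redcondexp(\redalg{\CB})\) supported on a meager set has zero image in the local multiplier algebra. To see this, note that its lower semicontinuous norm function (cf.\ \cref{example:cfunz}) is nonzero only on a set with empty interior, so by the Baire property (\cref{lemma:sober}) it vanishes on a dense open set. Hence it is killed by an essential ideal, which is exactly the kernel of \(\theta\). The inclusion ``\(\subseteq\)'': if \(\theta(\redcondexp(a)) = 0\), then \(\redcondexp(a)\) vanishes on a dense open set \(U\). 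The plan is to show \(U\) can be taken to be \(X \setminus D\), using that \(\redcondexp(a)\) is a norm limit of finite sums \(\sum b_{s_i} 1_{s_i}\). Each such sum is fibrewise continuous off \(\bigcup_i D_{s_i}\), because off \(D_{s_i}\) the projection \(1_{s_i}\) is locally either \(0\) or \(1\). A section that vanishes on a dense open set and is continuous on \(X \setminus D\) then vanishes on \(X \setminus D\).

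\textbf{Main obstacle.} The main obstacle is the last continuity/approximation argument. Norm limits preserve vanishing pointwise, but the ``continuity off \(D\)'' must survive the limit in a uniform enough sense. I would handle this with lower semicontinuity of \(x \mapsto \norm{b(x)}\) for \(b \in B\) (\cite{blackadar-book-2010}*{II.6.5.6}), combined with the type I hypothesis, so that fibres are computed by irreducible representations and \(B^{**}\)-values at points make sense. I would also need to check that this identification matches Kwa\'sniewski--Meyer's definition via \(I(B)\), or via the Borel quotient by meager sets as in \cite{buss-martinez-ess-ame-2024}*{Proposition 3.19}. I would cite their description of \(\essalg{\CB}\) for type I \(B\) as \(\redalg{\CB}\) modulo the elements whose expectation is supported on a meager set, which reduces the theorem to the equality of nuclei above.
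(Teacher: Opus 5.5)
Your outline has the same architecture as the paper's proof. You compare the nuclei of \(\condexp[D]\) and of the essential expectation inside \(\prod_x B_x^{**}\), using separability to land in the Borel envelope and type I to make \(q\) isometric there. Meagerness of \(D\) gives one inclusion and a Baire/discontinuity argument the other. However, two load-bearing steps fail as written.

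\textbf{Step ``\(\supseteq\)''.} You claim that elements of \(\redcondexp(\redalg{\CB})\) have lower semicontinuous norm functions. They do not: they are sums such as \(a_s 1_s - a_e\), and \cref{example:cfunz} only concerns monomials. If they did, your argument would prove too much. The support would be open, and an open meager set in a Baire space is empty, so every element of \(\redcondexp(\redalg{\CB})\) with meager support would vanish, forcing \(\essalg{\CB} = \redalg{\CB}\). This is false. Take \cref{ex:0-1-Gamma} with \(\Gamma = \ZZ/2\), identity \(e\) and non-trivial element \(g\), and a bump \(\phi\) with \(\phi(e) = 1\). Then \(a \coloneqq \phi\,1_{[0,1)\sqcup\{e\}} - \phi\,1_{[0,1)\sqcup\{g\}}\) has \(\redcondexp(a^*a) = 2\delta_e \neq 0\).

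You also cannot cite \cite{buss-martinez-ess-ame-2024}*{Proposition 3.19}: it is the groupoid (commutative) case, and its type I extension is what is being proved. The paper makes this inclusion immediate by passing, via the Borel envelope and \eqref{eq:diagram:e-l}, to \((\prod_x B_x^{**})/\widetilde{J}\), where \(\widetilde{J}\) is the meager-support ideal.

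If you want to link meager support to \(J_{\ess}\) yourself, approximate. Take algebraic \(a_n\) with \(\norm{a - a_n} \leq \varepsilon_n\), and let \(F_n\) be the finite set of \(s\) occurring in \(a_n\). Then \(\redcondexp(a_n)\) is locally a genuine multiplier, hence has lower semicontinuous norm, only on the dense open set \(U_n \coloneqq \primidealspc{B} \setminus \bigcup_{s \in F_n} (\closure{O_s} \setminus O_s)\). On \(U_n\) its norm is \(\leq \varepsilon_n\) off a meager set, hence everywhere on \(U_n\). So \(\redcondexp(a)\) lies within \(2\varepsilon_n\) of \(\redcondexp(a)(1 - 1_{U_n}) \in J_{\ess}\), and \(J_{\ess}\) is norm closed.

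\textbf{Step ``\(\subseteq\)''.} You justify continuity off \(\bigcup_i D_{s_i}\) by saying \(1_{s_i}\) is locally \(0\) or \(1\) there. This fails at points \(x \in \closure{O_s} \setminus s^*s\). Such points are not in \(D_s\), yet every neighbourhood meets both \(O_s\) and its complement. Because \(\primidealspc{B}\) need not be Hausdorff, \(\norm{a_s(y)}\) need not tend to \(0\) as \(y \to x\), even though \(a_s^*a_s \in B_{s^*s}\) vanishes at \(x\). (Also, membership in the norm closure \(J_{\ess}\) only yields vanishing off a meager set, not on a dense open set; that is all Baire needs.)

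What works is a one-sided estimate instead of continuity. Let \(c = \sum_{s \in F} c_s\) be algebraic and \(x \notin D\). Take a neighbourhood \(V\) of \(x\) on which \(\redcondexp(c) = b + T\), where:
\begin{itemize}
\item \(b = \sum c_s 1_{e_s}\), summed over the \(s\) with \(x \in O_s\), with \(e_s \leq s\) and \(x \in e_s\); so \(b\) is a genuine multiplier on \(V\);
\item \(T = \sum_{s \in H} c_s 1_s\), with \(H \coloneqq \{s \in F : x \in \closure{O_s} \setminus s^*s\}\).
\end{itemize}
Then \(b^*b \leq 2(b+T)^*(b+T) + 2\abs{H}\sum_{s \in H} c_s^*c_s\). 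The element \(b^*b - 2\abs{H}\sum_{s \in H} c_s^*c_s\) is genuine near \(x\) and equals \(b^*b\) at \(x\). Lower semicontinuity then gives \(\norm{\redcondexp(c)(x)} \leq \sqrt{2}\,\varepsilon\) whenever \(\norm{\redcondexp(c)} \leq \varepsilon\) on a dense subset of \(V\).

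Apply this to \(c = a_n\) on the comeager zero set of \(\redcondexp(a)\); the estimate passes to the norm limit. That is exactly the obstacle you flagged but did not resolve; the paper's own Claim also only treats finite sums \(a = \sum_s a_s\). With these two repairs your outline becomes a proof; as written it does not yet prove the theorem.
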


The proof of \cref{thm:ess-alg-is-jd} is somewhat technical, and will necessitate some previous discussion.
The \emph{Borel algebra of \(B\)}, denoted by \(\borelalg{B}\), was introduced in \cite{davies-1968} as a ``Borel counterpart'' of the, in general, much larger \(B^{**}\).
For now we introduce \(\borelalg{B}\) as a natural (norm closed) subset of \(B^{**}\). 

\begin{definition} [cf.\ \cite{davies-1968}*{Section 3}]
  The \emph{\(\sigma\)-envelope of \(B\)} is the smallest \(\sigma\)-closed family \(\borelalg{B} \subseteq B^{**}\) of bounded linear functionals on the states of \(B\) containing \(B\), that is, if \(b_n \to b\) for a sequence \(\{b_n\}_{n \in \NN} \subseteq B \subseteq B^{**}\) then \(b \in \borelalg{B}\). 
\end{definition}

By definition we have that \(\borelalg{B} \subseteq B^{**}\) is a norm-closed subalgebra.
Recall that the \emph{atomic representation} \(\lambda \colon B \to \CL(H)\) of a \cstar{}algebra \(B\) is the sum of all the irreducible representations of \(B\).
By \cite{davies-1968}*{Theorem 3.2} we can define \(\borelalg{B}\) as the \(\sigma\)-closure of \(\lambda(B)\), that is, if \(\lambda(b_n) \to c\) in the weak operator topology for a \emph{sequence} \(\{b_n\}_{n \in \NN} \subseteq B\) then \(c \in \lambda(\borelalg{B})\).
A convenient sub-product of the construction of \(\borelalg{B}\) is the following simple observation which translates, in the commutative world, to the well known fact that the pointwise limit of continuous functions is Borel.

\begin{lemma} \label{lemma:ptw-limit-cont-is-borel}
  If \(b, \{u_n\}_{n \in \NN} \subseteq B\) are such that \(b u_n \to c \in B^{**}\) weakly, then \(c \in \borelalg{B}\).
\end{lemma}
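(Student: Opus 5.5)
We use the characterization just recalled: by \cite{davies-1968}*{Theorem 3.2}, \(\borelalg{B}\) is the set of \(c\in B^{**}\) for which there is a \emph{sequence} \((c_n)_{n}\subseteq B\) with \(\lambda(c_n)\to\lambda(c)\) in the weak operator topology. Here \(\lambda\) is the atomic representation, and \(\borelalg{B}\) is identified with its image under \(\lambda^{**}\). The goal is to exhibit the sequence \(c_n\coloneqq bu_n\), which lies in \(B\) because \(B\) is an algebra.

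First, note that weak convergence \(bu_n\to c\) in \(B^{**}\) means convergence in the \(\sigma(B^{**},B^*)\)-topology. This is the weak\(^*\) topology, which agrees with the ultraweak topology of the universal enveloping von Neumann algebra. The atomic representation \(\lambda\colon B\to\CL(H)\) extends to a normal \Star{}homomorphism \(\lambda^{**}\colon B^{**}\to\lambda(B)''\), and normal maps are ultraweakly continuous. Hence \(\lambda(bu_n)=\lambda^{**}(bu_n)\to\lambda^{**}(c)\) ultraweakly, and in particular in the weak operator topology.

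Second, we verify that \(\lambda^{**}\) is faithful on the relevant elements, so that membership of \(c\) in \(\borelalg{B}\) can legitimately be read off from \(\lambda^{**}(c)\). If the identification is only up to this map, this is harmless. Alternatively, one can avoid the issue entirely by working directly with the defining property of \(\borelalg{B}\) as a \(\sigma\)-closed family of functionals on the state space. The condition \(bu_n\to c\) weakly means \(\varphi(bu_n)\to c(\varphi)\) for every state \(\varphi\). This is precisely sequential pointwise convergence on states, which is the closure operation defining \(\borelalg{B}\) in \cite{davies-1968}*{Section 3}.

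The only delicate point is matching the notion of ``\(b_n\to b\)'' in the definition of \(\borelalg{B}\), namely pointwise convergence on states, with the hypothesis. Both are the same sequential pointwise convergence, so the conclusion \(c\in\borelalg{B}\) follows immediately from the definition. Note that \(\sup_n\norm{bu_n}<\infty\) holds automatically by the uniform boundedness principle, should boundedness be required.
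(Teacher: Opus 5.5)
Your proposal is correct and is essentially the paper's argument: \(bu_n \in B\), and the hypothesis is exactly the sequential weak convergence under which \(\borelalg{B}\) is closed by definition. The paper phrases this through irreducible representations and you phrase it through states; your detour through faithfulness of \(\lambda^{**}\) is unnecessary (and not actually verified), since \(c\) itself, not merely \(\lambda^{**}(c)\), is the weak limit of \(bu_n\) in \(B^{**}\).
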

The remarkably silly phrasing of this lemma is due to how it is used, see the proof of \cref{lemma:p-takes-values-borel}.
\begin{proof}[Proof of \cref{lemma:ptw-limit-cont-is-borel}]
  Observe \(b u_n \in B\) and that \(\pi(bu_n) \to \pi(c)\) weakly for all irreducible representations \(\pi \colon B \to \CL(H)\).
  It thus follows from the definition of \(\borelalg{B}\) that the limit of \(\{b u_n\}_{n \in \NN}\) also belongs to \(\borelalg{B}\).
\end{proof}

Essentially by construction and the fact that if \(B\) is type I then the canonical map \(\widehat{B} \to \primidealspc{B}\) is a homeomorphism, see \cite{blackadar-book-2010}*{Theorem IV.1.5.7}, we have that if \(B\) is type I then the canonical map \(q \colon B^{**} \to \prod_{P \in \primidealspc{B}} (B/P)^{**}\) is isometric on \(\borelalg{B}\) (see also \cite{davies-1968}*{Theorem 3.2}).
In particular, if \(\CB = (B_s)_{s \in S}\) is a Fell bundle over \(B\) and \(\redcondexp(\redalg{\CB}) \subseteq \borelalg{B}\) then we could, without any problems, assume that the weak conditional expectation \(\redcondexp\) takes the form of
\[
  \redcondexp \colon \redalg{\CB} \to B^{**} \to \prod_{P \in \primidealspc{B}} \left(B/P\right)^{**}
\]
by post-composing with \(q \colon B^{**} \to \prod_{P \in \primidealspc{B}} (B/P)^{**}\).
This is the content of the following lemma.

\begin{lemma}[cf.\ \cite{KwasniewskiMeyer2021}*{Lemma 3.21}] \label{lemma:p-takes-values-borel}
  If \(B\) is separable then \(\redcondexp \colon \redalg{\CB} \to B^{**}\) takes values in \(\borelalg{B}\).
\end{lemma}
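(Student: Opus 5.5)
Since \(\redcondexp\) is a contractive linear map, \(\borelalg{B}\) is norm closed in \(B^{**}\), and \(\algalg{\CB}\) is dense in \(\redalg{\CB}\), it is enough to prove \(\redcondexp(b_s) \in \borelalg{B}\) for a single monomial \(b_s \in B_s\), \(s \in S\). By definition \(\redcondexp(b_s) = b_s 1_s\), where \(1_s\) is the unit of \(I_{1,s}^{**}\) and \(I_{1,s} = \langle B_e : e \le s\rangle\). The plan is to write \(1_s\) as a strict (hence weak) limit of a \emph{sequence} \((u_n)_{n}\) in \(I_{1,s}\). Then \(b_s u_n \to b_s 1_s\) weakly, and \cref{lemma:ptw-limit-cont-is-borel} will finish the argument, provided we know \(b_s u_n \in B\).

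\textbf{Step 1: the sequence \((u_n)\).} Because \(B\) is separable, its ideal \(I_{1,s}\) is separable and therefore has a countable increasing approximate unit \((u_n)_{n \in \NN}\). An increasing approximate unit of a \cstar{}algebra converges strongly (in any representation, in particular the universal one) to the unit of its bidual. So \(u_n \to 1_s\) in the \(\sigma\)-weak topology of \(B^{**}\). Since multiplication by a fixed element is weakly continuous, \(b_s u_n \to b_s 1_s\) weakly.

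\textbf{Step 2: \(b_s u_n \in B\).} This is the step where the Fell bundle structure is needed, and I expect it to be the only real obstacle. The products \(b_s u_n\) lie in \(B_s \cdot I_{1,s}\). For each idempotent \(e \le s\), \(B_s B_e \cong B_{se}\), and \(se = e'\) is an idempotent because \(e \le s\). Hence \(B_s B_e \subseteq B_{e'} \subseteq B\), and this inclusion is compatible with the embedding of \(B_s\) into \(\redalg{\CB}\); this is the same identification used in \cite{buss-martinez-approx-prop-23}*{Proposition 2.17} to define \(\redcondexp\) on monomials. Since \(I_{1,s}\) is the closed span of \(B \cdot B_e \cdot B\) over such \(e\), and these are ideals, \(I_{1,s}\) is the closed span of the \(B_e\) with \(e \le s\) (using \(B_e B \subseteq B_e\)). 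Therefore \(B_s I_{1,s} \subseteq \closure{\spn{\cup_{e \le s} B_{se}}} \subseteq B\), and so \(b_s u_n \in B\) for every \(n\).

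\textbf{Step 3: conclusion.} \cref{lemma:ptw-limit-cont-is-borel} applies to the sequence \(b_s u_n \in B\) (its phrasing really only uses that \(b_s u_n \in B\) converges weakly). It gives \(\redcondexp(b_s) = b_s 1_s \in \borelalg{B}\). By linearity, \(\redcondexp\) then maps \(\algalg{\CB}\) into \(\borelalg{B}\), and by norm continuity together with the closedness of \(\borelalg{B}\), it maps all of \(\redalg{\CB}\) into \(\borelalg{B}\). Separability is used only in Step 1, to replace a net by a sequence; this is exactly why the \(\sigma\)-envelope \(\borelalg{B}\), rather than all of \(B^{**}\), is reached.
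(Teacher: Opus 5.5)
Your proof is correct and follows essentially the same route as the paper's: reduce to monomials by continuity, approximate \(1_s\) weakly by a sequence drawn from the separable ideal, and apply \cref{lemma:ptw-limit-cont-is-borel}. Your Step 2 (taking \(u_n \in I_{1,s}\), so that \(b_s u_n \in B_s I_{1,s} \subseteq B\)) makes explicit a point the paper leaves implicit: the paper only takes \(u_{s,n} \in B\) and then applies the lemma, even though \(b_s \notin B\).
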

\begin{proof}
  As \(\borelalg{B}\) is norm closed and \(\redcondexp\) is continuous it suffices to show that \(\redcondexp(b_s) \in \borelalg{B}\) for all monomials \(b_s \in B_s\) and \(s \in S\).
  For such monomials we have that \(\redcondexp(b_s) = b_s 1_s\) by definition of \(\redcondexp\).
  Recalling that \(1_s\) is the unit of \(I_{s,1}^{**}\) it is clear that \(1_s\) can be approximated strongly (hence also weakly) by elements \(\{u_{s, n}\}_{n \in \NN} \subseteq B\).
  The claim that \(b_s 1_s \in \borelalg{B}\) then follows from \cref{lemma:ptw-limit-cont-is-borel}.
\end{proof}

\begin{proof}[Proof of \cref{thm:ess-alg-is-jd}]
  We first recall the construction of \(\essalg{\CB}\).
  For this, we will combine \cite{Kwasniewski-Meyer:Pure_infiniteness} and \cite{AraMathieu2003}*{Theorem 2.9.3}.
  Recall that an ideal \(L \subseteq B\) is \emph{essential} if it intersects any other non-trivial ideal.
  Let
  \(
    J_{\ess} \coloneqq \closure{\left\{b \in B^{**} : b p_L = 0 \; \text{ for some essential ideal } \, L \subseteq B\right\}},
  \)
  where \(p_L \in B^{**}\) is the open central projection corresponding to \(L\).
  Similarly to \cref{prop:cond-exp-kills-dzero}, we denote by \(E_\CL\) the \emph{essential conditional expectation}, that is, the map
  \[
    E_\CL \colon \redalg{\CB} \xrightarrow{\redcondexp} B^{**} \to B^{**}/J_{\ess}, \;\; a \mapsto \redcondexp\left(a\right) + J_{\ess}.
  \]
  The essential \cstar{}algebra \(\essalg{\CB}\) is nothing but \(\redalg{\CB}\) modulo the nucleus of \(E_\CL\), \emph{i.e.}\ the ideal \(\{a : E_\CL(a^*a) = 0\} = \{a : \redcondexp(a^*a) \in J_{\ess}\}\).
  By \cref{lemma:p-takes-values-borel} and the discussion immediately prior to it, the map \(\widetilde{\redcondexp}\) defined by
  \[
    \widetilde{\redcondexp} \colon \redalg{\CB} \xrightarrow{\redcondexp} B^{**} \xrightarrow{q} \prod_{P \in \primidealspc{B}} \left(B/P\right)^{**}
  \]
  is faithful.
  Indeed, it is the composition of a faithful map and a map that is isometric on the image of the first one.
  For convenience let \(X \coloneqq \primidealspc{B}\), whose points will be denoted by \(x, y \in X\).
  Likewise, let \(B_x\) be the corresponding quotient \(B/P\) (where \(x = P\)).
  We wish to induce the expectation \(E_\CL\) down to \(\prod_{x \in X} B_x^{**}\).
  Similarly to the definition of \(J_{\ess}\) above, let \(\widetilde{J} \subseteq \prod_{x \in X} B_x^{**}\) be the ideal generated by all tuples \(b = (b_x)_{x \in X}\) with meager support, \emph{i.e.}\ \(\{x : b_x \neq 0\} \subseteq X\) is meager.
  Then let \(\widetilde{E_\CL}\) be the generalized conditional expectation
  \[
    \widetilde{E_\CL} \colon \redalg{\CB} \xrightarrow{P} B^{**} \xrightarrow{q} \prod_{x \in X} B_x^{**} \to \left(\prod_{x \in X} B_x^{**}\right)/\widetilde{J}.
  \]
  Likewise, note that the generalized conditional expectation \(\condexp[D]\) also ``lives'' in \(\prod_{x \in X} B_x^{**}\), and hence it makes sense to compare \(\widetilde{E_\CL}\) and \(\condexp[D]\).
  We claim that the nucleus of \(E_\CL, \widetilde{E_\CL}\) and \(\condexp[D]\) are all equal.
  Since the notation may be overwhelming, we hope the commuting diagram helps:
  \begin{equation} \label{eq:diagram:e-l}
    \begin{tikzcd}[scale=50em]
        \redalg{\CB} \arrow{r}{P} \arrow{dr}{E_\CL} & B^{**} \arrow{r}{q} \arrow{d}{} & \prod_{x \in X} B_x^{**} \arrow{r}{} \arrow{d}{} & \prod_{x \in X \setminus D} B_x^{**} \\
        & B^{**}/J_{\ess} \arrow{r}{q_\CL} & \left(\prod_{x \in X}B_x^{**}\right)/\widetilde{J}. & 
    \end{tikzcd}
  \end{equation}
  The nameless arrows are the canonical quotient maps.
  The top horizontal row is precisely \(\condexp[D]\); and \(\widetilde{E_\CL} = q_\CL \circ E_\CL\).
  We also note that the map \(q_\CL\) exists because \(q(J_{\ess}) \subseteq \widetilde{J}\).
  In order to check this, let \(y \in J_{\ess}\) be some element such that \(y p_L = 0\) for some essential ideal \(L \subseteq B\), where \(p_L \in B^{**}\) is the open central projection corresponding to \(L\).
  As \(L\) is essential we have that \(\primidealspc{L} \subseteq X\) is open and dense, so that \(\supp{y} = \supp{y (1-p_L)} \subseteq \supp{1-p_L} = X \setminus \primidealspc{L}\) is meager, as desired.

  The diagram \eqref{eq:diagram:e-l} essentially shows that the nucleus of \(E_\CL\) and \(\widetilde{E_\CL}\) are equal.
  Indeed, \(q\) is isometric on the image of \(\redcondexp\), cf.\ \cref{lemma:p-takes-values-borel}, so \(q_\CL\) is isometric on the image of \(E_\CL\).
  Thus, in order to prove that \(\redalg[D]{\CB} \cong \essalg{\CB}\) it suffices to show that the nucleus of \(\widetilde{E_\CL}\) and \(\condexp[D]\) are equal.
  For this, we will show that
  \begin{equation} \label{eq:want-ess}
    \left(q \circ \redcondexp\right)\left(\redalg{\CB}\right) \cap J_{D} = \left(q \circ P\right)\left(\redalg{\CB}\right) \cap \widetilde{J}.
  \end{equation}
  In order to simplify notation (and since it does not really matter, see \cref{lemma:p-takes-values-borel} and the immediately prior discussion), we will assume that \(\redcondexp = \widetilde{\redcondexp}\) already takes values in \(\prod_{x \in X} B_x^{**}\).
  Note that we have to intersect with the image of \(\redcondexp\) on both sides of \eqref{eq:want-ess}, otherwise the statement would be false.

  We start proving ``\(\subseteq\)'' in \eqref{eq:want-ess}.
  Let \(\redcondexp(a) \in J_D\) be given.
  This precisely means that \(\redcondexp(a)(x) = 0\) when \(x \not\in D\).
  We wish to show that \(\redcondexp(a) \in \widetilde{J}\), that is, we wish to show that \(\{x : \redcondexp(a)(x) \neq 0\}\) is meager in \(X\).
  But this is clear, for the former is contained in \(D\) by the assumption on \(a\), and \(D \subseteq X\) is meager by construction, since \(S\) is quasi-countable, see \cref{lemma:d-inv-countable}.

  We turn to the proof of ``\(\supseteq\)'' in \eqref{eq:want-ess}.
  Take some contraction \(a \in \redalg{\CB}\) such that \(\redcondexp(a) \in \widetilde{J}\).
  The latter precisely means that \(\supp{\redcondexp(a)} \subseteq X\) is meager.
  As \(X = \primidealspc{B}\) is Baire we have that any co-meager set, such as \(X \setminus \supp{\redcondexp(a)}\), is dense.
  It thus suffices to the prove the following.

  \begin{claim}
    If the complement of \(\supp{\redcondexp(a)}\) in \(X\) is dense then \(\redcondexp(a) \in J_D\).
  \end{claim}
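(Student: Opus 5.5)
The plan is to prove the Claim first for elements of the form \(\redcondexp(c)\) with \(c \in \algalg{\CB}\), where \(\redcondexp(c)\) is ``locally an element of \(B\)'' away from \(D\), and then pass to general \(a\) by a norm approximation. The approximation step only needs a quantitative version of the Claim. Fix \(x \in X \setminus D\) and \(\varepsilon > 0\); we want \(\norm{\redcondexp(a)(x)} \leq 2\varepsilon\). Choose \(c = \sum_{i=1}^n b_{s_i}\) with \(b_{s_i} \in B_{s_i}\) and \(\norm{a - c} < \varepsilon\). Since \(\redcondexp\) and each \(q_y\) are contractive, \(\norm{\redcondexp(c)(y)} \leq \varepsilon\) for all \(y\) in the dense set \(X \setminus \supp{\redcondexp(a)}\). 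It therefore suffices to show: \emph{if \(\norm{\redcondexp(c)(y)} \leq \varepsilon\) on a dense set, then \(\norm{\redcondexp(c)(x)} \leq \varepsilon\) for every \(x \notin D\).}

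\emph{Step 1: local description of \(\redcondexp(b_s) = b_s 1_s\) near a point \(x \notin D\).}
\begin{enumerate}
  \item If \(x \notin s^*s\), then \(b_s 1_s \in I_{1,s}^{**} \subseteq B_{s^*s}^{**}\), and \(B_{s^*s}\) lies in the primitive ideal \(x\). Hence \((b_s 1_s)(x) = 0\).
  \item If \(x \in s^*s\), then \(x \notin D_s\) forces either \(x \in O_s\) (with \(O_s\) the open set of \(I_{1,s}\), as in the proof of \cref{lemma:d-inv-countable}) or \(x \notin \closure{O_s}\).
  \begin{enumerate}
    \item If \(x \notin \closure{O_s}\), then \(b_s 1_s\) vanishes on the whole open neighbourhood \(X \setminus \closure{O_s}\), since it lives in \(I_{1,s}^{**}\).
    \item If \(x \in O_s\), pick an idempotent \(e \leq s\) with \(x \in e\). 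On the open set \(e\) we have \(b_s 1_s \cdot 1_e = b_s 1_e\). Moreover \(b_s 1_e = b_s \cdot 1_e\) is the image of \(b_s\) in \(B_s B_e = B_{se} = B_e\), because \(se = e\) is an idempotent. So on \(e\), \(b_s 1_s\) agrees fiberwise with a genuine element of \(B\).
  \end{enumerate}
\end{enumerate}

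\emph{Step 2: the semicontinuity argument.} Intersect the finitely many neighbourhoods from case (ii) over the indices \(i\) with \(x \in s_i^*s_i\); call the result \(U\). Let \(c'\) be the part of \(c\) made of those indices, and let \(c''\) be the rest. On \(U\), \(\redcondexp(c')\) agrees fiberwise with some \(b \in B\), and \(\redcondexp(c'')(x) = 0\). If \(\redcondexp(c'')\) also vanished on a neighbourhood of \(x\), we would be done. Indeed, \(y \mapsto \norm{b(y)}\) is lower semicontinuous (\cite{blackadar-book-2010}*{Proposition II.6.5.6}), so \(\{y \in U : \norm{b(y)} > \varepsilon\}\) is open. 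It misses a dense set, hence is empty, and so \(\norm{\redcondexp(c)(x)} = \norm{b(x)} \leq \varepsilon\). Letting \(\varepsilon \to 0\) then gives \(\redcondexp(a)(x) = 0\) for all \(x \notin D\), that is, \(\redcondexp(a) \in J_D\).

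\emph{Main obstacle.} The expected difficulty is exactly the terms in \(c''\): indices with \(x \notin s_i^*s_i\) but \(x \in \closure{s_i^*s_i}\). There \(\redcondexp(b_{s_i})\) vanishes at \(x\) but need not vanish nearby, because \(X\) is only \(T_0\) and \(X \setminus s_i^*s_i\) is closed rather than open. The first thing I would try is to localise instead of restricting to an open set. Multiply \(\redcondexp(c)\) by a positive element \(f \in \Phi_B(V)\), where \(V = U \cap \bigcap_{x \in s_i^*s_i} s_i^*s_i\), chosen via \cref{thm-dauns-hofmann} so that \(f(x) \neq 0\). Then \(f\,\redcondexp(c)\) is a norm limit of elements \(f b_{s} u_n\) of \(B\) (as in \cref{lemma:ptw-limit-cont-is-borel}), and I would aim to show it genuinely lies in \(B\), not just in \(\borelalg{B}\). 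Lower semicontinuity of \(y \mapsto \norm{(f\redcondexp(c))(y)}\) then applies directly to the whole sum. The bound \(\norm{f\,\redcondexp(c)(y)} \leq \norm{f}\varepsilon\) on a dense set gives the estimate at \(x\), after normalising by \(\norm{f(x)}\).

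If this fails, the fallback is to write \(\redcondexp(b_{s_i})^*\redcondexp(b_{s_i}) \leq (b_{s_i}^* b_{s_i}) 1_{s_i}\). Since \(b_{s_i}^*b_{s_i} \in B_{s_i^*s_i}\), one can then use compactness of the sets \(\{\norm{\variable} \geq \delta\}\) (\cite{blackadar-book-2010}*{Proposition II.6.5.6}) to control the \(c''\) terms on a neighbourhood of \(x\) up to \(\delta\).
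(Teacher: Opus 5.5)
Your reduction to a finite sum \(c\) and the case analysis in Step 1 are sound, with one correction. For \(e\le s\), \(b_s1_e\) is a multiplier of \(B_e\), not an element of \(B_e\) (already for a trivial action it is \(b_s\) restricted to \(e\)). This is harmless, because for \(m\in\multialg{B_e}\) the function \(y\mapsto\norm{m(y)}=\sup\{\norm{(mf)(y)} : f\in B_e,\ \norm{f}\le 1\}\) is still lower semicontinuous.

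The genuine gap is the \(c''\) part, and neither of your remedies closes it.
\begin{itemize}
\item In the primary plan, \(fb_su_n\to fb_s1_s\) holds only weak\(^*\), not in norm. This is why \cref{lemma:ptw-limit-cont-is-borel} only lands in \(\borelalg{B}\), and \(f\,\redcondexp(c)\in B\) can simply be false for your \(V\). Also, dividing by \(\norm{f(x)}\) needs \(f(x)\) to be invertible in \(B_x\).
\item The fallback fails because compact subsets of a \(T_0\) space need not be closed. So \(x\notin\{y:\norm{k(y)}\ge\delta\}\) does not give you a neighbourhood of \(x\).
\end{itemize}

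Both failures occur in a small example. Take \(B=\{f\in C([0,1],M_2): f(1)\ \text{diagonal}\}\), so \(X=[0,1)\sqcup\{1_a,1_b\}\) and \(t\to1\) converges to both \(1_a\) and \(1_b\). Let \(S\) be generated by the open subsets of \(X\) and one element \(s=s^*\) acting as the identity on \(B_{s^*s}=\{f: f(1)_{22}=0\}\), with \(e\le s\) exactly when \(e\subseteq[0,1)\). Then \(O_s=[0,1)\) and \(D=D_s=\{1_a\}\), so \(x\coloneqq 1_b\notin D\).
\begin{itemize}
\item Take \(b_s\) with \(b_s(1)=\mathrm{diag}(1,0)\). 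The term \(\redcondexp(b_s)=b_s1_s\) vanishes at \(x\) but has norm close to \(1\) on every neighbourhood of \(x\).
\item Moreover \(f\,\redcondexp(b_s)\notin B\) whenever \(f(1_a)\neq0\), because it jumps at \(1_a\).
\end{itemize}
So no argument that makes the \(c''\) part small near \(x\), or genuine near \(x\), can work.

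The missing idea is that the \(c''\) part only needs to be \emph{dominated} by a genuine element vanishing at \(x\). You then fold that element into the lower semicontinuity argument.
\begin{itemize}
\item Since \(1_s\) is central, \(\redcondexp(b_s)^*\redcondexp(b_s)=(b_s^*b_s)1_s\le b_s^*b_s\).
\item Put \(h\coloneqq\redcondexp(c'')\), let \(N\) be the number of indices with \(x\notin s_i^*s_i\), and set \(k\coloneqq N\sum b_{s_i}^*b_{s_i}\) over those indices. Then \(k\in B\) and \(h^*h\le k\). Also \(k(x)=0\), because \(b_{s_i}^*b_{s_i}\in B_{s_i^*s_i}\subseteq x\).
\item Let \(g\in\multialg{\Phi_B(U)}\) agree with \(\redcondexp(c')\) on \(U\). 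Then \(g=\redcondexp(c)-h\) there, so \(g^*g\le 2\,\redcondexp(c)^*\redcondexp(c)+2k\) fibrewise on \(U\).
\item Let \(\phi(t)\coloneqq\max(t-2\varepsilon^2,0)\). The multiplier \(\phi(g^*g-2k)\) vanishes on the dense subset of \(U\) where \(\norm{\redcondexp(c)(y)}\le\varepsilon\). By lower semicontinuity it vanishes on all of \(U\).
\item Evaluating at \(x\) gives \(\norm{\redcondexp(c)(x)}^2=\norm{g(x)}^2\le2\varepsilon^2\).
\end{itemize}
This replaces the end of your Step 2 and completes your argument, with \(\sqrt2\,\varepsilon\) in place of \(\varepsilon\).

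You cannot borrow this step from the paper. Its proof argues pointwise, singling out one term \(a_{s_0}1_{s_0}\) that is nonzero at \(x\) and vanishes along \(x_\lambda\). That ignores cancellation between terms, which is exactly your \(c''\) situation. It also tacitly treats \(a\) as a finite sum, which your \(\varepsilon\)-approximation handles correctly.
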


  \begin{proof}
    The proof of this claim has a similar flavor to some arguments in a previous version of \cite{buss-martinez-ess-ame-2024}*{Section 3}, and has to do with continuity.
    Take some \(x \in X\) such that \(\redcondexp(a)(x) \neq 0\), \emph{i.e.}\ \(x \in \supp{\redcondexp(a)}\).
    By the density assumption there is some net \(\{x_\lambda\}_\lambda \subseteq X\) such that \(x_\lambda \to x\) and \(\redcondexp(a)(x_\lambda) = 0\).
    Expressing \(a = \sum_{s \in S} a_s\) for some \(a_s \in B_s\), we have that \(\redcondexp(a) = \sum_{s \in S} a_s 1_s\).
    Hence \(\redcondexp(a)(x) = \sum_{s : x \in O_{s}} (a_s 1_s)(x)\), where \(O_s \coloneqq \cup_{e \in E, e \leq s} \primidealspc{B_e}\), which we see as an open subset of \(X\) (in the same manner as in \cref{lemma:d-inv-countable}).
    Likewise, \((a_s 1_s)(x)\) is defined to be the image of \(a_s 1_s \in B^{**}\) under the map \(B^{**} \to B_x^{**}\).
    By the previous discussion we have that
    \begin{equation} \label{eq:cont-at-x}
      \redcondexp\left(a\right)\left(x\right) = \sum_{s : x \in O_s} \left(a_s 1_s\right)\left(x\right) \neq 0 = \sum_{s : x_\lambda \in O_s} \left(a_s 1_s\right)\left(x_\lambda\right) = \redcondexp\left(a\right)\left(x_\lambda\right).\footnote{\, Note we are being sloppy, as the left hand side of \eqref{eq:cont-at-x} happens in fiber \(x\), whereas the right hand one happens in fiber \(x_\lambda\).
    Nevertheless, we are only saying one side is \(0\) and the other is not, so the meaning should be clear.}
    \end{equation}
    It hence follows that \(x\) is a ``point of discontinuity'' of some \(a_{s_0} 1_{s_0}\), \emph{i.e.}\ there is some \(s_0 \in S\) such that \((a_{s_0} 1_{s_0})(x) \neq 0 = (a_{s_0} 1_{s_0})(x_\lambda)\).
    Note this implies that \(X \ni y \mapsto \norm{(a_{s_0} 1_{s_0})(y)} \in \RR_{\geq 0}\) is not lower semi-continuous at \(x\), immediately implying, by \cite{blackadar-book-2010}*{Proposition II.6.5.6}, that \(a_{s_0} 1_{s_0} \not\in B\).
    However this is not enough: we have to explicitly use that \((a_{s_0} 1_{s_0})(x) \neq 0 = (a_{s_0} 1_{s_0})(x_\lambda)\).
    First of all this non-continuity implies \(x \in s_0^*s_0\). Secondly \(x \not\in O_{s_0}\), for if that were the case then \(x_\lambda \in O_{s_0}\) by openness of the latter, and then \(a_{s_0}\) would be continuous at \(x\), in the sense that \eqref{eq:cont-at-x} would not hold.
    Thirdly, \(x \in \closure{O_{s_0}}\), for, again, otherwise both terms of \eqref{eq:cont-at-x} would be \(0\).
    Whence \(x \in s_0^*s_0 \cap \closure{O_{s_0}} \setminus O_{s_0} = D_{s_0} \subseteq D\), as desired.
  \end{proof}

  By the previous discussions it follows that \eqref{eq:want-ess} holds whenever \(B\) is a type I separable \cstar{}algebra.
  As the left and right hand sides of \eqref{eq:want-ess} are exactly the nucleus of \(E_D\) and \(\widetilde{E_\CL}\) the identity map \(\id \colon \redalg{\CB} \to \redalg{\CB}\) descends to a \Star{}isomorphism \(\redalg[D]{\CB} \cong \essalg{\CB}\), finishing the proof.
\end{proof}

In light of \cref{thm:ess-alg-is-jd} we give the following.
\begin{definition}
  \(\essmaxalg{\CB}\) is the \cstar{}algebra \(\maxalg[D]{\CB}\), where \(D \subseteq \primidealspc{B}\) is as in \eqref{eq:d-s}.
\end{definition}

The following will be expanded in \cref{sec:apps}.
\begin{example}
  Let \(G\) be an \'etale groupoid whose unit space \(X \coloneqq G^{(0)}\) is Hausdorff.
  Let \(S \coloneqq {\rm Bis}(G)\) be the inverse semigroup of open bisections of \(G\).
  The canonical action \(S \actson X\) has \(G\) as its groupoid of germs (cf.\ \cref{sec:apps}).
  If we denote by \(\cfunz{X}\) the Fell bundle associated to \(S \actson \contz{X}\) via \eqref{eq:fell-bundle-from-action} then \cref{thm:ess-alg-is-jd} shows that \(\essmaxalg{G} \cong \maxalg[D]{\cfunz{X}}\), where \(D \subseteq X\) is the set of dangerous units, and \(\essmaxalg{G}\) is defined as in \cite{buss-martinez-ess-ame-2024}.
\end{example}

We also point the following out, which we hope will enlighten the notions we are studying.
\begin{example}
  Let \(\Gamma\) be a discrete group acting by automorphisms on a \cstar{}algebra \(A\). By means of \eqref{eq:fell-bundle-from-action} one may construct a Fell bundle \(\CA\), and then \(\maxalg{\CA} \cong A \rtimes_{\full} \Gamma\), and similarly for their reduced counterparts.
  In fact, these isomorphisms are extensions of a canonical algebraic isomorphism at the level of dense sub-algebras.

  On the other hand, suppose \(A \cong \contz{X}\), where \(X\) is a locally compact and Hausdorff space. Fix some invariant \(D_0 \subseteq X\).
  Then the \cstar{}algebra \(\maxalg[D_0]{\CA}\) is just the maximal \cstar{}algebra of the groupoid \(\Gamma \ltimes X\) restricted to \(X \setminus D_0\) (which fails to be locally compact should \(D_0\) be dense).
  Likewise, \(\redalg[D_0]{\CA}\) is the (in general only Hausdorff) completion of the canonical dense sub-algebra \(\algalg{\CA} \cong \contz{X} \rtimes_{\alg} \Gamma\) under the norm induced from the representation
  \[
    \bigoplus_{x \in X \setminus D_0} \lambda_x \colon \contz{X} \rtimes_{\alg} \Gamma \to \bigoplus_{x \in X \setminus D_0} \CB\left(\ell^2\left(\Gamma\right)\right).
  \]
\end{example}

\begin{example}
  If \(S = E\) is an inverse semigroup containing only idempotents then one can check that \(\maxalg{\CB} \cong \redalg{\CB} \cong B\) for any Fell bundle \(\CB = (B_s)_{s \in S}\) over a \cstar{}algebra \(B\).
  If \(D_0 \subseteq X\) is non-empty then \(\maxalg[D_0]{\CB}\) is the image of the map \(B \to \prod_{x \in X \setminus D_0} B_x\) which, should \(D_0\) be (topologically) large, may fail to be isometric.
\end{example}

\section{Equivariance, exact sequences and functoriality} \label{sec:functoriality}
We now study functoriality properties of the crossed products \(\maxalg[D_0]{\variable}\) and \(\redalg[D_0]{\variable}\) introduced before.
Notice that by the discussion in \cref{sec:apps-ess} these functoriality properties immediately have applications for \(\essalg{\CB}\).
We are mostly interested in when extensions \(0 \to \CI \to \CA \to \CB \to 0\) of Fell bundles (which we regard as \((S, X)\)-\cstar{}algebras for a common \(X\)) give rise to extensions
\[
  0 \to \maxalg[D_0]{\CI} \to \maxalg[D_0]{\CA} \to \maxalg[D_0]{\CB} \to 0
\]
(and similarly for the reduced crossed counterparts \(\redalg[D_0]{\variable}\)).
This subsection uses and/or generalizes certain results in \cite{Kwasniewski-Meyer:Pure_infiniteness}*{Section 4}.
In fact, the case when \(D_0\) is empty is contained (modulo \emph{asymptotic} morphisms) in \cite{Kwasniewski-Meyer:Pure_infiniteness}*{Proposition 4.15}.
On the other hand, the cases when \(D_0\) is non-empty require more care and, therefore, it is best we do them by hand.
Thus, fix some Fell bundles \(\CI, \CA, \CB\) over \(S\), which we regard as \((S, X)\)-algebras for a common \(X\).
Observe that the structure maps
\[
  \Phi_I, \Phi_A, \Phi_B \colon \opensets{X} \to \idealsin{I}, \idealsin{A}, \idealsin{B}
\]
are all equivariant in the sense of \eqref{eq:equiv-struct-map}.
We start recalling the notion of non-asymptotic morphism, see \cite{Kwasniewski-Meyer:Pure_infiniteness}*{Definition 4.1}.

\begin{definition} \label{def:non-asymp-morphism}
  A \emph{morphism} \(\varphi \colon \CA \to \CB\) is a collection of linear maps \(\varphi_s \colon A_s \to B_s\) such that \(\varphi_{s_1s_2}(a_{s_1} a_{s_2}) = \varphi_{s_1}(a_{s_1}) \varphi_{s_2}(a_{s_2})\) for all \(a_{s_1} \in A_{s_1}, a_{s_2} \in A_{s_2}\) and \(s_1, s_2 \in S\).
  We say \(\varphi\) is \emph{reduced} if \(\varphi_1(I_{s,1}^\CA) \supseteq \varphi_{s^*s}(A_{s^*s}) \cdot I_{s,1}^\CB\) for all \(s \in S\), where \(I_{s,1}^\CA\) is the ideal in \eqref{eq:i-s-1} for \(\CA\), and similarly for \(\CB\).
\end{definition}

\begin{example}
  The easiest case of \cref{def:non-asymp-morphism} is that of a discrete group \(\Gamma\) acting on a \cstar{}algebra \(A\) by automorphisms.
  One can then construct an associated Fell bundle \(\CA\) via \eqref{eq:fell-bundle-from-action}.
  In this setting, a \Star{}homomorphism \(\varphi \colon A \to B\) between \(\Gamma\)-\cstar{}algebras is equivariant if and only if it induces a morphism \(\CA \to \CB\) between the associated Fell bundles in the sense of \cref{def:morphism}.
  Moreover, such a morphism is automatically reduced, for \(I_{\gamma,1_\Gamma}^\CB = 0\) unless \(\gamma = 1_\Gamma \in \Gamma\), in which case \(I_{1_\Gamma,1_\Gamma}^\CA = A\).
\end{example}

\begin{example} \label{ex:opensets-x}
  On the other end of the spectrum, let \(X\) be (as usual) some locally compact, Baire, \tzero space. Let \(S \coloneqq \opensets{X}\), equipped with the operation \(\CU \cdot \CV \coloneqq \CU \cap \CV\).
  Then every element in \(S\) is a projection, so that \(E = S\).
  An \(S\)-\cstar{}algebra then reduces to a \cstar{}algebra \(A\) equipped with some order preserving structure map \(\Phi_A \colon \opensets{X} = E \to \idealsin{A}\), \emph{i.e.}\ it is an \(X\)-algebra in the sense of Gabe~\cite{gabe-memoirs-2024}, see \cref{def:x-algebra}.
  In the same vein, a morphism \(\varphi \colon A \to B\), in the sense of \cref{def:morphism}, is just a \Star{}homomorphism \(\varphi \colon A \to B\) that is \(X\)-equivariant in the sense of \cite{gabe-memoirs-2024}*{Definition 10.7}:
  \[
    \varphi\left(\Phi_A\left(\CU\right)\right) \subseteq \Phi_B\left(\CU\right)
  \]
  for all \(\CU \in S = \opensets{X}\).
  Again, such a morphism is also automatically reduced. 
\end{example}

The main functoriality properties of morphisms \(\CA \to \CB\) in the sense of \cref{def:non-asymp-morphism} is that surjective morphisms always behave nicely.

\begin{proposition}[see \cite{Kwasniewski-Meyer:Pure_infiniteness}*{Proposition 4.2 and Remark 4.6}] \label{prop:technical-red-map}
  Any surjective morphism \(\varphi \colon \CA \to \CB\) between Fell bundles is reduced, and induces \Star{}homomorphisms
  \[
    \maxalg[\emptyset]{\varphi} \colon \maxalg[\emptyset]{\CA} \to \maxalg[\emptyset]{\CB} \;\; \text{ and } \;\; \redalg[\emptyset]{\varphi} \colon \redalg[\emptyset]{\CA} \to \redalg[\emptyset]{\CB},
  \]
  where \(a_s \delta_s \mapsto \varphi_s(a_s) \delta_s\) for all \(a_s \in A_s\).
\end{proposition}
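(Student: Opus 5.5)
The plan has three parts: first show that surjective morphisms are reduced, then get the maximal map from the universal property, and finally get the reduced map by comparing the conditional expectations.

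\textbf{Step 1: surjective implies reduced.} Surjectivity of \(\varphi\) means each \(\varphi_s \colon A_s \to B_s\) is onto. Since \(\varphi_e\) is a surjective \Star{}homomorphism \(A_e \to B_e\) for each idempotent \(e\), we get \(\varphi_1(I^\CA_{s,1}) \supseteq \varphi_e(A_e) = B_e\) for every \(e \leq s\). Here \(\varphi_1\) is the map on unit fibres, a \Star{}homomorphism \(A \to B\).

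The image of an ideal under a surjective \Star{}homomorphism is an ideal, and it is closed. Hence \(\varphi_1(I^\CA_{s,1})\) is an ideal of \(B\) containing every such \(B_e\), so it contains \(I^\CB_{s,1}\). In particular it contains \(\varphi_{s^*s}(A_{s^*s}) \cdot I^\CB_{s,1} \subseteq I^\CB_{s,1}\), which is exactly the reducedness condition.

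\textbf{Step 2: the maximal map.} Compose \(\varphi\) with the canonical representation \(\CB \to \maxalg{\CB}\). The resulting family \(b_s \mapsto \varphi_s(b_s)\delta_s\) is a representation of the Fell bundle \(\CA\), because \(\varphi\) is multiplicative across fibres and \(*\)-preserving. The universal property of \(\maxalg{\CA}\) then yields \(\maxalg[\emptyset]{\varphi}\) with \(a_s\delta_s \mapsto \varphi_s(a_s)\delta_s\).

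\textbf{Step 3: the reduced map.} Extend \(\varphi_1\) normally to \(\varphi_1^{**} \colon A^{**} \to B^{**}\). The key identity to establish is
\[
  \redcondexp^\CB\circ\varphi = \varphi_1^{**}\circ \redcondexp^\CA \quad \text{on } \algalg{\CA}.
\]
On monomials this reduces to \(\varphi_1^{**}(a_s 1^\CA_s) = \varphi_s(a_s) 1^\CB_s\). The plan for this identity is:
\begin{itemize}
  \item \(1^\CA_s\) is the strong limit of an approximate unit \((u_\lambda)\) of \(I^\CA_{s,1}\).
  \item By Step 1, \(\varphi_1(u_\lambda)\) is an approximate unit of \(I^\CB_{s,1}\), since a surjection maps approximate units onto approximate units of the image ideal.
  \item Therefore \(\varphi_1^{**}(1^\CA_s) = 1^\CB_s\), and normality of \(\varphi_1^{**}\) gives the monomial identity.
\end{itemize}
This identification of the unit projections is the main point where reducedness is used, and I expect it to be the main obstacle: one has to make sense of \(a_s 1_s\) as an element of \(A^{**}\) and check the bimodule compatibility with \(\varphi_s\).

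Given the identity, let \(\pi\) denote the composite \(\algalg{\CA} \to \algalg{\CB} \to \redalg{\CB}\). For \(a \in \algalg{\CA}\) we have \(\redcondexp^\CB(\varphi(a)^*\varphi(a)) = \varphi_1^{**}(\redcondexp^\CA(a^*a))\). Hence \(\pi\) vanishes on the kernel of the quotient \(\maxalg{\CA}\to\redalg{\CA}\). That kernel is the set \(\{a : \redcondexp^\CA(a^*a) = 0\}\), since the reduced algebra is the quotient by the nucleus of the faithful expectation.

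To pass from the dense subalgebra to the completion, I would argue via the maximal map of Step 2. Take \(x \in \maxalg{\CA}\) with \(\redcondexp^\CA(x^*x) = 0\), extending \(\redcondexp\) to \(\maxalg{\CA}\) through the quotient. The intertwining identity extends by continuity to \(\maxalg{\CA}\), giving \(\redcondexp^\CB(y^*y)=0\) for \(y\) the image of \(x\) in \(\maxalg{\CB}\). Faithfulness of \(\redcondexp^\CB\) on \(\redalg{\CB}\) then shows that \(y\) maps to zero in \(\redalg{\CB}\). So \(\maxalg[\emptyset]{\varphi}\) descends to \(\redalg[\emptyset]{\varphi}\), with the stated formula on monomials.
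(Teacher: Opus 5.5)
Your proof is correct and follows essentially the paper's route. The paper's own proof just cites Kwa\'sniewski--Meyer (Proposition 4.2 and Remark 4.6), and it spells out exactly your argument in the proof of \cref{prop:inject-equiv-map-induces}: surjectivity gives \(\varphi_1^{**}(1^{\CA}_{s}) = 1^{\CB}_{s}\), so the square formed by the two conditional expectations commutes, and the nucleus on the \(\CA\)-side is carried into the nucleus on the \(\CB\)-side. The step you flag as the main obstacle is routine: since \(se = e\) for \(e \leq s\) one has \(A_s \cdot I^{\CA}_{s,1} \subseteq I^{\CA}_{s,1} \subseteq A\), so \(a_s 1^{\CA}_s\) is the weak\(^*\) limit of \(a_s u_\lambda\), and multiplicativity gives \(\varphi_1(a_s u_\lambda) = \varphi_s(a_s)\varphi_1(u_\lambda) \to \varphi_s(a_s) 1^{\CB}_s\).
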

\begin{proof}
  This is a consequence of \cite{Kwasniewski-Meyer:Pure_infiniteness}*{Proposition 4.2 and Remark 4.6}: by \cite{Kwasniewski-Meyer:Pure_infiniteness}*{Remark 4.6} the condition that \(\varphi\) be surjective is sufficient to guarantee that \cite{Kwasniewski-Meyer:Pure_infiniteness}*{Proposition 4.2 (2)} is met, \emph{i.e.}\ \(\varphi\) is reduced in the sense of \cref{def:non-asymp-morphism}.
\end{proof}

The notion of reduced morphism in \cref{def:non-asymp-morphism} is needed to guarantee that the induced \Star{}homomorphism descends to \(\redalg{\variable}\), as \(\varphi \colon \CA \to \CB\) always induces a \Star{}homomorphism on the level of \(\maxalg{\variable}\).
Thus, this is only to warn the reader that \emph{some} condition is needed to induce a \Star{}homomorphism, cf.\ \cref{prop:inject-equiv-map-induces}. 
We go onto the vastly more subtle study of asymptotic morphisms.

\begin{definition} \label{def:morphism}
  An \emph{asymptotic morphism} \(\varphi \colon \CA \asymparrow \CB\) is a collection of maps \(\varphi_s \colon A_s \asymparrow B_s\) such that \(\varphi_{s^*s} = (\varphi_{s^*s,r})_{r \in [1, \infty)} \colon A_{s^*s} \asymparrow B_{s^*s}\) is a usual asymptotic morphism for all \(s^*s \in E\) and \(\varphi_s = (\varphi_{s,r})_{r \in [1, \infty)} \colon A_s \asymparrow B_s\) is a family of  maps such that
  \begin{itemize}
    \item \(\varphi_{s,r}(a_s + b_s) - \varphi_{s,r}(a_s) - \varphi_{s,r}(b_s)\);
    \item \(\varphi_{s_1s_2, r}(a_{s_1} a_{s_2}) - \varphi_{s_1,r}(a_{s_1}) \varphi_{s_2,r}(a_{s_2})\);
    \item \(\varphi_{s,r}(a_s) - \varphi_{t,r}(a_s)\), where \(s \leq t\) and \(a_s \in A_s \subseteq A_t\);
  \end{itemize}
  all go to \(0\) when \(r \to \infty\), for all variables appearing.
  We say a morphism \(\varphi\) is \emph{\(D_0\)-equivariant} if \(\varphi_1^{**}(J_{D_0, \CA}) \subseteq J_{D_0, \CB}\), where \(\varphi_1^{**} \colon A^{**} \to B^{**}\) is the unique normal extension of \(\cup_{e \in E} \varphi_e \colon \spn{\cup_{e \in E} A_e} \to B\) and \(J_{D_0,\CA}, J_{D_0,\CB}\) are as in \cref{prop:cond-exp-kills-dzero} for the Fell bundles \(\CA\) and \(\CB\) respectively.
\end{definition}

The notion of \(D_0\)-equivariant morphism, see \cref{def:morphism}, only works for actual morphisms, as opposed to \emph{asymptotic} ones.
However, in future applications we will need to construct elements in \Eth-theory on the level of \(\maxalg[D_0]{\variable}\) and \(\redalg[D_0]{\variable}\), so we do need to generalize this notion to \emph{\(D_0\)-equivariant asymptotic morphisms}.
For this, it is worth remembering that an asymptotic morphism \(\varphi = (\varphi_r)_{r \in [1,\infty)} \colon A \asymparrow B\) between usual \cstar{}algebras can be regarded as a \Star{}homomorphism \(\varphi_\fc \colon A \to \contb{[1, \infty), B}/\contz{[1, \infty), B}\).
For simplicity we let the latter \cstar{}algebra be
\[
  B_\fc \coloneqq \contb{\left[1, \infty\right), B}/\contz{\left[1, \infty\right), B}.\footnote{\, We would like to thank G\'abor Szab\'o for this phenomenal naming convention. Also, \(\contb{\left[1,\infty\right), B}\) denotes the \cstar{}algebra of continuous bounded functions \([1,\infty) \to B\).}
\]
We say an asymptotic morphism \(\varphi \colon A \asymparrow B\) is \emph{injective} if the induced map \(\varphi_\fc \colon A \to B_\fc\) is an embedding.
The following is a generalization of \cref{prop:technical-red-map} to asymptotic morphisms and generalized crossed products \(\maxalg[D_0]{\variable}\) and \(\redalg[D_0]{\variable}\).
For it, we denote by \(\varphi_{1,\fc} \colon A \to B_\fc\) the unique norm-continuous extension of \(\cup_{e \in E} \varphi_e \colon \spn{\cup_{e \in E} A_e} \to B_\fc\).
Similarly, we denote by \(\varphi_{1,\fc}^{**} \colon A^{**} \to B_\fc^{**} \coloneqq (B_\fc)^{**}\) its unique normal extension to the double duals.

The following lengthy statement encapsulates the technicality needed.
\begin{proposition} \label{prop:inject-equiv-map-induces}
  Let \(\varphi \colon \CA \asymparrow \CB\) be an asymptotic morphism of Fell bundles.
  \begin{enumerate}
    \item \(\varphi\) induces an asymptotic morphism \(\maxalg{\varphi} \colon \maxalg{\CA} \asymparrow \maxalg{\CB}\).
    \item \label{prop:inject-equiv-map-induces:red} If \(\varphi\) is either a surjective morphism or \(\varphi_{e,\fc}(A_e) \supseteq B_e \cdot \varphi_{s^*s,\fc}(A_{s^*s})\) for all \(s \in S\) and \(e \in E\) with \(e \leq s\), then \(\varphi\) induces an asymptotic morphism \(\redalg{\varphi} \colon \redalg{\CA} \asymparrow \redalg{\CB}\).
    \item \label{prop:inject-equiv-map-induces:dzero-equiv} If \(\varphi\) is ``\(D_0\)-equivariant'', in the sense that
      \[
        \varphi_{1,\fc}^{**}\left(J_{D_0}^\CA\right) \subseteq J_{D_0,\fc}^\CB,
      \]
      then \(\varphi\) also induces an asymptotic morphism \(\maxalg[D_0]{\varphi} \colon \maxalg[D_0]{\CA} \asymparrow \maxalg[D_0]{\CB}\).
    \item \label{prop:inject-equiv-map-induces:dzer-equi-red} If \(\varphi\) is \(D_0\)-equivariant and \(\varphi_{e,\fc}(A_e) \supseteq B_e \cdot \varphi_{s^*s,\fc}(A_{s^*s})\) for all \(s \in S\) and \(e \in E\) with \(e \leq s\), then \(\varphi\) also induces an asymptotic morphism \(\redalg[D_0]{\varphi} \colon \redalg[D_0]{\CA} \asymparrow \redalg[D_0]{\CB}\).
    \item \label{prop:inject-equiv-map-induces:injective} In the setting of \cref{prop:inject-equiv-map-induces:dzer-equi-red}, if \(\varphi\) is an injective morphism and \(\varphi(A)\) is an ideal of \(B\) then \(\maxalg[D_0]{\varphi}\) and \(\redalg[D_0]{\varphi}\) are injective.
    \item \label{prop:inject-equiv-map-induces:morphisms} If \(\varphi\) is a \(D_0\)-equivariant morphism (resp.\ surjective morphism) then so are \(\maxalg[D_0]{\varphi}\) and \(\redalg[D_0]{\varphi}\).
  \end{enumerate}
\end{proposition}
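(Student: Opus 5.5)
The plan is to work throughout with the \Star{}homomorphism picture of asymptotic morphisms. Each \(\varphi_s\) then becomes a genuine linear map \(\varphi_{s,\fc} \colon A_s \to B_{s,\fc} \subseteq B_\fc\), and the three asymptotic conditions in \cref{def:morphism} say exactly three things: the family \(\varphi_\fc = (\varphi_{s,\fc})_{s \in S}\) is multiplicative, it is compatible with the order \(s \leq t\), and it is \Star{}preserving on the idempotent fibers. In other words, \(\varphi_\fc\) is a representation of the Fell bundle \(\CA\) into \(\maxalg{\CB}_\fc\) (composing with \(B_s \subseteq \maxalg{\CB}\)).

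For item (i), I invoke the universal property of \(\maxalg{\CA}\). This gives a \Star{}homomorphism \(\maxalg{\CA} \to \maxalg{\CB}_\fc\) extending \(a_s \mapsto \varphi_{s,\fc}(a_s)\). By Bartle--Graves, choose a continuous set-theoretic lift to \(\contb{[1,\infty), \maxalg{\CB}}\); this lift is \(\maxalg{\varphi}\). Before this I should check that \(\varphi_\fc\) is a representation in the required sense: contractivity of the \(\varphi_{s,\fc}\) follows from the \cstar{}identity \(\norm{\varphi_{s,\fc}(a_s)}^2 = \norm{\varphi_{s^*s,\fc}(a_s^*a_s)}\), using that \(\varphi_{s^*s,\fc}\) is a \Star{}homomorphism.

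Items (iii) and (vi) then reduce to showing that this \Star{}homomorphism annihilates \(\CN = \{a \in \algalg{\CA} : \redcondexp(a^*a) \in J^\CA_{D_0}\}\) after composing with \(\maxalg{\CB}_\fc \to \maxalg[D_0]{\CB}_\fc\), and then applying \cref{lemma:alt-desc-max-dzero}.

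\begin{enumerate}
\item The key identity is \(\redcondexp^\CB_\fc(\varphi_\fc(a)) = \varphi_{1,\fc}^{**}(\redcondexp^\CA(a))\) for \(a \in \algalg{\CA}\). On monomials this amounts to \(\varphi_{s,\fc}(a_s)1^\CB_s = \varphi^{**}_{1,\fc}(a_s 1^\CA_s)\).
\item The reduced hypothesis \(\varphi_{e,\fc}(A_e) \supseteq B_e \varphi_{s^*s,\fc}(A_{s^*s})\) is exactly what makes \(\varphi_{1,\fc}^{**}(1^\CA_s)\) act as \(1^\CB_s\) on the image. This is the analogue of \cite{Kwasniewski-Meyer:Pure_infiniteness}*{Proposition 4.2}, and I expect it to be the main obstacle: one must pass units of weakly closed ideals through a normal extension into \(B_\fc^{**}\), where \(B_\fc\) is not \(\contb{}\)-like and the ideal \(J_{D_0,\fc}^\CB\) has to be interpreted inside \((B_\fc)^{**}\). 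I would approximate \(1^\CA_s\) strictly by an approximate unit of \(I^\CA_{s,1}\) and argue on each \(\varphi_{e,\fc}\) separately.
\end{enumerate}

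Granting this identity, if \(\redcondexp(a^*a) \in J_{D_0}^\CA\), then \(D_0\)-equivariance gives \(\redcondexp^\CB_\fc(\varphi_\fc(a)^*\varphi_\fc(a)) \in J^\CB_{D_0,\fc}\). Hence \(\varphi_\fc(a)\) lies in the \(D_0\)-nucleus of \(\CB\) (described in \cref{lemma:description-j-d}), so it vanishes in \(\redalg[D_0]{\CB}_\fc\) and therefore in \(\maxalg[D_0]{\CB}_\fc\). For (iii) without the reduced hypothesis, I instead use the universal property directly: the \(\maxalg[D_0]{\CB}\)-valued representation is a \(D_0\)-representation, and only the \(s = 1\) part of the expectation is needed.

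Items (ii) and (iv) use the same identity to show that \(\varphi_\fc\) preserves the norms induced by the faithful expectations. I would use the characterisation of the reduced norm through \(\redcondexp\): \(\norm{a}_{\red}\) is the supremum of \(\norm{\redcondexp(b^*a^*ab)}^{1/2}\) over contractions \(b\). Applying \(\varphi_{1,\fc}^{**}\), which is contractive, yields \(\norm{\varphi_\fc(a)}_{\redalg{\CB}_\fc} \leq \norm{a}_{\redalg{\CA}}\) on \(\algalg{\CA}\), and the same argument works modulo \(J_{D_0}\). In the surjective case, cite \cref{prop:technical-red-map}.

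For item (v): when \(\varphi\) is an injective morphism with \(\varphi(A)\) an ideal, \(\varphi_1^{**}\) is injective with central-support image. The identity \(\redcondexp^\CB \circ \varphi = \varphi^{**} \circ \redcondexp^\CA\) then makes nuclei correspond exactly, which gives injectivity on the reduced level. On the maximal level, I would extend representations of \(\CA\) to the hereditary part \(\varphi(\CA)\CB\varphi(\CA)\) of \(\CB\).
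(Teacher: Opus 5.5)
Your architecture is the paper's: integrate \(\varphi\) through the universal property of \(\maxalg{\CA}\), prove \(\redcondexp_{\CB}\circ\varphi=\varphi_{1,\fc}^{**}\circ\redcondexp_{\CA}\) from the reducedness hypothesis, then compare nuclei. However, the step that is supposed to produce \(\maxalg[D_0]{\varphi}\) in (iii) and (vi) runs backwards.

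You conclude that \(\varphi_{\fc}(a)\) ``vanishes in \(\redalg[D_0]{\CB}_{\fc}\) and therefore in \(\maxalg[D_0]{\CB}_{\fc}\)''. Since \(\redalg[D_0]{\CB}\) is a quotient of \(\maxalg[D_0]{\CB}\), vanishing in the former says nothing about the latter. The ideal in \cref{lemma:description-j-d} is the kernel of \(\redalg{\CB}\to\redalg[D_0]{\CB}\), which is the wrong ideal here. By \cref{lemma:alt-desc-max-dzero}, what you must show (and what the paper formulates via the universal property of \(\maxalg[D_0]{\CA}\)) is that the \emph{algebraic} set \(\{a\in\algalg{\CA}:\redcondexp_{\CA}(a^*a)\in J^{\CA}_{D_0}\}\) is sent into the maximal-norm closure of \(\CN^{\CB}_{D_0}\coloneqq\{b\in\algalg{\CB}:\redcondexp_{\CB}(b^*b)\in J^{\CB}_{D_0}\}\).

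For a genuine reduced morphism your identity gives this at once. Indeed \(\varphi(a)\in\algalg{\CB}\), and \(\redcondexp_{\CB}(\varphi(a)^*\varphi(a))=\varphi_1^{**}(\redcondexp_{\CA}(a^*a))\in J^{\CB}_{D_0}\) puts \(\varphi(a)\) literally in \(\CN^{\CB}_{D_0}\).

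For a genuinely asymptotic \(\varphi\) you would need \(\sum_s\varphi_{s,r}(a_s)\) to approach \(\CN^{\CB}_{D_0}\) in the maximal norm. A condition on \(\redcondexp_{\CB,\fc}\) cannot deliver that, because faithfulness is lost under \((\variable)_{\fc}\). For example, trigonometric polynomials \(f_r\in\redalgnopar{\ZZ}\cong C(\mathbb{T})\) with \(\norm{f_r}=1\) and \(\int\abs{f_r}^2\to0\) give a nonzero class that the expectation annihilates asymptotically. This is the obstacle you flagged yourself, and approximating \(1^{\CA}_{1,s}\) by approximate units does not touch it. The paper does not resolve it either; it simply writes the commuting square with \(\varphi_{1,\fc}^{**}\).

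Two further steps fail as written.

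First, (iii) carries no reducedness hypothesis, and without it your identity is unavailable. One only has \(\varphi_{1,\fc}^{**}(1^{\CA}_{1,u})\le 1^{\CB}_{1,u}\). So, already for genuine morphisms, the expectation of the image of \(a^*a=\sum_u x_u\) acquires the defect \(\sum_u\varphi_{u}(x_u)\bigl(1^{\CB}_{1,u}-\varphi_1^{**}(1^{\CA}_{1,u})\bigr)\). Killing this defect is exactly what reducedness is for (cf.\ \cite{Kwasniewski-Meyer:Pure_infiniteness}*{Example 4.7}). Your substitute, that the representation ``is a \(D_0\)-representation, and only the \(s=1\) part of the expectation is needed'', merely restates the claim. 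The expectation \(\redcondexp_{\CA}(a^*a)=\sum_{s,t}a_s^*a_t1^{\CA}_{1,s^*t}\) involves every off-diagonal unit, and \(D_0\)-equivariance of \(\varphi_1\) says nothing about the defect. The paper's proof of (iii) also just defers to the reduced commuting square. You therefore need either to add reducedness to (iii) or to find a genuinely new argument.

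Second, the norm formula you invoke for (ii) and (iv) is false. For \(a=f\in\redalgnopar{\ZZ}\cong C(\mathbb{T})\) one gets \(\sup_{\norm{b}\le1}\norm{\redcondexp(b^*f^*fb)}^{1/2}=\norm{f}_2\), not \(\norm{f}_\infty\). The correct normalisation is by \(\norm{\redcondexp(b^*b)}\). Even after this correction, your estimate only tests vectors of the form \(\varphi(b)\). You would still have to argue that these compute the norm of \(\varphi(a)\) in \(\redalg{\CB}\). They do: \(\redcondexp_{\CB}\) stays faithful on the closure of \(\varphi(\algalg{\CA})\), and \(\varphi_1^{**}\redcondexp_{\CA}(b^*a^*ab)\le\norm{a}^2\,\varphi_1^{**}\redcondexp_{\CA}(b^*b)\) by positivity.

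The paper's nucleus argument is shorter. If \(\redcondexp_{\CA}(a^*a)=0\), then \(\redcondexp_{\CB}(\varphi(a)^*\varphi(a))=\varphi_1^{**}(\redcondexp_{\CA}(a^*a))=0\). So \(\varphi\) maps the nucleus into the nucleus, which is all the descent needs.
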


In lieu of the yet-to-be-proved \cref{prop:inject-equiv-map-induces} we tacitly give the following.
\begin{definition} \label{def:all-equiv-morphism}
  An asymptotic morphism \(\varphi \colon \CA \asymparrow \CB\) is \emph{\(D_0\)-equivariant} if \(\varphi_{1,\fc}^{**}\left(J_{D_0}^\CA\right) \subseteq J_{D_0,\fc}^\CB\), and it is \emph{reduced} if \(\varphi_{e,\fc}(A_e) \supseteq B_e \cdot \varphi_{s^*s,\fc}(A_{s^*s})\) for all \(s \in S\) and \(e \in E\) with \(e \leq s\).
\end{definition}

\begin{proof}[Proof of \cref{prop:inject-equiv-map-induces}]
  The first part of the proof is in spirit mighty similar to \cite{KwasniewskiMeyer2021}*{Proposition 4.2}.
  Let \(\varphi = (\varphi_{s,r})_{s \in S, r \in [1,\infty)}\) be the starting asymptotic morphism.
  We will first prove the existence of \(\maxalg{\varphi} = \maxalg[\emptyset]{\varphi}\).
  In order to do this, recall that the Banach spaces \(B_s\) isometrically embed into \(\maxalg{\CB}\) for every \(s \in S\).
  In particular it follows that \(\varphi\) gives a representation
  \[
    \left(\varphi_s\right)_{s \in S} \colon \CA \to \maxalg{\CB}_\fc.
  \]
  By the universal property of \(\maxalg{\CA}\) the tuple \((\varphi_s)_{s \in S}\) integrates to a \Star{}homomorphism
  \(\widehat{\varphi} \colon \maxalg{\CA} \to \maxalg{\CB}_\fc\).
  Thus, \(\widehat{\varphi}\) can be seen as an asymptotic morphism \(\maxalg{\varphi} \coloneqq \widehat{\varphi} \colon \maxalg{\CA} \asymparrow \maxalg{\CB}\).

  \

  We now prove that \(\maxalg{\varphi}\) descends to all the asymptotic morphisms we are looking for.
  We first show it descends to the reduced crossed products as in \cref{prop:inject-equiv-map-induces:red}.
  By the definition of \(\redalg{\variable}\) it suffices to show that
  \begin{equation} \label{eq:want-red-nn}
    \maxalg{\varphi}\left(\CN_\CA\right) \subseteq \left(\CN_\CB\right)_\fc,\footnote{\, Throughout this proof, if \(C \subseteq B\) is some subset then \(C_\fc\) is the canonical copy of \(C\) embedded in \(B_\fc\), whose elements are (classes of) constant nets \([c]_{t \in [1,\infty)}\), with \(c \in C\).}
  \end{equation}
  where \(\CN_\CA \coloneqq \{a \in \maxalg{\CA} : \redcondexp_\CA(a^*a) = 0\}\) is the nucleus of the usual weak conditional expectation \(\redcondexp_\CA \colon \maxalg{\CA} \to A^{**}\), and similarly for the Fell bundle \(\CB\).\footnote{\, Technically speaking, we usually regard \(\redcondexp_\CA\) as a map \(\redalg{\CA} \to A^{**}\). Here we pre-compose it with the canonical quotient \(\maxalg{\CA} \to \redalg{\CA}\). We thus slightly abuse notation, as when talking about their nucleus we explicitly write where the maps take values from. Whence causing no confusion.}
  Recall the definition of the ideals \(I_{s,t}^\CA \subseteq A\) from \cref{def:ideals-i-s-t} (we use the superscript to distinguish between the ones from \(\CA\) and the ones from \(\CB\)).
  Likewise, we let \(1_{s,t}^\CA\) and \(1_{s,t}^\CB\) be the units of \(\multialg{I_{s,t}^\CA}\) and \(\multialg{I_{s,t}^\CB}\) respectively.
  We see these algebras embedded into \(A^{**}\) and \(B^{**}\) respectively in the canonical way.
  We will first show that a similar statement to \cite{Kwasniewski-Meyer:Pure_infiniteness}*{Proposition 4.2 (2)} is met, that is, we will prove
  \begin{equation} \label{eq:varphi-sends-i-st-a-to-b}
    \varphi_{1,\fc}^{**}(1_{1,s}^\CA) = \varphi_{1,\fc}^{**}(1_{1,s^*s}^\CA) \cdot 1_{1,s}^{\CB} \in B^{**}_\fc.
  \end{equation}
  As mentioned in \cite{Kwasniewski-Meyer:Pure_infiniteness}*{Remark 4.6} the inequality ``\(\leq\)'' is always satisfied.
  Nevertheless, we show this here as well for convenience of the reader, and because their notations and ours are vastly different.
  By normality of \(\varphi_{1,\fc}^{**}\) we have that
  \begin{equation} \label{eq:i-dont-have-more-names}
    \varphi_{1,\fc}^{**}\left(1_{1,s}^\CA\right) \stackrel{\rm (def)}{=} \varphi_{1,\fc}^{**} \left(\vee_{e \in E, e \leq s} 1_{1,e}^\CA\right) = \vee_{e \in E, e \leq s} \, \varphi_{1,\fc}^{**} \left(1_{1,e}^\CA\right) \leq \vee_{e \in E, e \leq s} \, 1_{1,e}^\CB = 1_{1,s}^\CB.
  \end{equation}
  Similarly, \(\varphi_{1,\fc}^{**}(1_{1,s}^\CA) \leq \varphi_{1,\fc}^{**}(1^\CA_{1,s^*s}) \leq 1^\CB_{1,s^*s}\).
  (In these equations we see \(1_{1,s}^\CB \in B^{**}_\fc\) as classes of constant nets.)
  The inequality ``\(\geq\)'' follows from the surjectivity of \(\varphi\) or the other assumption in \cref{prop:inject-equiv-map-induces:red}.
  Indeed, if \(\varphi\) is a surjective (non-asymptotic) morphism then \(\varphi_{1}^{**}(1_{1,e}^\CA) = 1_{1,e}^\CB\) for all \(e \in E\).
  This is due to the fact that \(\varphi_e \colon A_e \to B_e\) is a surjective \Star{}homomorphism.
  Thus, it follows that
  \[
    \varphi_{1}^{**}\left(1_{1,s}^\CA\right) = \vee_{e \in E, e \leq s} \varphi_{1}^{**}\left(1_{1,e}^\CA\right) = \vee_{e \in E, e \leq s} 1_{1,e}^\CB = 1_{1,s}^\CB.
  \]
  In the remaining case, the only non-equality in \eqref{eq:i-dont-have-more-names} (when intersected with \(\varphi_{s^*s,\fc}(A_{s^*s})\)) will actually be an equality if \(\varphi_{e,\fc}(A_e) \supseteq B_e \cdot \varphi_{s^*s,\fc}(A_{s^*s})\) for all \(e \in E\) such that \(e \leq s\), but this is precisely the condition appearing in item \cref{prop:inject-equiv-map-induces:red}.
  This finishes the proof of \eqref{eq:varphi-sends-i-st-a-to-b}.
  Notice that by \eqref{eq:varphi-sends-i-st-a-to-b} it follows that the diagram
  \begin{equation*}
    \begin{tikzcd}
      \maxalg{\CA} \arrow{r}{\maxalg{\varphi}} \arrow{d}{\redcondexp_\CA} & \maxalg{\CB}_\fc \arrow{d}{\redcondexp_{\CB,\fc}} \\
      A^{**} \arrow{r}{\varphi_{1,\fc}^{**}} & B_\fc^{**}
    \end{tikzcd}
  \end{equation*}
  commutes.
  In particular it follows that the image of \(\CN_\CA\) under \(\maxalg{\varphi}\) has to be contained in the nucleus of \(\redcondexp_{\CB,\fc}\), which is precisely what \eqref{eq:want-red-nn} states.
  Thus item \cref{prop:inject-equiv-map-induces:red} holds, as desired.

  \

  Knowing \cref{prop:inject-equiv-map-induces:red}, \emph{i.e.}\ that \(\maxalg{\varphi}\) descends to \(\redalg{\varphi} \colon \redalg{\CA} \asymparrow \redalg{\CB}\), we go directly to \cref{prop:inject-equiv-map-induces:dzer-equi-red}.
  For this, we must show that \(\varphi\) descends even further to \(\redalg[D_0]{\CA} \asymparrow \redalg[D_0]{\CB}\), for which it is enough to show that
  \begin{equation} \label{eq:want-red-dzero}
    \redalg{\varphi}\left(\left\{a \in \redalg{\CA} : \redcondexp_\CA\left(a^*a\right) \in J_{D_0}^\CA\right\}\right) \subseteq \left\{b \in \redalg{\CB}_\fc : \redcondexp_{\CB,\fc}\left(b^*b\right) \in J_{D_0,\fc}^\CB\right\}
  \end{equation}
  (at least modulo \(\contz{[1,\infty), \redalg{\CB}}\)).
  Instead of doing this directly, observe that in order to show \eqref{eq:want-red-dzero} it is enough to recall that \(D_0\)-equivariance precisely means that \(\varphi_{1,\fc}^{**}(J_{D_0}^\CA) \subseteq J_{D_0,\fc}^{\CB}\).
  This implies that there is an induced map \(A^{**}/J_{D_0}^\CA \to B^{**}_\fc/J_{D_0,\fc}^\CB\) making that the diagram
  \begin{equation} \label{eq:red-algs-diagram}
    \begin{tikzcd}
      \redalg{\CA} \arrow{r}{\Lambda^{\CA}_{D_0}} \arrow{d}{\redalg{\varphi}} & \redalg[D_0]{\CA} \arrow{r}{\condexp_{D_0}^\CA} & A^{**}/J_{D_0}^\CA \arrow{d}{} \\
      \redalg{\CB}_\fc \arrow{r}{\Lambda^{\CB}_{D_0,\fc}} & \redalg[D_0]{\CB}_\fc \arrow{r}{\condexp_{D_0,\fc}^\CB} & B_\fc^{**}/J_{D_0,\fc}^\CB
    \end{tikzcd}
  \end{equation}
  commute. In order to prove \eqref{eq:want-red-dzero} let us take some \(a^*a \in \redalg{\CA}\) such that \(\redcondexp_\CA(a^*a) \in J_{D_0}^\CA\).
  By the commutativity of \eqref{eq:red-algs-diagram} we have that \(\Lambda_{D_0,\fc}^\CB(\redalg{\varphi}(a^*a)) = 0\) by faithfulness of \(\condexp[D_0]^\CB\).
  But this precisely means that \(\redalg{\varphi}(a^*a) \in \ker(\Lambda_{D_0,\fc}^\CB) = \{b \in \redalg{\CB}_\fc : \redcondexp_{\CB,\fc}(b^*b) \in J_{D_0,\fc}^\CB\}\), as desired.

  \
  We now come back to item \cref{prop:inject-equiv-map-induces:dzero-equiv}, and show that \(\maxalg{\varphi}\) also induces a map on the level of \(\maxalg[D_0]{\variable}\).
  For this, simply recall that \(\algalg{\CA}\) is naturally a dense subalgebra of \(\maxalg{\CA}\) (and similarly for \(\CB\)).
  Thus we have a canonical algebra morphism
  \[
    \algalg[D_0]{\varphi} \colon \algalg{\CA} \to \algalg{\CB}_\fc \subseteq \maxalg{\CB}_\fc \to \maxalg[D_0]{\CB}_\fc.
  \]
  By the universal property of \(\maxalg[D_0]{\CA}\) this induces a map \(\maxalg[D_0]{\varphi}\) as long as \(\algalg[D_0]{\varphi}\) is a \(D_0\)-representation of \(\CA\).
  This precisely asks whether
  \[
    \algalg[D_0]{\varphi}\left(\left\{a \in \algalg{\CA} : \redcondexp_{\CA}\left(a^*a\right) \in J_{D_0}^\CA\right\}\right) = 0.
  \]
  But notice that this exactly happens by an argument similar to the one around diagram \eqref{eq:red-algs-diagram}.

  \

  Lastly, we turn to the proofs of \cref{prop:inject-equiv-map-induces:injective,prop:inject-equiv-map-induces:morphisms}.
  For \cref{prop:inject-equiv-map-induces:morphisms} we may just follow the previous constructions, noticing that whenever \(\varphi\) is assumed to be a morphism then \(\maxalg{\varphi}\) is a \Star{}homomorphism.
  Thus all of its descendants are also \Star{}homomorphisms.
  Likewise, if \(\varphi\) is surjective then so is \(\maxalg{\varphi} \colon \maxalg{\CA} \to \maxalg{\CB}\), for the span of the Banach spaces \(B_s\) is dense in \(\maxalg{\CB}\), and we can hit any \(b_s = \varphi_s(a_s) \in \varphi_s(A_s)\).
  Thus all the descendants \(\maxalg[D_0]{\varphi}\) and \(\redalg[D_0]{\varphi}\) are also surjective.
  
  Let us investigate \cref{prop:inject-equiv-map-induces:injective}.
  We have to show that
  \[
    \maxalg[D_0]{\varphi} \colon \maxalg[D_0]{\CA} \to \maxalg[D_0]{\CB}
  \]
  is injective.
  For this we instead show that \(\maxalg[D_0]{\varphi}(\maxalg[D_0]{\CA})\) enjoys the universal property of \(\maxalg[D_0]{\CA}\), implying that \(\maxalg[D_0]{\varphi}\) is isometric.
  Take any \(D_0\)-representation \(\pi \colon \maxalg[D_0]{\CA} \to \CL(H)\).
  We have to show that it factors through a representation of \(\maxalg[D_0]{\varphi}(\maxalg[D_0]{\CA})\).
  To simplify notation let us denote \(\maxalg[D_0]{\varphi}\) simply by \(\rho\). Assume, without loss of generality, that \(\pi\) is non-degenerate.
  We extend \(\pi\) to a representation of \(\CB\) via
  \[
    \pi_{\CB,s}\left(b_s\right) \cdot \pi\left(a\right) h \coloneqq \pi\left(b_s \rho\left(a\right)\right) h, \;\; \text{ where } h \in H.
  \]
  It is precisely here that the ad-hoc condition that \(\varphi(A)\) be an ideal of \(B\) is used: it makes the expression \(b_s \rho(a)\) be an element of \(\maxalg[D_0]{\CA}\).
  This \(\pi_\CB = (\pi_{\CB, s})_{s \in S}\) can be checked to be a representation of \(\CB\), and thus integrates to a \Star{}homomorphism \(\pi_\CB \colon \maxalg{\CB} \to \CL(H)\).
  Moreover, we may restrict \(\pi_\CB\) to \(\maxalg{\varphi}(\maxalg{\CA})\), thus yielding a \Star{}homomorphism
  \[
    \pi_\CA \coloneqq \pi_\CB|_\CA \colon \maxalg{\varphi}\left(\maxalg{\CA}\right) \to \CB\left(H\right).
  \]
  We claim that \(\pi_\CA\) actually descends to the desired representation of \(\rho(\maxalg[D_0]{\CA})\).
  For this, it is enough to show that \(\pi_\CB\) is a \(D_0\)-representation ``in the part of \(\CB\) coming from \(\CA\)''.
  What this precisely means is that by the description in \cref{lemma:alt-desc-max-dzero} it suffices to show that
  \[
    \pi_\CB\left(\left\{b \in \rho\left(\maxalg{\CA}\right) : \redcondexp_\CB\left(b^*b\right) \in J_{D_0}^\CB\right\}\right) = 0.
  \]
  Letting \(b = \sum_{s \in F} \varphi_s(a_s) \in \algalg{\CB}\) be such an element, we have that \(\pi_\CB(b) = \sum_{s \in F} \pi_\CB(\varphi_s(b_s)) = \sum_{s \in F} \pi(a_s) = \pi(\sum_{s \in F} a_s) = 0\) by the \(D_0\)-assumption on \(\pi\).
  Lastly, the injectivity claim in item \cref{prop:inject-equiv-map-induces:injective} for \(\redalg[D_0]{\varphi}\) is proven similarly to \cite{KwasniewskiMeyer2021}*{Proposition 4.2}.
  If \(\varphi\) is injective then so is \(\varphi_{1}^{**}\), and thus the claim follows from the built-in faithfulness feature of \(\condexp[D_0]^\CA\) and \(\condexp[D_0]^\CB\) and the commutativity of \eqref{eq:red-algs-diagram}.
\end{proof}

\cref{prop:inject-equiv-map-induces} will be used to construct an \(S\)-equivariant \Eth-theory in \cref{sec:equiv-eth}.
In the case we consider actual morphisms we have the following two results, describing the maximal/reduced scenarios.

\begin{theorem} \label{thm:maximal-extensions}
  Let \(0 \to \CI \xrightarrow{\iota} \CA \xrightarrow{q} \CB \to 0\) be a short exact sequence of Fell bundles.\footnote{\, This in particular implies that \(I \cong \iota(I)\) is an ideal in \(A\).}
  Suppose that both \(\iota\) and \(q\) are \(D_0\)-equivariant morphisms.
  Then there is an induced extension
  \[
    0 \to \maxalg[D_0]{\CI} \to \maxalg[D_0]{\CA} \to \maxalg[D_0]{\CB} \to 0
  \]
  of maximal crossed products.
\end{theorem}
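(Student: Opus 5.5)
Since \(\iota\) and \(q\) are \(D_0\)-equivariant morphisms, \cref{prop:inject-equiv-map-induces} \cref{prop:inject-equiv-map-induces:dzero-equiv,prop:inject-equiv-map-induces:morphisms} already supplies \Star{}homomorphisms \(\maxalg[D_0]{\iota} \colon \maxalg[D_0]{\CI} \to \maxalg[D_0]{\CA}\) and \(\maxalg[D_0]{q} \colon \maxalg[D_0]{\CA} \to \maxalg[D_0]{\CB}\). Moreover \(\maxalg[D_0]{q}\) is surjective, because each \(q_s\) is onto \(B_s\). Injectivity of \(\maxalg[D_0]{\iota}\) should follow from \cref{prop:inject-equiv-map-induces:injective}. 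Here \(\iota\) is an injective morphism with \(\iota(I)\) an ideal in \(A\). The proof of that item only uses the maximal universal property, so I expect the reducedness hypothesis of \cref{prop:inject-equiv-map-induces:dzer-equi-red} not to be needed for the maximal statement; if it is, I would check it directly, using that \(\iota_e(I_e) = I \cap A_e\) is an ideal. Finally, \(q \circ \iota = 0\) fiberwise, which gives \(\maxalg[D_0]{q}\circ\maxalg[D_0]{\iota} = 0\) on \(\algalg{\CI}\) and hence everywhere. So the only real content is exactness in the middle.

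For the middle, let \(K\) be the closed ideal of \(\maxalg[D_0]{\CA}\) generated by \(\maxalg[D_0]{\iota}(\maxalg[D_0]{\CI})\). Because \(I_s = I A_s = A_s I\) (saturation together with \(I \subseteq A\) being an ideal), \(K\) is the closed span of the images of the fibres \(\iota_s(I_s)\), so in fact \(K = \maxalg[D_0]{\iota}(\maxalg[D_0]{\CI})\). Clearly \(K \subseteq \ker \maxalg[D_0]{q}\). For the converse I will show that \(\maxalg[D_0]{\CA}/K\) satisfies the universal property of \(\maxalg[D_0]{\CB}\). Take the fibrewise compositions \(\pi_s \colon B_s \cong A_s/I_s \to \maxalg[D_0]{\CA}/K\), with \(\pi_s(q_s(a_s)) \coloneqq a_s + K\). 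These are well defined because \(I_s \subseteq K\), and they form a representation of \(\CB\), since \(q\) is multiplicative on fibres. This yields a \Star{}homomorphism \(\maxalg{\CB} \to \maxalg[D_0]{\CA}/K\) inverse to the map induced by \(\maxalg[D_0]{q}\) on \(\algalg{\CB}\). What remains is to check that it is a \(D_0\)-representation, i.e.\ that it kills \(\CN_\CB \coloneqq \{b \in \algalg{\CB} : \redcondexp_\CB(b^*b) \in J_{D_0}^\CB\}\), using \cref{lemma:alt-desc-max-dzero}.

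This last verification is the main obstacle. Given \(b = q(a)\) with \(a \in \algalg{\CA}\) and \(\redcondexp_\CB(b^*b) \in J^\CB_{D_0}\), I must find \(a' \in a + \algalg{\CI}\) (or a limit of such elements) with \(\redcondexp_\CA(a'^*a') \in J^\CA_{D_0}\). The tool is the intertwining \(q^{**}_1 \circ \redcondexp_\CA = \redcondexp_\CB \circ q\), which is \eqref{eq:varphi-sends-i-st-a-to-b} with surjective \(q\). Since \(A^{**} = I^{**} \oplus (A/I)^{**}\) with central projection \(1_I\), I have \(\redcondexp_\CA(a^*a)(1-1_I) \in J^\CA_{D_0}(1-1_I)\). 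The \(D_0\)-equivariance of \(q\) in its converse form, namely \(J_{D_0}^\CA\) maps onto \(J_{D_0}^\CB\), which holds because both are defined fibrewise over \(X\) and the structure maps are compatible, together with the fact that \(x \mapsto\) fibres splits as \(A_x \to B_x\) with kernel \(I_x\), lets me identify the \((1-1_I)\)-part with \(\redcondexp_\CB(b^*b)\). Meanwhile the \(1_I\)-part is absorbed by \(K\): for any approximate unit \(u_\lambda\) of \(I\), the elements \(a(1-u_\lambda)\) differ from \(a\) by elements of \(K\), and \(\redcondexp_\CA\) of their squares converges to the \((1-1_I)\)-part. I then conclude via the description of the nucleus in \cref{lemma:description-j-d}, applied in the quotient by \(K\). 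I expect the delicate point to be exactly this limiting argument, together with the interaction of \(J_{D_0}\) with the decomposition \(I^{**} \oplus (A/I)^{**}\).
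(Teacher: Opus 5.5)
\textbf{The middle-exactness step has a genuine gap, and in fact that step is false as stated.} You have located the crux of middle exactness correctly. Once one knows that \(q_s(a_s)\mapsto a_s+K\) is a \(D_0\)-representation of \(\CB\), the rest is formal; without \(D_0\), your argument does prove that \(\ker\maxalg{q}\) is the image of \(\maxalg{\CI}\). The paper's own argument does not supply this step either. It only checks that \(\pi\circ\maxalg[D_0]{q}\) kills \(\maxalg[D_0]{\CI}\) for a \(D_0\)-representation \(\pi\) of \(\CB\), which is automatic and gives \({\rm im}\subseteq\ker\) again.

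Your limiting argument does not prove the step. Put \(\CN^\CA_{D_0}\coloneqq\{a'\in\algalg{\CA}:\redcondexp_\CA(a'^*a')\in J^\CA_{D_0}\}\); what you need is that \(a\) lies in the norm closure of \(\CN^\CA_{D_0}+\algalg{\CI}\). For \(a'=a-au_\lambda\in a+\algalg{\CI}\) one has \(\redcondexp_\CA(a'^*a')=(1-u_\lambda)\redcondexp_\CA(a^*a)(1-u_\lambda)\). Its \((1-1_I)\)-component equals \(\redcondexp_\CA(a^*a)(1-1_I)\) for every \(\lambda\). Its \(1_I\)-component \((1-u_\lambda)\redcondexp_\CA(a^*a)(1-u_\lambda)1_I\) tends to \(0\) only weak\(^*\), not in norm, and nothing forces it to be close to \(J^\CA_{D_0}\). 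The element \(a(1-1_I)\) that would realize the limit is not in \(\maxalg{\CA}\). So the correct observation \(\redcondexp_\CA(a^*a)(1-1_I)\in J^\CA_{D_0}\) says nothing about the \(D_0\)-nucleus of \(\maxalg{\CA}\).

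The step cannot be proved from the stated hypotheses, because middle exactness can fail. Let \(S=\{1\}\) act trivially on \(X=[0,1]\), with \(D_0=\{0\}\), \(A=\cont{[0,1]}\), \(I=\contz{(0,1]}\), and \(B=\CC\) with \(q\) the evaluation at \(0\). Take the natural structure maps \(\Phi_A(\CU)=\contz{\CU}\), \(\Phi_I(\CU)=\contz{\CU\cap(0,1]}\) and \(\Phi_B(\CU)=q(\contz{\CU})\).
\begin{itemize}
\item Then \(B_x=0\) for \(x\neq 0\), so \(J^\CB_{D_0}=B^{**}\) and \(\maxalg[D_0]{\CB}=0\). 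In particular \(q\) is \(D_0\)-equivariant.
\item \(\iota\) is \(D_0\)-equivariant too, since \(J^\CI_{D_0}\) consists of the elements of \(I^{**}\) whose point evaluations at all \(x\in(0,1]\) vanish.
\item A continuous function vanishing on \((0,1]\) is zero, so \(\maxalg[D_0]{\CA}=\cont{[0,1]}\) and \(\maxalg[D_0]{\CI}=\contz{(0,1]}\).
\end{itemize}
Hence \(0\to\contz{(0,1]}\to\cont{[0,1]}\to 0\to 0\) is not exact in the middle. Concretely, for \(a=1\) every \(a'\in 1+I\) has \(\sup_{x\in(0,1]}\abs{a'(x)}\geq 1\), so \(\redcondexp_\CA(a'^*a')\) stays at distance at least \(1\) from \(J^\CA_{D_0}\). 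The same happens for \(D_0=D\) with the groupoid of \cref{ex:0-1-Gamma} and the closed invariant set \(\{1\}\subseteq D\): one checks that \(\essmaxalg{G}\cong\cont{[0,1]}\), while the quotient term vanishes.

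Since the plain maximal sequence is exact, middle exactness of the \(D_0\)-sequence is equivalent to \(\closure{\CN^\CB_{D_0}}\subseteq\maxalg{q}\bigl(\closure{\CN^\CA_{D_0}}\bigr)\), with closures taken in the full algebras. The reverse inclusion follows from \(D_0\)-equivariance of \(q\), but this inclusion has to be added as a hypothesis.

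Likewise, your appeal to item \cref{prop:inject-equiv-map-induces:injective} of \cref{prop:inject-equiv-map-induces} for injectivity of \(\maxalg[D_0]{\iota}\) relies on a reverse implication. For \(c\in\algalg{\CI}\) it needs \(\redcondexp_\CA(\iota(c)^*\iota(c))\in J^\CA_{D_0}\Rightarrow\redcondexp_\CI(c^*c)\in J^\CI_{D_0}\). This holds when \(\Phi_I(\CU)=I\cap\Phi_A(\CU)\), but it is not a consequence of \(D_0\)-equivariance.
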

\begin{proof}
  It is observed in \cite{Kwasniewski-Meyer:Pure_infiniteness}*{Section 4} that the morphisms \(\iota\) and \(q\) are always reduced in the sense of \cref{def:all-equiv-morphism}.
  The induced maps \(\widehat{\iota} \coloneqq \maxalg[D_0]{\iota}\) and \(\widehat{q} \coloneqq \maxalg[D_0]{q}\) are provided by \cref{prop:inject-equiv-map-induces}.
  Hence we only need to show that the sequence
  \[
    0 \to \maxalg[D_0]{\CI} \xrightarrow{\widehat{\iota}} \maxalg[D_0]{\CA} \xrightarrow{\widehat{q}} \maxalg[D_0]{\CB} \to 0
  \]
  is exact in the middle.
  The image of the map \(\widehat{\iota}\) is clearly contained in the kernel of \(\widehat{q}\), for the image of \(\widehat{\iota}\) is generated by monomials of the form \(\iota_s(i_s)\), with \(i_s \in I_s\).
  These get mapped to \(0\) under \(\widehat{q}\):
  \[
    \widehat{q}\left( \widehat{\iota}\left(i_s\right) \right) = \widehat{q}\left(\iota_s(i_s)\right) = q_s\left(\iota_s\left(i_s\right)\right) = 0.
  \]
  Hence, it follows that we only have to show that \(\ker(\widehat{q}) \subseteq {\rm im}(\widehat{\iota})\).
  A way to show this is to show that
  \[
    C \coloneqq \maxalg[D_0]{\CA}/{\rm im}\left(\widehat{\iota}\right) = \maxalg[D_0]{\CA}/\maxalg[D_0]{\CI}
  \]
  enjoys the universal property defining \(\maxalg[D_0]{\CB}\), \emph{i.e.}\ we have to show that if \(\pi \colon \CB \to \CL(H)\) is some \(D_0\)-representation of \(\CB\) then \(\pi\) integrates to a \Star{}homomorphism \(\widetilde{\pi} \colon C \to \CL(H)\).
  Hence, take any such \(D_0\)-representation \(\pi \colon \maxalg[D_0]{\CB} \to \CL(H)\).
  Pre-composing with \(\widehat{q}\) yields a representation \(\rho \coloneqq \pi \circ \widehat{q} \colon \maxalg[D_0]{\CA} \to \CL(H)\).
  We claim that \(\rho(\maxalg[D_0]{\CI}) = 0\).
  Again, this we only have to check on monomials, and it is clear: \(\rho(i_s) = \pi(\widehat{q}(i_s)) = 0\).
  Hence \(\rho\) descends to a representation \(\widetilde{\rho} \colon C \to \CL(H)\), proving that \(C\) does enjoy the universal property defining \(\maxalg[D_0]{\CB}\).
\end{proof}

\cref{thm:maximal-extensions} deals with the functoriality properties we need for maximal crossed products. As usual, their reduced counterparts are considerably more subtle, so we only observe the following.

\begin{theorem} \label{thm:reduced-extensions}
  Let \(0 \to \CI \xrightarrow{\iota} \CA \xrightarrow{q} \CB \to 0\) be a short exact sequence of Fell bundles.
  Suppose that both \(\iota\) and \(q\) are \(D_0\)-equivariant morphisms.
  Then there is a sequence
  \[
    0 \to \redalg[D_0]{\CI} \to \redalg[D_0]{\CA} \to \redalg[D_0]{\CB} \to 0
  \]
  which can only fail to be exact in the middle: the image of \(\redalg[D_0]{\CI} \to \redalg[D_0]{\CA}\) is always contained in the kernel of \(\redalg[D_0]{\CA} \to \redalg[D_0]{\CB}\).
\end{theorem}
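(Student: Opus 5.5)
Following the pattern of the proof of \cref{thm:maximal-extensions}, we first obtain the maps and then check the properties one at a time. As observed in \cite{Kwasniewski-Meyer:Pure_infiniteness}*{Section 4}, the morphisms \(\iota\) and \(q\) are reduced in the sense of \cref{def:all-equiv-morphism}; for \(q\) this is also \cref{prop:technical-red-map}, since \(q\) is surjective. Both are \(D_0\)-equivariant by hypothesis. So \cref{prop:inject-equiv-map-induces}\cref{prop:inject-equiv-map-induces:dzer-equi-red,prop:inject-equiv-map-induces:morphisms} gives \Star{}homomorphisms
\[
  \widetilde{\iota} \colon \redalg[D_0]{\CI} \to \redalg[D_0]{\CA}, \qquad \widetilde{q} \colon \redalg[D_0]{\CA} \to \redalg[D_0]{\CB},
\]
and \(\widetilde{q}\) is surjective by \cref{prop:inject-equiv-map-induces:morphisms}. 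This gives exactness at the right end.

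For exactness at the left end, we use that \(\iota(I)\) is an ideal of \(A\), as noted in the footnote to \cref{thm:maximal-extensions}. Then \cref{prop:inject-equiv-map-induces}\cref{prop:inject-equiv-map-induces:injective} applies and \(\widetilde{\iota}\) is injective. If one prefers to see this directly, argue as in \cite{KwasniewskiMeyer2021}*{Proposition 4.2}. The map \(\iota_1^{**} \colon I^{**} \to A^{**}\) is an injective normal map onto a central summand. By \(D_0\)-equivariance it induces a map \(I^{**}/J^{\CI}_{D_0} \to A^{**}/J^{\CA}_{D_0}\). We must check that this induced map is injective, i.e.\ that \(J^{\CA}_{D_0} \cap \iota_1^{**}(I^{**}) = \iota_1^{**}(J^{\CI}_{D_0})\). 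Fibrewise this holds because \(I_x\) is an ideal of \(A_x\), so a vanishing condition at \(x\) in \(A_x^{**}\) restricts to one in \(I_x^{**}\). The commutative diagram \eqref{eq:red-algs-diagram} then shows \(\condexp^\CA_{D_0}\circ\widetilde{\iota} = (\text{injective map})\circ\condexp^\CI_{D_0}\). Since \(\condexp^\CI_{D_0}\) is faithful on \(\redalg[D_0]{\CI}\), \(\widetilde{\iota}\) is injective. I expect this identification of \(J_{D_0}\) on the ideal to be the only delicate point.

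Finally we show \({\rm im}(\widetilde{\iota}) \subseteq \ker(\widetilde{q})\). Both maps are continuous and \(\algalg[D_0]{\CI}\) is dense in \(\redalg[D_0]{\CI}\), so it suffices to check this on monomials. For \(i_s \in I_s\) we have \(\widetilde{q}(\widetilde{\iota}(i_s)) = q_s(\iota_s(i_s)) = 0\), exactly as in the proof of \cref{thm:maximal-extensions}.

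We do not claim the reverse inclusion in the middle. The argument of \cref{thm:maximal-extensions} relied on the universal property of \(\maxalg[D_0]{\CB}\), and \(\redalg[D_0]{\CB}\) has no such property. Equality there can fail already when \(S\) is a group, by the non-exact examples of \cite{higson-lafforgue-skand-2002-counter-bc}. So the statement correctly asserts only the containment.
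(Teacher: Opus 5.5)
Your proposal is correct and follows the paper's own route. The paper just reruns the proof of \cref{thm:maximal-extensions} for \(\redalg[D_0]{\variable}\): it takes the maps, the injectivity of \(\redalg[D_0]{\iota}\) and the surjectivity of \(\redalg[D_0]{q}\) from \cref{prop:inject-equiv-map-induces}, checks \({\rm im} \subseteq \ker\) on monomials, and drops the universal-property half.

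Your direct injectivity argument is more careful than the paper's proof of item \cref{prop:inject-equiv-map-induces:injective} of \cref{prop:inject-equiv-map-induces}, since injectivity of \(\iota_1^{**}\) alone does not give injectivity of the induced map on \(I^{**}/J_{D_0}^{\CI}\). Note, however, that your fibrewise step tacitly assumes \(I\) carries the \(X\)-structure restricted from \(A\), i.e.\ \(\Phi_I(\CU) = I \cap \Phi_A(\CU)\), so that \(I_x \to A_x\) is injective. The paper also leaves this assumption implicit, and the statement can fail without it.
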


\begin{proof}
  The proof is the same as that of \cref{thm:maximal-extensions} just adorning with ``\red'' instead of ``\full'' and skipping the second half of the proof.
\end{proof}

One may reduce the study of the reduced crossed products to the following.

\begin{definition}
  We say an action \(S \actson X\) is \emph{\(D_0\)-exact} if any short exact sequence \(0 \to \CI \to \CA \to \CB \to 0\) of Fell bundles over \(S\) that is \(D_0\) equivariant induces a short exact sequence
  \begin{equation} \label{eq:ext-func-properties-red}
    0 \to \redalg[D_0]{\CI} \to \redalg[D_0]{\CA} \to \redalg[D_0]{\CB} \to 0.
  \end{equation}
\end{definition}
For some (rather non-explicit, and with \(D_0 = \emptyset\)) examples of exact actions we refer the reader to \cite{Kwasniewski-Meyer:Pure_infiniteness}*{Examples 4.16 and 4.17}.
For some more tame ones we give the following.

\begin{example}
  Let \(S \actson X\) be a trivial action, that is, \(sx = x\) for all \(x \in s^*s\).
  The groupoid \(S \ltimes X\) is a group bundle, and it comes equipped with a unique \'etale topology restricting to the given one in \(X\).
  Exactness of the action \(S \actson X\) is precisely the same as that for all open \(\CU \subseteq X\) the short exact sequence \(S \ltimes \CU \to S \ltimes X \to S \ltimes F\), where \(F \coloneqq X \setminus \CU\), induces a short exact sequence
  \[
    0 \to \redalg{S \ltimes \CU} \to \redalg{S \ltimes X} \to \redalg{S \ltimes F} \to 0.
  \]
  The above may be non exact: see \cite{martinez-szakacs-2025}*{Section 4} for a particularly badly behaved example.
  The maximal analogue, however, is always exact.
\end{example}

\begin{example}
  Take two actions \(S \actson X, Y\) on locally compact Hausdorff spaces, and let \(\rho \colon Y \to X\) be a continuous surjection.
  Were \(\rho\) to be equivariant, \emph{i.e.}\ \(\rho(sy) = s \rho(y)\) for all \(y \in s^*s\), then it would also induce, via Gelfand duality, a Fell bundle morphism \(\widehat{\rho} \colon \cfunz{X} \to \cfunz{Y}\) via \eqref{eq:fell-bundle-from-action} and \cref{prop:inject-equiv-map-induces}.
  In such case one would often say that the \Star{}homomorphism \(\widehat{\rho} \colon \cfunz{X} \to \cfunz{Y}\) is \emph{equivariant}.
  Moreover, this would induce a \Star{}homomorphism \(\essalg{S \ltimes X} \to \essalg{S \ltimes Y}\) if \(\widehat{\rho}\) is assumed to also be \(D\)-equivariant, \emph{e.g.}\  if \(\rho\) sends the dangerous units of \(S \ltimes Y\) to the dangerous units of \(S \ltimes X\) (in the sense of \cite{buss-martinez-ess-ame-2024}, and where \(D \subseteq X\) is as in \eqref{eq:d-s}).
\end{example}

\section{A notion of amenability and a left regular representation} \label{sec:proper-ame}
We now introduce an appropriate notion of ``\(D_0\)-amenable'' action \(S \actson X\).
The following should be compared with \cref{def:ideals-i-s-t}.

\begin{definition} \label{def:ideals-i-s-t-dzero}
  Let \(\CB = (B_s)_{s \in S}\) be a Fell bundle, and assume \(\Phi_B \colon \opensets{X} \to \idealsin{B}\) is some equivariant map. Let \(D_0 \subseteq X\) be invariant.
  Given elements \(s, t \in S\) recall (see \cref{def:ideals-i-s-t}) that \(1_{s,t} \in \multialg{B} \subseteq B^{**}\) is the unit of \(I_{s,t}^{**}\).
  If \(J_{D_0}\) is as in \cref{prop:cond-exp-kills-dzero}, then we let
  \[1_{s,t}^{D_0} \coloneqq 1_{s,t} + J_{D_0} \in B^{**}/J_{D_0}.\]
\end{definition}

Given a Fell bundle \(\CB = (B_s)_{s \in S}\) its double dual \(\CB^{**} \coloneqq (B_s^{**})_{s \in S}\) is the collection of double duals of the spaces \(B_s\) themselves.
This \(\CB^{**}\) is clearly a Fell bundle in its own right, see \cref{lemma:double-dual-x-algebra}.
Given some invariant \(D_0 \subseteq X\), consider
\[
  B_{s}^{D_0} \coloneqq B_s^{**}/B_s^{**} \cdot J_{D_0}
\]
where \(J_{D_0}\) is as in \cref{prop:cond-exp-kills-dzero}.
We can also equip \(\CB^{D_0} \coloneqq (B_s^{D_0})_{s \in S}\) with a Fell bundle structure:
\begin{align*}
  B_{s}^{D_0} \times B_{t}^{D_0} \to B_{st}^{D_0}, \;\; \left(b_s + B_s^{**} \cdot J_{D_0}\right) \cdot \left(b_t + B_t^{**} \cdot J_{D_0}\right) \coloneqq b_s b_t + B_{st}^{**} \cdot J_{D_0} \in B_{st}^{**}/B_{st}^{**} \cdot J_{D_0}.
\end{align*}

\begin{lemma} \label{lemma:artifice-d0-fell-bundle}
  The data \(\CB^{D_0}\) above defines a Fell bundle.
\end{lemma}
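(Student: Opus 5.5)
The plan is to realise each \(B_s^{D_0}\) as a quotient of the Hilbert \(B^{**}\)-bimodule \(B_s^{**}\) by a closed sub-bimodule. Once this is done, every Fell bundle axiom of \cref{def:fell-bundle} for \(\CB^{D_0}\) over the unit fiber \(B^{**}/J_{D_0}\) should follow from the corresponding axiom for the Fell bundle \(\CB^{**}\) of \cref{lemma:double-dual-x-algebra}.

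First I would check that \(B_s^{**}\cdot J_{D_0}\) is a closed sub-bimodule, with \(B_s^{**} J_{D_0} = J_{D_0} B_s^{**}\). Closedness comes from Cohen factorisation, since \(J_{D_0}\) is a \cstar{}ideal. The equality \(B_s^{**} J_{D_0} = J_{D_0} B_s^{**}\) is where invariance of \(D_0\) enters. As in the proof of \cref{lemma:description-j-d}, for \(b_s \in B_s\) and \(c \in J_{D_0}\) the element \(b_s c b_s^*\) has support in \(s(\supp{c}\cap s^*s) \subseteq D_0\), hence lies in \(J_{D_0}\). Thus \(b_s J_{D_0} b_s^* \subseteq J_{D_0}\), and the same holds with \(s^*\) in place of \(s\). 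Writing \(b_s = b_s' \langle b_s'', b_s'''\rangle\) via saturation, I then get \(B_s J_{D_0} \subseteq J_{D_0} B_s\), and by symmetry the reverse inclusion; passing to weak closures gives the statement for \(B_s^{**}\). I expect this step to be the main obstacle, because \(J_{D_0}\) is defined through pointwise supports in \(\prod_x B_x^{**}\), and transporting supports along \(s\) requires the equivariance \eqref{eq:equiv-struct-map} of the structure map at the level of double duals. In particular, one must check that \(q_{sx}^{**}(b_s c b_s^*)=0\) whenever \(q_x^{**}(c)=0\). I would prove this first for \(c \in B\) via the Rieffel correspondence in \cref{prop:setting-actions}, and then extend it by normality of the maps \(q_x^{**}\).

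With this in hand, the inner products descend to the quotients. Indeed,
\[
\langle B_s^{**}J_{D_0}, B_s^{**}\rangle = J_{D_0}\langle B_s^{**},B_s^{**}\rangle \subseteq J_{D_0},
\]
and similarly on the left, so \(B_s^{D_0}\) becomes a Hilbert \(B^{**}/J_{D_0}\)-bimodule. Its norm is the quotient norm, by the standard identification \(X/XJ\) for Hilbert modules. The multiplication is well defined because
\[
(B_s^{**}J_{D_0})B_t^{**} = B_s^{**}B_t^{**}J_{D_0} \subseteq B_{st}^{**}J_{D_0},
\]
using the commutation from the first step, and similarly in the other factor. The multiplication is moreover surjective and isometric onto \(B_{st}^{D_0}\), since \(\mu_{s,t}\) is an isomorphism for \(\CB^{**}\) and quotients preserve this.

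Finally, the three axioms are checked as follows. For axiom (i), \(B_e^{D_0} = B_e^{**}/(B_e^{**}\cap J_{D_0})\) is an ideal of \(B^{**}/J_{D_0}\), and the span over \(E\) is dense because it is so before taking the quotient. Axiom (ii) holds because the multiplications come from the bimodule actions already upstairs. Axiom (iii), associativity, descends directly from the associativity diagram for \(\CB^{**}\), since all the maps involved are induced by quotient maps.
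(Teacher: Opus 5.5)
Your overall plan matches the paper's: show that \(J_{D_0}\) is invariant, i.e.\ \(B_s^{**}J_{D_0}=J_{D_0}B_s^{**}\), using invariance of \(D_0\), and then let the axioms descend from \(\CB^{**}\). Your support-transport argument for \(b_sJ_{D_0}b_s^*\subseteq J_{D_0}\), and the same with \(s^*\), is fine. The step that fails as written is deducing \(B_sJ_{D_0}\subseteq J_{D_0}B_s\) from it via \(b_s=b_s'\langle b_s'',b_s'''\rangle\). That factorization only gives \(b_sc=b_s'(b_s'')^*b_s'''c\), with \(c\) still on the right. You also cannot absorb \(c\) through an approximate unit \(e_\lambda\) of \(B_{s^*s}\): since \(c\in B^{**}\), \(b_sce_\lambda\to b_sc\) only weak\(^*\). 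Nor can you reduce to \(c\in B\), because \(J_{D_0}\cap B\) may well be \(0\).

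What is missing is that \(J_{D_0}=\bigcap_{x\notin D_0}\ker q_x^{**}\) is an intersection of kernels of normal \Star{}homomorphisms. It is therefore a weak\(^*\)-closed ideal, so \(J_{D_0}=pB^{**}\) for a central projection \(p\in J_{D_0}\). Then \(b_spb_s^*\in pB^{**}\) gives
\[
\norm{(1-p)b_sp}^2=\norm{(1-p)\,b_spb_s^*\,(1-p)}=0,
\]
so \(b_sp=pb_sp\). Applying the same to \(b_s^*\in B_{s^*}\) and taking adjoints gives \(pb_s=pb_sp\), hence \(b_sp=pb_s\) for \(b_s\in B_s\). By weak\(^*\)-continuity this extends to all of \(B_s^{**}\), so \(B_s^{**}J_{D_0}=B_s^{**}p=pB_s^{**}=J_{D_0}B_s^{**}\). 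Alternatively, \(\eta=b_sc\) satisfies \(\eta\eta^*\in J_{D_0}\), so \(\eta=\lim_n(\eta\eta^*)^{1/n}\eta\in J_{D_0}B_s^{**}\). Either way, weak\(^*\)-closedness of \(J_{D_0}\) is also what justifies your ``passing to weak closures''.

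With this repair the rest of your argument goes through. It then differs from the paper only in how invariance is expressed. The paper transports \(a\in J_{D_0}\) pointwise to an element \(sa\) and asserts \(b_sa=(sa)b_s\), which is really only meaningful for central \(a\) such as \(p\). Your formulation \(b_sJ_{D_0}b_s^*\subseteq J_{D_0}\) avoids that issue.
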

\begin{proof}
  We will first show that \(J_{D_0}\) is \(\CB\) invariant, \emph{i.e.}\ \(B_s^{**} \cdot J_{D_0} = J_{D_0} \cdot B_{s}^{**}\) for all \(s \in S\).
  Let \(b_s \in B_s^{**}\) and \(a \in J_{D_0}\) be given.
  Consider the element:
  \[
    sa \coloneqq \left[X \supseteq s^*s \ni x \mapsto a\left(sx\right) \in B_{sx}^{**}\right] \in \prod_{x \in X} B_x^{**} \subseteq B^{**}.
  \]
  where the last inclusion follows from the fact that \(B^{**}\) quotients onto \(\prod_{x \in X} B_x^{**}\) and the former is a von Neumann algebra.
  It then follows that \(sa \in J_{D_0}\), since \(D_0\) is invariant and the support of \(a\) itself was contained in \(D_0\).
  Moreover \(b_s a = (sa) b_s\), proving that \(J_{D_0}\) is \(\CB^{**}\) invariant.
  The fact that \(\CB^{D_0}\) is a Fell bundle now follows routinely.
  Indeed, given \(b_s, b_t, b_r\) in \(B_s^{**}, B_t^{**}, B_r^{**}\) respectively we get that
  \[
    \left(b_s + B_s^{**} \cdot J_{D_0}\right) \left(\left(b_t + B_t^{**} \cdot J_{D_0}\right) \cdot \left(b_r + B_r^{**} \cdot J_{D_0}\right)\right) = b_s b_t b_r + B_{str}^{**} \cdot J_{D_0},
  \]
  proving associativity of the product. Likewise, the multiplication maps defining the Fell bundle are clearly isomorphisms, for the same holds for the double dual \(\CB^{**}\) itself.
\end{proof}

It is \emph{not remotely true} that \(\redalg[D_0]{\CB} \cong \redalg{\CB^{D_0}}\), for the former is usually separable while the latter almost never is.
The Fell bundle \(\CB^{D_0}\) is only an ad-hoc artifice we can use to generalize \cite{Exel:Amenability} and \cite{buss-martinez-approx-prop-23}*{Definition 4.3} to our setting, where we can ignore certain units \(D_0 \subseteq X\).
In particular the only interest of \cref{lemma:artifice-d0-fell-bundle} is that expressions such as \((b_s + J_{D_0}) b_1\) now make sense, where \(b_s \in B_s\) and \(b_1 \in B\).

\begin{definition} \label{def:approx-property}
  Let \(\CB = (B_s)_{s \in S}\) be a Fell bundle over \(B\). We denote by:
  \[
    B_\CCC \coloneqq C^* \left(\left\{1_{s,t} b : s, t \in S \text{ and } b \in B_{t^*ss^*t}\right\}\right) \subseteq B^{**}.
  \]
  We say \(\CB\) has the \emph{\(D_0\)-approximation property} if there is a net of finitely supported maps \(\xi_i \colon S \to B_\CCC/J_{D_0}\) such that \(\xi_i(s) \in B_{\CCC, ss^*} + J_{D_0} \coloneqq B_{\CCC} \cap B_{ss^*}^{**} + J_{D_0}\) for all \(s \in S\); and
  \begin{enumerate}
    \item \(\norm{\sum_{r,t \in S} \, 1_{r,t}^{D_0} \, \xi_i(r)^* \xi_i(t)} \leq 1\) for all \(i\); and
    \item \(\sum_{r, t} \, 1_{r, st}^{D_0} \, \xi_i(r)^* a \xi_i(t) \to b\) for all \(s \in S\) and \(a \in B_s\).
  \end{enumerate}
  We say \(S\) acts \emph{\(D_0\)-amenably} on \(X\) if the induced Fell bundle \(\cfunz{X}\) has the \(D_0\)-approximation property.
\end{definition}

\begin{example}
  In the case that \(B \cong \contz{X}\) and \(D \subseteq X\) is as in \eqref{eq:d-s} it follows that \(B_{\CCC}\) is the \cstar{}algebra whose spectrum is, by definition, the unit space of the \emph{Hausdorff cover} of \(S \ltimes X\), see \cite{brix-gonzalez-hume-li-2025}.
  Moreover, we refer the reader to \cref{sec:apps} for a discussion around the relationship between \(S \actson X\) being amenable and the groupoid of germs \(S \ltimes X\) being (topologically) amenable.
\end{example}

The main observations about Fell bundles and/or actions with the approximation property are the following. Firstly, they yield nuclear crossed products \cite{buss-martinez-approx-prop-23}*{Theorem 6.3}.
Moreover, should \(X\) be second countable and \(S\) be quasi-countable then we may assume that \(\{\xi_i\}_i\) is a sequence (as opposed to a net).
Lastly, if \(\CB\) is ``closed''~\cites{buss-martinez-approx-prop-23,Kwasniewski-Meyer:Pure_infiniteness} then \(B_\CCC = B\).
In that case monomials of the form \(1_{s,t} b\), where \(b \in B_{t^*ss^*t}\), are really contained in \(B\).
This is the ``Hausdorff'' situation, where the added non-Hausdorffness mentioned in \cref{rem:non-hausdorffness} does not appear.

\begin{example}
  It would certainly be convenient should it be true that if a Fell bundle \(\CB = (B_s)_{s \in S}\) has the \(D_0\)-approximation property then the action \(S \actson X\) is \(D_0\)-amenable.
  This, however, is false.
  For instance, we may choose any amenable action \(\FF_2 \actson B\) of the free group on \(2\) generators on a simple \cstar{}algebra, such as the Bernoulli action \(\FF_2 \actson \tensor_{\gamma \in \FF_2} M_{2^\infty}\).
  Then the action \(\FF_2 \actson \primidealspc{B} = \{\rm pt\}\) cannot be amenable, for \(\FF_2\) is non-amenable.
\end{example}

We now give a canonical left regular representation of \(\redalg[D_0]{\CB}\).
\begin{proposition} \label{prop:left-reg-rep}
  Let \(\CB = (B_s)_{s \in S}\) be a Fell bundle over an \((S,X)\) algebra, and let \(D_0 \subseteq X\) be some invariant subset.
  Given \(a = \sum_{s \in S} a_s\) and \(b = \sum_{s \in S} b_s \in \bigoplus_{s \in S} B_s\), consider:
  \[
    \innerprod{a}{b}_{D_0} \coloneqq \condexp[D_0]\left(a^*b\right) = \sum_{s, t \in S} a_s^*b_t 1_{s,t}^{D_0} \in B^{**}/J_{D_0},
  \]
  which is a semi-inner product with values in \(B^{**}/J_{D_0}\).
  Let \(\ell^2_{D_0}(S; \CB)\) be the \(B^{**}/J_{D_0}\)-Hilbert bimodule obtained by completing \(\bigoplus_{s \in S} B_s^{**}/B_s^{**} J_{D_0}\) with respect to \(\innerprod{\variable}{\variable}_{D_0}\).
  Then the map
  \begin{equation}
    \Lambda_{D_0} \colon \CB = \left(B_s\right)_{s \in S} \to \CL\left(\ell_{D_0}^2\left(S; \CB\right)\right); \;\; \text{ where } \; \Lambda_{D_0,s}\left(b_s\right) \widehat{a_t} \coloneqq \widehat{b_s a_t},
  \end{equation}
  where \(\widehat{a_t}\) is the image of \(a_t\) under the canonical
  \[
    \bigoplus_{s \in S} B_s \to \bigoplus_{s \in S} B_s^{**} \to \bigoplus_{s \in S} B_s^{**}/B_{s}^{**} J_{D_0} \to \ell^2_{D_0}(S; \CB),
  \]
  defines a \(D_0\)-representation \(\Lambda_{D_0} \colon \maxalg[D_0]{\CB} \to \CL(\ell^2_{D_0}(S; \CB))\). In fact, its image is precisely \(\redalg[D_0]{\CB}\).
\end{proposition}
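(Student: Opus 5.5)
We follow the standard construction of the regular representation of a Fell bundle over an inverse semigroup (as in \cite{Buss-Exel-Meyer:Reduced} and \cite{buss-martinez-approx-prop-23}) and compose with the quotient \(B^{**} \to B^{**}/J_{D_0}\). The proof breaks into four steps.

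\emph{Step 1: the form is a positive semi-inner product.} We first check that \(\innerprod{\variable}{\variable}_{D_0}\) is a \(B^{**}/J_{D_0}\)-valued semi-inner product. Sesquilinearity and the right module property are immediate from the formula. For positivity, recall that \(\redcondexp(a^*a) = \sum_{s,t} a_s^* a_t 1_{s,t} \geq 0\) in \(B^{**}\) for \(a \in \algalg{\CB}\); this is the usual computation of \(\redcondexp\) on products of monomials, using \(\redcondexp(b_s^* b_t) = b_s^*b_t 1_{s^*t}\) and that \(1_{s^*t}\) agrees with \(1_{s,t}\) on the relevant fibre. Since \(\condexp[D_0]\) is \(\redcondexp\) followed by a \Star{}homomorphism, we get \(\innerprod{a}{a}_{D_0} \geq 0\).

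Passing to the quotients \(B_s^{**}/B_s^{**}J_{D_0}\) is legitimate by \cref{lemma:artifice-d0-fell-bundle}. That lemma shows \(J_{D_0}\) is \(\CB^{**}\)-invariant, so that \(b_s^* c\, 1_{s,t}\) lies in \(J_{D_0}\) whenever \(c \in B_t^{**}J_{D_0}\). Quotienting by null vectors and completing then yields the Hilbert module \(\ell^2_{D_0}(S;\CB)\).

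\emph{Step 2: adjointability and the representation property.} Next we show that each \(\Lambda_{D_0,s}(b_s)\) extends to an adjointable operator of norm at most \(\norm{b_s}\). Adjointability, with \(\Lambda_{D_0,s^*}(b_s^*)\) as the adjoint, follows from \(\condexp[D_0]((b_s a)^* c) = \condexp[D_0](a^* b_s^* c)\). Boundedness follows from the inequality \(a^* b_s^*b_s a \leq \norm{b_s}^2 a^*a\) inside \(\maxalg{\CB}\), after applying the positive map \(\condexp[D_0]\). The relations \(\Lambda_{D_0,st}(b_sb_t) = \Lambda_{D_0,s}(b_s)\Lambda_{D_0,t}(b_t)\) hold on the dense set of monomials, so \(\Lambda_{D_0}\) is a representation of \(\CB\) and integrates to \(\maxalg{\CB}\).

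\emph{Step 3: \(\Lambda_{D_0}\) is a \(D_0\)-representation (the main obstacle).} Take \(a \in \algalg{\CB}\) with \(\redcondexp(a^*a) \in J_{D_0}\). For every vector \(\widehat{c}\) with \(c \in \algalg{\CB}\) we have
\[
\innerprod{\Lambda_{D_0}(a)\widehat c}{\Lambda_{D_0}(a)\widehat c}_{D_0} = \condexp[D_0](c^*a^*ac) .
\]
By \cref{lemma:description-j-d} this vanishes. That lemma treats \(c\) in \(\redalg{\CB}\), but the vectors here come from \(\bigoplus B_s^{**}\), so we also need the version with \(c\) a monomial in \(B_t^{**}\). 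This should follow from the same computation \(\redcondexp(c_t^* x c_t) = c_t^*\redcondexp(x)c_t\) combined with invariance of \(J_{D_0}\), via a normality argument. Hence \(\Lambda_{D_0}(a)=0\), and \(\Lambda_{D_0}\) factors through \(\maxalg[D_0]{\CB}\).

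\emph{Step 4: the image is \(\redalg[D_0]{\CB}\).} It remains to show that \(\norm{\Lambda_{D_0}(a)}\) equals the norm of \(a\) in \(\redalg[D_0]{\CB}\). For the upper bound, \(\Lambda_{D_0}\) factors through \(\redalg{\CB}\): its kernel contains the nucleus of \(\redcondexp\) by the same computation as in Step 3. It then vanishes on \(\{a : \redcondexp(a^*a)\in J_{D_0}\}\), so it factors through \(\redalg[D_0]{\CB}\).

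For the lower bound, we use that \(\condexp[D_0]\) is faithful on \(\redalg[D_0]{\CB}\) by construction. If \(\Lambda_{D_0}(a)=0\), then testing against \(\widehat{u_\lambda}\) for an approximate unit \(u_\lambda\) of \(B\) gives \(\condexp[D_0](u_\lambda a^*a u_\lambda)=0\) for all \(\lambda\). Passing to the limit gives \(\condexp[D_0](a^*a)=0\), so \(a=0\) in \(\redalg[D_0]{\CB}\). An injective \Star{}homomorphism is isometric, so the image is (isomorphic to) \(\redalg[D_0]{\CB}\).

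The delicate point in this last limit is the strict convergence \(u_\lambda \to 1\) in \(B^{**}/J_{D_0}\). I would handle it using \cref{lemma:approx-unit} together with the weak\({}^*\)-continuity of \(\redcondexp\) on monomials.
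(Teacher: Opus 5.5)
Your Steps 1--3 follow the paper's proof. The paper also treats the semi-inner product and the representation property as routine. It obtains the \(D_0\)-representation property exactly as you do, by testing \(\Lambda_{D_0}(a)\) on the vectors \(\widehat{b_t}\) and invoking \cref{lemma:description-j-d}. Your extra remark about vectors coming from \(B_t^{**}\), handled through the \(\CB^{**}\)-invariance of \(J_{D_0}\), addresses a point the paper leaves implicit.

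You genuinely diverge in identifying the image with \(\redalg[D_0]{\CB}\). The paper reconstructs \(\condexp[D_0]\) inside \(\CL(\ell^2_{D_0}(S;\CB))\). It builds the submodules \(F_t\), shows they are complemented by projections \(p_t\), and takes \(p=\vee_t p_t\). It then checks that the compression \(p\,\Lambda_{D_0}(\variable)\,p\) agrees with \(\condexp[D_0]\) on \(\algalg[D_0]{\CB}\), so \(\Lambda_{D_0}\) inherits the faithfulness of \(\condexp[D_0]\).

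You instead get one inclusion of kernels from the factorization through \(\redalg[D_0]{\CB}\). You get the other directly from the identity \(\innerprod{\Lambda_{D_0}(a)\widehat{c}}{\Lambda_{D_0}(a)\widehat{c}}_{D_0}=\condexp[D_0](c^*a^*ac)\), evaluated along an approximate unit. Your route is lighter: it never has to justify complementability, or the existence of a join of infinitely many projections among adjointable operators on a Hilbert module that need not be self-dual. The paper's route yields more, namely a spatial implementation of \(\condexp[D_0]\) as a compression of the regular representation. The statement itself only needs the equality of kernels, which your argument delivers.

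One correction to your final remark: the limit should not be taken in \(B^{**}/J_{D_0}\). That algebra is unital, so strict convergence \(u_\lambda\to 1\) there means norm convergence, which typically fails. It already fails for \(B=\contz{\NN}\) and \(D_0=\emptyset\), where \(B^{**}/J_{D_0}=\ell^\infty(\NN)\).

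Take the limit upstairs instead. By (the proof of) \cref{lemma:approx-unit}, \(au_\lambda\to a\) in norm in \(\redalg{\CB}\), hence \(u_\lambda a^*au_\lambda\to a^*a\) in norm. Since \(\condexp[D_0]\) is contractive, \(\condexp[D_0](a^*a)=\lim_\lambda\condexp[D_0](u_\lambda a^*au_\lambda)=0\), and no weak\({}^*\)-continuity is needed. Choose the \(u_\lambda\), or norm-approximants of them, in \(\spn{\cup_{e\in E}B_e}\), so that \(\widehat{u_\lambda}\) is an honest vector of \(\ell^2_{D_0}(S;\CB)\).
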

\begin{proof}
  The fact that \(\innerprod{\variable}{\variable}_{D_0}\) is a semi inner product is rather self-explanatory, as is the fact that \(\Lambda_{D_0}\) defines a representation.
  In fact, \(\Lambda_{D_0}\) is a \(D_0\) representation, as in, we have that
  \[
    \Lambda_{D_0}\left(\left\{a \in \algalg{\CB} : \redcondexp\left(a^*a\right) \in J_{D_0}\right\}\right) = 0,
  \]
  where, as usual, \(\redcondexp \colon \redalg{\CB} \to B^{**}\) is the usual weak conditional expectation.
  In order to show this, let \(a = \sum_{s \in F} a_s\) be such that \(\redcondexp(a^*a) \in J_{D_0}\), that is, \(\supp{\redcondexp(a^*a)} \subseteq D_0\).
  Take some \(b_t \in B_t\), and let \(\widehat{b_t}\) be its image in \(\ell^2_{D_0}(S; \CB)\).
  We aim to show that \(\widehat{a b_t} = 0\), for which it suffices to show that \(\condexp[D_0](b_t^*a^*ab_t) = 0\).
  This, however, holds precisely due to \(\condexp[D_0](a^*a) = 0\) and \cref{lemma:description-j-d}.
  Thus \(\Lambda_{D_0}\) integrates to a representation \(\Lambda_{D_0} \colon \maxalg[D_0]{\CB} \to \CL(\ell^2_{D_0}(S; \CB))\).

  In order to show that the image of \(\Lambda_{D_0}\) is isomorphic to \(\redalg[D_0]{\CB}\) we first aim to reconstruct the faithful generalized conditional expectation \(\condexp[D_0] \colon \redalg[D_0]{\CB} \to B^{**}/J_{D_0}\) at the level of \(\CL(\ell^2_{D_0}(S; \CB))\).
  In order to do this, notice that there are canonical bounded maps, the last of which may in general be non-isometric:
  \[
    B_t \cdot I_{t,1} \subseteq B_t \to B_t^{**} \to \ell^2_{D_0}\left(S; \CB\right).
  \]
  Thus there is a unique weakly continuous extension to \(B_t^{**} I_{t,1}^{**} \to \ell^2_{D_0}(S; \CB)\): call the image \(F_t\).
  We claim that \(F_t\) is a complemented module for all \(t \in S\). 
  For this, observe that the canonical inclusion \(F_t \subseteq \ell^2_{D_0}(S; \CB)\) is the dual of the (projection) map \(p_t \colon \ell^2_{D_0}(S; \CB) \to F_t\) defined via \(p_t a_s \coloneqq a_s 1_{s,t}^{D_0} 1_{t,1}^{D_0}\) for all \(s \in S, a_s \in B_s\).
  Indeed, we have that
  \[
    \innerprod{p_t a_s}{a_t 1_{t,1}^{D_0}}_{D_0} = a_s^* a_t 1_{s,t}^{D_0} 1_{t,1}^{D_0} = \innerprod{a_s}{a_t 1_{t,1}^{D_0}}_{D_0}
  \]
  for all \(a_s, a_t\) in \(B_s^{**}, B_t^{**}\) respectively.
  Moreover, the submodule generated by all \(F_t\) is also complemented, for it is the image of the join projection of all \(p_t\), denoted it by \(p \coloneqq \vee_{t \in S} p_t\).
  In particular it follows that
  \[
    \Phi \colon \maxalg[D_0]{\CB} \xrightarrow{\Lambda_{D_0}} \CL\left(\ell^2_{D_0}\left(S; \CB\right)\right) \xrightarrow{p \variable p} \CL\left(p \cdot \ell^2_{D_0}\left(S; \CB\right)\right)
  \]
  is a well defined completely positive and contractive map.
  A computation shows that \(\Phi\) restricts to \(\condexp[D_0]\) on \(\algalg[D_0]{\CB}\), \emph{i.e.}\ on the span of the images of \(B_s \to \maxalg[D_0]{\CB}\):
  \[
    \Phi\left(b_s\right) \cdot \widehat{a_t} = p \Lambda_{D_0}\left(b_s\right) p \cdot \widehat{a_t} = p \cdot \widehat{b_s a_t 1_{t,1}} = \widehat{1_{s,1} b_s a_t 1_{t,1}} = \condexp[D_0]\left(b_s\right) \cdot p \widehat{a_t}.
  \]
  Hence \(\Phi\) is \(\condexp[D_0]\), for the former subspace \(\algalg[D_0]{\CB}\) is dense.
  In particular it follows that the norm of all \(b \in \algalg[D_0]{\CB}\) in \(\redalg[D_0]{\CB}\) is larger than the norm of \(\Lambda_{D_0}(b)\) in \(\ell^2_{D_0}(S; \CB)\).
  This precisely means that \(\Lambda_{D_0}\) descends (or induces) a representation \(\Lambda_{D_0}^{\red} \colon \redalg[D_0]{\CB} \to \CL(\ell^2_{D_0}(S; \CB))\).
  The fact that \(\Lambda_{D_0}^{\red}\) is an embedding follows as well, as it is isometric on \(\algalg[D_0]{\CB}\) (due to the previous discussion) and the latter is dense.
\end{proof}

\begin{remark} \label{rem:more-left-reg}
  The statement and proof of \cref{prop:left-reg-rep} should be regarded more as a method to construct a representation of \(\redalg[D_0]{\CB}\).
  By this we mean that they also work when \(\ell^2_{D_0}(S; \CB)\) is constructed as the completion of any Fell sub-bundle \(\widetilde{\CB} \subseteq \CB^{**}\) as long as the unit fiber of \(\widetilde{\CB}\) contains the image of \(\redcondexp \colon \redalg{\CB} \to B^{**}\).
  Thus, for instance, the same proof can be carried out in the case of groupoids from the point of view of the Hausdorff covers of \cite{brix-gonzalez-hume-li-2025}, or from the point of view of \cite{tu-baum-connes-2004}*{Sections 3 and 7}, or from the point of view of the Borel algebra of \cref{sec:apps-ess}.
\end{remark}

\begin{example}
  In the case that \(D = \emptyset\) then the map \(\Lambda_{D}\) above generalizes the usual left regular representation of an \'etale groupoid presented in previous literature, such as \cites{buss-martinez-ess-ame-2024,Kwasniewski-Meyer:Pure_infiniteness,KwasniewskiMeyer2021}.
  By this we mean that \(\Lambda_\emptyset \colon \essalg{S \ltimes X} \to \CL(\ell^2(\algalg{G}))\) is the usual left regular representation. (Here \(\ell^2(\algalg{G})\) is the completion of \(\algalg{G}\) under the inner product \(\xi^*\zeta|_{G \setminus GD}\), where \(D \subseteq G^{(0)}\) is as in \eqref{eq:d-s}.)
\end{example}

\section{Properness and cut-off functions} \label{sec:proper}
In this section we introduce the adequate notion of \emph{proper} action \(S \actson X\), along with so-called \emph{cut-off functions} \(c \colon X \to [0,1]\).
We remark the former notion is rather different from the one studied classically.
Indeed, one should compare \cite{tu-baum-connes-2004}*{Definition 2.9} and \cref{def:proper-grpd}, and notice both reduce to the same one in the Hausdorff situation, which is the one used in \cite{tu-baum-connes-1999}*{Definition 3.4}.

\subsection{Proper actions}
Henceforth we work under the convention that \(S\) is a discrete inverse semigroup acting on a locally compact, Baire, sober, \tzero space \(X\).
Moreover, \(D_0 \subseteq X\) will be a fixed invariant subset.

\begin{definition} \label{def:proper-grpd}
  Let \(S \actson X\).
  A set \(F \subseteq S\) is \emph{finitely upper bounded} if it is bounded above by a finite set, \emph{i.e.}\ there is some finite \(\CF \subseteq S\) such that for all \(s \in F\) and \(x \in s^*s\) there is some \(t \in \CF\) and idempotent \(e \in E\) such that \(x \in e t^*t\); \(se = te\) and \(\closure{(se)^*se} = \closure{s^*se} \subseteq t^*t\).
  The action \(S \actson X\) is \emph{\(D_0\)-proper} if for all compact sets \(K_0, K_1 \subseteq X\) the set
  \[
    \left\{s \in S : s\left(K_0 \setminus D_0\right) \cap K_1 \neq \emptyset\right\} \subseteq S
  \]
  is finitely upper bounded.
  We say the action is \emph{proper} if it is \(\emptyset\)-proper.
\end{definition}

\begin{remark}
  The condition that \(\closure{s^*s e} \subseteq t^*t\) is crucial, particularly in \cref{lemma:proper-action-orbit-space}.
  It is often not needed should \(X\) be totally disconnected and Hausdorff, as then \(s^*s\) is often assumed to be open and compact (and thus closed too).
  In particular it is not needed for universal or tight groupoids of inverse semigroups.
\end{remark}

Heuristically, the more units \(D_0 \subseteq X\) we ignore the easier it is to meet our definitions.
In fact, the following is obvious from the definition, but should be mentioned nevertheless.

\begin{lemma}
  Let \(S \actson X\) and \(D_0 \subseteq D_1 \subseteq X\) be invariant. If \(X\) is \(D_0\)-proper then it also is \(D_1\)-proper.
\end{lemma}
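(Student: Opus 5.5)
The plan is a direct monotonicity argument on the sets appearing in \cref{def:proper-grpd}. Fix compact sets \(K_0, K_1 \subseteq X\). Since \(D_0 \subseteq D_1\), we have \(K_0 \setminus D_1 \subseteq K_0 \setminus D_0\). Hence, for every \(s \in S\),
\[
  s\left(K_0 \setminus D_1\right) \cap K_1 \subseteq s\left(K_0 \setminus D_0\right) \cap K_1 .
\]
(Here, as usual, \(s\) only acts on the part lying in \(s^*s\).) Consequently,
\[
  F_1 \coloneqq \left\{s \in S : s\left(K_0 \setminus D_1\right) \cap K_1 \neq \emptyset\right\} \subseteq F_0 \coloneqq \left\{s \in S : s\left(K_0 \setminus D_0\right) \cap K_1 \neq \emptyset\right\}.
\]

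The second step is to check that being finitely upper bounded passes to subsets. By \(D_0\)-properness, \(F_0\) is finitely upper bounded, witnessed by some finite \(\CF \subseteq S\). The defining condition in \cref{def:proper-grpd} is quantified over \(s \in F_0\) and \(x \in s^*s\): it asks for some \(t \in \CF\) and \(e \in E\) with \(x \in e t^*t\), \(se = te\), and \(\closure{(se)^*se} = \closure{s^*se} \subseteq t^*t\). This condition is stated separately for each \(s\). It therefore holds in particular for every \(s \in F_1 \subseteq F_0\), with the same finite set \(\CF\). So \(F_1\) is finitely upper bounded.

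Since \(K_0, K_1\) were arbitrary, the action is \(D_1\)-proper. There is no genuine obstacle here. The only point to double-check is that neither the invariance of \(D_1\) nor any topological property of \(D_0\) or \(D_1\) is used; invariance is needed only so that \(D_1\)-properness is a meaningful notion in the paper's framework.
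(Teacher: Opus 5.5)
Your argument is correct and is essentially the paper's: the paper gives no proof, calling the lemma obvious from \cref{def:proper-grpd}. Your two observations are exactly what makes it obvious. First, \(K_0 \setminus D_1 \subseteq K_0 \setminus D_0\) shrinks the set of relevant \(s\). Second, being finitely upper bounded is checked separately for each \(s\), so it passes to subsets with the same finite set \(\CF\).
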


\begin{example} \label{ex:0-1-Gamma}
  Let \(\Gamma\) be an infinite discrete group. Consider the groupoid \(G \coloneqq [0,1) \sqcup \Gamma\), equipped with the obvious multiplication and the unique \'etale topology such that \(x_k \to \gamma\) for all \(\gamma \in \Gamma\) and net \((x_k)_k \subseteq [0,1)\) converging to \(1\).
  This is an \'etale groupoid with only isotropy; and \(X\) is compact, but \(G\) is \emph{not}. Therefore the action \(S \coloneqq \Gamma \sqcup \{0\} \actson [0,1]\) is not \(\emptyset\)-proper, but it \emph{is} \(\{1\}\)-proper.
  It also \(D_0\)-proper for any \(D_0 \subseteq [0,1]\) containing \(1\).
\end{example}

\begin{example} \label{ex:0-1-Gamma:non-loc-hausd}
  Let \(Z \coloneqq [0,1) \sqcup \RR\), equipped with the topology uniquely determined by
  \begin{enumerate}
    \item the subspace topology of both \([0, 1)\) and \(\RR\) in \(Z\) being the usual topology; and
    \item \(x_k \to r\) for all \(r \in \RR\) and any net \((x_k)_k \subseteq [0,1)\) converging to \(1\).
  \end{enumerate}
  In particular, observe that \(Z\) is not locally Hausdorff.
  The semigroup \(S \coloneqq \ZZ \sqcup \{0\}\) canonically acts on \(Z\) via \(k \cdot r \coloneqq k+r \in \RR\) (and trivially on \([0,1)\)).
  We claim this action is, in fact, proper.
  For this, note that every compact \(K \subseteq Z\) is contained in a subset of the form \(\widetilde{K} \coloneqq [0,1) \sqcup [-k,k] \subseteq Z\), which is also compact.
  Moreover,
  \[
    \left\{s \in S : s \widetilde{K} \cap \widetilde{K} \neq \emptyset\right\} \subseteq \left\{0\right\} \sqcup \left\{-k-1, -k, \dots, k, k+1\right\} \subseteq \left\{0\right\} \sqcup \ZZ = S,
  \]
  which is a finite upper bound witnessing the properness of the action.
\end{example}

We are particularly interested in the structure of the orbit space of a \(D_0\)-proper action.
For this, we say \(x_1 \sim x_2\) if there is some \(s \in S\) such that \(sx_1 = x_2\).\footnote{\, Here, and from now on, if we ever write \(sx\) in the text it will be implicitly assumed that this is a possible thing to do, \emph{i.e.}\ \(x \in s^*s\).}
This is clearly an equivalence relation, and we are interested in when the orbit space \(X/S \coloneqq X/\sim\) is Hausdorff.
Needless to say, there is no reason for this to happen should \(X\) itself be non-Hausdorff, but the following shows that one can control the ``non-Hausdorffness'' of \(X/S\): it is (at most) that of \(X\) itself.
Before we state this we say \(x \in X\) is \emph{dangerous} if there is a net \(\{x_\lambda\}_\lambda \subseteq X\) converging to both \(x\) and to some \(y \in X\) with \(x \neq y\).
In particular, open neighborhoods of \(x\) and \(y\) will always intersect and, thus, these points ``cannot be separated'' (at least in a Hausdorff manner).

\begin{lemma} \label{lemma:proper-action-orbit-space}
  Let \(S \actson X\) be a proper action on a locally compact \tzero space.
  The quotient map \(X \to X/S\) is open; the graph of \(\sim_S\) is closed in \(X \times X\); and if \([x] \in X/S\) is dangerous then \(x\) is dangerous.

  Moreover, if \(S \actson X\) is \(D_0\) proper then the graph of \(\sim_S\) is ``\(D_0\)-closed'', in the sense that if \((x_\lambda, s_\lambda x_\lambda) \to (x, y)\) for points \(\{x_\lambda\}_\lambda, \{s_\lambda x_\lambda\}_\lambda, x, y \not\in D_0\), then \(x \sim_S y\).
\end{lemma}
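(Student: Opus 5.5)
We treat the four assertions of \cref{lemma:proper-action-orbit-space} in turn, beginning with openness of the quotient map \(\pi \colon X \to X/S\), which does not use properness. For an open \(\CU \subseteq X\) we show that \(\pi^{-1}(\pi(\CU)) = \bigcup_{s \in S} s(\CU \cap s^*s)\). Indeed, \(y \sim x \in \CU\) means \(y = sx\) for some \(s\) with \(x \in s^*s\). Each \(s \colon s^*s \to ss^*\) is a homeomorphism between open subsets of \(X\), so every set \(s(\CU \cap s^*s)\) is open, hence so is their union, and therefore \(\pi(\CU)\) is open.

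The core of the argument is the \(D_0\)-closedness of the graph, since closedness of the graph is the case \(D_0 = \emptyset\). Take a net \((x_\lambda, s_\lambda x_\lambda) \to (x,y)\) with all points outside \(D_0\). Choose compact neighbourhoods \(K_0 \ni x\) and \(K_1 \ni y\); local compactness provides these. Eventually \(x_\lambda \in K_0 \setminus D_0\) and \(s_\lambda x_\lambda \in K_1\), so every \(s_\lambda\) lies in the set
\[
  \left\{s \in S : s\left(K_0 \setminus D_0\right) \cap K_1 \neq \emptyset\right\},
\]
which is finitely upper bounded by some finite \(\CF\). For each \(\lambda\) we therefore obtain \(t_\lambda \in \CF\) and \(e_\lambda \in E\) with \(x_\lambda \in e_\lambda t_\lambda^* t_\lambda\), \(s_\lambda e_\lambda = t_\lambda e_\lambda\), and \(\closure{s_\lambda^* s_\lambda e_\lambda} \subseteq t_\lambda^* t_\lambda\). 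It follows that \(s_\lambda x_\lambda = t_\lambda x_\lambda\). Since \(\CF\) is finite, we pass to a subnet on which \(t_\lambda = t\) is constant.

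Now \(x_\lambda \in \closure{s_\lambda^* s_\lambda e_\lambda} \subseteq t^*t\). We need \(x \in t^*t\), and this is the step we expect to be the main obstacle. The closure clause in \cref{def:proper-grpd} is designed for exactly this, but the sets \(e_\lambda\) still vary with \(\lambda\). Our first attempt is to refine the choice: enlarge \(K_0\) so that \(x\) lies in its interior, and apply the bound at a fixed stage. This gives a single idempotent \(e\) with \(x_\lambda \in s^*se\) eventually, and hence \(x \in \closure{s^*se} \subseteq t^*t\). If that fails, the fallback is to observe that \(x \in \closure{\bigcup_\lambda s_\lambda^* s_\lambda e_\lambda}\) and to argue that the closure condition is uniform across the finitely many \(t \in \CF\). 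Once \(x \in t^*t\) holds, continuity of \(t\) on the open set \(t^*t\) gives \(t x_\lambda \to tx\). The net \(s_\lambda x_\lambda = t x_\lambda\) converges to \(y\), so both \(tx\) and \(y\) are limits of the same net. In a non-Hausdorff space limits are not unique, so we only get that \(y\) and \(tx\) are inseparable. In the Hausdorff parts of \(X\) this yields \(y = tx\) directly, which gives \(x \sim_S y\). In general we instead run the symmetric argument for \(t^*\) applied to the net \(t x_\lambda \to y\), and conclude using the \(T_0\) axiom and sobriety (\cref{lemma:sober}) that \(y = tx\).

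Finally we show that if \([x]\) is dangerous then \(x\) is dangerous. Suppose a net \([x_\lambda]\) converges to both \([x]\) and \([y]\) with \([x] \neq [y]\). Because \(\pi\) is open, nets converging in \(X/S\) lift along subnets: we may choose representatives \(x'_\lambda \to x\) and, after replacing by an equivalent point, \(s_\lambda x'_\lambda \to y'\) with \(y' \sim y\). If \(x'_\lambda\) also converges to some point other than \(x\), then \(x\) is dangerous by definition. Otherwise the pairs \((x'_\lambda, s_\lambda x'_\lambda)\) converge to \((x, y')\), and the closedness of the graph just proved gives \(x \sim y'\). That would mean \([x] = [y]\), a contradiction.
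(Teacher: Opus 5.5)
Your openness argument is correct and coincides with the paper's. Beyond that there is a genuine gap, and it cannot be closed under the stated hypotheses.

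The step you flag, \(x\in t^*t\), does not follow from \cref{def:proper-grpd}. The idempotent \(e_\lambda\) supplied there depends on the pair \((s_\lambda,x_\lambda)\), so evaluating the bound ``at a fixed stage'' gives no single \(e\) that works for all later \(\lambda\). Your fallback fails too: \(\closure{\bigcup_\lambda s_\lambda^*s_\lambda e_\lambda}\) may leave \(t^*t\) even though each \(\closure{s_\lambda^*s_\lambda e_\lambda}\) stays inside it.

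Concretely, on \(X=\RR\) let \(S\) be the inverse semigroup generated by \(\tau_1\colon(0,\infty)\to(1,\infty)\), \(u\mapsto u+1\), together with all \(\mathrm{id}_U\) for \(U\subseteq\RR\) open. Its elements are the restrictions to open sets of the translations \(\tau_d(u)=u+d\), \(d\in\ZZ\), where \(\tau_0=\mathrm{id}_\RR\), and \(\tau_d\) has domain \((0,\infty)\) if \(d>0\) and \((-d,\infty)\) if \(d<0\). For compact \(K_0,K_1\), only shifts \(|d|\le N\) can move a point of \(K_0\) into \(K_1\). For \(s=\tau_d|_U\) and \(x\in U\), choose \(e=\mathrm{id}_V\) with \(V\ni x\) an open interval and \(\closure{V}\subseteq U\); this verifies the definition with \(\CF=\{\tau_d : |d|\le N\}\), so the action is proper.

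Nevertheless \((1/n,\tau_1(1/n))\to(0,1)\) while the orbit of \(0\) is \(\{0\}\), so the graph of \(\sim_S\) is not closed. Also the net \([1/n]=[1+1/n]\) converges to both \([0]\) and \([1]\neq[0]\), so \([0]\) is dangerous in \(X/S\) although \(0\) is not dangerous in \(\RR\). Hence the second, third and fourth assertions all fail here (with \(D_0=\emptyset\)). Your dangerous-point argument relies on closedness, so it fails with them.

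The final step \(y=tx\) fails independently. Even granting \(x\in t^*t\), you only learn that one net converges to two points, and neither the \(T_0\) axiom nor sobriety makes limits unique. The trivial action of \(S=\{1\}\) on the line with two origins \(0,0'\) (locally compact, sober, even \(T_1\)) is proper and \(\sim_S\) is equality. Since \((1/n,1/n)\to(0,0')\), the graph is not closed.

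The paper's proof has the same holes, so there is no argument there to borrow. It places the limit in \(\closure{e_ns_n^*s_n}\subseteq t^*t\) with \(e_n\) varying and treats limits as unique. In the last part it writes \(x_\lambda=s^*sx_\lambda\to s^*y\) without knowing \(y\in ss^*\).

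What is missing is a hypothesis rather than an idea. Suppose \(X\) is Hausdorff and \(S\ltimes X\) is proper in Tu's sense; the example above shows this is not implied by \cref{def:proper-grpd}, contrary to \cref{lemma:proper-iff-proper}. Then the germs \([s_\lambda,x_\lambda]\) eventually lie in a compact set, so a subnet converges to some \([t,x']\). Continuity of source and range plus uniqueness of limits give \(x'=x\) and \(tx=y\), and the rest of your argument goes through. The \(D_0\)-version needs the analogous condition off \(D_0\).
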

\begin{proof}
  We first show that the quotient map \(\pi \colon X \to X/S\) is open.
  Take some open set \(\CU \subseteq X\). Note that \(\pi^{-1}(\pi(\CU)) = S \CU = \cup_{s \in S} \, s \cdot (\CU \cap s^*s)\), which is a union of open sets of \(X\), and thus open.
  By definition of the quotient topology it follows that \(\pi(\CU)\) is open too.

  We now study the closedness of \(R \coloneqq \{(sx, x) : x \in s^*s \subseteq X\}\) (as a subset of \(X \times X\)).
  Suppose that \((s_n x_n, x_n) \to (y, x)\) for some net.
  We have to show that there is some \(t \in S\) such that \(y = tx\).
  As \(X\) is assumed to be locally compact there are compact neighborhoods \(K_x\) and \(K_y\) of \(x\) and \(y\) respectively.
  By properness there is some finite \(\CF \subseteq S\) such that
  \[
    \left\{s \in S : s K_x \cap K_y \neq \emptyset\right\}
  \]
  is bounded above by \(\CF\).
  This precisely means that if \(z \in K_x\) and \(sz \in K_y\) then there is some \(t \in \CF\) and idempotent \(e \in E\) such that \(z \in e t^*t\); \(se = te\) and the closure of \(s^*s e\) is contained in \(t^*t\).
  Since \(x_n \to x\) and \(sx_n \to y\), for all large \(n\) there are \(t_n \in \CF\) and \(e_n \in E\) such that \(s_n e_n = t_n e_n\) and \(x_n \in e_n t_n^*t_n\). As there are infinitely many \(n\) and finitely many \(t_n \in \CF\), without loss of generality, and by passing to a sub-net if necessary, we may assume that \(t x_n \to y\) for a henceforth fixed \(t \in \CF\).
  By continuity of \(t\) (as a partial map of \(X\)) it follows that it is enough to show that \(y \in t^*t\).
  Note that \(y \in \closure{t^*t}\), as the latter set contains \(t x_n\) and these converge to \(y\).
  In fact, \(y \in \closure{e_n s_n^*s_n} \subseteq t^*t\) from the properness of the action, which implies that \(R\) is indeed closed.

  In the case when \(X\) is Hausdorff we are now done: the quotient map \(X \to X/S\) is open, and \(R\) is closed (in \(X \times X\)).
  This is then sufficient to show that \(X/S\) is Hausdorff.
  Nevertheless, in our setting \(X\) is only a \tzero space, so there is still work to do.
  Take some \([x] \in X/S\), and suppose that some representative \(x \in X\) is not dangerous. We will show that neither is \([x]\).
  For this, suppose that \([x_\lambda] \to [x], [y]\). By choosing different representatives if needed we may suppose that \(x_\lambda \to x\) and, as \(x\) is not dangerous, \(x\) is the only limit point of \(\{x_\lambda\}_\lambda\).
  Moreover, there are \(s_\lambda\) such that \(s_\lambda x_\lambda \to y\).
  Again by local compactness of \(X\) choose some compact neighborhoods \(K_x, K_y\) of \(x\) and \(y\) respectively.
  We may assume, without loss of generality, that \(s_\lambda x_\lambda \in K_y\) for all \(\lambda\).
  Then \(x_\lambda = s_\lambda^* s_\lambda x_\lambda \to x\), which implies (by an argument using properness of the action similar to the one in the previous paragraph) that we may assume \(s \coloneqq s_\lambda = s_{\lambda'}\) for all large enough \(\lambda, \lambda'\).
  But then \(x_\lambda = s^* s x_\lambda \to x, s^*y\). As \(x\) is not dangerous it follows that \(x = s^*y\), so that \([x] = [y]\).

  The ``moreover'' statement is proven similarly to the fact that \(\sim_S\) is closed in \(X \times X\), just using points outside of \(D_0\) as opposed to arbitraty points in \(X\).
\end{proof}

\subsection{Borel cut-off functions}
We proceed with the definition of a \emph{cut-off function}, which is reminiscient, and should be compared with, the one in \cite{higson-kasparov-2000}*{Theorem 8.1} or \cite{tu-baum-connes-2004}*{Section 6}.
Classically (\emph{i.e.}\ in the case of group actions \(\Gamma \actson X\)) the existence of a cut-off function is (essentially) a consequence of \cref{lemma:proper-action-orbit-space}.
This also works in the setting of Hausdorff groupoids, as stated (and not proved) in \cite{tu-baum-connes-1999}*{Lemme 10.5}.
In the setting of non-Hausdorff groupoids this approach is no longer desirable, as shown in \cite{tu-baum-connes-2004}*{Section 6}, and also explored below.
This essentially is the same problem faced in \cites{buss-martinez-ess-ame-2024,renault-2013}, and has to do with the discontinuities hinted on in \cref{rem:non-hausdorffness}.

For the following, given \(x \in X\) and \(s_1, s_2 \in S\) such that \(x \in s_1^*s_1 \cap s_2^*s_2\), we say \(s_1 \sim_x s_2\) whenever they are trivially equal on a neighborhood around \(x\), meaning that there is some idempotent \(e \in E\) such that \(x \in e\) and \(s_1 e = s_2 e\).
This is an equivalence relation, and we let \(R_x\) be a set of representatives of \(\sim_x\). 

\begin{proposition} \label{prop:cut-off-functions}
  Let \(S \actson X\) be an action on a locally compact, Hausdorff space \(X\). Suppose \(X\) is
  \begin{itemize}
    \item \emph{\(S\)-compact}; meaning that \(X = SK\) for some relatively compact \(K \subseteq X\); and
    \item the action \(S \actson X\) is proper.
  \end{itemize}
  Then \(S \actson X\) admits a \emph{Borel cut-off function}, that is, a Borel map \(c \colon X \to [0,1]\) such that \(\sum_{[s]_x \in R_x} c(sx)^2 = 1\) for all \(x \in X\) and whose support is contained in a compact set.
  In fact, \(c\) may be taken to continuous on \(X \setminus D\), where \(D \subseteq X\) is as in \eqref{eq:d-s}.
\end{proposition}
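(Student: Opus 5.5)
We adapt the classical construction of a cut-off function for proper group actions, taking care of the discontinuities coming from germs. The idea is to start from a compactly supported continuous bump, average it over the orbit using germ representatives, and normalise.

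\emph{Step 1: the bump.} By \(S\)-compactness fix a relatively compact \(K\) with \(X = SK\). Since \(X\) is locally compact Hausdorff, choose \(h \in \contc{X}\) with \(0 \leq h \leq 1\) and \(h \equiv 1\) on \(\closure{K}\). Let \(L \coloneqq \supp{h}\), which is compact.

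\emph{Step 2: the orbit sum.} Define
\[
  H(x) \coloneqq \sum_{[s]_x \in R_x} h(sx)^2 .
\]
This is well defined: if \(s_1 \sim_x s_2\) then \(s_1 e = s_2 e\) for some idempotent \(e \ni x\), so \(s_1 x = s_2 x\).

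\begin{itemize}
  \item \emph{Finiteness.} Properness applied to \(K_0 = \{x\}\) and \(K_1 = L\) shows that the set of \(s\) with \(sx \in L\) is bounded above by a finite \(\CF\). For each such \(s\) there are \(t \in \CF\) and \(e \ni x\) with \(se = te\), so \(s \sim_x t\). Hence at most \(\abs{\CF}\) classes contribute, and the sum is finite.
  \item \emph{Positivity.} Since \(X = SK\), we have \(x = s^*k\) for some \(k \in K\), so \(h(sx) = 1\) for some \(s\). Therefore \(H(x) \geq 1\).
  \item \emph{Invariance.} \(H\) is \(S\)-invariant, \(H(ux) = H(x)\) for \(x \in u^*u\), because \(s \mapsto su^*\) induces a bijection between \(R_x\) and \(R_{ux}\).
\end{itemize}

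\emph{Step 3: normalise.} Set \(c \coloneqq h/\sqrt{H}\). Then \(\sum_{[s]_x \in R_x} c(sx)^2 = H(x)^{-1}\sum_{[s]_x} h(sx)^2 = 1\) by invariance of \(H\). Moreover \(0 \leq c \leq 1\) because \(H \geq 1\), and \(\supp{c} \subseteq L\) is compact.

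\emph{Step 4: regularity (main obstacle).} It remains to show that \(H\) is Borel and continuous off \(D\). Locally near \(x_0\), properness with compact neighbourhoods \(K_0\) of \(x_0\) and \(K_1 = L\) gives a finite \(\CF\) dominating all relevant \(s\). The domination conditions \(se = te\) and \(\closure{s^*se} \subseteq t^*t\) allow each term \(h(sx)^2\), for \(x\) near \(x_0\), to be rewritten as \(h(tx)^2\) with \(t \in \CF\).

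The difficulty is that the germ classes \(R_x\) change with \(x\). For \(t, t' \in \CF\), whether \(t \sim_x t'\) is governed by \(x\) lying in \(O_{t't^*}\) (in the notation of \cref{lemma:d-inv-countable}). So on \(K_0\),
\[
  H(x) = \sum_{t \in \CF} h(tx)^2 \, \mathbf{1}_{t^*t}(x) \, w_t(x),
\]
where \(w_t(x)\) is the reciprocal of the number of \(t' \in \CF\) with \(x \in t'^*t'\) and \(t' \sim_x t\). Each \(w_t\) is built from indicator functions of the open sets \(O_{t't^*}\) and domains \(t^*t\), hence is Borel, so \(H\) is Borel on \(K_0\).

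These indicators jump only on the boundaries \(\closure{O_{t't^*}} \cap t^*t' \setminus O_{t't^*}\). Up to translation by \(t'\), these boundaries are contained in the sets \(D_{t't^*}\) of \eqref{eq:d-s}, and \(D\) is invariant by \cref{lemma:d-inv-countable}. So off \(D\) every indicator is locally constant. The domain indicators are harmless by the closure condition in \cref{def:proper-grpd}: wherever they jump, \(h(tx)\) vanishes. Hence \(H\) is continuous on \(X \setminus D\).

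Finally, a countable (or locally finite) cover of \(X\) by such neighbourhoods \(K_0\) gives global Borel measurability and continuity of \(H\), and hence of \(c\), on \(X \setminus D\). I expect the careful bookkeeping of germ classes across nearby points in this step to be the main technical work.
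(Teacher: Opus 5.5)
Your Steps 1--3 are correct and are essentially the paper's construction: your \(h^2\) and \(H\) play the roles of its \(\xi\) and \(\zeta\). Choosing \(h\equiv 1\) on \(\closure{K}\) is actually cleaner, since \(S\)-compactness then gives \(H\geq 1\) and compact support directly, with no detour through the orbit space \(X/S\). Your local formula for \(H\) also gives Borel measurability. You only need finitely many neighbourhoods \(K_0\), enough to cover \(L\), because \(c=0\) off \(L\). So the existence of a Borel cut-off function is established.

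The gap is the continuity on \(X\setminus D\) in Step 4, where two of your claims are false.
\begin{enumerate}
  \item The set \(\{x : t\sim_x t'\}\) is \(O_{t^*t'}\). Its indicator also jumps on the part of \(\partial O_{t^*t'}\) lying outside \((t^*t')^*(t^*t')\), and that part need not lie in \(D\).
  \item The closure condition \(\closure{s^*se}\subseteq t^*t\) of \cref{def:proper-grpd} is imposed one idempotent at a time. It does not stop a net \(x_\lambda\in t^*t\) from leaving \(t^*t\) while \(tx_\lambda\) stays in \(L\). So domain indicators can jump at points where \(h(tx)\neq 0\).
\end{enumerate}
Consequently, single terms of your formula can jump off \(D\) even when the sum is continuous. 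For example, take \(S=\opensets{[-1,1]}\) and \(\CF=\{[-1,1],(0,1]\}\), and look at \(x=0\).

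Worse, under the paper's definition of properness the sum itself can jump. Take \(X=[0,3]\), with the identities of all open sets as idempotents, and adjoin \(t\colon(0,1)\to(2,3)\), \(t(x)=x+2\), together with its restrictions and their inverses.
\begin{itemize}
  \item Every subset of \(S\) is bounded above by \(\{\mathrm{id}_X,t,t^*\}\): for \(s=t|_W\) and \(x\in W\), use an interval \(e\ni x\) with \(\closure{e}\subseteq W\).
  \item Hence the action is proper, and it is \(S\)-compact since \(X\) is compact.
  \item \(D=\emptyset\), because \(O_s=s^*s\) for idempotents and \(O_s=\emptyset\) otherwise.
  \item \(R_0\) and \(R_2\) contain only the identity germ, so every cut-off function has \(c(0)=c(2)=1\).
  \item On \((0,1)\) the cut-off identity reads \(c(x)^2+c(x+2)^2=1\).
\end{itemize}
Letting \(x\to 0^+\), no cut-off function is continuous at both \(0\) and \(2\). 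So this step cannot be closed by more careful bookkeeping. The paper's proof has the same hole: its one-line claim \(\zeta(x_\lambda)\to\zeta(x)\) fails in this example. So does the closedness of the orbit relation asserted in \cref{lemma:proper-action-orbit-space}, since \((x+2,x)\to(2,0)\) although \(0\not\sim_S 2\).

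What is missing is properness at the level of germs. For instance, ask that \(S\ltimes X\) be proper in Tu's sense; by the example above, this does not follow from \cref{def:proper-grpd}. Concretely: if \(x_\lambda\to x_0\) and \(t_\lambda x_\lambda\to y\), then along a subnet \(t_\lambda\sim_{x_\lambda}t_0\) for some \(t_0\) with \(x_0\in t_0^*t_0\).

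With this hypothesis, argue on the whole sum rather than term by term.
\begin{itemize}
  \item Germs that are distinct at \(x_0\notin D\) stay distinct nearby. Otherwise Hausdorffness gives \(t'x_0=tx_0\), hence \(x_0\in(t^*t')^*(t^*t')\), and so \(x_0\in D_{t^*t'}\), a contradiction.
  \item It follows that the germs at \(x_0\) landing in \(L\) contribute \(H(x_0)+o(1)\) to \(H(x_\lambda)\).
  \item Any other germ at \(x_\lambda\) with \(h\) bounded below would, by the hypothesis, eventually be the restriction of one of those germs, contradicting its choice.
  \item At most \(\abs{\CF}\) germs at each \(x\in K_0\) land in \(L\), so the remaining contribution tends to \(0\).
\end{itemize}
Together these give \(H(x_\lambda)\to H(x_0)\).
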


\begin{remark}
  Observe that, by properness of the action, the sum \(\sum_{[s]_x \in R_x} c(sx)^2\) is, in fact, finite for all \(x \in X\).
  Moreover, the assignment \(([s]_x, x) \mapsto sx\) really is well defined, and hence so is the sum.
\end{remark}

\begin{proof}[Proof of \cref{prop:cut-off-functions}]
  By \cref{lemma:proper-action-orbit-space} we have that the quotient map \(\pi \colon X \to X/S\) is an open surjection between locally compact, Hausdorff spaces.
  In particular there is a collection \(\{\xi_i\}_{i \in I} \subseteq \contc{X}\) of functions \(0 \leq \xi_i \leq 1\) such that \(\CV_i \coloneqq \pi(f_i^{-1}(0,1])\) forms a locally finite open cover of \(X/S\).
  Consider \(\xi(x) \coloneqq \sum_{i \in I} \xi_i(x)\); and \(\zeta(x) \coloneqq \sum_{[s]_x \in R_x} \xi(sx)\).
  Notice that the sum appearing in the definition of \(\xi\) is actually finite for all \(x \in X\), for \(\CV_i\) is locally finite.
  Moreover, let \(c(x) \coloneqq (\xi(x)/\zeta(x))^{1/2}\).
  We claim that \(c \colon X \to [0,1]\) meets the requirements of the statement.
  First of all, note that \(\xi\) is continuous, whereas \(\zeta\) is (in general) Borel.
  Nevertheless, if \(x_\lambda \to x\) with \(\{x_\lambda\}_\lambda, x \not\in D\), then
  \[
    \zeta\left(x_\lambda\right) = \sum_{[s]_{x_\lambda} \in R_{x_\lambda}} \xi(sx_\lambda) \to \sum_{[s]_x \in R_x} \xi(sx) = \zeta\left(x\right),
  \]
  proving the continuity asserted in the ``in fact'' statement.
  Furthermore, notice the support of \(c\) is contained in that of \(\xi\), which is relatively compact.
  Thus we only have to check that \(\sum_{[s]_x \in R_x} c(sx)^2 = 1\):
  \[
    \sum_{\left[s\right]_x \in R_x} c\left(sx\right)^2 = \sum_{\left[s\right]_x \in R_x} \frac{\xi\left(sx\right)}{ \zeta\left(sx\right) } = \sum_{\left[s\right]_x \in R_x} \frac{\xi\left(sx\right)}{\sum_{\left[t\right]_{sx} \in R_{sx}} \xi\left(tsx\right)} = \sum_{\left[s\right]_x \in R_x} \frac{\xi\left(sx\right)}{\sum_{\left[p\right]_{x} \in R_{x}} \xi\left(px\right)} = 1,
  \]
  as desired.
\end{proof}

\cref{prop:cut-off-functions} takes care of the case whenever \(X\) is Hausdorff and \(D_0 = \emptyset\).
Nevertheless, typically speaking \(X = \primidealspc{B}\), so we need to generalize it a bit further.
Luckily, the added non-Hausdorffness in \cref{rem:non-hausdorffness} has been taken care of by the Borel nature of \(c\).

\begin{proposition} \label{prop:cut-off-functions-primidlspc}
  Let \(S \actson X\) be a \(D_0\)-proper action on a locally compact, \(S\) compact, \tzero space \(X\).
  The action \(S \actson X\) admits a Borel \(D_0\)-cut-off function.
\end{proposition}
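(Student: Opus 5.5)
Since the paper does not spell out what a \emph{Borel \(D_0\)-cut-off function} is, I will take the natural reading, modelled on \cref{prop:cut-off-functions}. It is a Borel map \(c \colon X \to [0,1]\) with support inside a compact set, such that \(\sum_{[s]_x \in R_x} c(sx)^2 = 1\) for every \(x \in X \setminus D_0\). No condition is imposed on points of \(D_0\), which are killed anyway when one passes to \(\maxalg[D_0]{\variable}\) and \(\redalg[D_0]{\variable}\).

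The plan is to rerun the proof of \cref{prop:cut-off-functions}, replacing continuous partitions of unity by Borel ones. This sidesteps the fact that neither \(X\) nor \(X/S\) need be Hausdorff. I first use \(S\)-compactness to fix a relatively compact \(K\) with \(X = SK\), and I take \(K\) Borel, for instance by replacing it with the interior of a finite union of compact neighbourhoods. By local compactness I may also enlarge \(K\) so that \(\closure{K}\) is contained in some compact set \(L\). Then I set \(\xi \coloneqq 1_K\), which is Borel, bounded, and supported in \(L\). For every \(x\) there are \(s\) and \(k \in K\) with \(x = sk\), so \(s^*x \in K\) and hence \(\xi(s^*x) = 1\). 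In particular the quantity
\[
  \zeta(x) \coloneqq \sum_{[s]_x \in R_x} \xi(sx)
\]
is at least \(1\) for every \(x\). The main point is that \(\zeta(x)\) is finite for \(x \notin D_0\). To see this, apply \(D_0\)-properness to \(K_0 = \{x\}\), or to a compact neighbourhood of \(x\), and \(K_1 = L\). This shows that the set of \(s\) with \(sx \in L\) is bounded above by a finite \(\CF\). By the definition of finitely upper bounded, each such \(s\) satisfies \(se = te\) for some \(t \in \CF\) and some idempotent \(e\) containing \(x\), that is, \(s \sim_x t\). So only \(\abs{\CF}\) classes in \(R_x\) contribute, and \(1 \leq \zeta(x) \leq \abs{\CF}\).

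Next I define \(c(x) \coloneqq (\xi(x)/\zeta(x))^{1/2}\) for \(x \notin D_0\) and \(c \coloneqq 0\) on \(D_0\). The computation at the end of the proof of \cref{prop:cut-off-functions} goes through verbatim, because \(\zeta\) is constant along orbits. This holds because \([t]_{sx} \mapsto [ts]_x\) is a bijection \(R_{sx} \to R_x\): if \(t_1 e = t_2 e\) with \(sx \in e\), then \(t_1 s (s^*es) = t_2 s (s^*es)\) and \(x \in s^*es\), and conversely. Invariance of \(D_0\) guarantees that the orbit of a point \(x \notin D_0\) stays outside \(D_0\), so the identity \(\sum_{[s]_x \in R_x} c(sx)^2 = 1\) holds on all of \(X \setminus D_0\). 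The support of \(c\) lies in \(K \subseteq L\), which is compact.

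The step I expect to be the main obstacle is Borel measurability of \(\zeta\), and hence of \(c\). I will argue it as follows. Using quasi-countability of \(S\) (as in \cref{lemma:d-inv-countable}, which is implicit in the separable setting), the relation \(\sim_x\) is witnessed by countably many \(s \in \CCC\) together with restrictions to idempotents. So \(\zeta\) can be written as a countable supremum of finite sums of functions of the form \(x \mapsto 1_{K}(sx)\,1_{s^*s}(x)\), each of which is Borel. The difficulty is that one must not count two \(\sim_x\)-equivalent elements twice. To handle this I will enumerate \(\CCC = \{c_1, c_2, \dots\}\) and define
\[
  \zeta(x) = \sum_n 1_{K}(c_n x)\,1_{c_n^*c_n}(x)\,\prod_{m<n} \left(1 - 1_{\{y \,:\, c_m \sim_y c_n\}}(x)\right).
\]
Each set \(\{y : c_m \sim_y c_n\}\) is open, being the union of the idempotents \(e\) with \(c_m e = c_n e\), so every term is Borel and \(\zeta\) is Borel. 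If this bookkeeping fails, my fallback is to define \(c\) only on \(X \setminus D_0\) modulo a Borel null modification, which suffices for the later applications.
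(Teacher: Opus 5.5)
Your argument is correct, but it takes a different and more direct route than the paper.

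\textbf{The paper's route.} It first builds an open cover \(\{t_{n,j}(\CU_{x_i} \cap t_{n,j}^*t_{n,j})\}_{n,i,j}\) of all of \(X\) from an exhaustion \(K_n = \bigcup_{j \leq n} c_j(K \cap c_j^*c_j)\). It then uses \(D_0\)-properness on the pairs \((K_n, K_n)\) to make this cover locally finite on \(X \setminus D_0\). Finally it disjointifies the cover into Borel indicators \(\xi_{n,i,j}\) and defers to the normalisation in the proof of \cref{prop:cut-off-functions}.

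\textbf{Your route.} You bypass the cover and work with the single function \(\xi = 1_K\).
\begin{itemize}
  \item \(S\)-compactness gives \(\zeta \geq 1\), and \(D_0\)-properness applied only to \(K_0 = \{x\}\), \(K_1 = L\) gives \(\zeta(x) \leq \abs{\CF}\) off \(D_0\).
  \item You prove the orbit invariance \(\zeta(sx) = \zeta(x)\) via the bijection \([t]_{sx} \mapsto [ts]_x\). The paper's final computation uses this without comment.
  \item Your first-occurrence formula over an enumeration of \(\CCC\) makes Borel measurability explicit. The formula is sound, so the fallback is unnecessary: every \(\sim_x\)-class meets \(\CCC\), because if \(s = c_n e\) with \(c_n \in \CCC\) and \(x \in s^*s \subseteq e\), then \(se = c_n e\).
\end{itemize}

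\textbf{What each approach buys.} Using one \(\xi\) is also what makes the compact-support requirement automatic. Taken literally, the paper's indicators \(\xi_{n,i,j}\) partition \(X\) and so sum to \(1_X\). To get compact support one must in effect keep only the pieces covering \(K\), which is exactly your \(\xi\). The paper's local-finiteness picture mirrors the Hausdorff argument, where it yields continuity of \(\zeta\) away from \(D\). For mere Borel measurability your route is simpler. Both arguments rely on quasi-countability of \(S\), which the statement leaves implicit.

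\textbf{One small repair.} The paper allows an arbitrary invariant \(D_0\), not necessarily Borel. Declaring \(c \coloneqq 0\) on \(D_0\) can therefore destroy measurability; think of a trivial action with non-Borel \(D_0\). Instead set \(c \coloneqq (\xi/\zeta)^{1/2}\) on all of \(X\), with the convention \(1/\infty \coloneqq 0\). Since \(\zeta\) is a Borel \([1,\infty]\)-valued function, this \(c\) is Borel and supported in \(K\). Your computation then gives \(\sum_{[s]_x \in R_x} c(sx)^2 = 1\) on \(X \setminus D_0\), exactly as before.
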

\begin{proof}
  Reading through the proof of \cref{prop:cut-off-functions} we only need to prove that similar functions \(\{\xi_i\}_{i \in I}\) exist even if \(\pi \colon X \to X/S\) is a map between \tzero spaces.
  Of course, there is no reason why \(\xi_i\) would be continuous in such case, but we have forsaken those aspirations already.
  Thus we only need to show that such \(\xi_i\) exist if they are assumed Borel, which one can do by hand.
  As \(X\) is \(S\)-compact there is some compact set \(K \subseteq X\) such that \(X = SK\).
  Moreover, as \(S\) is quasi-countable there is some countable \(\CCC \subseteq S\) such that \(S = \CCC E\).
  Arbitrarily choosing an enumeration \(\CCC = \{c_1, c_2, \dots\}\) it follows that \(X = \cup_{n \in \NN} c_n(K \cap c_n^*c_n)\).
  We aim to construct a locally finite open cover of \(X\) of subsets contained in compact subsets.

  For every \(x \in X\) let \(\CU_x \subseteq X\) be an open neighborhood of \(x\) contained in some compact set, which exists by local compactness of \(X\) as a space.
  The open cover \(\{\CU_x\}_{x \in X}\) is not necessarily locally finite, so it will need to be refined.
  As \(K\) is compact (and contained in \(X\)) there are points \(x_1, \dots, x_\ell \in X\) such that \(K \subseteq \CU_{x_1} \cup \dots \cup \CU_{x_\ell}\), and the latter is contained in a compact set, for it is a finite union of sets with that same property.
  Put \(K_n \coloneqq \cup_{j = 1}^n c_j (K \cap c_j^*c_j)\).
  This is a countable exhaustion of \(X\) by sets contained in compact sets so, in particular, by properness of the action we have that the sets
  \[
    \left\{s \in S : s\left(K_n \setminus D_0\right) \cap K_n \neq \emptyset\right\}
  \]
  are finitely upper bounded for all \(n \in \NN\), say by \(\{t_{n,1}, \dots, t_{n,m_n}\} \subseteq S\).
  Furthermore, by the previous discussion we have that
  \[
    X = \bigcup_{n \in \NN} c_n \left(K \cap c_n^*c_n\right) = \bigcup_{n \in \NN} \bigcup_{i = 1}^\ell c_n \left(\CU_{x_i} \cap c_n^*c_n\right) = \bigcup_{n \in \NN} \bigcup_{i = 1}^\ell \bigcup_{j = 1}^{m_n} t_{n,j} \left(\CU_{x_i} \cap t_{n,j}^* t_{n,j}\right),
  \]
  that is, the former expression forms an open cover of \(X\).
  We claim that it is locally finite (on \(X \setminus D_0\)) and every subset in it is contained in a compact set.
  The latter condition is clear, for the same is true for \(\CU_{x_i}\) itself and every \(s \in S\) defines a partial homeomorphism of \(X\).
  For the former condition, suppose \(z = c_n x = c_m y\), with \(x \in \CU_{x_i} \cap c_{n}^*c_{n}\) and \(y \in \CU_{x_i} \cap c_{m}^*c_{m}\), is some element in some open set of the cover.
  Say that \(m \geq n\).
  By the properness assumption it follows that \(c_m^* c_n\) coincides with some \(t_{m,j_0}\) on a neighborhood of \(y\).
  In particular \(z\) may only be contained in the sets \(\{t_{m,j} \CU_{x_i}\}_{i,j}\), of which there are only finitely many. This proves local finiteness of the open cover \(\{\CV_{n, i, j}\}_{n, i, j} \coloneqq \{t_{n, j} (\CU_{x_i} \cap t_{n,j}^*t_{n,j})\}_{n, i, j}\).

  Given the open cover \(\{\CV_{n, i, j}\}_{n, i, j}\), let \(\xi_{1,1,1}\) be the characteristic function of \(\CV_{1,1,1}\), and  iteratively construct \(\xi_{n,i,j}\) as the characteristic function of \(\CV_{n,i,j} \setminus (\bigcup_{(n',i',j') < (n,i,j)} \CV_{n,i,j})\).
  We may now apply the same reasoning as in \cref{prop:cut-off-functions}, yielding an appropriately Borel cut-off function \(c \colon X \to [0,1]\).
\end{proof}

These cut-off functions are unique up to homotopy, and they also in fact define canonical projections.

\begin{proposition} \label{prop:hom-cut-off}
  Let \(S \actson X\) be an action on a locally compact, \tzero space \(X\).
  Any two Borel cut-off functions \(c_0, c_1 \colon X \to [0,1]\) are homotopic. Moreover, the formula
  \[
    p_c \left(\left[s, x\right]\right) \coloneqq c\left(sx\right) \, c\left(x\right), \;\;\; \text{ where } x \in s^*s \setminus D_0,
  \]
  defines a projection in \(\maxalg[D_0]{S \ltimes X}^{**}\) and \(\redalg[D_0]{S \ltimes X}^{**}\).
  Lastly, this projection is unique up to homotopy, so its \Kth-theory class is well defined.
\end{proposition}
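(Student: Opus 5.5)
The plan has three steps: first, homotopy of cut-off functions by a straight-line interpolation of squares; second, that \(p_c\) is a projection, by a convolution computation using the normalisation \(\sum_{[s]_x \in R_x} c(sx)^2 = 1\); third, uniqueness of \(p_c\) up to homotopy, by feeding the first step into the second. Throughout we work in the double duals, so that Borel (rather than continuous) functions on \(S \ltimes X\), restricted to arrows with source outside \(D_0\), are legitimate elements. Concretely, we realise a bounded Borel function \(f\) on the germs \([s,x]\), with \(x \in s^*s \setminus D_0\) and finitely many germs at each \(x\), as an element of \(\maxalg[D_0]{S \ltimes X}^{**}\). This is done through the universal (and, for the reduced version, left regular) representation of \cref{prop:left-reg-rep}. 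There \(f\) acts on \(\ell^2_{D_0}(S;\cfunz{X})\) by the usual groupoid convolution formula, and the vectors are evaluated at fibres \(x \notin D_0\), which is exactly where \(J_{D_0}\) is killed.

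For homotopy of cut-off functions, given \(c_0, c_1\) we set \(c_\tau \coloneqq \big((1-\tau)c_0^2 + \tau c_1^2\big)^{1/2}\) for \(\tau \in [0,1]\). Each \(c_\tau\) is Borel with values in \([0,1]\). Its support lies in the union of the two compact sets containing the supports of \(c_0\) and \(c_1\), hence in a compact set. The normalisation is preserved because it is linear in \(c^2\):
\[
  \sum_{[s]_x \in R_x} c_\tau(sx)^2 = (1-\tau) + \tau = 1.
\]
So \(\tau \mapsto c_\tau\) is a path of Borel cut-off functions. It is pointwise continuous in \(\tau\), uniformly so since the values lie in \([0,1]\).

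For the projection property, \(p_c\) is self-adjoint because \([s,x]^{-1} = [s^*, sx]\) and \(p_c([s^*,sx]) = c(x)c(sx)\). For idempotency we compute the convolution at an arrow \([s,x]\):
\[
  p_c * p_c([s,x]) = \sum_{[t]_x \in R_x} p_c([st^*, tx])\, p_c([t,x]) = \sum_{[t]_x \in R_x} c(sx)c(tx)\,c(tx)c(x) = c(sx)c(x),
\]
using the normalisation again. This computation has two points to justify. First, the germs \([st^*, tx]\) for \([t]_x \in R_x\) exhaust the decompositions of \([s,x]\) exactly once; this uses the definition of \(\sim_x\) together with the invariance of \(D_0\), which ensures that \(tx \notin D_0\). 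Second, the sum is finite, which follows from properness, as in the remark after \cref{prop:cut-off-functions}.

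I expect the main obstacle to be making sense of \(p_c\) as a genuine element of the double duals rather than a formal kernel. The plan is to write \(p_c = \Lambda^* \Lambda\)-style as \(v^*v\), where \(v \colon \ell^2 \to \ell^2\) is the operator \(\widehat{a_t} \mapsto \widehat{c\, a_t}\) composed with summation over germs. Equivalently, approximate \(c\) by an increasing sequence of finite sums of Borel pieces supported on the sets \(\CV_{n,i,j}\) from the proof of \cref{prop:cut-off-functions-primidlspc}. The corresponding finitely supported kernels are elements of \(\algalg[D_0]{\CB^{**}}\), and they converge weakly, by the normality arguments of \cref{lemma:ptw-limit-cont-is-borel}. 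One then checks \(vv^* = 1\) on the relevant fibres, which is the normalisation. This gives the projection in the reduced double dual. The maximal one is obtained by the same weak limit in the universal representation, since the defining monomials already lie in \(\maxalg[D_0]{\cdot}^{**}\).

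Finally, uniqueness up to homotopy: the path \(c_\tau\) from the first step gives a path of projections \(p_{c_\tau}\), and the formula \(c_\tau(sx)c_\tau(x)\) depends norm-continuously on \(\tau\) in the operator norm. Since close projections are unitarily equivalent, the \(\Kth\)-theory class \([p_c]_0\) is independent of \(c\).
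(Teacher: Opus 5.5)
Your proposal is correct and follows essentially the same route as the paper: the same interpolation \(c_\tau^2 = (1-\tau)c_0^2 + \tau c_1^2\), the same convolution computation \(p_c * p_c = p_c\) via the normalisation \(\sum_{[t]_x \in R_x} c(tx)^2 = 1\), and the same passage from the path \(c_\tau\) to the path of projections \(p_{c_\tau}\). The only difference is how \(p_c\) is placed in the double duals: the paper just observes that \(p_c\) lies in the Borel convolution algebra \(B_c(S \ltimes X)\) and asserts that this algebra embeds canonically into both double duals, whereas you sketch that realisation explicitly, via the left regular representation and weak limits.
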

\begin{proof}
  For the fact that any two cut-off functions \(c_0, c_1\) are homotopic consider:
  \[
    c_r^2 \coloneqq r c_1^2 + \left(1-r\right) c_0^2,
  \]
  where \(r \in [0,1]\). Clearly \(c_r\) is a continuous path from \(c_0\) to \(c_1\), and all \(c_r\) are cut-off functions.
  Let us thus show that \(p\) defines a projection.
  For this, first observe that as \(c\) is Borel we have that \(c \in B_c(X) \subseteq B_c(S \ltimes X)\), where we are using the notation of \cite{buss-martinez-ess-ame-2024}*{Section 4}.\footnote{\, There really is nothing deep going on here. We have that \(S \ltimes X\) is an \'etale groupoid, and \(B_c(S \ltimes X)\) is then just (by definition) the functions \(f \colon S \ltimes X \to \CC\) that are Borel and whose support is contained in a compact set of \(S \ltimes X\). We equip this with the usual convolution product, yielding a \Star{}algebra.}
  Recalling the construction of the groupoid of germs \(S \ltimes X\) and the convolution product of \(B_c(S \ltimes X)\), we have that
  \begin{align*}
    \left(p_c p_c\right) \left(\left[s,x\right]\right) = \sum_{\left[t, rx\right] \left[r,x\right] = \left[s,x\right]} p_c\left(\left[t, rx\right]\right) p_c\left(\left[r,x\right]\right) = \sum_{\left[tr,x\right] = \left[s,x\right]} c\left(sx\right) c\left(rx\right)^2 c\left(x\right) = p_c\left(\left[s,x\right]\right) 
  \end{align*}
  for all arrows \([s,x] \in S \ltimes X\).
  In particular it follows that \(p_c\) is a self-adjoint Borel projection whose support (as a function on \(S \ltimes X\)) is contained in a compact set (namely the orbit of whichever compact set \(\supp{c}\) is contained in).
  Thus \(p_c \in B_c(S \ltimes X)\), which canonically embeds into the \cstar{}algebras in the statement.
  Lastly, the fact that the \Kth-theory of \(p_c\) does not depend on \(c\) follows since the \(\{c_r\}_{r \in [0,1]}\) considered above are a homotopy, and thus \(\{p_{c_r}\}_{r \in [0,1]}\) defines a continuous path connecting any two given \(p_{c_0}\) and \(p_{c_1}\).
\end{proof}

In the case of an action \(S \actson X\) on a Hausdorff space we need slightly more: we need to prove that the canonical projection \(p\) may be chosen to be contained in \(\maxalg{S \ltimes X}\).
\begin{proposition} \label{prop:proj-proper-action}
  Let \(S \actson X\) be a \(D_0\)-proper action on a locally compact, Hausdorff space \(X\).
  Then the projection \(p_c\) defined before may be chosen to be contained in \(\maxalg[D_0]{S \ltimes X}\).
\end{proposition}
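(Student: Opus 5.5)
The plan is to replace the Borel cut-off function of \cref{prop:cut-off-functions-primidlspc} by one which is continuous and compactly supported away from \(D_0\). Then \(p_c\) becomes an element of the dense subalgebra \(\algalg[D_0]{\cfunz{X}}\), or a norm limit of such elements, rather than merely of the double dual. Since \(X\) is Hausdorff, \(\cfunz{X} = \contz{X}\), and \cref{lemma:proper-action-orbit-space} applies: the quotient map \(\pi \colon X \to X/S\) is open, and the graph of \(\sim_S\) is \(D_0\)-closed. I would first treat \(X\) as \(S\)-compact, as in \cref{prop:cut-off-functions}, and only afterwards remove that hypothesis by exhausting \(X\) with \(S\)-invariant open sets of the form \(S\CU\), with \(\CU\) relatively compact.

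\emph{Step one: a continuous function \(\xi\).} Pick \(\xi \in \contc{X}\) with \(0 \le \xi \le 1\) such that \(\pi(\{\xi > 0\})\) covers the relevant part of \(X/S\). This is possible because \(\pi\) is open and \(X = SK\) with \(K\) relatively compact: take \(\xi \equiv 1\) on \(K\).

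\emph{Step two: the normalising sum.} Set \(\zeta(x) \coloneqq \sum_{[s]_x \in R_x} \xi(sx)\). By \(D_0\)-properness this sum is finite. Moreover, the computation in the proof of \cref{prop:cut-off-functions} shows that \(\zeta\) is continuous at points outside \(D_0 \cup D\). Using the finite upper bound \(\CF\) attached to \(\supp{\xi}\), I would write \(\xi(sx)\) locally as a finite sum of continuous functions of the form \(x \mapsto \xi(tx)\,1_{t^*t}(x)\) with \(t \in \CF\). Then \(c \coloneqq (\xi/\zeta)^{1/2}\) is bounded, and it is continuous on \(X\setminus D_0\) away from the dangerous set \(D\).

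\emph{Step three: writing \(p_c\) in the algebra.} Decompose
\[
p_c = \sum_{t \in \CF} \bigl(c\circ t\bigr)\, c \;\delta_t ,
\]
a finite sum of monomials, each supported on a bisection \(t\) over a set with compact closure inside \(t^*t\). This closure condition is exactly the one built into \cref{def:proper-grpd}. Each coefficient \((c\circ t)\,c\) is a continuous function on \(t^*t\) with compact support, modulo functions vanishing off \(D_0\). It therefore defines an element of \(\contz{t^*t}\,\delta_t\) in \(\maxalg[D_0]{S\ltimes X}\), because functions supported in \(D_0\) are killed there; this is \cref{lemma:alt-desc-max-dzero} together with the description of \(J_{D_0}\). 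Hence \(p_c\) lies in \(\algalg[D_0]{\cfunz{X}}\), and its image under the canonical map into the double dual is the projection of \cref{prop:hom-cut-off}. Consequently \(p_c^2 = p_c = p_c^*\) already holds in \(\maxalg[D_0]{S\ltimes X}\).

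\emph{Main obstacle.} The hard step is making \(c\) genuinely continuous and not just Borel. The germ sums \(\zeta\) jump at the points of \(D\), where two bisections agree on a dense open set but not at the boundary. My first attempt would be to rely on \(D \subseteq D_0\), or to enlarge \(D_0\) invariantly. If that is not available, I would fall back on the \(D_0\)-quotient: the nucleus only sees points of \(X \setminus D_0\). On that set I would approximate the Borel \(c\) uniformly by continuous compactly supported \(c_n\), using the local finiteness of the cover from \cref{prop:cut-off-functions-primidlspc}. Then the corresponding \(p_{c_n}\) are almost projections in \(\algalg[D_0]{\cfunz{X}}\) converging in norm, and functional calculus gives an honest projection in \(\maxalg[D_0]{S\ltimes X}\) that is homotopic to \(p_c\).
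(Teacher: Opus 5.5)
There is a genuine gap in Step three, and neither of your fallbacks closes it. The step needs continuous coefficients, which you try to get by making \(c\) continuous off \(D_0\). In general no such \(c\) exists.

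A concrete case shows this. Let \(X=[0,1]\) and \(S=\{1,\sigma,e\}\) with \(\sigma^2=1\) and \(\sigma e=e\sigma=e\). Let \(1\) and \(\sigma\) act as the identity of \([0,1]\), and \(e\) as the identity of \([0,1)\).
\begin{enumerate}
  \item The action is proper, bounded above by \(\{1,\sigma\}\), and \(X\) is compact.
  \item The groupoid \(S\ltimes X=[0,1]\sqcup\{[\sigma,1]\}\) doubles the point \(1\), so \(D=\{1\}\).
  \item The cut-off equation forces \(c\equiv 1\) on \([0,1)\) and \(c(1)=1/\sqrt{2}\).
\end{enumerate}
Take \(D_0=\emptyset\), which is the case of \(\maxalg{S\ltimes X}\) itself. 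Your first fallback fails because \(D\not\subseteq D_0\), and enlarging \(D_0\) changes the algebra you must land in. Your second fallback fails because a function with a jump at a point of \(X\setminus D_0\) is not a uniform limit of continuous functions.

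The formula \(p_c=\sum_{t\in\CF}(c\circ t)\,c\,\delta_t\) is also wrong on its own terms. It double counts germs on which several \(t\in\CF\) coincide. Here it gives \(c^2\delta_1+c^2\delta_\sigma\), which takes the value \(2\) at \([1,x]\) for \(x<1\).

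The missing idea is that \(c\) need not be continuous at all. What you need is that \(p_c\), viewed as a function on the non-Hausdorff groupoid \(S\ltimes X\), is a finite sum of functions that are continuous and compactly supported on open bisections. The jump of \(c\) at a dangerous unit is exactly compensated by the jump in the number of germs there. In the example, \(p_c=\tfrac12\left(1_X\delta_1+1_X\delta_\sigma\right)\). This lies in \(\algalg{\cfunz{X}}\) and is a projection there, even though \(c\) is discontinuous.

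This decomposition of \(p_c\), not of \(c\), is what the paper imports. It identifies properness of \(S\actson X\) with Tu's properness of \(S\ltimes X\) via \cref{lemma:proper-iff-proper}, then applies Tu's Proposition 6.5. That argument is pointwise, so it can ignore \(D_0\).

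Separately, \(S\)-compactness cannot be removed afterwards by an exhaustion. For the trivial action of the trivial group on a non-compact \(X\), no compactly supported cut-off function exists, and the only candidate is \(p_c=1\notin\contz{X}\).
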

\begin{proof}
  This is a consequence of the methods of \cite{tu-baum-connes-2004}*{Proposition 6.5}, but in order to apply it we have to first observe that the groupoid of germs (also known as transformation groupoid) \(S \ltimes X\) is proper, in the sense of \cite{tu-baum-connes-2004}*{Definition 2.9}, if and only if \(S \actson X\) is proper in our sense.
  As this is more related to applications, we leave the proof to \cref{lemma:proper-iff-proper}.
  Note that \cite{tu-baum-connes-2004}*{Proposition 6.5} only deals with the case when \(D_0 = \emptyset\), but the proof is point-by-point, so one may choose to ignore \(D_0\) and arrive at the desired result.
\end{proof}

\begin{example}
  As discussed, it is self-evident that our definition of proper action generalizes the classical one used by, \emph{e.g.}\ \cite{higson-kasparov-2000}, should \(S\) be a group.
  More generally, if \(S = \opensets{X}\) is as in \cref{ex:opensets-x}, the canonical action \(S \actson X\) is proper.
  Indeed, any set \(F \subseteq S\) is covered by the singleton \(\{X\} \subseteq S\).
  If \(X\) is compact (so that it is also \(S\)-compact in the sense of \cref{prop:cut-off-functions}), a canonical choice for cutoff function is just the constant map \(X \ni x \mapsto 1 \in [0,1]\).
  The projection \(p_c\) is then the identity.
\end{example}

We end the sub-section by proving that \(\maxalg[D_0]{\CB} \to \redalg[D_0]{\CB}\) is isometric when \(\CB\) is proper.

\begin{theorem} \label{lemma:proper-actions-give-weak-containment}
  Let \(\CB = (B_s)_{s \in S}\) be a Fell bundle over an \((S,X)\)-algebra \(B\).
  Assume \(S \actson X\) is \(D_0\)-proper.
  The canonical quotient map \(\maxalg[D_0]{\CB} \to \redalg[D_0]{\CB}\) is then a \Star{}isomorphism.
\end{theorem}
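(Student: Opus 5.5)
The plan is the classical one for proper group actions: show that \(\CB\) has the \(D_0\)-approximation property of \cref{def:approx-property}, with the approximating net built from cut-off functions, and then deduce weak containment. Concretely, I would first prove the general implication: if \(\CB\) has the \(D_0\)-approximation property, then \(\maxalg[D_0]{\CB} \to \redalg[D_0]{\CB}\) is injective. This follows Exel's argument as adapted in \cite{buss-martinez-approx-prop-23}. Take a \(D_0\)-representation \(\pi\) of \(\maxalg[D_0]{\CB}\) on \(H\) and form the module \(\ell^2_{D_0}(S;\CB) \tensor_\pi H\), realised through the left regular representation \(\Lambda_{D_0}\) of \cref{prop:left-reg-rep}. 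Each \(\xi_i\) gives a map \(V_i \colon H \to \ell^2_{D_0}(S;\CB)\tensor_\pi H\). Condition (i) makes \(V_i\) contractive, and condition (ii) gives \(V_i^*(\Lambda_{D_0}(b)\tensor 1)V_i \to \pi(b)\) for monomials \(b\). Hence \(\norm{\pi(b)} \leq \norm{\Lambda_{D_0}(b)}\), and the latter is the reduced norm by \cref{prop:left-reg-rep}. The only non-routine point is that \(\xi_i\) takes values in \(B_\CCC/J_{D_0} \subseteq B^{**}/J_{D_0}\), not in \(B\). To handle this I would extend \(\pi\) normally to \(B^{**}\), or to the Borel subalgebra of \cref{sec:apps-ess}, and use \cref{rem:more-left-reg} together with \cref{lemma:artifice-d0-fell-bundle}, which says the Fell bundle \(\CB^{D_0}\) makes the products \((b_s + J_{D_0})b_1\) meaningful. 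The fact that \(\pi\) is a \(D_0\)-representation is exactly what lets it factor through \(B^{**}/J_{D_0}\).

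Second, I would construct the net. Write \(B\) as an increasing union of \(\Phi_B(\CU_n)\) with \(\CU_n\) open and contained in a compact set \(K_n\); by \cref{lemma:comp-supp-are-dense}, compactly supported elements are dense. The restricted action \(S \actson S\CU_n\) is \(D_0\)-proper and \(S\)-compact, so \cref{prop:cut-off-functions-primidlspc} gives a Borel \(D_0\)-cut-off function \(c_n\). I would then set \(\xi_n(s) \coloneqq c_n \circ s^*\), viewed through \cref{thm-dauns-hofmann} and \eqref{eq:dauns-hofmann} as a Borel central element of \(B^{**}\) lying in \(B^{**}_{ss^*}\), and reduced modulo \(J_{D_0}\). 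Because the orbit map is finite-to-one modulo \(\sim_x\) by properness, the \(\xi_n\) are finitely supported modulo the equivalence \(s\sim_x t\). Using the finite upper bound in \cref{def:proper-grpd}, I would replace \(\xi_n\) by a function supported on the bounding finite set \(\CF\), multiplied by the Borel partition coming from the sets \(e\).

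Third, I would verify the two conditions. Pointwise at \(x\notin D_0\), the sum \(\sum_{r,t}1^{D_0}_{r,t}\xi(r)^*\xi(t)\) reduces to \(\sum_{[s]_x\in R_x} c(s^*x)^2 = 1\), since \(1_{r,t}\) identifies exactly the germs with \(r\sim_x t\). This gives (i) with equality. For (ii), the same fibrewise computation gives \(\sum_{r,t}1^{D_0}_{r,st}\xi(r)^*a\xi(t) = \big(\sum_{[r]} c(r^*x)\,c(r^*sx)\big)a\). This expression equals \(a\) when \(a\) has support where \(c_n\) is large, that is inside \(K_n\), and so converges to \(a\) as \(n\to\infty\) by the density from \cref{lemma:comp-supp-are-dense}.

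The main obstacle is the first step, together with the values of \(\xi\) in the ``Hausdorff cover'' algebra \(B_\CCC\). Here \(c\) is only Borel, and it must lie in \(B_\CCC + J_{D_0}\). To resolve this I would use that \(c\) is built from characteristic functions of sets \(t(\CU\cap t^*t)\), which correspond to the units \(1_{s,t}\) and open central projections. It should be checked carefully that these generate \(B_\CCC\) modulo \(J_{D_0}\) and that \(\pi\) extends compatibly to them. This compatibility is where the \(D_0\)-representation hypothesis and the invariance of \(J_{D_0}\) from \cref{lemma:artifice-d0-fell-bundle} must be used.
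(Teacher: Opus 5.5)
The gap is in your first step, which is exactly the implication the paper declines to use. In its proof of this theorem it states that it is not known whether the \(D_0\)-approximation property of \cref{def:approx-property} implies \(D_0\)-weak containment. The inner product of \(\ell^2_{D_0}(S;\CB)\), the values \(\xi_i(s)\) and the projections \(1^{D_0}_{r,t}\) all live in \(B^{**}/J_{D_0}\). So forming \(\ell^2_{D_0}(S;\CB)\otimes_\pi H\) and your maps \(V_i\) requires \(\pi\) to act on \(B^{**}/J_{D_0}\) compatibly with \(\CB\). You take the normal extension of \(\pi|_B\) and assert that it kills \(J_{D_0}\) because \(\pi\) is a \(D_0\)-representation. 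This is false: on \(B\) the \(D_0\)-condition only says that \(\pi\) kills \(B\cap J_{D_0}\), which is typically \(0\).

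Here is a counterexample. Take the construction of \cref{ex:0-1-Gamma} but with \(\Gamma=\ZZ/2\ZZ=\{1_\Gamma,g\}\), so \(X=[0,1]\), \(B=C([0,1])\), and let the open bisections act. This action is proper, since the bisections \(G^{(0)}\) and \(U\coloneqq[0,1)\cup\{g\}\) bound everything; hence it is \(D_0\)-proper for the invariant set \(D_0=\{1\}\). Put \(\pi(f)\coloneqq f(1_\Gamma)+f(g)\), i.e.\ restrict to the closed invariant set \(\{1\}\) and take the trivial representation of \(\Gamma\). Every open bisection through \(1_\Gamma\) or \(g\) contains some \((1-\varepsilon,1)\), so \(\pi(a)=\lim_{x\to1^-}a(x)\) for \(a\in\algalg{G}\). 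If \(\redcondexp(a^*a)\in J_{D_0}\), then \(a\) vanishes on \([0,1)\), so \(\pi(a)=0\); thus \(\pi\) is a \(D_0\)-representation. Yet \(\pi|_B\) is evaluation at \(1\in D_0\). Its normal extension sends \(\chi_{\{1\}}\in J_{D_0}\) to \(1\). It also sends \(1_{U,1}=\chi_{[0,1)}\) to \(0\), although \(1^{D_0}_{U,1}\) is the unit of \(B^{**}/J_{D_0}\). So the expressions in (i) and (ii) cannot be evaluated through \(\pi\), and the \(V_i\) are not defined. A repair would need a representation of the bundle generated by \(\CB\) and \(B_\CCC/J_{D_0}\) that extends \(\pi\) covariantly. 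An arbitrary non-normal extension on \(B\) alone does not give this, and nothing in your proposal supplies one; this is the content of the implication the paper leaves open.

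Even granting that step, your second step is not in place. \cref{prop:cut-off-functions-primidlspc} needs \(S\)-compactness and, in its proof, quasi-countability of \(S\); the theorem assumes neither. The map \(s\mapsto c_n\circ s^*\) is not finitely supported on \(S\), and you do not check the truncation to the bounding set \(\CF\) against (i) and (ii). Whether these Borel functions lie in \(B_\CCC+J_{D_0}\) is, as you say yourself, left open.

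The paper avoids approximation altogether and argues by hand. It unitizes and reduces to compactly supported \(b=\sum_{s\in F}b_s\) with supports in a compact \(K\). \(D_0\)-properness gives a finite upper bound \(t_1,\dots,t_k\) for \(\{s : s(K\setminus D_0)\cap K\neq\emptyset\}\). With \cref{lemma:cover-s-by-ti}, this places every power \(b^n\), and hence the series for the inverse of \(1+b\) in \(\redalg[D_0]{\CB}\), inside \((B^{**}_{t_1}+\dots+B^{**}_{t_k})/J_{D_0}\). From this the paper concludes invertibility of \(1+b\) on the maximal side.
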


The proof requires the following technical point, which will also come in handy later.
\begin{lemma} \label{lemma:cover-s-by-ti}
  Suppose \(s\) is ``covered by \(t_1, \dots, t_k\)'', meaning that \(\{s\}\) is bounded above by \(\{t_1, \dots, t_k\}\) in the sense of \cref{def:proper-grpd}.
  Then \(1_{s^*s}^{D_0} \leq \vee_{j = 1, \dots, k} 1_{s, t_j}^{D_0}\) (as projections in \(B^{**}/J_{D_0}\)).
\end{lemma}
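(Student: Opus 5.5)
The plan is to reduce the inequality of projections to a pointwise statement over \(X\). The natural setting is the canonical quotient \(B^{**} \to \prod_{x \in X} B_x^{**}\) from the proof of \cref{prop:cond-exp-kills-dzero}. Since \(J_{D_0}\) consists precisely of elements supported in \(D_0\), an inequality between projections in \(B^{**}/J_{D_0}\) follows from the corresponding inequality of their images in \(\prod_{x \in X \setminus D_0} B_x^{**}\). Passing from the fiberwise statement back to \(B^{**}/J_{D_0}\) is legitimate because the difference \(1_{s^*s} - 1_{s^*s}\bigl(\vee_j 1_{s,t_j}\bigr)\) is a projection whose image vanishes off \(D_0\), and hence lies in \(J_{D_0}\). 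So it suffices to show that for every \(x \in s^*s \setminus D_0\) there is some \(j\) with \(x\) in the open set corresponding to \(I_{s,t_j}\). Here \(1_{s^*s}\) is the open central projection of \(B_{s^*s}\), whose image at \(x\) is \(1\) exactly when \(x \in s^*s\). Likewise \(1_{s,t_j}\) is the open central projection of \(I_{s,t_j} = \langle B_e : e \leq st_j^* \rangle\), whose image at \(x\) is \(1\) exactly when \(x \in O_{st_j^*} \coloneqq \bigcup_{e \leq st_j^*} e\).

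Fix \(x \in s^*s\). Since \(\{s\}\) is bounded above by \(\{t_1,\dots,t_k\}\) in the sense of \cref{def:proper-grpd}, there exist \(j\) and an idempotent \(e \in E\) with \(x \in e\,t_j^*t_j\) and \(se = t_j e\). From these we build an idempotent below \(st_j^*\) whose domain contains the relevant point. Put \(f \coloneqq t_j e t_j^*\), which is an idempotent. Using \(se = t_j e\), we compute \(s t_j^* \cdot t_j e t_j^* = s\,(t_j^*t_j)\,e\,t_j^*\). Since \(e\) and \(t_j^*t_j\) commute, this equals \(se\,t_j^*t_j\,t_j^* = t_j e t_j^* = f\), so \(f \leq st_j^*\).

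The point \(t_j x\) lies in \(f\), because \(x \in e \cap t_j^*t_j\). Here I have to be careful about which side the ideals \(I_{s,t}\) sit on. The ideal \(I_{s,t}\) lives in \(B_{ss^*}\)-type range coordinates, while the domain version is \(I_{s^*,t^*}\) with \(e \leq s^*t\). With the convention of \cref{def:ideals-i-s-t} the claim is probably meant on the domain side, so I would use the idempotent \(e\,t_j^*t_j \leq s^*t_j\) instead. The check is \(s^* t_j e t_j^* t_j = s^* s e\, t_j^*t_j\), which equals \(e\,t_j^*t_j\) on \(s^*s\) after multiplying by \(s^*s\). This gives \(x \in O_{s^*t_j}\). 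I would then adjust the notation to match \(1_{s,t}\) via the symmetry \(I_{s,t} = I_{t,s}\) up to the involution.

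The main obstacle is exactly this bookkeeping between range and domain conventions. The statement is written with \(1_{s^*s}\) and \(1_{s,t_j}\), so I expect to need the identification of the germ condition \(se = t_j e\) with the idempotent \(s^*s\,e\,t_j^*t_j \leq s^*t_j\), read appropriately. With that settled, the inequality holds pointwise for every \(x \in s^*s\), in fact without even using \(x \notin D_0\). The statement in \(B^{**}/J_{D_0}\) then follows by passing to the quotient, since the quotient map preserves order and joins of central projections.
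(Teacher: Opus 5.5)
Your proposal is correct and follows the paper's proof. Both translate the inequality into the inclusion \(s^*s\setminus D_0\subseteq\bigcup_j O_{s,t_j}\), then use the covering data \(x\in e\,t_j^*t_j\), \(se=t_je\) to place \(x\) in an idempotent below \(s^*t_j\).

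On conventions: as you suspected, the paper's proof silently uses the domain-side sets \(O_{s,t}=\bigcup_{e\le s^*t}e\), not the literal \(e\le st^*\) of \cref{def:ideals-i-s-t} (under which \(I_{s,t_j}\subseteq B_{ss^*}\)). So simply read \(1_{s,t}\) that way; no symmetry of \(I_{s,t}\) converts one convention into the other.

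Your check \(s^*t_j\cdot e\,t_j^*t_j=s^*se=e\,t_j^*t_j\) is exact, with no need for ``on \(s^*s\)'': comparing source idempotents in \(se=t_je\) gives \(e\,s^*s=e\,t_j^*t_j\). This supplies the step the paper compresses into ``this precisely means''.

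Since your argument yields \(s^*s\subseteq\bigcup_j O_{s,t_j}\) outright, you can drop the fibrewise detour and get the inequality already in \(B^{**}\) before passing to \(B^{**}/J_{D_0}\). That detour rests on the claim that open central projections have image \(0\) or \(1\) in \(B_x^{**}\), which need not hold when \(B_x\) is not simple.
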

\begin{proof}
  Translating from central open projections in \(B^{**}\) to open sets in \(X\) we have to show that
  \[
    s^*s \setminus D_0 \subseteq \bigcup_{j = 1}^k O_{s,t_j} \setminus D_0,
  \]
  where \(O_{s,t} = \bigcup_{e \in E, e \leq s^*t} e \subseteq s^*s \cdot t^*t\).
  Take some \(x \in s^*s \setminus D_0\) on the left hand side.
  By the condition in the statement there is some \(j_0 \in \{1, \dots, k\}\) and idempotent \(e \in E\) such that \(x \in e t_{j_0}^*t_{j_0}\), \(se = t_{j_0} e\) and \(\closure{s^*se} \subseteq t_{j_0}^*t_{j_0}\).
  This precisely means that \(x\) is contained in the \(j_0\) sub-set of the right hand side.
\end{proof}

\begin{proof}[Proof of \cref{lemma:proper-actions-give-weak-containment}]
  Classically, this result would be shown by proving that if the action \(\Gamma \actson \primidealspc{B}\) is proper then it has the approximation property (\emph{i.e.}\ it is amenable), and thus has the ``weak containment property'', see \cite{buss-martinez-approx-prop-23}.
  In this setting, however, it is not yet known whether the \(D_0\)-approximation property, in the sense of \cref{def:approx-property}, implies the ``\(D_0\) weak containment property'' (\emph{i.e.}\ what we want to show), so it is best we show \(\Lambda_{D_0} \colon \maxalg[D_0]{\CB} \to \redalg[D_0]{\CB}\) is isometric again, by hand.

  Let \(\unitof{S} \coloneqq \{1\} \sqcup S\) be the minimal unitization of \(S\), and likewise let \(\unitof{\CB}\) be the minimal unitization of the Fell bundle \(\CB\).
  This means that \(\unitof{B_s} \coloneqq B_s\) for all \(s \in S\) and \(\unitof{B}_1 \coloneqq \unitof{B}\) is the minimal unitization of the \cstar{}algebra \(B\).
  This is a Fell bundle with the obvious operations, and we have that
  \[
    \maxalg[D_0]{\CB} \subseteq \maxalgnopar[D_0]{\unitof{\CB}} \;\; \text{ and } \;\; \redalg[D_0]{\CB} \subseteq \redalgnopar[D_0]{\unitof{\CB}}
  \]
  canonically, that is, the inclusions are just given by the identity maps \(B_s \to B_s\) on the fibers coming from \(\CB\).
  As the canonical quotient \(\Lambda_{\unitof{\CB}} \colon \maxalgnopar[D_0]{\unitof{\CB}} \to \redalgnopar[D_0]{\unitof{\CB}}\) extends \(\Lambda_{D_0}\) we may assume that \(B\) is unital.\footnote{\, Note that the Fell bundle \(\unitof{\CB}\) is not proper, but it does in the open set corresponding to \(\primidealspc{B}\). We hope this causes no confusion.}

  Recall we say \(b \in B\) is \emph{compactly supported}, cf.\ \cref{def:compactly-supported element}, if the set \(\supp{b} \coloneqq \{x \in X : b(x) \neq 0\}\) is contained in a compact subset of \(X\).
  By \cref{lemma:comp-supp-are-dense} the \Star{}algebra \(B_c\) formed by these elements in dense in \(B\).
  We first show that the ``compactly supported Fell bundle'' \(\CB_c \coloneqq (B_{c,s})_{s \in S}\),\footnote{\, This is not a Fell bundle, for \(B_c\) is not a \cstar{}algebra, nor is \(B_{c,s}\) a Banach space. Nevertheless, on a moral level we will regard \(\CB_{c}\) as an ``algebraic'' Fell bundle.} where \(B_{c,s} \coloneqq B_s \cdot B_c\), is dense in \(\CB\).
  By this we precisely mean that any \(a = \sum_{s \in F} a_s \in \algalg[D_0]{\CB}\) can be approximated up to a given \(\varepsilon > 0\) by some \(b = \sum_{s \in F} b_s\) such that \(b_s^*b_s \in B_c\) for all \(s \in F\) (equivalently \(b_s \in B_{c,s}\)).
  In order to do this, simply note that \(B_{c,s} = B_s \cdot B_c\) is dense in the Banach space \(B_s\).
  Thus we may approximate \(a_s \in B_s\) by some \(b_s \in B_{c,s}\) up to \(\varepsilon/\abs{F}\).
  The so obtained finite linear combination \(b = \sum_{s \in F} b_s\) satisfies that \(\norm{a - b} \leq \sum_{s \in F} \norm{a_s - b_s} \leq \varepsilon\), as desired.

  In order to show that the left regular representation \(\maxalg[D_0]{\CB} \to \redalg[D_0]{\CB}\) is isometric it suffices to prove, by the previous paragraph, that it is isometric in the dense sub-algebra spanned by the elements \(b\) of compact support.
  Thus, let \(b = \sum_{s \in F} b_s \in \algalg[D_0]{\CB_c}\) be some finite linear combination, where \(b_s = a_s a_{c,s} \in A_s A_c\) is as before.
  Let \(K \subseteq X\) be some compact set such that \(\bigcup_{s \in F} \supp{b_s^*b_s} \subseteq K\) (such a set exists by the choice of \(b\)).
  Suppose that \(1 + b \in \algalg[D_0]{\CB}\) is invertible as an element in \(\redalg[D_0]{\CB}\).
  We aim to prove that the inverse of \(1+b\) is then actually contained in \(\algalg[D_0]{\CB^{**}}\), so that \(1+b\) is also invertible in \(\maxalg[D_0]{\CB^{**}}\).
  This would finish the proof.
  The inverse of \(1+b\) is given by \((1+b)^{-1} = \sum_{n = 0}^\infty (-1)^n b^n \in \redalg[D_0]{\CB}\).
  By \(D_0\)-properness of the action the set
  \[
    \left\{s \in S : s \left(K \setminus D_0\right) \cap K \neq \emptyset\right\}
  \]
  is finitely upper bounded, say by \(\{t_1, \dots, t_k\} \subseteq S\).
  This in particular means that any \(s \in F\) is locally below some \(t_i\).
  For the following, and to simplify notation, we let \(B_t^{**}/J_{D_0}\) be \(B_t^{**}/J_{D_0} B_t^{**}\) (note that the former technically does not make sense).

  \begin{claim}
    \(b^n \in (B_{t_1}^{**} + \cdots + B_{t_k}^{**})/J_{D_0}\) for all \(n \in \NN\).
  \end{claim}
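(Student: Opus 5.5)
The natural route is induction on \(n\). The key input is that, modulo \(J_{D_0}\), every monomial whose \(X\)-support lies in \(K\setminus D_0\) and whose "degree" carries a point of \(K\setminus D_0\) back into \(K\) can be rewritten as a sum of pieces sitting in the fibres \(B_{t_1}^{**},\dots,B_{t_k}^{**}\). This is exactly what \cref{lemma:cover-s-by-ti} provides. Concretely, for a single \(s\) with \(s(K\setminus D_0)\cap K\neq\emptyset\) and \(c_s\in B_s\) with \(\supp{c_s^*c_s}\subseteq K\), we have \(c_s = c_s 1_{s^*s}\). By \cref{lemma:cover-s-by-ti}, \(1_{s^*s}^{D_0}\le\vee_j 1_{s,t_j}^{D_0}\). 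Disjointify this join into orthogonal central projections \(q_j\le 1_{s,t_j}^{D_0}\) summing to \(1_{s^*s}^{D_0}\) in \(B^{**}/J_{D_0}\). Then \(c_s q_j = c_s 1_{s,t_j} q_j\), and \(c_s1_{s,t_j}\in B_s^{**}1_{s,t_j}\subseteq B_{t_j}^{**}\), because on the open set \(O_{s,t_j}\) the fibres \(B_s\) and \(B_{t_j}\) agree (\(se=t_je\) for the relevant idempotents \(e\)). Hence \(c_s\in (B_{t_1}^{**}+\cdots+B_{t_k}^{**})/J_{D_0}\).

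For the base case \(n=1\), apply the above to each \(b_s\), \(s\in F\). We may assume \(b_s\neq0\) modulo \(J_{D_0}\). Both \(\supp{b_s^*b_s}\) (the source side) and \(\supp{b_sb_s^*}\) (the range side, which is the \(s\)-image of the source support) should meet \(K\setminus D_0\). To arrange this, enlarge \(K\) at the start so that it contains the supports of both \(b_s^*b_s\) and \(b_sb_s^*\) for all \(s\in F\); this is possible since \(F\) is finite and each is contained in a compact set. Consequently \(s(K\setminus D_0)\cap K\neq\emptyset\), using invariance of \(D_0\) to keep the image outside \(D_0\).

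For the inductive step, write \(b^{n+1}=b\cdot b^n\). By induction, \(b^n\) is a sum of terms \(c_{t_j}\in B_{t_j}^{**}\) modulo \(J_{D_0}\), and these may be taken with source and range supports in \(K\), because \(b^n=b\,b^{n-1}\) has range support in the range supports of the \(b_s\), and \(b^n = b^{n-1}b\) has source support in the source supports of the \(b_s\). So each product \(b_s c_{t_j}\) lies in \(B_{st_j}^{**}\) with source and range supports inside \(K\setminus D_0\) modulo \(J_{D_0}\). Applying the single-monomial argument to \(st_j\) puts it back into \(\sum_i B_{t_i}^{**}/J_{D_0}\).

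The main obstacle is making the single-monomial step rigorous. It requires the identification \(B_s^{**}1_{s,t}=B_t^{**}1_{s,t}\) inside the Fell bundle \(\CB^{**}\) (from \cite{buss-martinez-approx-prop-23}*{Proposition 2.17}), together with control of supports under multiplication and under passing to \(B^{**}/J_{D_0}\), where Borel rather than open supports appear. In particular, the requirement \(\closure{s^*se}\subseteq t^*t\) in \cref{def:proper-grpd} is what should guarantee that the cut pieces \(c_sq_j\) genuinely lie in \(B_{t_j}^{**}\) rather than only in its closure up to boundary points. I would check this first, using that such boundary points are harmless either because they lie in \(D_0\) or because they are killed by the support hypothesis.
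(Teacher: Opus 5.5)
Your proposal is correct and is essentially the paper's argument: induction on \(n\), the base case from \cref{lemma:cover-s-by-ti} by disjointifying the projections \(1_{s,t_j}^{D_0}\), and the inductive step by applying the same cutting to the products \(b_s\cdot b_{t_j}\in B^{**}_{st_j}\). Your bookkeeping of range supports, which guarantees that each degree you cut actually lies in the finitely upper bounded set, fills a step the paper leaves implicit; however, \(\supp{b_sb_s^*}\) need not lie in a compact set when \(b_s\in B_s\cdot B_c\), so you should take the approximants in \(B_c\cdot B_s\cdot B_c\) instead, where both supports are controlled.
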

  \begin{proof}
    It suffices to show that all products \(b_{s_1} \cdots b_{s_n}\) are contained in \((B_{t_1}^{**} + \cdots + B_{t_k}^{**})/J_{D_0}\) for all finitely many \(s_1, \dots, s_n \in F\).
    We proceed by induction.
    If \(n = 1\) we may represent \(b_s = a_s a_{c,s} \in B_s \cdot B_c\) and proceed iteratively (in \(j = 1, \dots, k\)).
    Put \(b_{t_1} \coloneqq a_s a_{c,s} \cdot 1_{s, t_1}^{D_0}\) and, given \(b_{t_1}, \dots, b_{t_\ell}\), put
    \[
      b_{t_{\ell + 1}} \coloneqq a_s a_{c,s} \cdot 1_{s, t_\ell}^{D_0} \left(1 - 1_{s, t_1}^{D_0}\right) \cdots \left(1 - 1_{s, t_{\ell-1}}^{D_0}\right).
    \]
    As \(s\) is bounded above by \(\{t_1, \dots, t_k\}\) it follows that \(1_{s^*s}^{D_0} \leq \vee_{j = 1, \dots, k} 1_{s, t_j}^{D_0}\) by \cref{lemma:cover-s-by-ti}, which implies that \(b_s = \sum_{j = 1}^k b_{t_j}\).
    Each of the monomials of this sum belongs to (at least) one \(B_{t_j}^{**}/J_{D_0}\) by construction.

    For the induction step it is enough to show that \(b_s \cdot (b_{t_1} + \cdots + b_{t_k}) \in (B_{t_1}^{**} + \dots + B_{t_k}^{**})/J_{D_0}\) whenever \(b_{t_j} \in B_{t_j}^{**}/J_{D_0}\) is of compact support.
    Whence it is sufficient to show this for \(b_s \cdot b_{t_j}\), in which case it follows from the very same argument before.
  \end{proof}

  The claim implies \((1+b)^{-1} \in (B_{t_1}^{**} + \cdots + B_{t_k}^{**})/J_{D_0}\).
  This means \(1+b\) is invertible in \(\maxalg[D_0]{\CB^{**}}\), as desired, for the latter \(\maxalg[D_0]{\CB^{**}}\) contains both \((B_{t_1}^{**} + \cdots + B_{t_k}^{**})/J_{D_0}\) and \(\maxalg[D_0]{\CB}\) canonically.
\end{proof}

\section{A Baum-Connes assembly map on inverse semigroup equivariant E-theory} \label{sec:equiv-eth}

With the work in the previous sections done we are now in a position to define a Baum-Connes assembly map.
For this, we will follow the blueprint of \cite{Guentner-Higson-Trout}*{Chapters 6 and 10}, where this is achieved for (say discrete and countable) groups. 
We start by making precise the desired equivariant \Eth-theory of \cref{sec:non-equiv-eth}.

\begin{remark}
  Because of the discussion around the notion of ``reduced'' morphism in \cref{def:all-equiv-morphism} we will restrict ourselves to discussing only ``maximal'' \Eth-theory. A ``reduced'' analogue will not be studied here.
  For an example of why this is needed see \cite{Kwasniewski-Meyer:Pure_infiniteness}*{Example 4.7}: it is due to the discontinuities of \cref{rem:non-hausdorffness}.
\end{remark}

\begin{definition}
  Two \(D_0\)-equivariant (in the sense of \cref{def:all-equiv-morphism}) asymptotic morphisms \(\varphi_0, \varphi_1 \colon \CA \asymparrow \CB\) are \emph{homotopic} if there is a \((S,D_0)\)-equivariant morphism \(\varphi \colon \CA \to (\cont{[0,1]} \tensor \CB)_\fc\) with end points \(\varphi_0\) and \(\varphi_1\) at \(0\) and \(1\) respectively.
\end{definition}

Suffice to say that being homotopic is an equivalence relation: the usual proof goes through equally well.
We may now give the following.

\begin{definition}
  An \((S,X)\)-\emph{Hilbert space} is a Hilbert space \(H\) equipped with representations \(S \to {\rm PIsom}(H)\) (via partial isometries) and \(\opensets{X} \to {\rm Proj}(H)\) (via projections) that are, moreover, compatible in the following covariant way
  \[
    s p_{\CU} s^* = p_{s \cdot (\CU \cap s^*s)}
  \]
  for all \(s \in S\) and \(\CU \subseteq X\) open set.
  The \emph{standard \((S,X,D_0)\)-Hilbert space} is \(\CH_{S,D_0} \coloneqq \ell^2(\NN) \tensor \ell^2(S \ltimes (X \setminus D_0))\), equipped with the usual left regular representations:
  \[
    s \cdot \delta_n \tensor \delta_{\left[t,x\right]} \coloneqq \delta_n \tensor \delta_{\left[st,x\right]}
  \]
  if \(tx \in s^*s\), and \(0\) otherwise, and \(p_{\CU}\) is the projection onto \(\ell^2(\NN) \tensor \ell^2(\{[s,x] : x \in \CU\})\) for all \(\CU \in \opensets{X}\).
\end{definition}

The following contains the key fact about the standard \(S\)-Hilbert space \(\CH_{S,D_0}\).
\begin{lemma} \label{lemma:k-sx}
  If \(\CK_{S,D_0} \coloneqq \CK(\CH_{S,D_0})\), then there is a, unique up to homotopy, map
  \[
    \CK_{S,D_0} \oplus \CK_{S,D_0} \subseteq \CK\left(\CH_{S,D_0} \oplus \CH_{S,D_0}\right) \cong \CK_{S,D_0}.
  \]
\end{lemma}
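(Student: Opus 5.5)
The plan is to mimic the classical argument for \(\CK(\ell^2(\NN)\tensor H)\) from \cite{Guentner-Higson-Trout}, making sure that every operator we use commutes with both the partial isometries of \(S\) and the projections \(p_\CU\). The mechanism is to absorb the doubling entirely into the \(\ell^2(\NN)\) tensor factor, on which \(S\) and \(\opensets{X}\) act trivially.

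\textbf{Step 1: construct the isometry.} Pick two isometries \(v_1, v_2\) on \(\ell^2(\NN)\) with \(v_1v_1^* + v_2v_2^* = 1\), for instance \(v_1\delta_n = \delta_{2n}\) and \(v_2\delta_n = \delta_{2n+1}\). Define
\[
  V \colon \CH_{S,D_0} \oplus \CH_{S,D_0} \to \CH_{S,D_0}, \;\; V(\xi,\eta) \coloneqq (v_1 \tensor 1)\xi + (v_2\tensor 1)\eta .
\]
This \(V\) is a unitary. Because the representations of \(S\) and of \(\opensets{X}\) on \(\CH_{S,D_0}\) are of the form \(1 \tensor (\cdots)\), a direct check on basis vectors \(\delta_n\tensor\delta_{[t,x]}\) shows
\[
  V(s\oplus s) = sV \quad\text{and}\quad V(p_\CU\oplus p_\CU) = p_\CU V .
\]
Hence \(\operatorname{Ad}V \colon \CK(\CH_{S,D_0}\oplus\CH_{S,D_0}) \to \CK_{S,D_0}\) is an equivariant \Star{}isomorphism of \((S,X)\)-algebras. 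It preserves the ideals \(\Phi(\CU) = p_\CU\CK p_\CU\) and intertwines the partial actions \(k \mapsto sks^*\). Consequently it defines a morphism of the associated Fell bundles in the sense of \cref{def:non-asymp-morphism}, and it is \(D_0\)-equivariant, since its normal extension is again conjugation by \(V\) and maps supports in \(D_0\) to supports in \(D_0\). Composing with the block-diagonal inclusion \(\CK_{S,D_0}\oplus\CK_{S,D_0} \subseteq \CK(\CH_{S,D_0}\oplus\CH_{S,D_0})\), which is also clearly equivariant, gives the desired map.

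\textbf{Step 2: uniqueness up to homotopy.} Let \(V'\) be another equivariant isometry of the same kind, that is, one built from another pair \(w_1,w_2\) of Cuntz isometries on \(\ell^2(\NN)\), or more generally any isometry commuting with the \(S\) and \(\opensets{X}\) actions. The composite \(V'V^*\) is then a unitary (or isometry) in the commutant of the \((S,X)\)-representation. The key point is that this commutant contains \(\CL(\ell^2(\NN))\tensor 1\), where the unitary group is norm-contractible by Kuiper's theorem; in the Cuntz-pair case \(V'V^* = u\tensor 1\) with \(u\) unitary on \(\ell^2(\NN)\). A norm-continuous path \(u_r\) from \(1\) to \(u\) then yields a path \(\operatorname{Ad}((u_r\tensor 1)V)\) of equivariant \Star{}homomorphisms, which is a \((S,D_0)\)-equivariant homotopy in the sense of the preceding definition. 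For arbitrary equivariant isometries I would instead use the standard rotation trick: conjugate by the rotation homotopy inside \(M_2\) applied to \((V,V')\), which stays in the commutant and is therefore equivariant at every time.

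\textbf{Main obstacle.} I expect the main difficulty to be Step 2 in full generality: making precise which class of maps ``unique'' ranges over, and checking that the homotopy is \(D_0\)-equivariant at every time \(r\). Equivariance in \(r\) is automatic because everything happens in the \(\ell^2(\NN)\) factor, where \(S\) and \(X\) act trivially. For \(D_0\)-equivariance, conjugation by an operator of the form \(u\tensor 1\) acts fiberwise over \(S\ltimes(X\setminus D_0)\), so it preserves \(J_{D_0}\).
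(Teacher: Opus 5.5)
Your Step 1 and the Cuntz-pair part of Step 2 are correct. Kuiper is more than you need there: every unitary in \(\CL(\ell^2(\NN))\) is an exponential \(e^{ih}\), so the unitary group is norm path-connected.

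The gap is the general case, which you explicitly claim and which the paper's proof covers. That case is uniqueness over arbitrary equivariant isometries, not just those built from Cuntz isometries on the \(\ell^2(\NN)\)-factor, and the rotation trick does not deliver it. The path \(T_t\xi = ((\cos t)V\xi, (\sin t)V'\xi)\) lands in \(\CH_{S,D_0}\oplus\CH_{S,D_0}\). So it only connects \(\mathrm{Ad}\,V\oplus 0\) to \(0\oplus\mathrm{Ad}\,V'\) after stabilising. Meanwhile \((\cos t)V+(\sin t)V'\) is an isometry into \(\CH_{S,D_0}\) itself only when \(V^*V'+V'^*V=0\), e.g.\ for orthogonal ranges, which fails as soon as one of them is unitary. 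To undo the stabilisation you must know that \(\mathrm{Ad}\,W\) is homotopic to \(\id\) for a non-surjective equivariant isometry \(W\) such as \(v_1\tensor 1\). No norm-continuous device (Kuiper, exponentials in the commutant, rotations) can give that. If \(\norm{W-W'}<1/2\) for isometries, then \(\norm{WW^*-W'W'^*}<1\). Projections at distance less than \(1\) are unitarily equivalent, and so are their complements. Hence \(\dim(1-WW^*)\) is constant along norm-continuous paths of isometries, whereas \(v_1\tensor 1\) has infinite-dimensional co-range and \(1\) has none.

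The missing idea is the one the paper takes from Guentner--Higson--Trout. A homotopy of \Star{}homomorphisms between algebras of compact operators only needs a \emph{strong-\(*\)} continuous path of isometries, since \(k\mapsto W_rkW_r^*\) is then point-norm continuous on compact \(k\). Take strong-\(*\) continuous isometries \(W_r\) on the trivially acted factor \(\ell^2(\NN)\) with \(W_1=1\) and \(W_rW_r^*\to 0\) strongly as \(r\to 0\). Then the operators \((W_r\tensor 1)V(W_r\tensor 1)^*+1-W_rW_r^*\tensor 1\) are equivariant isometries joining an arbitrary equivariant isometry \(V\) (at \(r=1\)) to \(1\) (as \(r\to 0\)). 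So all spatial equivariant endomorphisms of \(\CK_{S,D_0}\) are homotopic, and uniqueness follows for every choice of \(\CH_{S,D_0}\oplus\CH_{S,D_0}\cong\CH_{S,D_0}\) at once.

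If you only want uniqueness over equivariant \emph{unitaries}, your approach can be repaired more cheaply. It cannot be done by noting that the commutant contains \(\CL(\ell^2(\NN))\tensor 1\), since a general \(V'V^*\) need not lie there. Instead, the commutant of the \((S,X)\)-representation is a von Neumann algebra, so \(V'V^*=e^{ih}\) with \(h\) in it. Then \(r\mapsto\mathrm{Ad}(e^{irh}V)\) is an equivariant, norm-continuous path.

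A minor point: \(p_\CU\CK_{S,D_0}p_\CU\) is a corner, not an ideal, since \(\CK_{S,D_0}\) is simple. Step 1 only needs that \(\mathrm{Ad}\,V\) intertwines the \(S\)- and \(\opensets{X}\)-representations.
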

\begin{proof}
  The proof is essentially the same as the one given in \cite{Guentner-Higson-Trout}*{Lemma 6.3} and discussion thereafter. We sketch it here for convenience.

  First we show any isometry \(V \colon \CH_{S,D_0} \to \CH_{S,D_0}\) is connected to the identity.
  Writing \(\CH_{S,D_0} = \ell^2(\NN) \tensor \CH\), and seeing \(\ell^2(\NN)\) as a trivial \((S,X)\) Hilbert space, we may choose a \Star{}strongly continuous path of isometries \(\{W_r\}_{r \in (0,1]}\) such that \(W_rW_r^* \to 0\) when \(r \to 0\) in the strong operator topology.
  Then \((1 \tensor W_r) V (1 \tensor W_r)^* + 1 - 1 \tensor W_rW_r^*\) connects \(V\) to \(1\).

  It follows that any two spatial \Star{}homomorphisms \(\CK_{S,D_0} \to \CK_{S,D_0}\) are homotopic, and since \(\CH_{S,D_0} \oplus \CH_{S,D_0} \cong \CH_{S,D_0}\) equivariantly it follows that the map
  \[
    \CK_{S,D_0} \oplus \CK_{S,D_0} \subseteq \CK\left(\CH_{S,D_0} \oplus \CH_{S,D_0}\right) \to \CK_{S,D_0}
  \]
  is unique up to homotopy.
\end{proof}

\begin{definition} \label{def:equiv-eth}
  Let \(\CA\) and \(\CB\) be Fell bundles over \((S, X)\)-algebras \(A\) and \(B\) respectively. Given any invariant \(D_0 \subseteq X\), we let \(\llbracket \CA, \CB \rrbracket_{S,D_0}\) be the set of homotopy classes of \(D_0\)-equivariant asymptotic morphisms \(\CA \asymparrow \CB\).
  In the same vein, we let
  \[
    \Egrp[S,D_0]{\CA}{\CB} \coloneqq \llbracket \czeror \tensor \CA \tensor \CK_{S,D_0}, \czeror \tensor \CB \tensor \CK_{S,D_0} \rrbracket_{S,D_0},
  \]
  where \(\czeror \tensor \CA \tensor \CK_{S,D_0}\) is the trivial Fell bundle extension of \(\CA \tensor \CK_{S,D_0}\), and similarly for \(\CB \tensor \CK_{S,D_0}\) (recall \cref{lemma:fell-bundle-a-tensor-b-trivial}).
\end{definition}

\begin{example}
  \cref{def:equiv-eth} evidently generalizes the group case as discussed in, for instance, \cite{Guentner-Higson-Trout}*{Definition 6.8}.
  On the other end of the spectrum, recall \cref{ex:opensets-x}, and let \(S \coloneqq \opensets{X}\) and \(D_0 \coloneqq \emptyset \subseteq X\).
  Then asymptotic morphisms \(\varphi \colon \CA \asymparrow \CB\) are (asymptotically equivalent to) \emph{\(X\)-equivariant} asymptotic morphisms \(\varphi \colon A \asymparrow B\) in the sense of \cite{dadarlat-meyer-2012}*{Definition 2.4}.
  Whence our definition of \Eth-theory generalizes that of Dadarlat and Meyer in \cite{dadarlat-meyer-2012}*{Definition 2.14}.
\end{example}

\begin{example}
  The ``ideal related'' \(X\)-\KKth-theory defined by Gabe in \cite{gabe-memoirs-2024}*{Section 12} can be seen as a particular case of \(\Egrp[S,D_0]{\CA}{\CB}\) at least if the algebras \(A\) and \(B\) are nuclear.  
  Indeed, the group \(\Egrp[\opensets{X},\emptyset]{A}{B}\) can then be seen as \(\KKgrp[nuc]{X; A}{B}\), see \cite{gabe-memoirs-2024}*{Definition 12.11}.
  Asymptotic morphisms \(\varphi \colon \CA \asymparrow \CB\) are just \(X\)-asymptotic morphisms \(A \asymparrow B\), see \cref{ex:opensets-x}.
  These, in turn, are given by Cuntz pairs that are ``weakly \(X\)-equivariant'' in the sense of \cite{gabe-memoirs-2024}*{Definition 12.17} whenever \(A\) and \(B\) are nuclear.
\end{example}

One should check that \(\Egrp[S,D_0]{\variable}{\variable}\) is a well behaved functor: we start showing it is an abelian group.

\begin{lemma}
  The set \(\llbracket \CA, \czeror \tensor \CB \tensor \CK_{S,D_0}\rrbracket\) is an abelian group, where given \(\varphi, \psi \colon \CA \asymparrow \czeror \tensor \CB \tensor \CK_{S,D_0}\) the sum \([\varphi]_h + [\psi]_h\) is defined as the homotopy class of
  \[
    \varphi \oplus \psi \colon \CA \asymparrow \left(\czeror \tensor \CB \tensor \CK_{S,D_0}\right) \oplus \left(\czeror \tensor \CB \tensor \CK_{S,D_0}\right) \cong \czeror \tensor \CB \tensor \CK_{S,D_0},
  \] 
  where the last isomorphism comes from any choice of embedding \(\CK_{S,D_0} \oplus \CK_{S,D_0} \subseteq \CK_{S,D_0}\) as in \cref{lemma:k-sx}.
\end{lemma}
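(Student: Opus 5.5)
We follow the classical argument of \cite{Guentner-Higson-Trout}*{Chapter 6}. The plan has four steps: show that the sum is well defined, then check associativity and commutativity, then produce a zero, and finally produce inverses.

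\emph{Well-definedness.} First we must check that \(\varphi \oplus \psi\) is again a \(D_0\)-equivariant asymptotic morphism in the sense of \cref{def:all-equiv-morphism}. The direct sum is taken fibrewise, \((\varphi \oplus \psi)_s \coloneqq \varphi_s \oplus \psi_s\). The asymptotic conditions of \cref{def:morphism} hold coordinatewise. \(D_0\)-equivariance follows because \((\varphi \oplus \psi)_{1,\fc}^{**} = \varphi_{1,\fc}^{**} \oplus \psi_{1,\fc}^{**}\), and because the ideal \(J_{D_0}\) of the direct sum is the direct sum of the two ideals \(J_{D_0}\); this is clear from the pointwise description in \cref{prop:cond-exp-kills-dzero}. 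Composing with the identification \(\CK_{S,D_0} \oplus \CK_{S,D_0} \subseteq \CK_{S,D_0}\) of \cref{lemma:k-sx} preserves equivariance for two reasons:
\begin{enumerate}
  \item the isomorphism \(\CH_{S,D_0} \oplus \CH_{S,D_0} \cong \CH_{S,D_0}\) is \((S,X)\)-equivariant, so conjugation by it gives a Fell bundle morphism \(\czeror \tensor \CB \tensor (\CK_{S,D_0} \oplus \CK_{S,D_0}) \to \czeror \tensor \CB \tensor \CK_{S,D_0}\);
  \item this morphism maps \(J_{D_0}\) into \(J_{D_0}\), since it acts trivially on the \(B\) tensor factor, and \(J_{D_0}\) for \(B \tensor \CK_{S,D_0}\) is computed fibrewise over \(X\) using \cref{lemma:fell-bundle-a-tensor-b-trivial}.
\end{enumerate}
Independence of the choices goes as follows. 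Homotopies \(\varphi_0 \sim \varphi_1\) and \(\psi_0 \sim \psi_1\) give the homotopy \(\varphi \oplus \psi\) into \((\cont{[0,1]} \tensor \cdots)_\fc\). Independence of the embedding is exactly the uniqueness up to homotopy in \cref{lemma:k-sx}: two embeddings differ by conjugation by an equivariant isometry \(V\), and the path of equivariant isometries joining \(V\) to \(1\) constructed there gives an equivariant homotopy.

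\emph{Associativity and commutativity.} Associativity holds because the two bracketings differ by a permutation of three summands of \(\CH_{S,D_0}\). Commutativity holds because swapping the summands is implemented by an equivariant unitary on \(\CH_{S,D_0} \oplus \CH_{S,D_0}\). Again by \cref{lemma:k-sx}, all of these equivariant unitaries are homotopic to the identity through equivariant partial isometries, hence the classes agree.

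\emph{Zero and inverses.} The zero element is the class of the zero asymptotic morphism; \(\varphi \oplus 0\) is homotopic to \(\varphi\) by the same isometry argument. For inverses we use the \(\czeror\) factor, as in \cite{Guentner-Higson-Trout}. Set \(\bar\varphi \coloneqq (\id \tensor \kappa)\circ\varphi\), where \(\kappa\) is induced by the flip \(t \mapsto -t\) on \(\czeror\). Then \(\varphi \oplus \bar\varphi\) is null-homotopic via the standard rotation homotopy on \(\czeror \tensor M_2\). Here \(M_2 \subseteq \CK\) is embedded through the trivially acted factor \(\ell^2(\NN)\) of \(\CH_{S,D_0}\); since this factor carries the trivial \((S,X)\)-structure, every map in the rotation homotopy is a Fell bundle morphism that leaves \(\CB\) untouched, so \(D_0\)-equivariance is automatic.

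I expect the main obstacle to be the bookkeeping in the first step, namely verifying that composing with \(\id \tensor \operatorname{Ad}(V)\), for \(V\) an equivariant isometry, preserves \(D_0\)-equivariance. The natural tool is the compatibility \(sp_\CU s^* = p_{s(\CU \cap s^*s)}\), which should show that \(\operatorname{Ad}(V)\) respects the \(X\)-structure and hence the support conditions defining \(J_{D_0}\).
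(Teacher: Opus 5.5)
Your proposal is correct and follows the same route as the paper, which likewise adapts the Guentner--Higson--Trout argument. In both, commutativity comes from the homotopy between \((T,S) \mapsto T \oplus S\) and \((T,S) \mapsto S \oplus T\) provided by \cref{lemma:k-sx}, and the inverse of \([\varphi]_h\) is the class of \(\varphi\) composed with the flip \(f \mapsto f(-\,\cdot\,)\) on \(\czeror\). Your version spells out several points the paper's sketch only asserts or leaves implicit: independence of the chosen embedding, associativity, the zero element, and the null-homotopy of \(\varphi \oplus \bar\varphi\).
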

\begin{proof}
  We only sketch the proof due to the importance of the result, but the method is standard.
  Observe that \(\varphi \oplus \psi\) does define a \(D_0\)-equivariant asymptotic morphism.
  Thus, all we have to do is to prove that the operation is commutative and that every element admits an inverse.
  The former follows in the same way as it does for groups, see \cite{Guentner-Higson-Trout}*{Lemma 6.6}: the maps
  \[
    \left(T, S\right) \mapsto T \oplus S \;\; \text{ and } \;\; \left(T,S\right) \mapsto S \oplus T
  \]
  are homotopy-equivalent.
  The latter condition is also proved as it is in groups: the inverse of \(\varphi \colon \CA \asymparrow \czeror \tensor \CB \tensor \CK_{S,D_0}\) is \((\rho \tensor 1 \tensor 1) \circ \varphi\), where \(\rho(f)(x) \coloneqq f(-x)\) for all \(f \in \czeror\). 
\end{proof}

\begin{corollary}
  \(\Egrp[S,D_0]{\CA}{\CB}\) is an abelian group.
\end{corollary}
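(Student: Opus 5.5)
The plan is to deduce the corollary directly from the preceding lemma, applied with the first argument being the trivial Fell bundle extension \(\czeror \tensor \CA \tensor \CK_{S,D_0}\). By \cref{def:equiv-eth}, \(\Egrp[S,D_0]{\CA}{\CB}\) is by definition the set
\[
  \llbracket \czeror \tensor \CA \tensor \CK_{S,D_0}, \czeror \tensor \CB \tensor \CK_{S,D_0} \rrbracket_{S,D_0}.
\]
Since \(\czeror\) and \(\CK_{S,D_0}\) are nuclear, \cref{lemma:fell-bundle-a-tensor-b-trivial} shows that \(\czeror \tensor \CA \tensor \CK_{S,D_0}\) is again a Fell bundle over the \((S,X)\)-algebra \(\czeror \tensor A \tensor \CK_{S,D_0}\). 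Hence it is a legitimate choice of \(\CA\) in the previous lemma, and the set above carries the abelian group structure that lemma produces.

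The checks I would record are the following. First, the set of homotopy classes used in \cref{def:equiv-eth} must coincide with the set appearing in the lemma. Both consist of \(D_0\)-equivariant asymptotic morphisms in the sense of \cref{def:all-equiv-morphism}, taken modulo the same homotopy relation, so this is a matter of matching the notation \(\llbracket\cdot,\cdot\rrbracket\) with \(\llbracket\cdot,\cdot\rrbracket_{S,D_0}\).

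Second, the sum must be well defined independently of the chosen embedding \(\CK_{S,D_0} \oplus \CK_{S,D_0} \subseteq \CK_{S,D_0}\). This follows from \cref{lemma:k-sx}, since any two such embeddings are homotopic and composing with a homotopy of the target gives a homotopy of \(\varphi \oplus \psi\). For this to stay inside the category, the embedding must act as a \(D_0\)-equivariant morphism after tensoring with \(\czeror \tensor \CB\). It does, because it is spatially implemented by an equivariant isometry \(\CH_{S,D_0} \oplus \CH_{S,D_0} \to \CH_{S,D_0}\), which acts trivially on the \(B\)-coordinate and therefore preserves \(J_{D_0}\).

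The main obstacle I expect is this last equivariance check. Concretely, one has to verify that
\[
  (\id \tensor \id \tensor \operatorname{Ad} V)^{**}
\]
maps \(J_{D_0}\) into \(J_{D_0}\) for the tensored Fell bundle. I would argue it fibrewise in \(x \in X\), using that the \(X\)-structure on \(\czeror \tensor B \tensor \CK_{S,D_0}\) comes only from \(B\), as in \cref{lemma:fell-bundle-a-tensor-b-trivial}. The map acts as the identity on supports, so elements supported in \(D_0\) are sent to elements supported in \(D_0\).

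The remaining group axioms, namely associativity, the neutral element given by the zero morphism, commutativity and inverses, are then inherited verbatim from the lemma.
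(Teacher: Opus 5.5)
Your proposal is correct and follows the paper's route exactly: the corollary is the preceding lemma applied with \(\czeror \tensor \CA \tensor \CK_{S,D_0}\) as the first argument, which is legitimate by \cref{def:equiv-eth}. The extra check you flag, that the embedding \(\CK_{S,D_0} \oplus \CK_{S,D_0} \subseteq \CK_{S,D_0}\) preserves \(J_{D_0}\), is a point the paper passes over in the lemma's proof with the single remark that \(\varphi \oplus \psi\) is again \(D_0\)-equivariant, and your fibrewise argument is a reasonable way to fill it in.
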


We now check several properties of the functor \(\Egrp[S,D_0]{\variable}{\variable}\).
We are particularly interested in the composition product; constructing a ``descent'' functor and proving that extensions give rise to elements in equivariant \Eth-theory.

\begin{proposition} \label{prop:eth-composition}
  The assignment
  \[
    \Egrp[S,D_0]{\CA}{\CB} \times \Egrp[S,D_0]{\CB}{\CCC} \to \Egrp[S,D_0]{\CA}{\CCC}, \;\; \left(\left[\varphi\right]_h, \left[\psi\right]_h\right) \mapsto \left[\psi \circ \varphi\right]_h,
  \]
  where \([\psi \circ \varphi]_h\) is the homotopy class of (possibly a reparametrization of) the composition of the \((S, D_0)\)-equivariant asymptotic morphisms \(\varphi\) and \(\psi\) is a well defined composition law.
\end{proposition}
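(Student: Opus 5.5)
The plan is to follow the Connes--Higson composition of asymptotic morphisms, as in \cite{Guentner-Higson-Trout}*{Chapter 2 and Proposition 6.9}, and check that each step respects the Fell bundle and \(D_0\)-structure.

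\textbf{Step 1: composition via reparametrization.} Write \(\varphi \colon \czeror \tensor \CA \tensor \CK_{S,D_0} \asymparrow \czeror \tensor \CB \tensor \CK_{S,D_0}\) and \(\psi\) similarly from \(\CB\) to \(\CCC\). View them fiberwise as \(\varphi_{s,\fc}\) and \(\psi_{s,\fc}\). Since \(\psi\) is asymptotically multiplicative on each fiber, we choose a countable dense set of monomials \(a_{s}\) in each fiber \(A_s\). This is possible because \(S\) is quasi-countable and the algebras are separable.

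\textbf{Step 2: choose a single parametrization.} Pick an increasing continuous \(r \mapsto \rho(r)\) so that \(\psi_{\rho(r)} \circ \varphi_r\) satisfies the three asymptotic conditions of \cref{def:morphism} on this dense set. Since the index set is countable, one \(\rho\) works simultaneously for all \(s \in S\), by a diagonal argument on compact subsets of \([1,\infty)\). Equicontinuity then passes the conditions to the whole fiber.

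\textbf{Step 3: \(D_0\)-equivariance of the composite.} The composite's unit-fiber map is asymptotically equal to \(\psi_{1,\fc}\circ\varphi_{1,\fc}\). I would realize it as the composition of \(\varphi_{1,\fc}\) with the \(\ast\)-homomorphism \(B_\fc \to C_\fc\) induced by the reparametrized family \(\psi\), a standard "\(\psi_\fc\) extends to \(B_\fc\) along \(\rho\)" step. Normal extensions compose, so
\[
  (\psi\circ\varphi)^{**}_{1,\fc}(J^\CA_{D_0}) \subseteq \psi^{**}_{1,\fc}(J^\CB_{D_0,\fc}) \subseteq J^{\CCC}_{D_0,\fc}.
\]

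\textbf{Step 4: well-definedness.} The class must be independent of \(\rho\) and of the representatives \(\varphi\) and \(\psi\). Two admissible reparametrizations are homotopic by linear interpolation of \(\rho\). Homotopies \(\Phi \colon \CA \asymparrow (\cont{[0,1]}\tensor\CB)_\fc\) and \(\Psi\) compose, with \(\Psi\) tensored with \(\cont{[0,1]}\) via evaluation, to give a homotopy of composites. This is again \(D_0\)-equivariant by the argument of Step 3, applied to \(\cont{[0,1]}\tensor \CB\) with its trivially extended \(X\)-structure from \cref{lemma:fell-bundle-a-tensor-b-trivial}.

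\textbf{Main obstacle.} I expect Step 3 to be the hardest, because \(D_0\)-equivariance is a condition on bidual maps. The asymptotic composite is not literally \(\psi^{**}\circ\varphi^{**}\), and \(J_{D_0}\) is defined pointwise on \(X\). My first attempt would be to show that \(J_{D_0,\fc}^{\CB}\) is exactly what the \(\fc\)-lift of \(\psi\) sees, by using the support description \(\supp{q(b)}\subseteq D_0\) from \cref{prop:cond-exp-kills-dzero} fiberwise in \(r\). Failing that, I would restrict to composing through \(\maxalg[D_0]{\cdot}\) using \cref{prop:inject-equiv-map-induces}.
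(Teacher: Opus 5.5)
Your outline follows the paper's route. The paper's proof consists of the remark that the asymptotic-morphism axioms survive composition up to reparametrization, followed by exactly your chain
\[
  (\psi\circ\varphi)_{1,\fc}^{**}\bigl(J^{\CA}_{D_0}\bigr)\subseteq\psi_{1,\fc}^{**}\bigl(J^{\CB}_{D_0,\fc}\bigr)\subseteq J^{\CCC}_{D_0,\fc}.
\]
It says nothing about independence of $\rho$ or of representatives, so your Step~4 goes beyond it.

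However, the step you single out as the main obstacle is a genuine gap, in your sketch and equally in the paper. The hypothesis on $\psi$ concerns $\psi_{1,\fc}$, whose domain is $B$, so it controls $\psi$ only on constant nets. What the middle term actually needs is $D_0$-equivariance of $[t\mapsto b_t]\mapsto[t\mapsto\psi_{\rho(t)}(b_t)]$ on the non-constant elements $\varphi_{1,\fc}(A)\subseteq B_\fc$. Moreover, for a genuine asymptotic morphism this map is in general a $*$-homomorphism only on a separable subalgebra of $B_\fc$, with $\rho$ chosen after that subalgebra. It is not defined on all of $B_\fc$, as your Step~3 asserts. Your fallback of composing after descent would only give a product in ordinary $E$-theory of the $D_0$-algebras, not a class in $\Egrp[S,D_0]{\CA}{\CCC}$. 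Also, the equivariant homotopies in Step~4 rely on the same unproved inclusion.

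The gap is not merely formal, at least if $J^{\CB}_{D_0,\fc}\subseteq(B_\fc)^{**}$ is read as the $J_{D_0}$-ideal of $B_\fc$ for the $X$-structure whose ideals are the classes of nets with values in $\Phi_B(\CU)$. Under that reading, $D_0$-equivariance of the composite can depend on $\rho$. Take $S$ trivial, $X=[0,1]$, $D_0=\{0\}$, and all three algebras equal to $C[0,1]$ with its standard $X$-structure. Let $\varphi_t(f)=f\circ g_t$ with $g_t(z)=z\bigl(1-\max\{0,1-t^2|z-\tfrac12-\tfrac1{2t}|\}\bigr)$, and $\psi_t(f)=f(\tfrac12+\tfrac1{2t})\cdot 1$; each $\varphi_t,\psi_t$ is a $*$-homomorphism.

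Since $g_t\to\mathrm{id}$ pointwise, for each $y\neq 0$ the fibre maps $f\mapsto[t\mapsto\varphi_t(f)(y)]$ and $f\mapsto[t\mapsto\psi_t(f)(y)]$ are evaluation at $y$ and at $\tfrac12$. Hence both $\varphi$ and $\psi$ are $\{0\}$-equivariant. But $g_t(\tfrac12+\tfrac1{2t})=0$, so $\psi_t\circ\varphi_t(f)=f(0)\cdot 1$, which sends $1_{\{0\}}\in J^{\CA}_{\{0\}}$ to $1\notin J^{\CCC}_{\{0\},\fc}$. By contrast, $\psi_{t+10}\circ\varphi_t$ converges to $f\mapsto f(\tfrac12)\cdot 1$ and is $\{0\}$-equivariant.

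So the $D_0$-condition has to enter the choice of $\rho$, or the notion of $D_0$-equivariance must be strengthened so that it passes to the induced maps on $B_\fc$. Neither your sketch nor the paper's chain does this.

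A minor point: quasi-countability only gives $S=KE$ with $K$ countable, not countably many fibres. The diagonal argument in Step~2 therefore also needs a reduction to the fibres over $K$.
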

\begin{proof}
  One should check that the composition of \(D_0\)-equivariant asymptotic morphisms is really a \(D_0\) equivariant asymptotic morphism.
  The ``asymptotic morphism'' part is clear: the bullet points in \cref{def:morphism} are all preserved under taking compositions (at least modulo reparametrizing \([1,\infty)\)).
  In order to check that the \(D_0\)-equivariance of asymptotic morphisms is preserved recall that (in the notation of \cref{prop:inject-equiv-map-induces}):
  \[
    \varphi_{1,\fc}^{**}\left(J_{D_0}^\CA\right) \subseteq J_{D_0,\fc}^\CB \;\; \text{ and } \;\; \psi_{1,\fc}^{**}\left(J_{D_0}^\CB\right) \subseteq J_{D_0,\fc}^\CCC,
  \]
  whence
  \[
    \left(\psi \circ \varphi\right)_{1,\fc}^{**}\left(J_{D_0}^\CA\right) \subseteq \psi_{1,\fc}^{**}\left(J_{D_0,\fc}^\CB\right) \subseteq J_{D_0,\fc}^{\CCC},
  \]
  as desired.
  The fact that the composition law is associative is clear, so the proof is omitted.
\end{proof}

The following details the so-called ``descent map'', and is a consequence of \cref{prop:inject-equiv-map-induces} and the usual Bott periodicity of \Eth-theory.
\begin{proposition} \label{prop:descent}
  There is a ``descent functor''
  \[
    \Egrp[S,D_0]{\CA}{\CB} \to \Egrp{\maxalg[D_0]{\CA}}{\maxalg[D_0]{\CB}}
  \]
  from the homotopy category of \((S, X)\)-\cstar{}algebras with \((S, D_0)\)-equivariant asymptotic morphisms to the homotopy category of \cstar{}algebras and asymptotic morphisms.
  It associates to the class of an \((S, D_0)\)-equivariant asymptotic morphism \(\varphi \colon \czeror \tensor \CA \tensor \CK_{S,D_0} \asymparrow \czeror \tensor \CB \tensor \CK_{S,D_0}\) the homotopy class of the induced asymptotic morphism \(\varphi \colon \czeror \tensor \maxalg[D_0]{\CA} \tensor \CK_{S,D_0} \asymparrow \czeror \tensor \maxalg[D_0]{\CB} \tensor \CK_{S,D_0}\) via \cref{prop:inject-equiv-map-induces}.
\end{proposition}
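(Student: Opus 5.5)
The descent map is assembled in three steps: first identify the crossed product of the stabilized bundle, then apply \cref{prop:inject-equiv-map-induces}, then check compatibility with homotopy and composition.

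\textbf{Step 1: the crossed product of the stabilized bundle.} Since \(\czeror\) and \(\CK_{S,D_0}\) are nuclear, \cref{lemma:fell-bundle-a-tensor-b-trivial} makes \(\czeror \tensor \CA \tensor \CK_{S,D_0}\) a Fell bundle over an \((S,X)\)-algebra. Here \(\CK_{S,D_0}\) is regarded as carrying the trivial action on the \(\czeror\) factor. If \(S\) acts on \(\CK_{S,D_0}\) only through \(\CH_{S,D_0}\), the action is inner in the sense of the proof of \cref{prop:trivial-extensions-maxalg}, because it is implemented by the partial isometries of \(\CH_{S,D_0}\). By \cref{prop:trivial-extensions-maxalg} we then obtain a canonical \Star{}isomorphism
\[
  \maxalg[D_0]{\czeror \tensor \CA \tensor \CK_{S,D_0}} \cong \czeror \tensor \maxalg[D_0]{\CA} \tensor \CK_{S,D_0},
\]
and the same holds for \(\CB\).

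\textbf{Step 2: inducing the asymptotic morphism.} Given a \(D_0\)-equivariant asymptotic morphism \(\varphi\) between the stabilized bundles, \cref{prop:inject-equiv-map-induces}\cref{prop:inject-equiv-map-induces:dzero-equiv} produces an asymptotic morphism \(\maxalg[D_0]{\varphi}\) between the \(D_0\)-maximal crossed products. Transporting it through the isomorphism of Step 1 gives an asymptotic morphism
\[
  \czeror \tensor \maxalg[D_0]{\CA} \tensor \CK_{S,D_0} \asymparrow \czeror \tensor \maxalg[D_0]{\CB} \tensor \CK_{S,D_0}.
\]
Since \(\CK_{S,D_0}\) is isomorphic, non-equivariantly, to the compact operators on a separable Hilbert space, this is a cycle for \(\Egrp{\maxalg[D_0]{\CA}}{\maxalg[D_0]{\CB}}\).

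\textbf{Step 3: well-definedness and functoriality.} The main point is that the assignment respects homotopy and composition.

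For homotopy, a homotopy is a \(D_0\)-equivariant asymptotic morphism into \(\cont{[0,1]} \tensor (\czeror \tensor \CB \tensor \CK_{S,D_0})\), and \(\cont{[0,1]}\) carries the trivial action. Applying Step 2 to it, together with \cref{prop:trivial-extensions-maxalg} for \(\cont{[0,1]}\), gives a homotopy of the descended maps. Evaluation at the endpoints commutes with descent, since it is a surjective \(D_0\)-equivariant morphism and \cref{prop:inject-equiv-map-induces}\cref{prop:inject-equiv-map-induces:morphisms} applies.

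For composition, both \(\maxalg[D_0]{\psi \circ \varphi}\) and \(\maxalg[D_0]{\psi} \circ \maxalg[D_0]{\varphi}\) are determined by their values on the monomials \(a_s \in A_s\), by the universal property in the proof of \cref{prop:inject-equiv-map-induces}. These values agree asymptotically, after the same reparametrization used in \cref{prop:eth-composition}. Compatibility with the group structure follows the same way from the identification \(\CK_{S,D_0} \oplus \CK_{S,D_0} \subseteq \CK_{S,D_0}\) of \cref{lemma:k-sx}.

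\textbf{Main obstacle.} The step I expect to be hardest is Step 1, namely the innerness of the \(S\)-action on \(\CK_{S,D_0}\). The partial isometries \(s\) on \(\CH_{S,D_0}\) need not give unitaries in \(\multialg{A_{s^*s}}\) in the required sense. I would first try to verify directly that \(\mathrm{Ad}(s)\) is implemented on the corner \(p_{s^*s}\CK_{S,D_0}p_{s^*s}\) by the partial isometry \(s\). Failing that, I would avoid \cref{prop:trivial-extensions-maxalg} and instead construct the isomorphism by hand through the universal property of \(\maxalg[D_0]{\variable}\), as in its proof.
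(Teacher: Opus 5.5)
Your Steps 1 and 2 are exactly the paper's proof. The paper takes the asymptotic morphism \(\maxalg[D_0]{\czeror \tensor \CA \tensor \CK_{S,D_0}} \asymparrow \maxalg[D_0]{\czeror \tensor \CB \tensor \CK_{S,D_0}}\) from \cref{prop:inject-equiv-map-induces} and identifies both sides with \(\czeror \tensor \maxalg[D_0]{\variable} \tensor \CK_{S,D_0}\) simply by citing \cref{prop:trivial-extensions-maxalg}; it does not verify the innerness you flag as the main obstacle either, and your Step 3 on homotopy invariance, composition and sums is extra checking that the paper leaves implicit.

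One correction to an aside: \(\CH_{S,D_0} = \ell^2(\NN) \tensor \ell^2(S \ltimes (X \setminus D_0))\) is in general not separable, because its basis is indexed by germs \([t,x]\) with \(x \in X \setminus D_0\), which is uncountable e.g.\ for \(S = {\rm Bis}(G)\) with \(G^{(0)}\) uncountable and \(D_0 = \emptyset\). So \(\CK_{S,D_0}\) is not the separable compacts. To get a class in \(\Egrp{\maxalg[D_0]{\CA}}{\maxalg[D_0]{\CB}}\), restrict to \(\CK(H_0)\) for a separable infinite-dimensional \(H_0 \subseteq \CH_{S,D_0}\). For separable \(\maxalg[D_0]{\CA}\), the restricted asymptotic morphism can then be taken to land in \(\czeror \tensor \maxalg[D_0]{\CB} \tensor \CK(H_1)\) for some separable \(H_1\), and independence of these choices follows from the homotopy-uniqueness of such embeddings, as in \cref{lemma:k-sx}.
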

\begin{proof}
  Observe that \cref{prop:inject-equiv-map-induces} yields an asymptotic morphism \(\maxalg[D_0]{\czeror \tensor \CA \tensor \CK_{S,D_0}} \asymparrow \maxalg[D_0]{\czeror \tensor \CB \tensor \CK_{S,D_0}}\), so all we have to do is to construct a \Star{}isomorphism
  \[
    \Phi \colon \maxalg[D_0]{\czeror \tensor \CA \tensor \CK_{S,D_0}} \to \czeror \tensor \maxalg[D_0]{\CA} \tensor \CK_{S,D_0},
  \]
  and similarly for \(\CB\). This is precisely the contents of \cref{prop:trivial-extensions-maxalg}.
\end{proof}

\begin{remark}
  The reduced analogue of \cref{prop:descent} is \emph{false}, due to the complexity of inverse semigroups.
  This means that, in general, there is no ``reduced descent'' map
  \[
    \Egrp[S,D_0]{\CA}{\CB} \to \Egrp{\redalg[D_0]{\CA}}{\redalg[D_0]{\CB}}.
  \]
  Indeed, recall \cref{prop:inject-equiv-map-induces}: an asymptotic morphism \(\varphi \colon \CA \asymparrow \CB\) is only guaranteed to induce a map on \(\redalg[D_0]{\variable}\) if it is reduced itself, see \cref{def:all-equiv-morphism}.
  So the reduced descent map above has no reason to exist. 
  In fact, it does not, see \cite{Kwasniewski-Meyer:Pure_infiniteness}*{Example 4.7} for an easy example of a morphism \(\varphi \colon \CA \to \CB\) that does not induce any map \(\redalg{\CA} \to \redalg{\CB}\).
\end{remark}

Note that the discussion around equivariant extensions \(0 \to \CI \to \CA \to \CB \to 0\) in \cref{thm:maximal-extensions,thm:reduced-extensions} makes one wonder whether such extensions induce elements in \Eth-theory.
This is so.
\begin{theorem} \label{prop:ext-eth}
  Let \(\eta \colon 0 \to \CI \xrightarrow{\iota} \CA \xrightarrow{q} \CB \to 0\) be a \(D_0\)-equivariant short exact sequence of separable Fell bundles.\footnote{\, We say a Fell bundle over a \cstar{}algebra \(A\) is \emph{separable} if every fiber \(A_s\) is separable as a Banach space. In particular, \(A\) is separable as a \cstar{}algebra.}
  There is an associated element \([\eta]_h \in \Egrp[S,D_0]{\CB}{\CI}\) that, moreover, maps to the class of the extension
  \[
    \left[\maxalg[D_0]{\eta}\right]_h \colon 0 \to \maxalg[D_0]{\CI} \to \maxalg[D_0]{\CA} \to \maxalg[D_0]{\CB} \to 0
  \]
  of \cref{thm:maximal-extensions} via the descent functor of \cref{prop:descent}.
\end{theorem}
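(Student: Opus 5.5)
The plan is to follow the classical Connes--Higson construction in the form used by \cite{Guentner-Higson-Trout}, carrying out every step fibrewise over \(S\) and checking \(D_0\)-equivariance at the level of unit fibres.

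\emph{Step 1: an equivariant quasicentral approximate unit.} Since \(\CI\) and \(\CA\) are separable, I would first choose a continuous approximate unit \(\{u_r\}_{r \in [1,\infty)} \subseteq I\) for the unit fibre \(I\) (viewed inside \(A\) via \(\iota\)). It should be quasicentral in two senses: \(\norm{u_r a_s - a_s u_r} \to 0\) for every \(a_s \in A_s\) and \(s \in S\), and \(u_r\) should be asymptotically compatible with the \(X\)-structure. Quasicentrality for the monomials \(a_s\) is the replacement for asymptotic \(\Gamma\)-invariance in the group case. The usual convexity argument (Arveson) applies to the separable operator system spanned by countably many dense elements of the fibres \(A_c\), \(c \in \CCC\), with \(S=\CCC E\). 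Here I would use the fact that \(u_r a_s - a_s u_r\) makes sense in \(\maxalg{\CA}\), into which all \(A_s\) embed isometrically.

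\emph{Step 2: the asymptotic morphism.} Define
\[
  \varphi_{s,r} \colon \czeror \tensor B_s \to I_s, \qquad f \tensor q_s(a_s) \mapsto f(u_r)\, a_s,
\]
where \(a_s \in A_s\) is a set-theoretic (Bartle--Graves) lift of \(q_s(a_s)\). The standard estimates show that this is independent of the lift up to \(\contz{[1,\infty), I_s}\), because \(f(u_r)\,\iota(i_s) \to f(1)\, i_s = 0\) for \(f \in \czeror\) vanishing at the relevant endpoint, after the usual identification \(\czeror \cong \contz{(0,1)}\). The same estimates show that \(\varphi\) satisfies the three bullet conditions of \cref{def:morphism}: multiplicativity \(\varphi_{s_1s_2,r}(a_{s_1}a_{s_2}) \approx \varphi_{s_1,r}(a_{s_1})\varphi_{s_2,r}(a_{s_2})\) follows from quasicentrality, and compatibility with \(s \le t\) is automatic because the formula does not depend on \(s\). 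Tensoring with \(\CK_{S,D_0}\) and applying Bott periodicity in the form \(\czeror\tensor\czeror\) then gives a class \([\eta]_h\) in \(\Egrp[S,D_0]{\CB}{\CI}\), exactly as in \cite{Guentner-Higson-Trout}.

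\emph{Step 3: \(D_0\)-equivariance.} This is the main obstacle. We need \(\varphi_{1,\fc}^{**}(J_{D_0}^{\CB}) \subseteq J_{D_0,\fc}^{\CI}\). At the unit fibre \(\varphi_1\) is multiplication by \(f(u_r)\) composed with a lift. Supports can only shrink under multiplication by \(f(u_r)\), so \(\supp{\varphi_1(b)} \subseteq \supp{b}\) pointwise in \(X\). To make this precise I would use \(D_0\)-equivariance of \(q\) and \(\iota\) to identify \(B^{**}\) with a corner of \(A^{**}\) cut down by the central projection complementary to \(I^{**}\), pass to the normal extension, and take supports in \(\prod_x A_x^{**}\). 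The delicate point is that the Bartle--Graves lift is not linear, so the argument has to be carried out on the induced \Star{}homomorphism \(\czeror\tensor B \to I_\fc\) instead. There it reduces to the fact that \(J_{D_0}\) is an ideal, as shown in \cref{prop:cond-exp-kills-dzero}.

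\emph{Step 4: compatibility with descent.} Applying \cref{prop:inject-equiv-map-induces} and \cref{prop:descent}, the descended map \(\czeror\tensor\maxalg[D_0]{\CB} \asymparrow \maxalg[D_0]{\CI}\) is given on monomials by \(f\tensor b_s \mapsto f(u_r)a_s\). Since \(\{u_r\}\) is also quasicentral for \(\maxalg[D_0]{\CA}\), and the image of \(I\) contains an approximate unit of \(\maxalg[D_0]{\CI}\) by \cref{lemma:approx-unit}, this is precisely the Connes--Higson asymptotic morphism of the extension produced in \cref{thm:maximal-extensions}. The class of that asymptotic morphism does not depend on the choice of quasicentral approximate unit, which finishes the identification.
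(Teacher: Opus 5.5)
Your Steps 1, 2 and 4 follow the paper's argument. Both use an Arveson quasi-central approximate unit and the Connes--Higson formula \(f\tensor b_s\mapsto f(u_r)\zeta_s(b_s)\) with set-theoretic sections. Both get the conditions of \cref{def:morphism} from quasi-centrality and \(f(u_r)i\to f(1)i=0\), and both descend through \cref{prop:inject-equiv-map-induces}. Running Arveson inside \(\maxalg{\CA}\) with an approximate unit of \(I\) is slightly cleaner than the paper's version. The paper's \(u_k\) only lie in the image of \(I\) in \(\maxalg[D_0]{\CI}\), and they are quasi-central for the \(\maxalg[D_0]{\CA}\)-norm rather than the fibre norms. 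The paper's proof never verifies that \(\varphi\) is \(D_0\)-equivariant; it applies \cref{prop:inject-equiv-map-induces} directly. So your Step 3 carries the real content, and as written it does not work.

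There are three problems with Step 3.
\begin{itemize}
\item The claim that ``supports can only shrink under multiplication by \(f(u_r)\)'' only gives \(\supp{f(u_r)a}\subseteq\supp{a}\) for the lift \(a\). But \(\supp{a}\) can be much larger than \(\supp{q(a)}\).
\item The corner \((1-p)A^{**}\) does not help. We have \(f(u_r)a\in I\) and \(f(u_r)\to f(p)=0\) strongly, so \(\varphi\) only lives in \(I_\fc\).
\item The argument does not reduce to \(J_{D_0}\) being an ideal. The hypotheses \(\iota^{**}(J^{\CI}_{D_0})\subseteq J^{\CA}_{D_0}\) and \(q^{**}(J^{\CA}_{D_0})\subseteq J^{\CB}_{D_0}\) point the wrong way for a map \(\CB\asymparrow\CI\). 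You would need \(J^{\CB}_{D_0}\subseteq q^{**}(J^{\CA}_{D_0})\) and \(J^{\CA}_{D_0}\cap I^{**}\subseteq J^{\CI}_{D_0}\), and these can fail.
\end{itemize}

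For a counterexample to the last point, let \(S\) be trivial, \(X=\{x,y\}\) and \(D_0=\{x\}\). Take \(0\to\contz{[0,1)}\to\cont{[0,1]}\to\CC\to0\), with \(\Phi_I\) and \(\Phi_A\) zero on \(\{x\}\) and everything on \(\{y\}\), and the reverse for the quotient \(\CC\). Then \(J^{\CI}_{D_0}=J^{\CA}_{D_0}=0\) and \(J^{\CB}_{D_0}=\CC\), so \(\iota\) and \(q\) are \(D_0\)-equivariant. Yet \(f\mapsto[f(u_r)]\) is nonzero, while \(D_0\)-equivariance would force it to vanish. \cref{thm:maximal-extensions} also fails in this example.

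The missing input is compatibility of the extension with the \(X\)-structures, for instance \(q(\Phi_A(\CU))=\Phi_B(\CU)\) and \(\Phi_A(\CU)\cap I\subseteq\Phi_I(\CU)\). Granting this, the argument goes through as follows.
\begin{itemize}
\item For each \(x\), lift \(b\in\Phi_B(X\setminus\closure{\{x\}})\) to an element of \(\Phi_A(X\setminus\closure{\{x\}})\). This is legitimate because \(\varphi\) does not depend on the lift modulo \(\contz{[1,\infty),I}\).
\item Hence \(\varphi\colon\czeror\tensor B\to I_\fc\) is \(X\)-equivariant and intertwines the fibre quotients.
\item Normality of the bidual extensions then gives \(\varphi^{**}_{1,\fc}(J_{D_0})\subseteq J_{D_0,\fc}\).
\end{itemize}

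Two smaller points. First, Bott periodicity cannot absorb a single suspension. The Connes--Higson class therefore naturally lies in \(\Egrp[S,D_0]{\czeror\tensor\CB}{\CI}\), which is also what the paper's proof actually produces. Second, the decomposition \(S=\CCC E\) in Step 1 is a quasi-countability assumption that is not in the statement (the paper tacitly needs something similar to get a sequence).
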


Before proving \cref{prop:ext-eth} recall one basic feature of non-equivariant \Eth-theory: any short exact sequence \(0 \to I \to A \to B \to 0\) induces an asymptotic morphism \(\czeror \tensor B \asymparrow I\). Indeed, one may choose any (set-theoretic only) section \(\zeta \colon B \to A\) and let
\begin{equation} \label{eq:central-invariant}
  \varphi \colon \czeror \tensor B \asymparrow I, \;\; \varphi_r\left(f \tensor b\right) \coloneqq f\left(u_r\right) \zeta\left(b\right),
\end{equation}
where \(\{u_r\}_{r \in [1, \infty)}\) is an approximate unit of \(I\).
This can be checked to be asymptotically equivalent to an asymptotic morphism \(\czeror \tensor B \asymparrow I\), and thus defines an element in \(\Egrp{B}{I}\).
In the proof of \cref{prop:ext-eth} we aim to do something virtually equal.
Note that in the classical group case this goes back to a fundamental theorem of Kasparov, see \cite{Kasparov-1988}*{Theorem 1.4}.
Thus, the following lemma is the key technical point we will need: it gives us the adequate notion of \emph{quasi-central approximate unit of \(\CI \subseteq \CA\)}.

\begin{lemma} \label{lemma:quasi-central-approx-unit}
  Suppose \(\CI \subseteq \CA\); the \cstar{}algebra \(A\) is separable and \(I \subseteq A\) is an ideal.
  Then there is a norm-continuous family of positive contractions \(\{u_{r}\}_{r \in [1,\infty)} \subseteq I\) such that:
  \begin{enumerate}
    \item \label{lemma:quasi-central-approx-unit:i} \(\norm{u_{r} i - i} \to 0\) when \(r \to \infty\) for all \(i \in I\); 
    \item \label{lemma:quasi-central-approx-unit:ii} \(\norm{u_{r} a_s - a_s u_{r}} \to 0\) when \(r \to \infty\) for all \(a_s \in A_s\); and
    \item \label{lemma:quasi-central-approx-unit:iii} \(\norm{f(u_r) a_s - a_s f(u_r)} \to 0\) for all \(f \in \czeror\).
  \end{enumerate}
\end{lemma}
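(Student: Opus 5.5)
The plan is to reduce to the classical existence of quasi-central approximate units (Arveson, Akemann–Pedersen) inside a single separable \cstar{}algebra that contains every fiber \(A_s\). The natural candidate is \(\maxalg{\CA}\). By the proposition recalled at the start of the subsection on cross-sectional algebras, each \(A_s\) embeds isometrically there, and \(\algalg{\CA}\) is dense. Since the Fell bundle is separable and \(S\) is quasi-countable, \(\maxalg{\CA}\) is separable: every \(A_s\) with \(s = ce\) lies in \(A_c\) for some \(c\) in the countable set \(\CCC\), so countably many separable fibers span a dense subspace. Also \(I \subseteq A \subseteq \maxalg{\CA}\), via the embedding of \cref{lemma:approx-unit}'s map \(\pi_1\).

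The key observation is that we only need \(u_r\) to asymptotically commute with each \(a_s\), viewed as an element of \(\maxalg{\CA}\). The standard argument only requires \(u_r\) to be an approximate unit for \(I\) that asymptotically commutes with a separable set of operators normalising \(I\), namely \(\bigcup_s A_s\). Here I would check that \(A_s I \subseteq I_s\) and \(I A_s \subseteq I_s\), where \(I_s = \iota(I_s)\) is meant to be \(A_s I\) by saturation (this is where \(I\) being an ideal and \(\CI \subseteq \CA\) being a sub-bundle enter). So \(\maxalg{\CA}\) contains \(I\) as a hereditary subalgebra, and the derivations \([a_s,\cdot]\) map \(I\) into \(I\)-related spaces; the Arveson convex-combination argument then goes through. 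Concretely:

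\begin{enumerate}
  \item Choose a countable dense sequence \(\{a_n\}\) in \(\bigcup_{c\in\CCC} A_c\), and an increasing sequential approximate unit \(\{e_k\}\) of \(I\).
  \item Run the usual argument: the commutators \([e_k, a_n]\) tend to \(0\) weakly, because \(e_k \to 1_I\) strictly in \(I^{**}\) and \(1_I\) commutes with each \(a_n\), since \(I\) is \(\CA\)-invariant, as in the proof of \cref{lemma:artifice-d0-fell-bundle}. Hahn–Banach then turns weak convergence into norm convergence of convex combinations, giving a sequence \(v_k \in I\) satisfying \ref{lemma:quasi-central-approx-unit:i} and \ref{lemma:quasi-central-approx-unit:ii} on the \(a_n\).
  \item Interpolate linearly, \(u_r = (k+1-r)v_k + (r-k)v_{k+1}\), to get a norm-continuous family of positive contractions. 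Density and uniform boundedness then give \ref{lemma:quasi-central-approx-unit:ii} for all \(a_s\).
\end{enumerate}

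Item \ref{lemma:quasi-central-approx-unit:iii} is standard. For \(f(t)\) a polynomial vanishing at \(0\), it follows from \ref{lemma:quasi-central-approx-unit:ii} by the Leibniz expansion \([u^n,a]=\sum u^j[u,a]u^{n-1-j}\). For general \(f \in \czeror\) one approximates uniformly on \([0,1]\), since the spectrum of \(u_r\) lies there, and applies functional calculus.

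The main obstacle is step 2: justifying that \(1_I \in A^{**}\) commutes with every \(a_s\) in \(\maxalg{\CA}^{**}\). This amounts to \(A_s I = I A_s\), which is an invariance property of \(I\) with respect to the Fell bundle. I would try to derive it from the fact that \(\CI\) is a sub-Fell bundle with \(I_s = A_s I = I A_s\), coming from saturation of \(\CI\) and \(I_s I_s^* I_s = I_s\). If this fails, a fallback is to avoid \(1_I\) and directly show \(\|[e_k,a_s]x\| \to 0\) for \(x\) in \(I\) together with positivity estimates.
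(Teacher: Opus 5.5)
Your proposal is correct and is essentially the paper's argument. The paper invokes Arveson's theorem for the ideal \(\maxalg[D_0]{\CI}\subseteq\maxalg[D_0]{\CA}\) and notes, via \cref{lemma:approx-unit}, that the quasi-central convex combinations can be drawn from an approximate unit of \(I\); you instead run Arveson's Hahn--Banach argument by hand in \(\maxalg{\CA}\), replacing centrality of that ideal's support projection by a direct check that \(1_I\) commutes with each \(A_s\). Working with \(D_0=\emptyset\) even has the small advantage that the estimates hold in the honest fibre norms rather than in a \(D_0\)-quotient.

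Be aware that, exactly as in the paper, this check needs \(\CI\) to be an ideal of \(\CA\), i.e.\ \(I_s=A_sI=IA_s\); this holds for the kernel of \(\CA\to\CB\) in the application, but a merely saturated sub-bundle does not give it. Also, your countability assumption on \(S\) is genuinely needed to obtain a sequence, and the paper uses it tacitly as well.
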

\begin{proof}
  This is mostly an application of \cref{prop:inject-equiv-map-induces} and \cite{arveson-1977}*{Theorem 1}.
  The inclusion \(\CI \subseteq \CA\), by \cref{prop:inject-equiv-map-induces}, yields an inclusion \(\maxalg[D_0]{\CI} \subseteq \maxalg[D_0]{\CA}\) of an ideal in a \cstar{}algebra.
  Using \cite{arveson-1977}*{Theorem 1} we obtain an increasing sequence \(\{u_k\}_{k \in \NN} \subseteq \maxalg[D_0]{\CI}\) of positive contractions such that \(u_k i_s - i_s \to 0\) for all \(i_s \in I_s\); and \(\norm{u_k a_s - a_s u_k} \to 0\) for all \(a_s \in A_s\).
  It hence suffices to prove that \(u_k\) may be contained in the \cstar{}algebra \(I_{D_0}\), which is (by definition) the image of the canonical \Star{}homomorphism \(I \to \maxalg[D_0]{\CI}\).
  The key observation here is that the proof of \cite{arveson-1977}*{Theorem 1} actually shows that \(\{u_k\}_{k \in \NN}\) may be chosen to be contained in the convex hull of an approximate unit (without assuming quasi-centrality) of \(\maxalg[D_0]{\CI}\).
  As \(I_{D_0}\) is convex (it is a \cstar{}algebra), it follows from \cref{lemma:approx-unit} that \(\{u_k\}_{k \in \NN}\) will be completing the proof of the first two items.
  Item \cref{lemma:quasi-central-approx-unit:iii}, in turn, is a consequence of \cref{lemma:quasi-central-approx-unit:ii} and the Stone-Weierstrass theorem.
\end{proof}

\begin{proof}[Proof of \cref{prop:ext-eth}]
  Given \(0 \to \CI \to \CA \to \CB \to 0\) as in the statement, choose an approximate unit \(\{u_{r}\}_{r \in [1,\infty)}\) of \(I\) as in \cref{lemma:quasi-central-approx-unit}.
  As hinted at in the discussion around \eqref{eq:central-invariant} we aim to define a central invariant.
  For this, take any (set-theoretic only) sections \(\zeta_s \colon B_s \to A_s\) for all \(s \in S\); and define the tuple
  \begin{align*}
    \varphi \colon \czeror \odot \CB & \asymparrow \CI, \\
    \varphi_{s,r}\left(f \tensor b_s\right) & \coloneqq f\left(u_{r}\right) \zeta_{s}\left(b_s\right).
  \end{align*}
  We claim that \(\varphi = (\varphi_{s,r})_{s \in S, r \in [1,\infty)}\) defines an asymptotic morphism in the sense of \cref{def:morphism}.
  First note that this essentially suffices: by the universal property of \(\czeror \tensor \CB\) and \cref{prop:inject-equiv-map-induces} we will have that \(\varphi\) extends to a \Star{}homomorphism \(\czeror \tensor \maxalg[D_0]{\CB} \to \maxalg[D_0]{\CI}_\fc\).
  This, as is well known, is just an asymptotic morphism \(\czeror \tensor \maxalg[D_0]{\CB} \asymparrow \maxalg[D_0]{\CI}\), so that it is indeed enough to work within the algebraic tensor product only.
  
  Let \(f \tensor b_s \in \czeror \odot B_s\) be given.
  We start by noticing that \(\varphi_{s,r}(f \tensor b_s) \in I \cdot A_s \subseteq I_s\) is in the correct fiber.
  Moreover, for two pairs of basic tensors \(f_i \tensor b_{i} \in \czeror \tensor B_{s_i}\) we have that
  \[
    \varphi_{r,s_1s_2}\left(f_1 \tensor b_1 \cdot f_2 \tensor b_2\right) = f_1 \left(u_r\right) f_2\left(u_r\right) \zeta_{s_1s_2} \left(b_1b_2\right)
  \]
  whereas
  \[
    \varphi_{r,s_1}\left(f_1 \tensor b_1\right) \varphi_{r,s_2}\left(f_2 \tensor b_2\right) = f_1\left(u_r\right) \zeta_{s_1}\left(b_1\right) f_2\left(u_r\right) \zeta_{s_2}\left(b_2\right) \approx f_1\left(u_r\right) f_2\left(u_r\right) \zeta_{s_1}\left(b_1\right) \zeta_{s_2}\left(b_2\right),
  \]
  where the asymptotic equivalence in the middle is due to \cref{lemma:quasi-central-approx-unit}~\cref{lemma:quasi-central-approx-unit:iii}.
  Taking into account the assumption that \(\zeta_{s} \colon B_s \to A_s\) is a section of the quotient map \(A_s \to B_s\) for all \(s \in S\), we may notice that \(x \coloneqq \zeta_{s_1s_2}(b_1 b_2) - \zeta_{s_1}(b_1) \zeta_{s_2}(b_2) \in I_{s_1s_2}\).
  Whence, applying \cref{lemma:quasi-central-approx-unit}~\cref{lemma:quasi-central-approx-unit:i}:
  \[
    \varphi_{r,s_1s_2}\left(f_1 \tensor b_1 \cdot f_2 \tensor b_2\right) - \varphi_{r,s_1}\left(f_1 \tensor b_1\right) \varphi_{r,s_2}\left(f_2 \tensor b_2\right) \approx \left(f_1f_2\right)\left(u_r\right) x \to 0.
  \]
  The second bullet point of \cref{def:morphism} is proven similarly.
  For it, let \(f \tensor b_s \in \czeror \tensor B_s\), where \(s \leq t\), so that we may see \(f \tensor b_s \in \czeror \tensor B_s \subseteq \czeror \tensor B_t\) in the usual way.
  We have that \(\varphi_{r,s}(f \tensor b_s) = f(u_r) \zeta_s(b_s)\), and similarly for \(t\), whence
  \[
    \varphi_{r,s}\left(f \tensor b_s\right) - \varphi_{r,t}\left(f \tensor b_s\right) \approx f\left(u_r\right) \left(\zeta_s\left(b_s\right) - \zeta_t\left(b_s\right)\right).
  \]
  As before, whichever the value of \(\zeta_t(b_s) \in A_s\) is, it has to map to \(b_s\) under \(A_s \to B_s\), and thus \(\zeta_s(b_s) - \zeta_t(b_s) \in I_s\), whence the second bullet of \cref{def:morphism} again follows from \cref{lemma:quasi-central-approx-unit}~\cref{lemma:quasi-central-approx-unit:i}.
  
  Now that we have an induced asymptotic morphism \(\varphi_\eta \colon \czeror \tensor \CB \asymparrow \CI\) the rest of the proof is just applying \cref{prop:inject-equiv-map-induces} to get the extension and use that all the constructions are functorial.
\end{proof}

We end the subsection with a brief discussion of why we only ever discuss \Eth-theory, as opposed to \KKth-theory.

\begin{remark} \label{rem:why-not-kk}
  Were we to attempt to define an equivariant-\KKth-category then we would typically require extensions \(0 \to \CI \to \CA \to \CB \to 0\) to induce elements in some hypothetical \(\KKgrp[S,D_0]{\CB}{\CI}\) only if the quotient \(\CA \to \CB\) admits an equivariant \cpc split \(\sigma \colon \CB \to \CA\).
  This is done, for instance, for groups, see \cite{higson-kasparov-2000}*{Section 8}, and should be compared to \cref{prop:ext-eth}.
  This is asking too much.
  For instance, even if \(\CB\) arose from a proper, \'etale groupoid and the arguments in \cite{higson-kasparov-2000}*{Theorem 8.1}  were replicated, the splitting \(\sigma \colon \CB \to \CA\), in general, would only land in \(\CA^{**}\) (or, rather, in the Borel \cstar{}algebra \(\borelalg{A}\) described before \cref{thm:ess-alg-is-jd}, or even in the algebra constructed in \cite{brix-gonzalez-hume-li-2025}).
  Roughly speaking \(\CA\) does not admit sufficiently many discontinuities.
  (This was, in fact, one of the main problems of the approach in \cite{tu-baum-connes-2004}, see also the comments about \cite{renault-2013} in \cite{buss-martinez-ess-ame-2024}.)
\end{remark}

\subsection{The assembly map}
With the composition product and the previous discussion at hand we may almost construct the assembly map.
Before doing so, for the rest of the section if \(S \actson Y\) is an action we will assume that \(Y\) is a locally compact, \(S\)-compact, \(D_0\)-proper, \emph{Hausdorff} space.

Given \(v_1 \in \Egrp[S,D_0]{\cfunz{Y_1}}{\CB}\) and \(v_2 \in \Egrp[S,D_0]{\cfunz{Y_2}}{\CB}\) two elements in their own \(D_0\)-proper equivariant \Eth-theories, we say \(v_1 \sim_{D_0} v_2\) if there is some \(D_0\)-proper \(S\)-space \(Z\) and equivariant maps \(\varphi_1 \colon Y_1 \to Z\) and \(\varphi_2 \colon Y_2 \to Z\) such that \(\widetilde{\varphi_{1}} v_1 = \widetilde{\varphi_{2}} v_2\), where \(\widetilde{\varphi_i} \colon \cfunz{Z} \to \cfunz{Y}\) is the Gelfand dual of \(\varphi_i\).
Notice that \(\widetilde{\varphi_i}\) is automatically equivariant due to \(\varphi_i\) being equivariant.
Likewise, we see
\[
  \widetilde{\varphi_1} v_1 \in \Egrp[S,D_0]{\cfunz{Z}}{\cfunz{Y}} \cdot \Egrp[S,D_0]{\cfunz{Y}}{\CB} \subseteq \Egrp[S,D_0]{\cfunz{Z}}{\CB}
\]
as elements in \(\Egrp[S,D_0]{\cfunz{Z}}{\CB}\) via the composition product of \cref{prop:eth-composition}.
In particular it follows that \(\sim_{D_0}\) is a sensible notion.
It in fact even is an equivalence relation.
We are thankful to Ali Miller for fruitful conversations on the following proof.

\begin{lemma}
  With notation as above, \(\sim_{D_0}\) defines an equivalence relation.
\end{lemma}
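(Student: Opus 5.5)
Reflexivity and symmetry are immediate from the shape of the definition, so the work is transitivity. For reflexivity, given \(v \in \Egrp[S,D_0]{\cfunz{Y}}{\CB}\), I take \(Z \coloneqq Y\) and \(\varphi_1 = \varphi_2 = \id[Y]\); the Gelfand dual is the identity and \(\widetilde{\id}\, v = v\). Symmetry holds because the condition \(\widetilde{\varphi_1} v_1 = \widetilde{\varphi_2} v_2\) is symmetric in the indices.

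For transitivity, suppose \(v_1 \sim_{D_0} v_2\) via \(Y_1 \xrightarrow{\varphi_1} Z \xleftarrow{\varphi_2} Y_2\), and \(v_2 \sim_{D_0} v_3\) via \(Y_2 \xrightarrow{\psi_2} Z' \xleftarrow{\psi_3} Y_3\). I would form the pushout-type space
\[
  W \coloneqq \left(Z \sqcup Z'\right)/\left(\varphi_2(y) \sim \psi_2(y),\ y \in Y_2\right)
\]
and let \(\alpha \colon Z \to W\) and \(\beta \colon Z' \to W\) be the canonical maps. Then \(\alpha \varphi_2 = \beta \psi_2\), so by functoriality of Gelfand duality and associativity of the composition product of \cref{prop:eth-composition},
\[
  \widetilde{\alpha\varphi_1} v_1 = \widetilde{\alpha}\widetilde{\varphi_1} v_1 = \widetilde{\alpha}\widetilde{\varphi_2} v_2 = \widetilde{\beta}\widetilde{\psi_2} v_2 = \widetilde{\beta}\widetilde{\psi_3} v_3 = \widetilde{\beta\psi_3} v_3 .
\]
Here I am using that the product by a \Star{}homomorphism class is just precomposition, and that \(\widetilde{\gamma \circ \delta} = \widetilde{\delta}\circ\widetilde{\gamma}\).

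The main obstacle is that \(W\) must be an admissible \(S\)-space, meaning locally compact, Hausdorff, \(S\)-compact and \(D_0\)-proper. A naive pushout along a non-closed or non-proper map can fail to be Hausdorff. To avoid this, my first attempt is to use the smaller space \(W \coloneqq Z \times Z'\), with the diagonal-type action \(s(z,z') = (sz, sz')\) on \(s^*s \times s^*s\). I then need equivariant maps from \(Y_1\) and \(Y_3\) into it, which means passing instead to a common refinement. Concretely, I would replace \(W\) by the closure in \(Z\times Z'\) of the \(S\)-saturation of \((\varphi_2,\psi_2)(Y_2)\), and enlarge it by mapping cylinders of \(\varphi_1\) and \(\psi_3\). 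That version gets clumsy.

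The cleaner route, which I would try first, is the standard Baum–Connes trick: equivariant maps between proper spaces are homotopic to one another whenever the target is "universal enough". So I would choose \(W \coloneqq Z \ast Z'\), the join-type space of pairs \((t z, (1-t) z')\), with \(S\) acting diagonally. Both \(Z\) and \(Z'\) embed equivariantly in \(W\), and the two composites \(Y_2 \to W\) are equivariantly homotopic via the straight-line homotopy \(t \mapsto (t\varphi_2(y), (1-t)\psi_2(y))\). Homotopy invariance of \(\Egrp[S,D_0]{\variable}{\CB}\) then gives \(\widetilde{\alpha}\widetilde{\varphi_2} v_2 = \widetilde{\beta}\widetilde{\psi_2}v_2\).

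What remains to verify for \(W\) is the following. Hausdorffness and local compactness follow from those of \(Z\) and \(Z'\); one should use the compactly supported version of the join, or restrict to a saturation of a compact set. \(S\)-compactness follows because \(Z\) and \(Z'\) are \(S\)-compact. \(D_0\)-properness needs a check: a compact set in \(W\) projects to compacts in \(Z\) and \(Z'\), and the finite upper bounds for \(Z\) and \(Z'\) combine, via products \(e\,t\,t'\) on the common domain, to a finite upper bound for \(W\), including the closure condition \(\closure{(se)^*se}\subseteq t^*t\). This properness check is the step I expect to need the most care.
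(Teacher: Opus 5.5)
Your reflexivity and symmetry arguments, and the formal chain of equalities once \(\alpha\varphi_2 = \beta\psi_2\) is available, are the paper's. The paper proves transitivity with exactly the space you wrote down first and then discarded, \(W = (Z \sqcup Z')/\sim_{Y_2}\). It handles your Hausdorffness worry through properness of \(\varphi_2\) and \(\psi_2\): if \(\varphi_2(y_\lambda)\) and \(\psi_2(y_\lambda)\) converge, then \(y_\lambda\) subconverges, so the identification is closed. Because \(\alpha\varphi_2 = \beta\psi_2\) holds on the nose, no homotopy invariance is needed. The paper's check is brief: it leaves the \(S\)-action on \(W\), its \(S\)-compactness and its \(D_0\)-properness implicit. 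So your admissibility concern is legitimate, but the pushout is the right object.

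The genuine gap is in the route you actually commit to: the join \(Z * Z'\) with the diagonal action is not an admissible \(S\)-space.
\begin{itemize}
\item It is not locally compact at the copy of \(Z\) when \(Z'\) is non-compact. Already \(\{\mathrm{pt}\} * \RR\), the cone on \(\RR\), fails at its apex. So local compactness does not ``follow from that of \(Z\) and \(Z'\)''.
\item It is not \(S\)-compact either. For \(\ZZ\) translating \(Z = Z' = \RR\), the closed invariant slice \(t = 1/2\) is \(\RR^2\) with diagonal translation, and its orbit space is non-compact.
\item Most seriously for inverse semigroups, the diagonal maps are not partial homeomorphisms between open sets. Whenever \(Z'_{s^*s} \neq Z'\), every neighbourhood in \(Z * Z'\) of a point \(z \in Z_{s^*s}\) contains points \([w, w', t]\) with \(t < 1\) and \(w' \notin Z'_{s^*s}\), where \(s\) is undefined. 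Hence \(Z \hookrightarrow Z * Z'\) is not a map of \(S\)-spaces, and your straight-line homotopy does not take place in an \(S\)-space at all.
\end{itemize}
Your suggested patches (``compactly supported join'', ``saturation of a compact set'') and the deferred properness check do not address these points.

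To repair the argument, either return to the pushout, or, if you want the homotopy version, keep only what the straight-line homotopy sweeps out. That is the double mapping cylinder \(Z \cup_{\varphi_2} (Y_2 \times [0,1]) \cup_{\psi_2} Z'\). For proper \(\varphi_2, \psi_2\) it is locally compact Hausdorff and \(S\)-compact. \(S\) acts piecewise on it (on \(Y_2 \times [0,1]\) through the first coordinate) with open domains. This uses \(\varphi_2^{-1}(Z_e) = (Y_2)_e\), which you need anyway for \(\widetilde{\varphi_2}\) to be a morphism of Fell bundles. The two composites \(Y_2 \to W\) are equivariantly homotopic in it. Its \(D_0\)-properness still has to be checked, just as for the paper's pushout.
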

\begin{proof}
  Reflexivity of \(\sim_{D_0}\) is clear, as we only consider \emph{\(D_0\)-proper} equivariant \Eth-theories.
  Likewise, symmetry of \(\sim_{D_0}\) is also clear.
  In order to prove transitivity, suppose that \(v_i \in \Egrp[S,D_0]{\cfunz{Y_i}}{\CB}\), where \(i = 1, 2, 3\), and \(v_1 \sim_{D_0} v_2 \sim_{D_0} v_3\).
  Let these relations be witnessed by \(D_0\)-proper spaces \(Z_1\) and \(Z_2\) and equivariant maps \(\varphi_1 \colon Y_1 \to Z_1, \varphi_2 \colon Y_2 \to Z_1\) and \(\psi_1 \colon Y_2 \to Z_2, \psi_2 \colon Y_3 \to Z_2\).
  Our aim is to construct an \(S\)-space \(W\) that is \(S\)-compact, \(D_0\)-proper and fits into the following commuting diagram:
  \begin{equation*}
    \begin{tikzcd}[scale=50em]
      Y_1 \arrow{r}{\varphi_1} & Z_1 \arrow{r}{\rho} & W \\
      & Y_2 \arrow{u}{\varphi_2} \arrow{r}{\psi_1} & Z_2 \arrow{u}{\sigma} \\
      & & Y_3 \arrow{u}{\psi_2},
    \end{tikzcd}
  \end{equation*}
  for some equivariant maps \(\rho\) and \(\sigma\).
  Should we manage to construct such a \(W\) then:
  \[
    \widetilde{\rho \varphi_1} v_1 = \widetilde{\rho} \cdot \widetilde{\varphi_1} v_1 = \widetilde{\rho} \cdot \widetilde{\varphi_2} v_2 = \widetilde{\rho \varphi_2} v_2 = \widetilde{\sigma \psi_1} v_2 = \widetilde{\sigma} \cdot \widetilde{\psi_1} v_2 = \widetilde{\sigma} \cdot \widetilde{\psi_2} v_3 = \widetilde{\sigma \psi_2} v_3, 
  \]
  where the commutation of the diagram is precisely used in order to guarantee that \(\rho \varphi_2 = \psi_1 \sigma\).
  Given \(z_1 \in Z_1\) and \(z_2 \in Z_2\) say that \(z_1 \sim_{Y_2} z_2\) if there is some \(y_2 \in Y_2\) such that \(\varphi_2(y_2) = z_1\) and \(\psi_1(y_2) = z_2\). 
  Write \(W \coloneqq Z_1 \sqcup Z_2/\sim_{Y_2}\), which we see as the disjoint union of \(Z_1\) and \(Z_2\) over \(Y_2\).
  One can see that \(\sim_{Y_2}\) defines an equivalence relation (or, rather, its trivial extension where \(z_i \sim_{Y_2} z_i\) for all \(z_i \in Z_i\)), and thus indeed the quotient \(W\) makes sense as a topological space.
  Notice, furthermore, that it is locally compact and Hausdorff, for \(\sim_{Y_2}\) is closed: if \(\varphi_2(y_\lambda) = z^{(1)}_\lambda \sim_{Y_2} z_\lambda^{(2)} = \psi_1(y_\lambda)\) each converge to \(z^{(1)}\) and \(z^{(2)}\) respectively, it follows from the properness of the maps \(\varphi_2\) and \(\psi_1\) that (passing to a subnet if necessary) \(y_\lambda \to y\), and from continuity: \(z^{(1)} = \varphi_2(y)\) and \(z^{(2)} = \psi_1(y)\), proving that \(\sim_{Y_2}\) is closed.
  The canonical maps \(\rho \colon Z_1 \subseteq Z_1 \sqcup Z_2 \to W\) and \(\sigma \colon Z_2 \subseteq Z_1 \sqcup Z_2 \to W\) then make the diagram above commute, finishing the proof.
\end{proof}

\begin{definition} \label{def:etop}
  Let \(\CB = (B_s)_{s \in S}\) be a Fell bundle over an \((S,X)\)-\cstar{}algebra \(B\).
  We define \(\Etop{S, D_0}{\CB}\) to be the abelian group \(E^{\rm top}_{0}(S, D_0; \CB) \oplus E^{\rm top}_{1}(S, D_0; \CB)\), where:
  \begin{enumerate}
    \item \(E^{\rm top}_{0}(S, D_0; \CB)\) is generated by \(\Egrp[S,D_0]{\cfunz{Y}}{\CB}\), where \(Y\) is a locally compact, \(S\)-compact, \(D_0\)-proper, Hausdorff \(S\)-space, divided by the equivalence relation \(\sim_{D_0}\).
    \item \(E^{\rm top}_{1}(S, D_0; \CB) \coloneqq E^{\rm top}_{0}(S, D_0; \czeror \tensor \CB)\).
  \end{enumerate}
\end{definition}

\begin{remark}
  We define \(\Etop{S,D_0}{\CB}\) in the way we do because it is not clear whether a Fell bundle admits a ``universal space for proper actions'', see \cite{valette-2002-book}.
  In fact this is not even clear for actions \(S \actson X\).
\end{remark}

We call \(E^{\rm top}_{0}(S, D_0; \CB)\) the ``grade \(0\)'' term of \(\Etop{S,D_0}{\CB}\), whereas \(E^{\rm top}_{1}(S, D_0; \CB)\) is the ``grade \(1\)''.
They correspond, under the assembly map, precisely to \(\Kgrp[0]{\variable}\) and \(\Kgrp[1]{\variable}\) respectively.
The following constructs the assembly map we are looking for.
For it, note that if \(S \actson Y\) is \(D_0\)-proper (and \(Y\) is Hausdorff) then there is a projection \(p_Y \in \maxalg[D_0]{S \ltimes Y}\) that is, moreover, unique up to homotopy, see \cref{prop:proj-proper-action}.
\begin{theorem} \label{thm:assembly}
  \(\Etop{S, D_0}{\CB}\) is an abelian group.
  Moreover, the assignment
  \[
    \mu_{D_0, \CB} \colon \Etop{S, D_0}{\CB} \to \Kgrp{\maxalg[D_0]{\CB}} \to \Kgrp{\redalg[D_0]{\CB}}
  \]
  that is, in grade \(0\), locally defined by
  \[
    \Egrp[S,D_0]{\cfunz{Y}}{\CB} \to \Egrp{\maxalg[D_0]{S \ltimes Y}}{\maxalg[D_0]{\CB}} \to \Kgrp[0]{\maxalg[D_0]{\CB}} \to \Kgrp[0]{\redalg[D_0]{\CB}},
  \]
  where:
  \begin{enumerate}
    \item the first map is the descent functor of \cref{prop:descent};
    \item the second map is left multiplication with \([p_Y]_0 \in \Kgrp[0]{\maxalg[D_0]{S \ltimes Y}} \cong \Egrp{\CC}{\maxalg[D_0]{S \ltimes Y}}\);
    \item and the third map is the one induced by the canonical quotient map \(\maxalg[D_0]{\variable} \to \redalg[D_0]{\variable}\);
  \end{enumerate}
  defines a group homomorphism.
  The map \(\mu_{D_0, \CB}\) in grade \(1\) is defined analogously.
\end{theorem}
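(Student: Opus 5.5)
The plan is to split the statement into two parts: first, that \(\Etop{S,D_0}{\CB}\) is an abelian group; second, that \(\mu_{D_0,\CB}\) is well defined and additive on it. Grade \(1\) will follow from grade \(0\) by replacing \(\CB\) with \(\czeror \tensor \CB\) (which is again a Fell bundle by \cref{lemma:fell-bundle-a-tensor-b-trivial}) and using \(\Kgrp[1]{C} \cong \Kgrp[0]{\czeror \tensor C}\). So it suffices to treat grade \(0\).

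\textbf{Group structure.} I will realize \(E^{\rm top}_0(S,D_0;\CB)\) as the direct limit of the groups \(\Egrp[S,D_0]{\cfunz{Y}}{\CB}\). The index set is the class of locally compact, \(S\)-compact, \(D_0\)-proper, Hausdorff \(S\)-spaces, ordered by equivariant maps \(Y \to Z\); the connecting morphisms are \(v \mapsto \widetilde{\varphi} v\), which are group homomorphisms because the composition product of \cref{prop:eth-composition} is biadditive.
\begin{itemize}
  \item \emph{Biadditivity.} Direct sums are computed via \cref{lemma:k-sx}, so pre-composition with \(\widetilde{\varphi}\) commutes with \(\oplus\) up to homotopy.
  \item \emph{Directedness.} Given \(Y_1, Y_2\), the disjoint union \(Y_1 \sqcup Y_2\) is again \(S\)-compact, \(D_0\)-proper and Hausdorff, and both inclusions are equivariant. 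Hence \([v_1]+[v_2] \coloneqq [\widetilde{\iota_1}' v_1 + \widetilde{\iota_2}' v_2]\) in \(\Egrp[S,D_0]{\cfunz{Y_1 \sqcup Y_2}}{\CB}\), where \(\widetilde{\iota_j}'\) is the restriction \(\cfunz{Y_1\sqcup Y_2}\to\cfunz{Y_j}\).
  \item \emph{Compatibility with \(\sim_{D_0}\).} The pushout construction from the transitivity proof of the preceding lemma shows that this sum respects \(\sim_{D_0}\): if \(v_1 \sim_{D_0} v_1'\) via \(Z\), then \(Y_1 \sqcup Y_2\) and \(Y_1' \sqcup Y_2\) both map to \(Z \sqcup Y_2\).
\end{itemize}
Associativity, commutativity, the zero element and inverses are then inherited from the groups \(\Egrp[S,D_0]{\cfunz{Y}}{\CB}\).

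\textbf{Well-definedness of \(\mu\).} On each \(\Egrp[S,D_0]{\cfunz{Y}}{\CB}\) the local map is a composition of three homomorphisms:
\begin{itemize}
  \item descent (\cref{prop:descent}), which is additive because \(\maxalg[D_0]{\variable}\) commutes with direct sums;
  \item Kasparov product with the fixed class \([p_Y]_0\), which is biadditive;
  \item the map induced on \(\Kth\)-theory by the quotient \(\maxalg[D_0]{\variable} \to \redalg[D_0]{\variable}\).
\end{itemize}
It remains to check compatibility with the connecting maps: for an equivariant \(\varphi\colon Y\to Z\) and \(v \in \Egrp[S,D_0]{\cfunz{Y}}{\CB}\), we need \(\mu_Z(\widetilde{\varphi} v) = \mu_Y(v)\). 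Descent is a functor, so \(\mu_Z(\widetilde{\varphi}v) = [p_Z]_0 \cdot \maxalg[D_0]{\widetilde{\varphi}} \cdot {\rm descent}(v)\). The claim therefore reduces to showing that the \Star{}homomorphism \(\maxalg[D_0]{\widetilde\varphi}\colon \maxalg[D_0]{S\ltimes Z}\to\maxalg[D_0]{S\ltimes Y}\) sends \([p_Z]_0\) to \([p_Y]_0\). Additivity on the direct limit then follows from additivity on each stage together with the compatibility just established.

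\textbf{Main obstacle.} The hardest step is this naturality of the cut-off projections. The plan is to take a cut-off function \(c_Z\) for \(Z\) and show that \(c_Z\circ\varphi\) is a cut-off function for \(Y\). It is compactly supported because \(Y\) is \(S\)-compact and \(D_0\)-proper. The identity \(\sum_{[s]_y\in R_y} c_Z(\varphi(sy))^2=1\) needs the germ classes \(R_y\) and \(R_{\varphi(y)}\) to correspond. Establishing this correspondence, or at least enough of it off \(D_0\), is where I expect difficulty, since \(\varphi\) need not be injective on germs. If that fails, I would instead compare \(\widetilde\varphi(p_{c_Z})\) with \(p_{c_Z\circ\varphi}\) directly in \(B_c(S\ltimes Y)\) and use the homotopy uniqueness from \cref{prop:hom-cut-off} and \cref{prop:proj-proper-action} to conclude \([\widetilde\varphi(p_Z)]_0=[p_Y]_0\).
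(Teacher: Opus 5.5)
Your overall route is the paper's. You reduce well-definedness on \(\sim_{D_0}\)-classes to \([\widetilde\varphi(p_Z)]_0=[p_Y]_0\), and obtain that by showing \(c_Z\circ\varphi\) is a cut-off function for \(Y\) and invoking \cref{prop:hom-cut-off}. Your direct-limit bookkeeping for the group structure is more explicit than the paper, which takes it as part of \cref{def:etop}, and it is fine.

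The gap is that you leave open exactly the step you call the main obstacle, and your fallback does not close it. As a function on \(S\ltimes Y\), \(\widetilde\varphi(p_{c_Z})\) is literally \([s,y]\mapsto c_Z(s\varphi(y))\,c_Z(\varphi(y))=p_{c_Z\circ\varphi}([s,y])\). Moreover, \cref{prop:hom-cut-off} only gives homotopy uniqueness among genuine cut-off functions. So comparing the two projections still requires the normalization you have not established.

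The missing observation is that this normalization is automatic. The relation \(\sim_x\) and the set \(\{s\in S : x\in s^*s\}\) depend only on which idempotent domains contain \(x\). For \(\varphi(sy)=s\varphi(y)\) to make sense, and for the Gelfand dual \(\widetilde\varphi\) to map \(\contz{Z_{ss^*}}\) into \(\contz{Y_{ss^*}}\), equivariance must mean \(Y_e=\varphi^{-1}(Z_e)\) for all \(e\in E\). Hence \(y\in s^*s\) if and only if \(\varphi(y)\in s^*s\), and \(s\sim_y t\) if and only if \(s\sim_{\varphi(y)}t\). So \([s]_y\mapsto[s]_{\varphi(y)}\) is a bijection \(R_y\to R_{\varphi(y)}\), whether or not \(\varphi\) is injective, and
\[
  \sum_{[s]_y\in R_y} c_Z\left(\varphi(sy)\right)^2=\sum_{[s]_{\varphi(y)}\in R_{\varphi(y)}} c_Z\left(s\varphi(y)\right)^2=1.
\]

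Conversely, the paper treats compact support as the only non-trivial point, and there your justification cites the wrong hypothesis. The set \(\supp{c_Z\circ\varphi}\subseteq\varphi^{-1}(\supp{c_Z})\) is relatively compact because \(\varphi\) is proper. That is where properness of the target \(Z\) enters, together with \(S\)-compactness of \(Y\). Properness of the action on \(Y\) alone does not control this preimage.
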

\begin{proof}
  All we have to do is show that the map thus locally defined is well defined.
  Indeed, if it is then the desired composition is a group morphism by the virtue of the product in \Eth-theory being defining a homomorphism and \cref{prop:descent}.
  Suppose thus that \(v_i \in \Egrp[S,D_0]{\cfunz{Y_i}}{\CB}\), where \(i = 1, 2\), are such that \(v_1 \sim_{D_0} v_2\).
  Let \(\varphi_1 \colon Y_1 \to Z\) and \(\varphi_2 \colon Y_2 \to Z\) be \(D_0\)-equivariant maps to a common \(Z\).
  Let \(c \colon Z \to [0,1]\) be a cut-off function for \(Z\), see \cref{prop:cut-off-functions}.
  The Gelfand duals \(\widetilde{\varphi_1}\) and \(\widetilde{\varphi_2}\) are then equivariant, and by \cref{prop:inject-equiv-map-induces} they both induce maps on the respective \cstar{}algebras \(\maxalg[D_0]{\variable}\), which we denote by the same name by abuse of notation.
  By \cref{prop:hom-cut-off} it suffices to show that \(\widetilde{\varphi_i}(p_c)\) is a projection arising from a cut-off function of \(Y_i\), since all such projections are homotopic (and thus define the same maps on \Kth-theory).

  Dropping the subscript \(i = 1, 2\) (as the proofs are the same), it suffices to prove that the pull back \(c \varphi\) is a cut-off function for \(Y\).
  The only non-trivial feature to check is that the support of \(c \varphi\) has support contained in a compact set, which holds due to the same being true for \(c\) and \(\varphi\) being proper.
\end{proof}

\begin{definition}
  If \(\CB\) is a Fell bundle over an \((S,X)\)-algebra \(B\) and \(D_0 \subseteq X\) is invariant, we let
  \[
    \mu_{D_0, \CB} \colon \Etop{S,D_0}{\CB} \to \Kgrp{\redalg[D_0]{\CB}}
  \]
  be the map defined in \cref{thm:assembly}. We call it the \emph{Baum-Connes assembly map} for the tuple \((S, X, D_0, \CB)\).
\end{definition}

Note that it follows from \cref{thm:assembly} that the assembly map is always a group homomorphism.
Whether or not it is an isomorphism for a certain class of actions \(S \actson X\) will be studied elsewhere.
For now we observe two features.
First, it was proved in \cites{higson-lafforgue-skand-2002-counter-bc,martinez-szakacs-2025} that \(\mu_{D_0, \CB}\) cannot always be surjective.
Secondly, the following translates to equivariant \Eth-theory and assembly maps the fact that if \(D_0 \subseteq D_1\) then \(\redalg[D_0]{\CB} \to \redalg[D_1]{\CB}\) is a canonical quotient of \cstar{}algebras.
This is a naturality notion that is somehow intrinsic to inverse semigroup actions.

\begin{corollary} \label{cor:diagram-eth}
  Let \(\CB\) be a Fell bundle over an \((S,X)\)-algebra \(B\) and let \(D_0 \subseteq X\) be invariant.
  The following diagram commutes:
  \begin{equation*}
    \begin{tikzcd}[scale=50em]
      \Etop{S,\emptyset}{\CB} \arrow{r}{\mu_{\emptyset, \CB}} &[2.2em] \Kgrp{\redalg{\CB}} \arrow{d}{} \\
      \Etop{S,D_0}{\CB} \arrow{u}{} \arrow{r}{\mu_{D_0, \CB}} &[2.2em] \Kgrp{\redalg[D_0]{\CB}}.
    \end{tikzcd}
  \end{equation*}
\end{corollary}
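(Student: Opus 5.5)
The diagram is an instance of naturality in the ideal $D_0$. The vertical right arrow is induced by the canonical quotient $\redalg{\CB} \to \redalg[D_0]{\CB}$. It exists because $\emptyset \subseteq D_0$ gives $J_\emptyset = 0 \subseteq J_{D_0}$, so the nucleus $\{a : \redcondexp(a^*a) = 0\}$ sits inside $\{a : \redcondexp(a^*a) \in J_{D_0}\}$. The same inclusion of nuclei yields a compatible quotient $q_{\full} \colon \maxalg{\CB} \to \maxalg[D_0]{\CB}$, by \cref{lemma:alt-desc-max-dzero}.

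The left vertical arrow $\Etop{S,D_0}{\CB} \to \Etop{S,\emptyset}{\CB}$ has to be defined before anything can be checked. I would define it on generators as follows. Given a $D_0$-proper, $S$-compact Hausdorff $S$-space $Y$ and $v \in \Egrp[S,D_0]{\cfunz{Y}}{\CB}$, send $v$ to the class obtained by regarding the same $D_0$-equivariant asymptotic morphism as an element of the theory attached to $\emptyset$. The intended meaning is that a representative $\varphi$ factors asymptotically through the $D_0$-quotients, and one lifts it. Concretely, I would check at the level of descents. By \cref{prop:inject-equiv-map-induces}, applied with $\emptyset$ and with $D_0$, the descents fit into a commuting square with the quotient maps $q$ on both sides, that is, $q_{\CB}\circ \maxalg{\varphi} = \maxalg[D_0]{\varphi}\circ q_{Y}$. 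This holds because both composites are determined on the dense algebra $\algalg{\cfunz{Y}}$, where they agree fibrewise via $a_s \mapsto \varphi_s(a_s)$. I must also check that this arrow respects $\sim_{D_0}$, which should follow from compatibility with composition with Gelfand duals $\widetilde{\rho}$, since these are genuine equivariant morphisms, and from \cref{prop:eth-composition}.

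With that in place, chase a generator $v$ around the diagram. Going up and then right gives $\Lambda_*\big(\maxalg{\varphi}_*[p_Y]\big)$ followed by the quotient to $\redalg[D_0]{\CB}$. Going right directly gives $\Lambda_{D_0,*}\big(\maxalg[D_0]{\varphi}_*[p_Y^{D_0}]\big)$. The needed identities are the following. First, $q_Y(p_Y) = p_Y^{D_0}$ up to homotopy in $\maxalg[D_0]{S\ltimes Y}$. This holds because $p_c$ is given by the same formula $c(sx)c(x)$, restricted to $x \notin D_0$, and any two cut-off projections are homotopic by \cref{prop:hom-cut-off}. Second, the descent square above commutes. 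Third, the max-to-reduced quotients commute with the $D_0$-quotients, which is immediate on $\algalg{\CB}$. Functoriality of $K$-theory and of the $E$-theory product, specifically that pairing with $[p]_0 \in \Egrp{\CC}{\cdot}$ is natural under $*$-homomorphisms, then shows that both paths agree. Grade $1$ follows by replacing $\CB$ with $\czeror\tensor\CB$ and using \cref{prop:trivial-extensions-maxalg}.

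The main obstacle is the left arrow itself. A $D_0$-proper space need not be $\emptyset$-proper, so $p_Y$ may live only in $\maxalg[D_0]{S\ltimes Y}$ and not in $\maxalg{S\ltimes Y}$. I would first try to interpret the upward arrow as the pairing through $p_Y^{D_0}$, pulled back along the descent. Failing that, I would restrict to representatives that are also $\emptyset$-equivariant. Everything else is bookkeeping on dense subalgebras.
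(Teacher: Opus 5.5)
You correctly locate the obstruction in your last paragraph, but it is fatal to the construction as you describe it. So the proposal has a genuine gap: the left vertical arrow is never actually defined.

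By \cref{def:etop}, \(\Etop{S,\emptyset}{\CB}\) is generated by the groups \(\Egrp[S,\emptyset]{\cfunz{Y}}{\CB}\) with \(Y\) proper. By contrast, \(\Etop{S,D_0}{\CB}\) has generators indexed by spaces \(Y\) that are \(D_0\)-proper but not proper; see \cref{ex:0-1-Gamma} with \(D_0=\{1\}\). For such a generator, ``the same morphism regarded in the \(\emptyset\)-theory'' is simply not an element of \(\Etop{S,\emptyset}{\CB}\). Correspondingly, \(p_Y\) need not exist in \(\maxalg{S\ltimes Y}\).

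The same defect breaks your well-definedness check. The relation \(v_1\sim_{D_0}v_2\) is witnessed by a space \(Z\) that is only \(D_0\)-proper, so composing with Gelfand duals never produces the proper witness that \(\sim_{\emptyset}\) demands. You also tacitly identify the two stabilizations \(\CK_{S,D_0}\) and \(\CK_{S,\emptyset}\). They differ, since \(\CH_{S,D_0}\) is only an invariant subspace of \(\CH_{S,\emptyset}\).

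Neither fallback repairs this.
\begin{enumerate}
\item Pairing with \(p_Y^{D_0}\) produces a class in \(\Kgrp{\maxalg[D_0]{\CB}}\), not an element of \(\Etop{S,\emptyset}{\CB}\). Such classes also do not canonically lift along \(\maxalg{\CB}\to\maxalg[D_0]{\CB}\).
\item Restricting to \(\emptyset\)-equivariant representatives does not touch the real obstruction. By item (i) of \cref{prop:inject-equiv-map-induces}, every asymptotic morphism already descends to \(\maxalg{\variable}\). What fails is properness of \(Y\) and \(Z\), not equivariance of \(\varphi\).
\end{enumerate}

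The paper does not supply this construction either. The corollary carries no proof. The remark after it justifies the arrow by three facts: ``\(D_0\)-equivariant asymptotic morphisms are asymptotic morphisms'', \(v_1\sim_\emptyset v_2\Rightarrow v_1\sim_{D_0}v_2\), and ``proper spaces are \(D_0\)-proper''. The last two are exactly the implications needed for an arrow \(\Etop{S,\emptyset}{\CB}\to\Etop{S,D_0}{\CB}\), not for the one drawn. The converses you would need are unavailable; ``\(D_0\)-proper \(\Rightarrow\) proper'' fails by \cref{ex:0-1-Gamma}.

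So the statement must be modified before it can be proved. One option is to let the lower-left corner be generated only by \(\Egrp[S,D_0]{\cfunz{Y}}{\CB}\) with \(Y\) proper, to use proper witnesses for the relation, and to fix a comparison between the two stabilizations. On such classes your chase is the right argument:
\begin{enumerate}
\item the descent square commutes, because both composites agree on the algebraic part;
\item the image of \(p_Y\) in \(\maxalg[D_0]{S\ltimes Y}\) is again a cut-off projection, hence homotopic to \(p_Y^{D_0}\) by \cref{prop:hom-cut-off};
\item the maximal-to-reduced maps commute with the \(D_0\)-quotients;
\item naturality of the \Eth-theory product then gives commutativity.
\end{enumerate}
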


\begin{remark}
  The contravariance of the arrow \(\Etop{S,D_0}{\CB} \to \Etop{S, \emptyset}{\CB}\) is due to the fact that \(D_0\)-equivariant asymptotic morphisms are, in particular, asymptotic morphisms, paired with the fact that if \(v_1 \sim_{\emptyset} v_2\) then \(v_1 \sim_{D_0} v_2\) for all \(v_1,v_2 \in \Egrp[S,\emptyset]{\cfunz{Y}}{\CB}\).
  Also recall that proper spaces are \(D_0\)-proper too.
\end{remark}

We end the section with some examples.
\begin{example}
  If \(S = \Gamma\) is a discrete group then the assembly map in \cref{thm:assembly} generalizes the usual one, see \cites{higson-lafforgue-skand-2002-counter-bc,higson-kasparov-2000,Guentner-Higson-Trout,valette-2002-book}, among many others.
  Likewise, if \(S \actson X\) is a closed action (so that the groupoid of germs is Hausdorff) then our \Eth-theoretic approach to the assembly map should be compared with the \KKth-theoretic one in \cite{tu-baum-connes-1999}, for instance.
\end{example}

\begin{example}
  On the other end of the spectrum, let \(S = \opensets{X}\) be as in \cref{ex:opensets-x}.
  As discussed before any Fell bundle \(\CB = (B_s)_{s \in S}\) over a nuclear \cstar{}algebra \(B\) will satisfy that \(\maxalg[D_0]{\CB} \cong \redalg[D_0]{\CB}\), and \(\maxalg{\CB} = B\).
  An \(S\)-space \(Y\) is just a space and a choice of open sets \(\CU_s \subseteq Y\), one for each \(s \in S = \opensets{X}\), that are neatly nested.
  Equivariant asymptotic morphisms \(\czeror \tensor \cfunz{Y} \tensor \CK_{S,D_0} \asymparrow \czeror \tensor \CB \tensor \CK_{S,D_0}\) are then simply usual asymptotic morphisms \(\varphi \colon \czeror \tensor \cfunz{Y} \tensor \CK \asymparrow \czeror \tensor B \tensor \CK\) with the extra condition that if \(f \in \cfunz{Y}\) is supported on \(\CU_s\) then \(\varphi(\xi \tensor f \tensor k) \in \czeror \tensor B_{s} \tensor \CK\).
  It is apparent that one may then suppose that \(X = Y\), and the group \(\Etop{S}{\CB}\) is just \(\Egrp[S]{\cfunz{X}}{\CB}\).
  This group should be compared with \(\KKgrp{X; \cfunz{X}}{B}\) in the sense of Gabe \cite{gabe-memoirs-2024}.
\end{example}

\section{Applications to groupoids, (weak) Cartan subalgebras, and representations of Fell bundles over groups} \label{sec:apps}

In this last section we provide a series of examples of \cstar{}algebras belonging to the class \(\maxalg[D_0]{\variable}\) and \(\redalg[D_0]{\variable}\) introduced previously.
Moreover, we discuss when the Fell bundles appearing in these examples are proper and/or amenable, and relate our equivariant \Eth-theory to previous versions thereof.
This is done in a purely expository manner, so we will keep the proofs brief.
Moreover, some of the results below have been discussed in the examples/remarks throughout the paper.

\subsection{Actions of groups}
We start by the simplest class, namely Fell bundles over discrete groups.
Discrete groups are, of course, canonical examples of inverse semigroups.
Moreover, a discrete group \(\Gamma\) is countable if and only if \(\Gamma\) is quasi-countable as an inverse semigroup, for the only idempotent \(e \in \Gamma\) is the unit element of the group.
Given an action \(\Gamma \actson B\) of a discrete group on a type I \cstar{}algebra \(B\), the procedure laid down in \eqref{eq:fell-bundle-from-action} forms a canonical Fell bundle \(\CB = (B_\gamma)_{\gamma \in \Gamma}\).
In fact, it is not hard to prove that
\[
  \Psi \colon \maxcrossedprod{B}{\Gamma} \to \redalg{\CB_\Gamma}, \;\; b u_\gamma \mapsto b_\gamma \in B_\gamma \cong B,
\]
where \(b_\gamma\) is the element \(b\) seen in the fiber \(\gamma\), is a \Star{}isomorphism.
It is also well known that if \(\maxcrossedprod{B}{\Gamma}\) is separable then \(\Gamma\) has to be countable and \(B\) has to be separable.
If the Fell bundle \(\CB_\Gamma\) has the approximation property then there is only one \cstar{}algebra, see \cite{buss-martinez-approx-prop-23}*{Section 6}.
In this setting, the equivariant \Eth-theory introduced in \cref{sec:equiv-eth} boils down to the usual equivariant \Eth-theory for actions of groups on \cstar{}algebras, see \cite{Guentner-Higson-Trout}.

\subsection{Non-commutative Cartan subalgebras}
A more interesting class of examples is that of type I non-commutative Cartan subalgebras.
These were introduced by Exel's \cite{exel-nc-cartan-subalgs-2011}, extending Renault's notion of Cartan subalgebra in \cite{Renault2008CartanSI} in order to admit possibly non-commutative subalgebras as well.
Noting that Exel's \cite{exel-nc-cartan-subalgs-2011} uses the nomenclature of \emph{generalized Cartan subalgebra}, which we have changed to \emph{non-commutative} to emphasize its true nature, the main theorem of \cite{exel-nc-cartan-subalgs-2011} reads as follows.

\begin{theorem}[see \cite{exel-nc-cartan-subalgs-2011}*{Theorem 14.7}] \label{thm-exel-nc-cartans}
  Let \(A\) be a separable \cstar{}algebra with a non-commutative Cartan subalgebra \(B\).
  Then there is a Fell bundle \(\CB = (B_s)_{s \in S}\) over \(B\), where \(S\) is some countable inverse semigroup, such that \(\redalg{\CB} \cong A\).
\end{theorem}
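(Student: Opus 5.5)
This statement is Exel's reconstruction theorem, quoted from \cite{exel-nc-cartan-subalgs-2011}*{Theorem 14.7}. The plan is to follow Exel's argument and translate it into the saturated Fell bundle language of \cref{def:fell-bundle}, in four steps.

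\textbf{Step 1: normalizers.} Let \(N(B) \coloneqq \{n \in A : nBn^* \subseteq B,\ n^*Bn \subseteq B\}\), the normalizers of \(B\) in \(A\). By the definition of a non-commutative Cartan subalgebra, their span is dense in \(A\). Following Exel, consider the \emph{slices}: closed subspaces \(M \subseteq A\) with \(BM \subseteq M\), \(MB \subseteq M\), \(M^*M \subseteq B\) and \(MM^* \subseteq B\). Under composition \(M \cdot N \coloneqq \closure{\spn{MN}}\) and involution \(M \mapsto M^*\), the slices form an inverse semigroup. Each slice \(M\) is a Hilbert \(B\)-\(B\)-bimodule, in fact an imprimitivity bimodule between the ideals \(\closure{MM^*}\) and \(\closure{M^*M}\) of \(B\).

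\textbf{Step 2: countability.} Since \(A\) is separable, choose a countable dense set of normalizers \(\{n_k\}\). Let \(S\) be the countable inverse semigroup of slices generated by the singly generated slices \(\closure{Bn_kB}\) together with the countably many idempotent slices they produce. Set \(B_s \coloneqq s\), and take the multiplication maps \(\mu_{s,t}\) to be multiplication in \(A\). Then \(B_sB_t\) spans a dense subspace of \(B_{st}\) by construction, so \(\mu_{s,t}\) is an isomorphism and the bundle is saturated. The idempotents are ideals of \(B\), which must also span densely in \(B\); this is where it matters that \(B\) contains an approximate unit for \(A\). Associativity and the compatibility axioms of \cref{def:fell-bundle} are inherited from \(A\).

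\textbf{Step 3: identification with \(\redalg{\CB}\).} The inclusions \(B_s \subseteq A\) form a representation of \(\CB\), which integrates to a surjective \Star{}homomorphism \(\maxalg{\CB} \to A\). Surjectivity holds because the \(B_s\) contain a dense set of normalizers. To see that this map factors through \(\redalg{\CB}\) and is injective there, use the faithful conditional expectation \(E \colon A \to B\) that comes with the Cartan pair. One checks on monomials that \(E(b_s)\) agrees with \(\redcondexp(b_s) = b_s1_s\), that is, \(E\) kills the part of \(B_s\) orthogonal to \(I_{1,s}\). This is precisely Exel's analysis of free versus fixed parts of normalizers, using the maximality and regularity conditions in his definition.

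\textbf{Main obstacle.} I expect Step 3 to be the hard part, specifically the identity \(E(b_s) = b_s1_s\). For it I would use Exel's characterization that the \emph{virtual commutant} conditions force \(E(n)\) to be the compression of \(n\) to the largest ideal on which \(n\) acts trivially. Granting this, compatible faithful conditional expectations give the isomorphism by the standard argument: the kernel of \(\maxalg{\CB} \to A\) coincides with the nucleus of \(\redcondexp\), since both equal the set of \(a\) with \(\redcondexp(a^*a) = 0\). Therefore \(\redalg{\CB} \cong A\).
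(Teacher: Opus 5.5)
\textbf{Verdict: genuine gap.} The paper gives no argument here beyond citing Exel, so your outline has to stand on its own. It breaks where Steps 2 and 3 meet: your \(S\) is not shown to contain enough idempotents below each \(s\). In this paper \(\redcondexp(b_s)=b_s1_s\), where \(1_s\) is the unit of \(\langle B_e : e\le s \text{ idempotent in } S\rangle^{**}\). So \(\redcondexp\) depends on which idempotents of your chosen \(S\) lie below \(s\). For slices, an idempotent \(e\) (an ideal of \(B\)) satisfies \(e\le s\) if and only if \(B_e\subseteq B_s\). Hence \(I_{1,s}\subseteq B_s\cap B\), and nothing forces equality.

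On the other side, the virtual commutant axiom gives \(E(B_s)=B_s\cap B\). The map \(\Psi(m_1m_2^*)\coloneqq E(m_1)m_2^*\) on the ideal \(\closure{\spn{B_sB_s^*}}\) of \(B\) is well defined, contractive and \(B\)-bilinear (note \(\Psi(x)\in B_s^*\) and \(\Psi(x)m=E(xm)\) for \(m\in B_s\)). It is therefore \(B\)-valued, so \(E(m)m^*=E(m)E(m)^*\). Taking adjoints, \(E(m)E(m)^*=mE(m)^*\in B_s\cap B\), whence \(E(m)\in B_s\cap B\); the reverse inclusion is trivial. Consequently \(E(b_s)=b_s1_s\) for all \(b_s\in B_s\) holds \emph{exactly} when \(I_{1,s}=B_s\cap B\), i.e.\ when \(S\) is wide in this sense. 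The inverse semigroup generated by the \(\closure{Bn_kB}\) is not shown to have this property.

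This is not a formality. Take the Cartan pair \(B=\cont{[-1,1]}\subseteq A=\cont{[-1,1]}\rtimes\ZZ/2\), for the flip \(x\mapsto-x\) with implementing unitary \(u\). Let \(M\coloneqq\{f+gu : f\in\contz{(1/2,1]},\ g\in\contz{(-1/2,1/2)}\}\); this equals \(\closure{BnB}\) for a suitable normalizer \(n\). Then \(M=M^*\), \(M\cdot M=J\coloneqq\contz{(1/2,1]\cup(-1/2,1/2)}\), and \(\{B,J,M\}\) is an inverse semigroup of slices in which no idempotent lies below \(M\). For the resulting bundle, \(\redcondexp\) vanishes on \(B_M=M\), whereas \(E(f+gu)=f\). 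For \(0\neq f\in\contz{(1/2,1]}\), the element \(x=f\delta_M-f\delta_B\) satisfies \(\pi(x)=0\) but \(\redcondexp(x^*x)=2\abs{f}^2\neq0\). So \(\pi\) does not induce an isomorphism \(\redalg{\CB}\cong A\).

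The fix is a countable saturation. Let \(S_0\) be generated by \(B\) and the \(\closure{Bn_kB}\), let \(S_{j+1}\) be generated by \(S_j\cup\{B_s\cap B : s\in S_j\}\), and put \(S\coloneqq\bigcup_jS_j\). Then:
\begin{itemize}
  \item \(S\) is countable;
  \item each \(B_s\cap B\) is an idempotent of \(S\) below \(s\), so \(I_{1,s}=B_s\cap B\);
  \item for an approximate unit \((u_\lambda)\) of \(B_s\cap B\), one gets \(E(m)=\lim E(m)u_\lambda=\lim mu_\lambda=m1_s\), using \(mu_\lambda\in B\) and taking the last limit weakly.
\end{itemize}
After this, your kernel comparison in Step 3 is correct. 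A minor point: the approximate unit for \(A\) inside \(B\) is what gives \(n\in\closure{BnB}\) and \(n^*n\in B\). Density of the idempotent fibres in \(B\) is automatic once \(B\in S\).
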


Note that it immediately follows from \cref{prop:setting-actions} that \(A\) is an \((S,\primidealspc{B})\)-algebra.
Moreover, \cref{prop:left-reg-rep} yields a canonical induced representation of \(A\) arising from a given representation of \(B\).
Likewise, \cref{thm:assembly} gives a way to compute the \Kth-theory of \(A\) via the assembly map.
In particular, note that there is not much to say about previous equivariant \Eth-theories in this case, for the Fell bundle case had not been studied previously.
If \(B\) is commutative, however, then we are in the case of the following subsection.

\subsection{\'Etale groupoids}
Fix some locally compact, \'etale groupoid \(G\) with Hausdorff unit space \(X \coloneqq G^{(0)}\) (for an introduction to these see \cite{buss-martinez-ess-ame-2024} and references therein).
Let \(S \coloneqq {\rm Bis}(G)\) be the inverse monoid of open bisections of \(G\).
It is well known that \(S\) canonically acts on \(X\) via multiplication of subsets of \(G\):
\[
  s \colon s^*s = \left\{g \in G : g^{-1}g \in s^*s\right\} \to ss^*, \;\; x \mapsto sxs^*,
\]
where \(AB \coloneqq \{ab : a \in A, b \in B\}\) for all \(A, B \subseteq G\), and \(A x \coloneqq A \{x\}\) and \(xB \coloneqq \{x\} B\).

\begin{remark}
  The notation \(sx\) now means a different thing in here than in the rest of the paper, for \(sx\) was always a unit elsewhere, where now it might be an arrow.
\end{remark}

It was first observed by Exel \cite{Exel:Inverse_combinatorial} that the groupoid of germs of the action \(S \actson X\) is isomorphic (as a groupoid) to \(G\), and the isomorphism is given by
\[
  S \ltimes X \mapsto G, \;\; \left[s, x\right] \mapsto sx,
\]
again understanding \(sx\) in the above sense, or equivalently as the necessarily unique arrow in \(s\) whose source is \(x\).
We may now prove that this isomorphism does not only happen at the level of the groupoids.
As usual, we denote by \(\cfunz{X}\) the Fell bundle given by \eqref{eq:fell-bundle-from-action}.

\begin{lemma} \label{prop:grpd-alg-fell-bundle}
  Let \(G\) be given, and let \(S \actson X\) be as above. Then the map
  \[
    \maxalg{\cfunz{X}} \to \maxalg{G}, \;\; f v_s \mapsto \left[G \supseteq s \ni sx \mapsto f\left(sxs^*\right) \in \CC\right],
  \]
  where \(f \in \cfunz{ss}^*\), is a \Star{}isomorphism.
  In fact, it also descends to an isomorphism \(\redalg{\cfunz{X}} \cong \redalg{G}\).
  Likewise, if \(D_0 \coloneqq D \subseteq X\) denotes the set of dangerous units in \(G\) then the above map also induces an isomorphisms \(\maxalg[D]{\cfunz{X}} \cong \essmaxalg{G}\) and \(\redalg[D]{\cfunz{X}} \cong \essalg{G}\).
\end{lemma}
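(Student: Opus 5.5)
We first build the isomorphism on the dense algebras and then pass to the various completions. Set \(\Psi_0 \colon \algalg{\cfunz{X}} \to \contc{G}\) (or rather the \Star{}algebra \(\mathfrak{C}_c(G)\) spanned by functions supported on open bisections, as in the non-Hausdorff literature) on monomials \(f v_s\), with \(f \in \contc{ss^*}\) and \(s\) an open bisection, by \(\Psi_0(fv_s)(sx) \coloneqq f(sxs^*)\) on \(s\), extended by zero. We check three things. First, \(\Psi_0\) respects the identifications in the Fell bundle: if \(s \leq t\) and \(f \in \contc{ss^*}\), then \(fv_s = fv_t\) and both images agree, because \(s \subseteq t\) as bisections. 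Second, \(\Psi_0\) is multiplicative and \Star{}preserving; this is the convolution computation \((fv_s)(gv_t) = f\,s(g)\,v_{st}\) from \eqref{eq:fell-bundle-from-action}, compared with convolution of functions supported on \(s\) and \(t\), using that \(st\) is the product bisection. Third, \(\Psi_0\) is surjective onto \(\mathfrak{C}_c(G)\), since every function supported in an open bisection \(s\) is of the form \(\Psi_0(fv_s)\).

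Injectivity on the algebraic level is not needed by itself. Instead we argue with universal properties. Representations of \(\mathfrak{C}_c(G)\) that are bounded on each bisection give, after composing with \(\Psi_0\), representations of the Fell bundle \(\cfunz{X}\). Conversely, every representation \((\pi_s)_s\) of \(\cfunz{X}\) defines a representation of \(\mathfrak{C}_c(G)\) via \(\pi(\Psi_0(fv_s)) \coloneqq \pi_s(fv_s)\). This is well defined because relations among functions on \(G\) (such as \(h = h|_{s\cap t}\) glued from pieces) come from the compatibility \(\pi_s|_{B_{s e}} = \pi_{se}\) and a partition of unity on \(s \cap t\) inside \(X\). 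Since \(\maxalg{G}\) is the universal completion for such representations, this yields \(\maxalg{\cfunz{X}} \cong \maxalg{G}\). This is exactly Exel's argument \cite{Exel:Inverse_combinatorial} adapted to the possibly non-Hausdorff setting.

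For the reduced algebras we compare conditional expectations. Let \(\redcondexp(fv_s) = f1_s\), where \(1_s\) is the indicator of the interior of the fixed-point set \(O_s = \bigcup_{e\le s} e\). This corresponds under \(\Psi_0\) to restricting a function to \(X\) and then regarding it in \(\ell^\infty(X) \subseteq \contz{X}^{**}\) (the value at \(x\) is \(f(x)\) exactly when \(sx=x\) as a germ, i.e.\ when \(x \in O_s\)). Hence the expectations intertwine, their nuclei match, and we get \(\redalg{\cfunz{X}} \cong \redalg{G}\). Running the same argument modulo \(J_D\) on the expectation side gives \(\redalg[D]{\cfunz{X}} \cong \essalg{G}\), either via \cref{thm:ess-alg-is-jd} or directly, since \(D=\bigcup_s D_s\) is precisely the set of dangerous units: \(x\in D_s\) means that germs of \(s\) and of units both converge to \(x\). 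The maximal essential case then follows from \cref{lemma:alt-desc-max-dzero} together with the definition \(\essmaxalg{G}\coloneqq\maxalg[D]{\cfunz{X}}\) via \cite{buss-martinez-ess-ame-2024}.

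The main obstacle is the identification \(D=\) dangerous units in the non-Hausdorff case, that is, showing \(D_s=\closure{O_s}\cap s^*s\setminus O_s\) matches the definition in \cite{buss-martinez-ess-ame-2024}. Here we first check the inclusion in both directions for a fixed bisection \(s\) containing the offending arrow.
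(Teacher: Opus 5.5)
The paper gives no argument of its own here. It cites \cite{armstrong-etal-2026}*{Theorem 8.1} for the maximal isomorphism and for its descent to the reduced and essential quotients, and points to \cref{thm:ess-alg-is-jd} as an alternative for the essential case. Your skeleton (isomorphism of dense algebras, universal property, matching of expectations, passing modulo \(J_D\)) is a legitimate and more self-contained route. However, it does not prove the step that carries the non-Hausdorff content: that every representation \((\pi_s)_{s\in S}\) of \(\cfunz{X}\) annihilates \(\ker\Psi_0\), so that \(\pi(\Psi_0(fv_s))\coloneqq\pi_s(fv_s)\) is well defined on \(\mathfrak{C}_c(G)\).

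``Gluing plus a partition of unity on \(s\cap t\)'' does not do this. The relations in \(\ker\Psi_0\) sit at non-separated points and involve functions that are only \(C_0\), not \(C_c\), on the overlaps. For instance, take \cref{ex:0-1-Gamma} with \(\Gamma=\ZZ/2\ZZ\), \(s=[0,1)\cup\{g\}\) for the non-trivial \(g\in\Gamma\), and \(f(x)=1-x\). Then \(fv_s-fv_X\in\ker\Psi_0\), although \(f\) is not compactly supported in \(s\cap X=[0,1)\).

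What works is an induction on the number \(n\) of bisections, allowing \(C_0\) coefficients. Suppose \(a=\sum_{i=1}^n f_iv_{s_i}\) and \(\Psi_0(a)=0\), and put \(h_1\coloneqq f_1\circ s_1\). Evaluating \(\Psi_0(a)\) at the arrow of \(s_1\) with source \(y\) shows that \(h_1\) vanishes off the open set \(W\coloneqq\bigcup_{j\geq 2}e_j\), where \(e_j\coloneqq(s_1\cap s_j)^*(s_1\cap s_j)\). Hence \(h_1\in C_0(W)=\sum_{j\geq 2}C_0(e_j)\), a finite sum of closed ideals. Write \(h_1=\sum_{j\geq 2}h_{1,j}\) with \(h_{1,j}\in C_0(e_j)\). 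Each \((h_{1,j}\circ s_1^*)v_{s_1}\) lies in the fibre over \(s_1e_j=s_je_j\), so it can be absorbed into the term indexed by \(s_j\). This rewrites \(a\) inside \(\maxalg{\cfunz{X}}\) with \(n-1\) bisections and the same image under \(\Psi_0\), and the case \(n=1\) is trivial. If you would rather rely on Exel's theorem, then that citation, not the heuristic, is what carries this step.

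Two further points concern the essential algebras.

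First, for the maximal one, ``the definition \(\essmaxalg{G}\coloneqq\maxalg[D]{\cfunz{X}}\)'' assumes what is to be shown. The paper defines \(\essmaxalg{\CB}\) that way for Fell bundles, but \(\essmaxalg{G}\) in the statement is the groupoid algebra of \cite{buss-martinez-ess-ame-2024}. The missing computation is \(\redcondexp(a^*a)(x)=(F^* * F)(x)=\sum_{g\in G_x}\abs{F(g)}^2\) for \(F=\Psi_0(a)\), where \(G_x\) is the set of arrows with source \(x\). It shows that \(\Psi_0\) carries \(\{a\in\algalg{\cfunz{X}}:\redcondexp(a^*a)\in J_D\}\) onto \(\{F\in\mathfrak{C}_c(G):F|_{G\setminus GD}=0\}\), with \(GD\) the arrows whose source lies in \(D\). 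By \cref{lemma:alt-desc-max-dzero}, \(\maxalg[D]{\cfunz{X}}\) is then \(\maxalg{G}\) modulo the closure of this ideal, and that is what has to be matched with \cite{buss-martinez-ess-ame-2024}. Similarly, ``the nuclei match'' in your reduced step needs that passing from \(\contz{X}^{**}\) to pointwise values is injective on the range of \(\redcondexp\); cf.\ \cref{lemma:p-takes-values-borel}.

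Second, what you call the main obstacle is a two-line check. \(x\in D_s\) says exactly that the arrow of \(s\) with source \(x\) is not a unit, yet is the limit of a net of units in \(s\cap X\) that also converges to \(x\). Covering an arbitrary dangerous arrow by an open bisection gives the converse. Applied to a countable cover of a second countable \(G\) by bisections, the same observation shows \(D\) is meager. That is all the proof of \cref{thm:ess-alg-is-jd} uses quasi-countability for, and you need it, because \({\rm Bis}(G)\) is in general not quasi-countable.

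Finally, \(O_s=s\cap X\) is the set where the germ of \(s\) is a unit, not the interior of the fixed-point set. The latter is strictly larger when \(s\) contains isotropy. Your parenthetical uses the correct set.
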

\begin{proof}
  The proof of the isomorphism at the level of \(\maxalg{\variable}\) is well known, see \cite{armstrong-etal-2026}*{Theorem 8.1} for a quick reference.
  Note that in fact \cite{armstrong-etal-2026} also proves the isomorphism descends to \(\redalg{\variable}\) and \(\essalg{\variable}\).
  We also refer the reader to \cref{thm:ess-alg-is-jd} for an alternative (more general) proof, should they want it.
\end{proof}

The following relates properness in our sense and that of Tu~\cite{tu-baum-connes-2004}.
\begin{lemma} \label{lemma:proper-iff-proper}
  Let \(S \actson X\) be an action on a locally compact, Hausdorff space \(X\).
  The transformation groupoid \(S \ltimes X\) is proper in the sense of \cite{tu-baum-connes-2004}*{Definition 2.9} if and only if \(S \actson X\) is proper in the sense of \cref{def:proper-grpd}.
\end{lemma}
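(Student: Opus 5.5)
I would prove both implications by translating between compact subsets of the groupoid of germs \(G \coloneqq S \ltimes X\) and finite sets of elements of \(S\) together with compact subsets of \(X\). Recall that Tu's definition \cite{tu-baum-connes-2004}*{Definition 2.9} says \(G\) is proper when the map \((r,d) \colon G \to X \times X\) is proper. Since \(X\) is locally compact and Hausdorff, it suffices to show the following: for all compact \(K_0, K_1 \subseteq X\), the set
\[
  G_{K_0}^{K_1} \coloneqq \left\{[s,x] : x \in K_0,\ sx \in K_1\right\}
\]
is contained in a compact subset of \(G\) if and only if \(\{s \in S : sK_0 \cap K_1 \neq \emptyset\}\) is finitely upper bounded in the sense of \cref{def:proper-grpd}. (Together with closedness of \((r,d)^{-1}(K_0 \times K_1)\), which is automatic since \(X\) is Hausdorff, this gives properness.)

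\emph{Proper in our sense implies proper in Tu's sense.} Let \(\CF = \{t_1, \dots, t_k\}\) be a finite upper bound. By definition, for every \([s,x] \in G_{K_0}^{K_1}\) there are \(j\) and an idempotent \(e\) with \(x \in e\, t_j^*t_j\) and \(se = t_j e\). Hence \([s,x] = [t_j, x]\) lies in the bisection \(\{[t_j, y] : y \in t_j^*t_j\}\), which is homeomorphic to \(t_j^*t_j\) via \(d\). Moreover \(x \in K_0 \cap \closure{s^*se} \subseteq t_j^*t_j\). The step I expect to be the real obstacle is producing a \emph{compact} set inside \(t_j^*t_j\) containing all such \(x\). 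For this I would use the condition \(\closure{s^*se} \subseteq t_j^*t_j\), but \(K_0 \cap t_j^*t_j\) need not be compact. My plan is as follows. Assume, towards a contradiction, that a net \(x_\lambda\) of such points converges to some \(x \in K_0 \setminus t_j^*t_j\). Properness then applies along this net exactly as in the proof of \cref{lemma:proper-action-orbit-space}: passing to a subnet with fixed \(t_j\), one gets \(x \in \closure{s_\lambda^*s_\lambda e_\lambda} \subseteq t_j^*t_j\), a contradiction. It follows that \(d(G_{K_0}^{K_1} \cap t_j) \subseteq K_0\) has closure contained in \(t_j^*t_j\), hence is relatively compact there. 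Taking the union over \(j\) then yields a compact set in \(G\) containing \(G_{K_0}^{K_1}\).

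\emph{Proper in Tu's sense implies proper in our sense.} Suppose \(L \coloneqq (r,d)^{-1}(K_0 \times K_1)\) is compact. Around each point \([s,x] \in L\), choose \(e \in E\) with \(x \in e\) and an open set \(V \ni x\) such that \(\closure{V}\) is compact and \(\closure{V} \subseteq e\, s^*s\); this is possible since \(X\) is locally compact and Hausdorff. Then \([s,\cdot]\) restricted to \(V\) is an open neighbourhood of \([s,x]\) in \(G\). By compactness of \(L\), finitely many of these neighbourhoods, say \((t_j, V_j)\) for \(j = 1, \dots, k\), cover \(L\). Now take \(s\) with \(sK_0 \cap K_1 \neq \emptyset\) and a point \(x \in s^*s\) with \([s,x] \in L\). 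Then \([s,x] = [t_j, x]\) for some \(j\), so there is \(e\) with \(x \in e\) and \(se = t_j e\). Shrinking \(e\) using \(V_j\) gives \(\closure{s^*se} \subseteq \closure{V_j} \subseteq t_j^*t_j\), as required. Points \(x \in s^*s\) whose germ lies outside \(L\) are not constrained by \(L\). To handle them, I would enlarge \(K_0\) and \(K_1\) to compact neighbourhoods, and read \cref{def:proper-grpd} as only concerning points \(x\) with \(x \in K_0\) and \(sx \in K_1\). This matches how the definition is used in \cref{lemma:proper-action-orbit-space} and \cref{lemma:cover-s-by-ti}.
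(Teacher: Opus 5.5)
\textbf{The key step in your forward direction fails, and no argument can repair it.} You correctly identified the hard step, but the net argument does not overcome it. Passing to a subnet fixes \(t_j\), not \(s_\lambda\) or \(e_\lambda\). All you learn is that the limit \(x\) lies in the closure of \(\bigcup_\lambda s_\lambda^*s_\lambda e_\lambda\). The individual inclusions \(\closure{s_\lambda^*s_\lambda e_\lambda} \subseteq t_j^*t_j\) say nothing about the closure of that union. (The step in \cref{lemma:proper-action-orbit-space} you are copying has the same defect.) In \cref{def:proper-grpd} the idempotent \(e\) may depend on the point, so the condition \(\closure{s^*se} \subseteq t^*t\) is purely local. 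It is met for free by shrinking \(e\), and it gives no control near \(\partial(t_j^*t_j)\).

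In fact the implication is false. Let \(X = [-1,1]\) and \(t \colon (0,1] \to [-1,0)\), \(x \mapsto -x\), and let \(S\) be generated by \(t\) and all \({\rm id}_V\) with \(V \subseteq X\) open.
\begin{itemize}
\item The action is proper in the sense of \cref{def:proper-grpd}. For \(s = t|_V\) and \(x \in V\), the idempotent \(e = {\rm id}_{V \cap (x/2,1]}\) works with \(t\). The case of \(t^*\) is symmetric, and idempotents are covered by \({\rm id}_X\). So all of \(S\) is bounded above by \(\{{\rm id}_X, t, t^*\}\).
\item The groupoid is not proper in Tu's sense. The only elements of \(S\) with germ \([t,y]\) are restrictions of \(t\). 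Hence \(\{[t,x] : x \in (0,1]\} \cong (0,1]\) is a closed, non-compact subset of \((r,d)^{-1}(X \times X) = S \ltimes X\).
\end{itemize}
The paper's own proof does not help here. It simply asserts that \(t_1K_1 \cup \dots \cup t_\ell K_1\) is contained in a compact set, which is exactly what fails in this example.

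\textbf{Your converse has two further problems.} First, you replace \cref{def:proper-grpd} by a weaker condition. The definition asks that \emph{every} \(x \in s^*s\) be covered, not only those with \(x \in K_0\) and \(sx \in K_1\). With the literal definition the converse already fails for the translation groupoid \(G = \RR \rtimes \ZZ\) (arrows \((x,k)\) from \(x\) to \(x+k\)) with \(S = {\rm Bis}(G)\). Consider the bisections \(\{(x,0) : x \in (-1,2)\} \cup \{(x,k) : x \in (n^2,n^2+1)\}\) for \(1 \leq k \leq n\). They all satisfy \(s[0,1] \cap [0,1] \neq \emptyset\). Yet they have \(n\) pairwise distinct germs at \(n^2+1/2\), while a finite \(\CF\) supplies at most \(\abs{\CF}\) germs at that point.

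Second, ``shrinking \(e\) using \(V_j\)'' presupposes that \(E\) contains idempotents inside arbitrarily small neighbourhoods of \(x\), which is not assumed. Take \(X = [0,1]\) and \(S = E = \{{\rm id}_{(0,1]}, {\rm id}_{[0,1)}, {\rm id}_{(0,1)}\}\). The groupoid \(S \ltimes X = X\) is compact. Yet at \(x = 1\) the element \(s = {\rm id}_{(0,1]}\) can only be covered with \(t = e = s\), and \(\closure{(0,1]} \not\subseteq (0,1]\). This fails even for \(K_0 = K_1 = \{1\}\).

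So neither implication holds as stated. Your converse argument is valid only under two changes: your modified reading of the definition, and the extra hypothesis that the domains of idempotents form a basis of \(X\) (e.g.\ \(S = {\rm Bis}(G)\)). The forward direction needs a genuinely uniform version of the closure condition, not a net argument.
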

\begin{proof}
  Recall that \cite{tu-baum-connes-2004}*{Definition 2.9} defines an \'etale groupoid to be proper if \((\range,\source) \colon G \to G^{(0)} \times G^{(0)}\) is a proper map between topological spaces, \emph{i.e.}\ the preimage of any compact set is contained in a compact, where \(\range\) and \(\source\) are the range and source maps of \(G\) respectively.

  Assume first that \(S \actson X\) is proper. Any compact subset of \(X \times X\) is contained in some rectangle \(K_2 \times K_1\) for some compact sets \(K_1, K_2 \subseteq X\).
  Let \(\{t_1, \dots, t_\ell\} \subseteq S\) be a finite cover of \(\{s \in S : s K_1 \cap K_2 \neq 0\}\), which exists by properness.
  If \(g = [s,x] \in S \ltimes X\) is an arrow in the preimage of \(K_2 \times K_1\) then \(g = [s,x] = [t_i,x]\) for some \(i = \{1, \dots, \ell\}\).
  In particular it follows that the pre-image of \(K_2 \times K_1\) is contained in \(t_1 K_1 \cup \dots \cup t_\ell K_1\), which is indeed contained in a compact set.
  The opposite direction is similar.
\end{proof}

The notion of amenability introduced in \cref{def:approx-property} precisely yields the notions of amenability and essential amenability studied in \cite{buss-martinez-ess-ame-2024}.
\begin{lemma} \label{lemma:d-0-amen-grpd}
  Let \(D_0 \subseteq X\) be either the empty set of the set of dangerous units of \(G\), \emph{i.e.}\ \(D_0 = D\) as in \eqref{eq:d-s}.
  If \(\redalg[D_0]{G}\) is nuclear then \(S \actson X\) is \(D_0\)-amenable.
\end{lemma}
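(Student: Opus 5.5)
The plan is to reduce the statement to the known characterizations of (essential) amenability for étale groupoids from \cite{buss-martinez-ess-ame-2024}, and then translate the resulting approximating functions into the net \(\{\xi_i\}_i\) demanded by \cref{def:approx-property} for the Fell bundle \(\cfunz{X}\). By \cref{prop:grpd-alg-fell-bundle} we have \(\redalg[D_0]{\cfunz{X}} \cong \redalg[D_0]{G}\), which is \(\redalg{G}\) when \(D_0 = \emptyset\) and \(\essalg{G}\) when \(D_0 = D\). Nuclearity of \(\redalg{G}\) (resp.\ of \(\essalg{G}\)) implies topological amenability (resp.\ essential amenability) of \(G\); I would quote this from \cite{buss-martinez-ess-ame-2024}, together with the classical Hausdorff-free result for \(D_0 = \emptyset\). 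Both notions produce a net of compactly supported functions \(h_i\) on \(G\) such that \(\sum_{g \in G_x} \abs{h_i(g)}^2 \leq 1\) and \(\sum_{g} \overline{h_i(g)} h_i(g\gamma) \to 1\) uniformly on compact subsets of \(G\). In the essential case, the sums and the uniform convergence are taken only over \(G \setminus GD\) and its units. The functions \(h_i\) may be taken Borel and continuous off the dangerous set.

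Next I would convert \(h_i\) into maps \(\xi_i \colon S \to B_\CCC/J_{D_0}\). Since \(h_i\) has compact support, it is covered by finitely many open bisections \(s_1, \dots, s_n \in S = {\rm Bis}(G)\). Taking a Borel partition of the support subordinate to these bisections, I would set \(\xi_i(s_k) \coloneqq (h_i \cdot \chi_k)\circ s_k^{-1}\), a function on \(s_k s_k^*\), and \(\xi_i(s) = 0\) for all other \(s\). I then need to check that \(\xi_i(s_k)\) lies in \(B_{\CCC, s_ks_k^*} + J_{D_0}\). Here \(B_\CCC\) is generated by elements \(1_{s,t} b\), which are the characteristic functions of the sets \(O_{s,t}\) times continuous functions; these are exactly the Borel functions that are continuous except along the boundaries \(D_s\). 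Modulo \(J_{D_0}\), where sets inside \(D_0\) are killed, this is what is needed. This matching of discontinuities is the first delicate point. When \(D_0 = \emptyset\), I expect to have to choose the partition along the sets \(O_{s_k,s_l}\), so that its pieces are expressible through the projections \(1_{s,t}\).

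Then I would verify conditions (i) and (ii) of \cref{def:approx-property}. At a unit \(x \notin D_0\), the expression \(\sum_{r,t} 1^{D_0}_{r,t}\,\xi_i(r)^*\xi_i(t)\) evaluates to \(\sum_{r,t\,:\,[r,x]=[t,x]} \overline{\xi_i(r)}\xi_i(t)\). This equals \(\sum_{g \in G_x} \abs{h_i(g)}^2 \leq 1\), because \(1_{r,t}\) at \(x\) exactly records that the germs of \(r\) and \(t\) at \(x\) agree. Condition (ii) is computed in the same way: for \(a \in \cfunz{ss^*}\) in fiber \(s\), the sum becomes \(a\) multiplied by the positive-type function \(\gamma \mapsto \sum_g \overline{h_i(g)}h_i(g\gamma)\) restricted to the bisection \(s\). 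Since \(a\) has compact support, this converges to \(a\) in the quotient by \(J_{D_0}\).

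I expect the main obstacle to be the essential case \(D_0 = D\). There the convergence supplied by essential amenability is controlled only off \(GD\), and the norm on \(B^{**}/J_D\) is a supremum over \(x \notin D\). I would need the convergence to be uniform on \(X \setminus D\) and not merely on a comeager set. My first attempt would be to use the Borel formulation of essential amenability in \cite{buss-martinez-ess-ame-2024}, which I believe gives exactly pointwise-uniform control off the dangerous units.
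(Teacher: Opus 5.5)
Your overall reduction is the one the paper uses, but the paper translates nothing. Its proof is the one-line observation that the lemma is exactly (i)\(\Rightarrow\)(ii) of \cite{buss-martinez-ess-ame-2024}*{Theorems 5.11 and 5.12}. So the approximation property of \cref{def:approx-property} for \(D_0=\emptyset\) and \(D_0=D\) is imported as is. The conversion from positive-type functions on \(G\) that you add is where the proposal breaks.

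\textbf{The Borel partition produces inadmissible values.} The values \(\xi_i(s)\) must lie in \(B_{\CCC,ss^*}+J_{D_0}\), and a Borel partition does not respect this. Already for Hausdorff \(G\) the action is closed, so \(B_\CCC=B=\contz{X}\). Meanwhile \(J_\emptyset\) only identifies elements with the same point evaluations, so \(\xi_i(s)\) must be a continuous function on \(X\). The pieces \((h_i\chi_k)\circ s_k^{-1}\) of a Borel partition subordinate to overlapping bisections are in general discontinuous. For the same reason, ``Borel and continuous off \(D\)'' is the wrong regularity class for \(h_i\).

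What works, with no partition at all, is the following. Take \(h_i\in\contc{G}\) in the non-Hausdorff sense, and write \(h_i=\sum_k f_k\) with \(f_k\) continuous and compactly supported on the open bisection \(s_k\) (overlaps allowed). Set \(\xi_i(s_k)\coloneqq f_k\circ(\range|_{s_k})^{-1}\in\contz{s_ks_k^*}\subseteq B\subseteq B_\CCC\); taking \(s=t=e\in E\) in the definition of \(B_\CCC\) gives \(B\subseteq B_\CCC\). The projections absorb the overlaps, because \(1_{r,t}(x)=1\) exactly when the arrows of \(r\) and \(t\) with range \(x\) coincide in \(G\). Hence \(\sum_{r,t}1_{r,t}(x)\,\overline{\xi_i(r)(x)}\,\xi_i(t)(x)=\sum_{\range[g]=x}\abs{h_i(g)}^2\). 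The same bookkeeping with \(1_{r,st}\) turns (ii) into uniform convergence to \(1\) of the positive-type function of \(h_i\) on \(\{g\in s:\range[g]\in L\}\), for compact \(L\subseteq ss^*\).

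\textbf{The essential case is left unproved.} In the essential case the norm of \(B^{**}/J_D\) is a supremum over all \(x\in X\setminus D\). So (ii) needs uniform convergence on \(K\setminus GD\) for compact \(K\subseteq G\), such as the set above. Control on compact subsets of \(G\setminus GD\), or off a meager set, is strictly weaker. Since \(X\setminus D\) is dense, any compact subset of \(X\setminus D\) containing \(\supp{a}\setminus D\) would contain \(\supp{a}\). That is impossible as soon as \(\supp{a}\) meets \(D\).

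You leave open which form of control essential amenability actually supplies (``I believe''). That is precisely the non-trivial content of (i)\(\Rightarrow\)(ii) in \cite{buss-martinez-ess-ame-2024}*{Theorem 5.12}. As written, therefore, your argument does not establish the case \(D_0=D\). The remedy is to invoke that implication directly, as the paper does, rather than rebuild the approximation property from groupoid-level amenability.
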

\begin{proof}
  This is exactly implications (i) implies (ii) in \cite{buss-martinez-ess-ame-2024}*{Theorems 5.11 and 5.12} respectively.
\end{proof}

As a consequence of the previous discussion, the class of algebras \(\maxalg[D_0]{\variable}\) and \(\redalg[D_0]{\variable}\) immediately includes all \'etale, in general non-Hausdorff, twisted groupoid \cstar{}algebras.
Indeed, we may even look at \cstar{}algebras with \emph{weak Cartan subalgebras}.
We will not define this notion here, but we will only mention the following, which was proven in \cite{exel-pitts-grpds-2022}*{Theorem 3.9.4}, being inspired by Renault and Kumjian's seminal papers \cites{renault-2013,kumjian-diagonals-1986}. 

\begin{theorem}[see \cite{exel-pitts-grpds-2022}*{Theorem 3.9.4}] \label{thm-weak-cartan-grpd-model}
  Let \(A\) be a separable \cstar{}algebra with a weak Cartan subalgebra \(D \subseteq A\).
  There is a locally compact, secound countable, twisted, \'etale, topologically free groupoid \((G, \Sigma)\) such that \(A \cong \essalg{G, \Sigma}\) and \(D \cong \cfunz{G^{(0)}}\).
\end{theorem}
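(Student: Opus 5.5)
The statement to prove is the last one displayed, \cref{thm-weak-cartan-grpd-model}: if \(A\) is separable and \(D \subseteq A\) is a weak Cartan subalgebra, then \(A \cong \essalg{G,\Sigma}\) with \(D \cong \cfunz{G^{(0)}}\) for a twisted, \'etale, topologically free groupoid \((G,\Sigma)\). The plan is a Kumjian--Renault style reconstruction, run in the essential (non-Hausdorff) setting, with the comparison to the essential algebra handled by \cref{thm:ess-alg-is-jd}.

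\textbf{Step 1: the inverse semigroup and the unit space.} Since \(D\) is commutative, write \(D \cong \contz{X}\). Take \(S\) to be the inverse semigroup of normalizers \(n \in A\), i.e.\ those with \(nDn^* \subseteq D\) and \(n^*Dn \subseteq D\).
\begin{itemize}
  \item Each normalizer \(n\) induces a partial homeomorphism \(\alpha_n\) from \(\{x : (n^*n)(x) > 0\}\) onto \(\{x : (nn^*)(x) > 0\}\), characterised by \(n^* d n = (d \circ \alpha_n)\, n^*n\) for \(d \in D\).
  \item These maps give an action \(S \actson X\).
  \item Separability of \(A\) lets us pass to a countable sub-semigroup that still generates \(A\), so \(S\) is quasi-countable and \(X\) is second countable.
\end{itemize}

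\textbf{Step 2: the twisted groupoid.} Let \(G \coloneqq S \ltimes X\) be the groupoid of germs of this action. The twist \(\Sigma\) is built from germs of pairs \([n,x]\) with \(x \in \supp{n^*n}\), modulo \(D\)-valued rescalings that are positive at \(x\). Together, \(S\), \(X\) and \(\Sigma\) produce a Fell bundle \(\CB = (B_s)_{s \in S}\) over \(D\), where \(B_n = \closure{nD}\).

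\textbf{Step 3: identify \(A\) with the essential algebra.} The inclusion of normalizers gives a representation of \(\CB\) into \(A\), hence a \Star{}homomorphism \(\maxalg{\CB} \to A\). It is surjective because the normalizers span a dense subspace of \(A\). By \cref{prop:grpd-alg-fell-bundle}, which extends to the twisted case, and by \cref{thm:ess-alg-is-jd}, \(\essalg{G,\Sigma} \cong \redalg[D]{\CB}\), where \(D\) is the set of dangerous units from \eqref{eq:d-s}. So it suffices to show that the kernel of \(\maxalg{\CB} \to A\) is exactly the nucleus \(\CN_D\).

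\textbf{Step 4: compare the two expectations (main obstacle).} A weak Cartan subalgebra carries a faithful essential generalized conditional expectation \(E \colon A \to I(D)\), valued in the injective hull \(I(D)\). The key claim is that \(E\) agrees with \(\condexp[D]\) after identifying
\[
  I(D) \;\cong\; \mathrm{Borel}(X)/\{\text{meagre}\}.
\]
To prove it, reproduce the argument behind \eqref{eq:want-ess}: for a normalizer \(n\), the expectation \(E(n)\) is \(n\) cut down to the interior of its fixed-point set, i.e.\ \(n\,1_n\), modulo meagre sets. Then check that both \(E\) and \(\condexp[D]\) annihilate the same positive elements, and conclude that the nuclei coincide.

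\textbf{Step 5: topological freeness.} Use the maximal-abelian (regularity) part of the weak Cartan hypothesis. If the interior of the isotropy in \(G\) contained non-units, it would yield normalizers commuting with \(D\) but lying outside \(D\), contradicting maximality. Finally, second countability and the isomorphism \(D \cong \cfunz{G^{(0)}} = \contz{X}\) follow from the construction.
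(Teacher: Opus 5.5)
The paper gives no proof of this statement. It is quoted from Exel--Pitts and used only as a source of examples, so there is no internal argument to compare with. Your sketch is an independent route, and it is sound in outline. The cited source builds the twisted groupoid from the normalizers and realises \(A\) on it in Kumjian--Renault fashion. You instead surject \(\maxalg{\CB}\to A\) from the cross-sectional algebra of a Fell bundle of slices and identify the kernel as a nucleus, in the spirit of Kwa\'sniewski--Meyer. This reuses the paper's machinery and keeps the argument short. The price is that the analytic heart is outsourced to \cref{thm:ess-alg-is-jd}, so separability enters mainly through quasi-countability of \(S\). It is also outsourced to the twisted, wide-subsemigroup version of \cref{prop:grpd-alg-fell-bundle}, i.e.\ the Buss--Exel dictionary between semi-abelian Fell bundles and twisted groupoids, which you assert rather than prove.

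Three points need tightening.

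First, normalizers do not form an inverse semigroup, since \(nn^*n\neq n\) in general. Take \(S\) to consist of Exel's slices \(\closure{nD}\), for instance the inverse semigroup generated by a countable \(*\)-closed family of slices whose span is dense in \(A\), together with all ideals of \(D\). This makes \(B_sB_t\cong B_{st}\) literal and gives quasi-countability.

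Second, \(E\circ\pi\) cannot literally agree with \(\condexp[D]\), which takes values in \(D^{**}/J_D\) and only kills the dangerous set. The computation on slices runs as follows. \(E\) is a \(D\)-bimodule map, so it vanishes near points moved by \(\alpha_n\). By maximality it equals \(n\) near interior fixed points. Hence \(E(n)=n1_n\) off a nowhere dense set. What this shows is that \(E\circ\pi\) agrees with \(\widetilde{E_\CL}\), pulled back to \(\maxalg{\CB}\), under \(I(D)\cong{}\)Borel functions modulo meagre ones. Faithfulness of \(E\) then identifies \(\ker\pi\) with the nucleus of \(\widetilde{E_\CL}\). It is exactly \eqref{eq:want-ess} that turns this into the nucleus of \(\condexp[D]\). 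You should also rename one of your two \(D\)'s.

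Third, faithfulness of \(E\) is the genuine input of Step~4. State explicitly that it is part of the definition of weak Cartan you are using, or derive it from whatever form that hypothesis takes.

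Step~5 is fine: maximality gives effectiveness, and with countably many bisections Baire category upgrades this to density of units with trivial isotropy.
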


\begin{example}
  Given an \'etale (non-Hausdorff) groupoid \(G\) and a twist \(\Sigma \to G\) over it, it is well known that one may construct a Fell line bundle \(\CB_\Sigma\) over, say, the inverse semigroup of open bisections of \(G\).
  In this scenario, one has that \(\maxalg{\CB_\Sigma} = \maxalg{G, \Sigma}\), \(\redalg{\CB_\Sigma} = \redalg{G, \Sigma}\) and even \(\essalg{\CB_\sigma} = \essalg{G, \Sigma}\).
  Thus, by the previous discussion \(\maxalg[D_0]{\variable}\) and \(\redalg[D_0]{\variable}\) does include all \cstar{}algebras with weak Cartan subalgebras.
\end{example}

The equivariant \Eth-theory of \cref{sec:equiv-eth} in this case reduces to equivariant groupoid \Eth-theory, and was studied (in the Hausdorff case, and possibly untwisted) for instance in \cites{popescu-2004,kwanieski-li-skalski-2022,burgstaller-2016}.

We have so far only studied examples where \(D_0 \subseteq X\) is ``natural'', that is, whenever \(D_0\) is either empty or equal to the set of dangerous units \(D\) as in \eqref{eq:d-s}.
Another class of examples is also of interest, however: that in which \(D_0\) is an orbit.
For the following, given a unit \(x \in G^{(0)}\) we let \(\lambda_x \colon \algalg{G} \to \CB(\ell^2(Gx))\) be the usual left regular representation at the unit \(x\).

\begin{proposition}
  Let \(G\) be an \'etale groupoid, and fix some unit \(x_0 \in G^{(0)} \eqqcolon X\). Fix some wide inverse semigroup \(S\) of open bisections of \(G\), and let \(Sx_0 \subseteq X\) be the orbit of \(x_0\).
  The \cstar{}algebra generated by \(\lambda_x(\algalg{G})\) is isomorphic to \(\redalg[X \setminus S x_0]{\cfunz{X}_S}\).
\end{proposition}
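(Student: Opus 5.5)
The plan is to identify the norm on the dense subalgebra \(\algalg{\cfunz{X}_S} \cong \algalg{G}\) coming from \(\redalg[X \setminus S x_0]{\cfunz{X}_S}\) with the norm \(\norm{\lambda_{x_0}(\variable)}\). Both algebras are completions of this common dense \Star{}algebra: the first is a quotient of \(\redalg{\cfunz{X}_S}\), via \cref{def:cross-algebra}; the second is the closure of \(\lambda_{x_0}(\algalg{G})\). It therefore suffices to show that the two ideals being killed agree, namely that for \(a \in \redalg{\cfunz{X}_S}\),
\[
  \redcondexp\left(a^*a\right) \in J_{X \setminus Sx_0} \iff \lambda_{x_0}(a) = 0.
\]
Here \(\lambda_{x_0}\) is extended to \(\redalg{G} \cong \redalg{\cfunz{X}_S}\), using \cref{prop:grpd-alg-fell-bundle}; for a wide \(S\) the proof of that lemma applies verbatim.

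First I will unwind \(J_{D_0}\) for \(D_0 = X\setminus Sx_0\). By \cref{prop:cond-exp-kills-dzero}, \(J_{D_0}\) consists of elements of \(\cfunz{X}^{**}\) vanishing at every point of \(X \setminus D_0 = Sx_0\). The set \(D_0\) is invariant because orbits are, so everything is well posed. Hence the condition \(\redcondexp(a^*a)\in J_{D_0}\) says exactly that the (Borel) function \(\redcondexp(a^*a)\) vanishes on the orbit \(Sx_0\). On monomials, \(\redcondexp(f v_s)(y) = f(sy)\) if \(y\) lies in the interior of the fixed set of \(s\), that is, in \(O_s\), and is \(0\) otherwise. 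This is precisely the value \(a(y)\) of the corresponding function on \(G\) at the unit \(y\), which is the usual pointwise description of the reduced expectation \(\redalg{G} \to B(X)\).

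Second, I will use the standard identity
\[
  \innerprod{\delta_g}{\lambda_{x_0}(a^*a)\delta_g} = (a^*a)(\source(g)) \quad\text{or equivalently}\quad \norm{\lambda_{x_0}(a)\delta_g}^2 = \redcondexp\left(a^*a\right)(\range(g)),
\]
for \(g \in Gx_0\), after conjugating by the unitary \(\delta_g\mapsto\delta_{g}\) and using \(\lambda_{x_0}\cong\lambda_{\range(g)}\) for \(g \in Gx_0\). Because \(S\) is wide, \(\range(Gx_0) = Sx_0\). Therefore \(\lambda_{x_0}(a)=0\) holds iff \(\redcondexp(a^*a)(y)=0\) for all \(y\in Sx_0\), iff \(\redcondexp(a^*a)\in J_{X\setminus Sx_0}\). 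This gives equality of the kernels, hence an isometric identification of the two completions.

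The main obstacle will be justifying the pointwise formulas for elements \(a\) of the completion, not only for elements of \(\algalg{G}\). The point is that \(\redcondexp\) lands in \(B^{**}\) and evaluation at points is defined only via the normal extension to \(\cfunz{X}_y^{**}\). I would handle this using \cref{lemma:p-takes-values-borel}, so that \(\redcondexp\) takes values in the Borel algebra, together with the fact that evaluation at a point is norm-contractive; both sides of the identity are then norm-continuous in \(a\), and it suffices to verify the identity on \(\algalg{G}\) by direct computation. A secondary point is the wide hypothesis: I need that the germs of bisections in \(S\) recover all of \(Gx_0\), so that the orbit \(Sx_0\) equals the \(G\)-orbit and \(\algalg{\cfunz{X}_S}\) maps onto \(\algalg{G}\).
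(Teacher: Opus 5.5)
Your argument is correct, but it takes a different route from the paper.

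The paper realises \(\redalg[X \setminus Sx_0]{\cfunz{X}_S}\) concretely. It starts from the faithful representation \(\Lambda_{D_0}\) on \(\ell^2_{D_0}(S;\CB)\) of \cref{prop:left-reg-rep}, in the flexible form of \cref{rem:more-left-reg}. It then takes the balanced tensor product with \(\ell^2(X \setminus D_0)\) along the diagonal representation and identifies the resulting Hilbert space with \(\ell^2(S \ltimes (X \setminus D_0))\). The algebra thus becomes the image of \(\bigoplus_{x \in Sx_0} \lambda_x\), and only at the end does the paper use that the \(\lambda_x\) along one orbit are unitarily equivalent.

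You never represent \(\redalg[D_0]{\cfunz{X}_S}\) at all. Instead you compare two ideals of \(\redalg{G} \cong \redalg{\cfunz{X}_S}\): \(\ker \lambda_{x_0}\) and the nucleus \(\{a : \redcondexp(a^*a) \in J_{D_0}\}\). You do this through the diagonal coefficient identity \(\norm{\lambda_{x_0}(a)\delta_g}^2 = \redcondexp(a^*a)(\range(g))\), extended from \(\algalg{G}\) by continuity, together with the observation that \(J_{D_0} = \bigcap_{y \in Sx_0} \ker q_y^{**}\) is already closed.

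This is more elementary: no Hilbert modules, no balanced tensor products, and no appeal to the faithfulness part of \cref{prop:left-reg-rep}. It also shows verbatim that \(\redalg[D_0]{\cfunz{X}_S}\) is the image of \(\bigoplus_{y \in X \setminus D_0} \lambda_y\) for every invariant \(D_0\). The cost is that your argument rests entirely on the identification \(\redalg{\cfunz{X}_S} \cong \redalg{G}\) intertwining \(\redcondexp\) with restriction of functions to the unit space. You need this for a wide \(S\), not just \({\rm Bis}(G)\), and you import it rather than prove it. The paper's proof, by contrast, is an instance of the general left-regular-representation mechanism it develops for arbitrary Fell bundles.

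A few slips to fix:
\begin{enumerate}
\item Since \(\source(g) = x_0\) for \(g \in Gx_0\), your first formula should have \(\range(g)\). It follows by direct computation, \(\innerprod{\delta_g}{\lambda_{x_0}(f)\delta_g} = f(\range(g))\), with no unitary needed; the one you wrote, \(\delta_g \mapsto \delta_g\), is the identity.
\item \(O_s\) is not the interior of the fixed-point set of \(s\) when \(G\) is not effective. For a group \(G = \Gamma\), every \(\gamma\) fixes the unique unit, yet \(\redcondexp\) kills the fibre over \(\gamma \neq 1\). Rather, \(O_s = s \cap X\), and this is exactly where wideness is used.
\item \cref{lemma:p-takes-values-borel} is not needed: contractivity of the \Star{}homomorphism \(q_y^{**}\) suffices for the continuity step.
\item \(\algalg{\cfunz{X}_S}\) need not coincide with \(\algalg{G}\), since it contains non-compactly supported functions in \(C_0(s)\). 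Both are dense in \(\redalg{G}\), however, which is all you use.
\end{enumerate}
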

\begin{proof}
  The notation is rather ungrateful, so we denote \(\redalg[X \setminus S x_0]{\cfunz{X}_S}\) simply by \(\redcrossedprod[Sx_0]{\cfunz{X}}{S}\).
  \cref{prop:left-reg-rep} and the subsequent remark, applied to the Fell bundle \(\cfunz{X}_S\) in \eqref{eq:fell-bundle-from-action} and \(D_0 \coloneqq X \setminus Sx_0\), yields a \emph{faithful} representation \(\redcrossedprod[Sx_0]{\cfunz{X}}{S} \to \CB(\ell^2_{x_0}(S; X))\).
  Our first goal is to describe \(\ell^2_{x_0}(S; X)\) explicitly.
  For this, recall \cref{rem:more-left-reg}: \(\ell^2_{D_0}(S; \CB)\) can be thought as the Hausdorff completion of \(\oplus_{s \in S} \ell^\infty(ss^*)\), where \(\ell^\infty(ss^*)\) is the \cstar{}algebra of complex valued bounded functions \(ss^* \to \CC\).
  In particular it is a \(\ell^\infty(X \setminus D_0)\)-Hilbert bimodule.

  Given the usual diagonal (in general non-isometric) representation \(\rho \colon \cfunz{X} \to \CB(\ell^2(X \setminus D_0))\), consider the balanced tensor product \(\CH \coloneqq \ell^2_{D_0}(S; \CB) \tensor_{\rho^{**}} \ell^2(X \setminus D_0)\), that is, the Hausdorff completion of \(\ell^2_{D_0}(S; \CB) \odot \ell^2(X \setminus D_0)\) subject to the semi-inner product
  \[
    \innerprod{v_1 \tensor \xi_1}{v_2 \tensor \xi_2} \coloneqq \innerprod{\xi_1}{ \innerprod{v_1}{v_2}_{\ell^2_{D_0}\left(S; \CB\right)} \xi_2}_{\ell^2\left(X \setminus D_0\right)}.
  \]
  The same proof as in \cite{buss-martinez-approx-prop-23}*{Section 3} shows that the left regular representation
  \[
    \Lambda_{D_0} \tensor {\rm id} \colon \redcrossedprod[Sx_0]{\cfunz{X}}{S} \to \CL\left(\ell^2_{x_0}\left(S; X\right) \otimes_{\rho^{**}} \ell^2\left(X \setminus D_0\right)\right)
  \]
  is unitarily equivalent to the representation
  \begin{align*}
    \rho^\Lambda_s \colon B_s & \to \CB\left(\ell^2_{\rho,D_0}\left(S; \ell^2\left(X\right)\right)\right), \\
    \rho^\Lambda_s \left(b_s\right) \left(v_t \delta_t\right) & \coloneqq \rho\left(b_s\right) v_t \delta_{st},
  \end{align*}
  where on the right hand side we see \(b_s \in B_s \subseteq \ell^{\infty}(ss^*) \subseteq \ell^\infty(X)\) by extending by \(0\) outside of \(ss^*\).
  The Hilbert space \(\ell^2_{\rho,D_0}(S; \ell^2(X \setminus D_0))\) can be checked to be precisely \(\ell^2(S \ltimes (X \setminus D_0))\), that is, the \cstar{}algebra \(\redcrossedprod[Sx_0]{\cfunz{X}}{S}\) is precisely the \cstar{}algebra generated by the representation
  \[
    \oplus_{x \in Sx_0} \lambda_x \colon \algalg{S \ltimes X} \cong \algalg{G} \to \CB\left(\oplus_{x \in Sx_0} \ell^2(G_x)\right),
  \]
  which can, in turn, be checked to be \Star{}isomorphic to \(\lambda_x(\maxalg{G})\) as in the statement.
\end{proof}

\vspace{1.5mm}

We end the paper with a, by now, well known observation and an intriguing question.
For the first, note that the essential \cstar{}algebra \(\essalg{\CB}\) of a Fell bundle \(\CB = (B_s)_{s \in S}\) only makes sense if \(S\) is ``small enough'', \emph{i.e.}\ it is quasi-countable.
In that case the set \(D\) above realizing the essential algebra is meager and Borel (by countability and \cref{lemma:d-inv-countable}), so we ``are not killing too much''.
On the other hand if \(S\) is not quasi-countable then \(D\) may as well be all of \(\primidealspc{B}\), so \(\essalg{\CB}\) may trivialize.
In the same vein so may \(\redalg[D_0]{\CB}\) trivialize if \(D_0 \subseteq X\) is large.
In the setting of groupoids, \(S\) is quasi-countable when \(S \ltimes X\) is covered by countably many open bisections, a condition that has appeared already in previous work, see \cites{Kwasniewski-Meyer:Pure_infiniteness,KwasniewskiMeyer2021,buss-martinez-ess-ame-2024}, among others.

\vspace{1.5mm}

For the intriguing question we pose the following, to which we have no answer so far.

\begin{question}
  Is there any separable, nuclear \cstar{}algebra \(A\) that does \emph{not} have a model \(A \cong \maxalg[D_0]{\CB}\) or \(A \cong \redalg[D_0]{\CB}\) for some Fell bundle \(\CB = (B_s)_{s \in S}\) over a type I \cstar{}algebra \(B\)?
\end{question}

The only ``concrete'' algebras the author is aware of that could potentially fall outside of the class \(\maxalg[D_0]{\variable}\) and \(\redalg[D_0]{\variable}\) introduced in the paper is the class of quotients of \(\maxalg{\Gamma}\) for an amenable discrete group \(\Gamma\) (and variants thereof).
Nevertheless, as pointed out in \cite{eckhardt-2024}, see \cite{eckhardt-2024}*{Remark 2.1 and Theorem 2.6}, every quotient of \(\maxalg{\Gamma}\) for a virtually nilpotent group \(\Gamma\) automatically is of the form \(\maxalg[D_0]{\variable}\).
Hence one would need to discuss other larger classes of discrete groups.
A certainly interesting case would, for instance, be the class of simple quotients of \(\maxalg{{\rm F}(\ZZ \actson X)}\), where \({\rm F}(\ZZ \actson X)\) is the topological full group of a free minimal action of \(\ZZ\) on the Cantor space.
Another interesting, perhaps more approachable, case would be the Baumslag-Solitar group \({\rm BS}(1,2) = \{a, b : bab^{-1} = a^2\}\), which is amenable and of exponential growth (and hence not under the study of \cite{eckhardt-2024}).

\bibliography{bibuctbc}

\end{document}